\theoremstyle{plain}
\newtheorem{proposition}{Proposition}[section]
\newtheorem{theorem}[proposition]{Theorem}
\newtheorem{lemma}[proposition]{Lemma}
\newtheorem{corollary}[proposition]{Corollary}
\theoremstyle{definition}
\newtheorem{example}[proposition]{Example}
\newtheorem{definition}[proposition]{Definition}
\theoremstyle{remark}
\newtheorem{remark}[proposition]{Remark}
\newtheorem{notation}[proposition]{Notation}
\newtheorem{question}[proposition]{Question}
\newtheorem*{claim}{Claim}
\DeclareMathOperator{\Id}{Id}
\DeclareMathOperator{\Cay}{Cay}
\DeclareMathOperator{\SL}{\mathsf{SL}}
\DeclareMathOperator{\GL}{\mathsf{GL}}
\DeclareMathOperator{\SO}{\mathsf{SO}}
\DeclareMathOperator{\PSL}{\mathsf{PSL}}
\DeclareMathOperator{\Hom}{Hom}
\DeclareMathOperator{\Hull}{Hull} 
\DeclareMathOperator{\rank}{rank} 
\DeclareMathOperator{\Gr}{Gr} 
\DeclareMathOperator{\id}{id} 
\DeclareMathOperator{\diag}{diag}
\DeclareMathOperator{\Isom}{Isom}
\DeclareMathOperator{\Span}{Span}
\DeclareMathOperator{\Bc}{\mathcal{B}}
\DeclareMathOperator{\Fc}{\mathcal{F}}
\DeclareMathOperator{\Gc}{\mathcal{G}}
\DeclareMathOperator{\Hc}{\mathcal{H}}
\DeclareMathOperator{\Lc}{\mathcal{L}}
\DeclareMathOperator{\Nc}{\mathcal{N}}
\DeclareMathOperator{\Pc}{\mathcal{P}}
\DeclareMathOperator{\Xc}{\mathcal{X}}
\DeclareMathOperator{\Cb}{\mathbb{C}}
\DeclareMathOperator{\Kb}{\mathbb{K}}
\DeclareMathOperator{\Nb}{\mathbb{N}}
\DeclareMathOperator{\Pb}{\mathbb{P}}
\DeclareMathOperator{\Rb}{\mathbb{R}}
\DeclareMathOperator{\Zb}{\mathbb{Z}}
\newcommand{\abs}[1]{\left|#1\right|}
\newcommand{\norm}[1]{\left\|#1\right\|}
\begin{document}

\title{Notions of Anosov representation of relatively hyperbolic groups}
\author{Tianqi Wang}
\address{National University of Singapore}
\email{twang@u.nus.edu}
\thanks{Wang was partially supported by the NUS-MOE grant A-8000-458-00-00 and by the Merlion PhD program 2021}

\date{\today}

\begin{abstract}
    We prove that divergent, extended geometrically finite (in the sense of \cite{Weisman}) representations can be interpreted as restricted Anosov (in the sense of \cite{TW23}) representations over certain flow spaces. We also show that the representations of this type are stable under small type preserving deformations. As an example, we show that a representation induced from a geometrically finite one through a Galois covering, constructed in \cite{TW23}, is divergent and extended geometrically finite with a non-homeomorphic boundary extension.
\end{abstract}

\maketitle

\setcounter{tocdepth}{1}
\tableofcontents

\section{Introduction}

The notion of Anosov representation of a hyperbolic group into a semisimple Lie group $G$ is introduced by Labourie \cite{Lab} and developed further by many researchers such as Guichard--Wienhard \cite{GW}, Kapovich--Leeb--Porti \cite{KLP16,KLP17,KLP18}, Gu\'eritaud--Guichard--Kassel--Wienhard \cite{GGKW}, and Bochi--Potrie--Sambarino \cite{BPS}. They provide many different ways to characterize Anosov representations such as by the contraction property of the geodesic flow on associated bundles, by the convergence group actions on the limit sets, and by the linear growth property of the Cartan projections.

For convenience, we consider the case when $G = \SL(d,\Kb)$ where $\Kb = \Rb$ or $\Cb$. Let $k\leqslant d/2$ be a positive integer and let \[\Fc_{k,d-k} = \{(V,W)\in\Gr_k(\Kb^d)\times\Gr_{d-k}(\Kb^d): V\subset W\}\] be the flag manifold. The results easily extend to the case of other non-compact semisimple Lie groups with finite centers (see for example \cite[Section 8]{BPS} and \cite[Appendix A]{CZZ} as references). We quickly recall these ways of characterizing Anosov representations. Let $\Gamma$ be a hyperbolic group and let $\rho$ be a representation of $\Gamma$ into $\SL(d,\Kb)$.

Since $\Gamma$ is hyperbolic, one can construct an associated geodesic flow $(U(\Gamma),\phi)$ following \cite{Gromov}, \cite{Matheus}, \cite{Cham} and \cite{Mineyev}. One may roughly understand $U(\Gamma)$ as the collection of bi-infinite geodesics parametrized by length in the Cayley graph of $\Gamma$ and $\phi$ the shift of the parameters.

The flow $\phi$ can be naturally lifted to the trivial bundle $U(\Gamma)\times \Kb^d$ by parallel transformations, which commutes with the $\Gamma$-action on $U(\Gamma)\times \Kb^d$ defined by $\gamma (z,v) = (\gamma z, \rho(\gamma)v)$. We say that $E_\rho = \Gamma\backslash(U(\Gamma)\times \Kb^d) = E^s_\rho \oplus E^u_\rho$, a decomposition of $E_\rho$ into $\phi$-invariant subbundles, is a dominated splitting of rank $k$ if $E^s_\rho$ is of rank $k$ and there exist constants $C,c > 0$, such that \[ \dfrac{\norm{\phi^t_z(v)}}{\norm{\phi^t_z(w)}} \leqslant C e^{-c t}\dfrac{\norm{v}}{\norm{w}}~,\] for any $z\in \Gamma\backslash U(\Gamma)$, $v\in (E^s_\rho)_z$, $w\in (E^u_\rho)_z$ nonzero vectors and $t\in\Rb_{\geqslant 0}$. Here $\norm{\cdot}$ is a metric on $E_\rho$, and since $\Gamma\backslash U(\Gamma)$ is compact, the definition is independent of the choice of such metric.
It is not hard to see that the condition is equivalent to requiring that the induced flow on $\Hom(E^u_\rho,E^s_\rho)$ (by $\phi^t$-conjugation) is contracting exponentially.

We say that $\rho$ is $P_k$-Anosov if the associated flat bundle $E_\rho$ admits a dominated splitting of rank $k$, referring to \cite{BPS}.

The second way to characterize Anosov representations by the convergence group actions on the limit sets (see for example \cite{Can}) is formulated in the following way. Denote the Gromov boundary of $\Gamma$ by $\partial \Gamma$. Then $\rho$ is $P_k$-Anosov if and only if there exists a pair of $\rho$-equivariant continuous maps $\xi=(\xi^k,\xi^{d-k}):\partial\Gamma \to \Gr_k(\Kb^d)\times Gr_{d-k}(\Kb^d)$ that is
\begin{itemize}
    \item compatible, that is, the image of $\xi$ lies in $\Fc_{k,d-k}$,
    \item transverse, that is, $\xi^k(p)\oplus \xi^{d-k}(q) = \Kb^d$ for any $p\ne q\in \partial \Gamma$,
    \item strongly dynamics preserving, that is, if $(\gamma_n)_{n\in \Nb}\subset \Gamma$ is a sequence with $\gamma_n\to p\in \partial \Gamma$ and $\gamma_n^{-1}\to q\in \partial \Gamma$ as $n\to +\infty$, then $\rho(\gamma_n)V \to \xi^k(p)$ for any $V\in \Gr_k(\Kb^d)$ transverse to $\xi^{d-k}(q)$.
\end{itemize} 

Finally, we may also characterize Anosov representations by the dominated property (linear growth of the Cartan projections) following \cite{KLP18} and \cite{BPS}. The representation $\rho$ is $P_k$-Anosov if and only if there exist constants $C,c>0$, such that \[\dfrac{\sigma_{k+1}(\rho(\gamma))}{\sigma_k(\rho(\gamma))}\leqslant Ce^{-c\abs{\gamma}}~,\] for any $\gamma\in \Gamma$, where $\abs{\cdot}$ denotes the word metric on $\Gamma$ with respect to a fixed generating set and $\sigma_k(\cdot)$ denotes the $k$-th singular value with respect to the standard metric on $\mathbb{K}^d$.
\\

One may view the classical notion of Anosov representation of hyperbolic groups as a generalization of convex-cocompact representation into higher rank semisimple Lie groups. A recent trend is to study notions of Anosov representation of relatively hyperbolic groups, which are, in the same sense as the classical one, trying to generalize the notion of geometrically finite representation into  higher rank semisimple Lie groups. This induces for instance the cusped Hitchin representations introduced by Canary--Zhang--Zimmer \cite{CZZ}, the relatively dominated and relatively Anosov representations introduced by Zhu \cite{Zhu} and Zhu--Zimmer \cite{ZZ1}, the relatively Morse and relatively asymptotically embedded representations introduced by Kapovich--Leeb \cite{KL}, and the extended geometrically finite representations introduced by Weisman \cite{Weisman}.

We now assume that $(\Gamma,\Pc)$ is a relatively hyperbolic group and $\rho:\Gamma\to \SL(d,\Kb)$ is a representation. Recall the following definition. 
\begin{definition}
    A representation $\rho:\Gamma\to \SL(d,\Kb)$ is \emph{$P_k$-divergent} if \[\lim\limits_{n\to +\infty} \dfrac{\sigma_{k}(\rho(\gamma_n))}{\sigma_{k+1}(\rho(\gamma_n))}=+\infty~,\] for any sequence of pairwise distinct elements $(\gamma_n)_{n\in \Nb}\subset\Gamma$.
\end{definition}

One may define Anosov representations of $(\Gamma,\Pc)$ by replacing the Gromov boundary of the hyperbolic group in the classical case by the Bowditch boundary $\partial(\Gamma,\Pc)$. More concretely, following Zhu--Zimmer \cite{ZZ1}, $\rho$ is said to be $P_k$-Anosov relative to $\Pc$ if there exists a pair of continuous, $\rho$-equivariant maps $\xi=(\xi^k,\xi^{d-k}):\partial(\Gamma,\Pc)\to \Fc_{k,d-k}$ that is transverse and strongly dynamics preserving (see Definition \ref{DefRelativelyAnosov}). A slightly different notion is that of $P_k$-asymptotically embedded representation introduced by Kapovich--Leeb \cite{KL}. Instead of the dynamics preserving property, it requires the limit map $\xi$ to be a homeomorphism between $\partial(\Gamma,\Pc)$ and its image in the flag manifold and requires $\rho$ to be $P_k$-divergent (see Definition \ref{DefAsymptoticallyEmbedded}). These two definitions are equivalent referring to \cite[Proposition 4.4]{ZZ1}.

Another further generalization of Anosov representation of relatively hyperbolic groups is introduced by Weisman \cite{Weisman}, namely the extended geometrically finite representations. 
Comparing to the relatively Anosov and relatively asymptotically embedded representations, we consider continuous projections (rather than homeomorphisms) from a subset of $\Fc_{k,d-k}$ onto $\partial(\Gamma,\Pc)$. More concretely, a boundary extension of $(\Gamma,\Pc)$ in $\Fc_{k,d-k}$ is a closed $\rho(\Gamma)$-invariant subset $\Lambda$ of $\Fc_{k,d-k}$, together with a continuous, $\rho$-equivariant, surjective map $\zeta:\Lambda\to \partial(\Gamma,\Pc)$. The representation $\rho$ is said to be extended geometrically finite relative to $\Pc$, if there exists a boundary extension $\zeta:\Lambda\to \partial(\Gamma,\Pc)$ in $\Fc_{k,d-k}$ that is
\begin{itemize}
    \item transverse, i.e., $x,y\in \Lambda$ are transverse flags when $\zeta(x)\ne\zeta(y)$,
    \item extending the convergence dynamics, i.e., there exists a family $(C_p)_{p\in \partial (\Gamma, \Pc)}$ of open subsets of $\Fc_{k,d-k}$, such that $\Lambda\setminus \zeta^{-1}(p)\subset C_p$ for each $p\in \partial (\Gamma, \Pc)$, and if $(\gamma_n)_{n\in \Nb}$ is a sequence in $\Gamma$ with $\gamma_n\to p\in\partial(\Gamma,\Pc)$ and $\gamma_n^{-1}\to q\in \partial(\Gamma,\Pc)$ when $n\to +\infty$, then for any $x\in C_q$, $\rho(\gamma_n)x$ has all accumulation points lying in $\zeta^{-1}(p)$ as $n\to +\infty$, which is uniform on any compact subset of $C_q$.
\end{itemize}\ \\

On the other hand, Tholozan--Wang \cite{TW23} provides a purely abstract way to describe the dominated splitting property. As long as the given group $\Gamma$ is discrete, we may consider the setting of a flow space $(Y,\phi)$ with a properly discontinuous $\Gamma$-action on $Y$ that commutes with $\phi$. Then we say a representation $\rho:\Gamma\to \SL(d,\Kb)$ is $P_k$-Anosov in restriction to $Y$ if $E_\rho = \Gamma\backslash (Y \times \Kb^d)$ admits a dominated splitting of rank $k$ with respect to some metric on $E_\rho$. If the $\Gamma$-action on $Y$ is cocompact, the dominated splitting is independent of the choice of the metric, as any two metrics are always bi-Lipschitz to each other. However, the choice of the metric is important when the $\Gamma$-action is not cocompact, especially when we consider flow spaces associated to relatively hyperbolic groups.

One natural question to ask is whether there is a proper flow space such that we could interpret extended geometric finiteness as Anosov in restriction to such a flow space. We give an answer when the representation is divergent. When $(\Gamma, \Pc)$ is a relatively hyperbolic group and $X$ is a Gromov model of $(\Gamma, \Pc)$, we construct an associated flow space $\Fc(X)$ (see Notation~\ref{notation: compact conical limit flow}). Then for a given boundary extension $\zeta:\Lambda\to \partial(\Gamma,\Pc)$, one can extend $\Fc(X)$ to a flow space $\Fc(\Lambda,\zeta,X)$ (see Section \ref{BoundaryExtensionSection}). Roughly speaking, $\Fc(\Lambda,\zeta,X)$ is the collection of bi-infinite flow lines with transverse endpoints in $\Lambda$. Then we have the following equivalence.

\begin{theorem}\label{MainTheorem}
    Let $(\Gamma,\Pc)$ be a relatively hyperbolic group and let $\rho$ be a representation of $\Gamma$ into $\SL(d,\Kb)$, then the following are equivalent.
    \begin{itemize}
        \item[(1)] $\rho$ is $P_k$-divergent and extended geometrically finite relative to $\Pc$;
        \item[(2)] There exists a Gromov model $X$ of $(\Gamma,\Pc)$, and a minimal boundary extension $\zeta:\Lambda\to \partial(\Gamma,\Pc)$, such that $\rho$ is $P_k$-Anosov in restriction to $\Fc(\Lambda,\zeta,X)$.
    \end{itemize}
    Moreover, when (2) holds, $\rho$ is $P_k$-weakly dominated with respect to $X$.
\end{theorem}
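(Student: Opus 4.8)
The plan is to prove the two implications separately, in both cases working over the flow space $\Fc(\Lambda,\zeta,X)$, its flat bundle $E_\rho=\Gamma\backslash(\Fc(\Lambda,\zeta,X)\times\Kb^d)$, and one and the same candidate splitting. Let $\xi^k,\xi^{d-k}$ denote the restrictions to $\Lambda$ of the two factor projections of $\Fc_{k,d-k}$ onto $\Gr_k(\Kb^d)$ and $\Gr_{d-k}(\Kb^d)$, and for a flow line $\ell$ write $\ell^\pm\in\Lambda$ for its two endpoints (with $\ell(t)\to\ell^\pm$ as $t\to\pm\infty$). Set $E^s_\ell=\xi^k(\ell^+)$ and $E^u_\ell=\xi^{d-k}(\ell^-)$; these have ranks $k$ and $d-k$, they are complementary in $\Kb^d$ exactly because the endpoints of a flow line are transverse flags, they are invariant under the flow (the bundle being flat) and under $\Gamma$ (as $\rho(\gamma)$ carries $\ell^\pm$ to $(\gamma\ell)^\pm$), and being assembled from continuous data they define a continuous splitting $E_\rho=E^s_\rho\oplus E^u_\rho$. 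A holonomy computation along a periodic flow line confirms that the forward flow contracts $E^s_\rho$ relative to $E^u_\rho$, matching the sign convention in the definition of a dominated splitting above, so the content of each direction is to reconcile the exponential contraction of the induced flow on $\Hom(E^u_\rho,E^s_\rho)$, for a suitable metric, with the external hypotheses; the recurring difficulty is that the $\Gamma$-action on $\Fc(\Lambda,\zeta,X)$ is not cocompact, so the estimate is genuinely metric-dependent near the peripheral pieces.

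For $(1)\Rightarrow(2)$, assume $\rho$ is $P_k$-divergent and extended geometrically finite with boundary extension $\zeta\colon\Lambda\to\partial(\Gamma,\Pc)$. I would first replace $\zeta$ by a minimal boundary extension --- for instance the closure of the attracting flags of the loxodromic elements, which for an extended geometrically finite representation one checks is again a transverse boundary extension satisfying the uniform convergence property (this reduction, and the verification that minimality survives, is a short but not entirely formal step). Then fix a suitable Gromov model $X$ of $(\Gamma,\Pc)$, so that $\Fc(X)$ is Mineyev's flow space, and form $\Fc(\Lambda,\zeta,X)$ together with the candidate splitting above. The task is to build a metric on $E_\rho$ for which this splitting is dominated of rank $k$. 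Over the cocompact thick part of $\Fc(\Lambda,\zeta,X)/\Gamma$ any continuous metric works, and the exponential contraction along long flow segments there follows from the strong-dynamics-preserving behaviour of $\zeta$, essentially as in the hyperbolic case. The delicate region is the family of horoball pieces associated to $\Pc$: there the quotient is non-compact, no fixed metric can succeed, and one must rescale the metric fibrewise along the peripheral directions, using the uniformity in the definition of extended geometric finiteness \emph{together with} $P_k$-divergence, so that a flow segment which enters and exits a horoball still contributes a definite amount of exponential contraction measured in the flow parameter. Manufacturing this cusp metric, and gluing the cusp estimate to the thick-part estimate to get global constants, is the main obstacle of this direction.

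For $(2)\Rightarrow(1)$, assume $E_\rho$ over $\Fc(\Lambda,\zeta,X)$ admits a dominated splitting of rank $k$ for some metric, with $\zeta$ minimal. Exponential forward contraction forces the rank-$k$ summand over a flow line to depend only on its attracting endpoint, giving a continuous $\rho$-equivariant map $\Lambda\to\Gr_k(\Kb^d)$; minimality identifies it with $\xi^k$ (and symmetrically for $\xi^{d-k}$), so $(\Lambda,\zeta)$ really is the boundary datum. Transversality of $\zeta$ then follows from the dominated splitting --- the endpoints of any flow line are complementary, and this propagates to all of $\Lambda$ via minimality and the convergence dynamics --- and the ``extending the convergence dynamics'' property, with $C_p$ the set of flags transverse to every element of $\zeta^{-1}(p)$, follows from the uniform contraction of the cocycle over the flow segments joining $o$ to $\gamma_n o$, i.e.\ from the usual passage from bundle dynamics to boundary dynamics, now carried out over a non-cocompact flow space as in \cite{TW23}. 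It remains to prove $P_k$-divergence, and with it $P_k$-weak domination with respect to $X$: fixing a basepoint $o\in X$ and joining $o$ to $\gamma o$ by a flow segment of length comparable to $\dist_X(o,\gamma o)$, the dominated splitting over that segment bounds $\sigma_{k+1}(\rho(\gamma))/\sigma_k(\rho(\gamma))$ by a quantity tending to $0$ as $\dist_X(o,\gamma o)\to\infty$, which is exactly weak domination with respect to $X$; and since distinct elements escape to infinity in $X$ by properness of the action, $P_k$-divergence follows. The subtle point here, and the main obstacle of this direction, is the peripheral elements: a flow line fixed by a parabolic stays inside a horoball and carries no unbounded flow parameter, so the singular-value gap for such $\gamma$ has to be read off from flow segments passing close to but not through the parabolic fixed point, which again demands quantitative control of how the chosen metric degenerates as one approaches $\zeta^{-1}$ of a parabolic point --- the cusp estimate of the previous paragraph, used in reverse.
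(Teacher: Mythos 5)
The main gap is in your $(1)\Rightarrow(2)$ direction. You treat the choice of Gromov model as a background step (``fix a suitable Gromov model'') and locate all the work in rescaling the metric through the horoball pieces, ``using the uniformity in the definition of extended geometric finiteness together with $P_k$-divergence''. But divergence is purely qualitative and supplies no rate, and the rate cannot be manufactured by the metric alone: the $\Gamma$-action on the thick part of $\Fc(\Lambda,\zeta,X)$ is cocompact, so any invariant metric is uniformly comparable to the standard one at the entry and exit points of a cusp excursion, and hence the total contraction of $E^s$ against $E^u$ across an excursion of flow length $T\asymp\abs{\gamma}_X$ (where $\gamma\in P$ carries the entry region to the exit region) is pinned, up to uniform constants, to $\sigma_{k+1}(\rho(\gamma))/\sigma_k(\rho(\gamma))$. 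Domination in flow time is therefore possible only if $\rho|_P$ is $P_k$-weakly dominated with respect to $X$, and this genuinely fails for some Gromov models (see Section \ref{TheChoiceofGromovModel}); that is exactly why statement (2) only asserts the existence of $X$. The ingredient your sketch is missing is Lemma \ref{GromovModelDivtoDom}: from $P_k$-divergence of $\rho|_P$ one extracts a divergence function and chooses a generalized Groves--Manning model $X\in\Xc_f$ whose cusp geometry is tuned to it, so that the singular value gap of peripheral elements grows exponentially in $\abs{\cdot}_X$; only then does the explicit cusp metric (the $e^{\pm ct}$-interpolation of Section \ref{Section1to2theFirst}, combined with Lemmas \ref{ThickPartPointsEstimation} and \ref{ThinBoundaryPointsEstimation} and the thick/thin concatenation argument) give a dominated splitting. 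Without this, what you call the ``main obstacle'' is not merely unresolved but unresolvable for a generic $X$. (Your proposed minimality reduction via closures of attracting fixed flags also differs from the paper's, which identifies the refined boundary extension with the limit set $\Lc(\rho)$ in Proposition \ref{RefinedtoMinimal}; that is a smaller issue.)

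In the $(2)\Rightarrow(1)$ direction your route is essentially the paper's, but note two points. First, in statement (2) the boundary extension is abstract: $\Lambda$ is only a compact metrizable $\Gamma$-space, not a subset of $\Fc_{k,d-k}$, so there are no factor projections $\xi^k,\xi^{d-k}$ to begin with, and the conclusion must produce a boundary extension in the flag manifold, namely $\zeta\circ\xi^{-1}:\xi(\Lambda)\to\partial(\Gamma,\Pc)$. Your one-line claim that exponential contraction forces the rank-$k$ summand to depend only on the forward endpoint needs an argument in this non-cocompact flow space; the paper obtains it by restricting to cocompact subflows, where Theorem \ref{TheoremCompactSubflow} gives unique limit maps, and then using minimality (density of compactly attained conical points in $\Lambda$) together with continuity of the splitting (Proposition \ref{LimitMapWaivePro}). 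Second, your concern about parabolic-periodic flow lines is not where the difficulty lies: Lemma \ref{TrackingLemma} (tautness of $X$ plus properness of $\pi$) provides, for every $\gamma\in\Gamma$ including peripheral elements, a flow segment of length comparable to $\abs{\gamma}_X$ between translates of a fixed compact set, and from this the weak domination with respect to $X$ and then $P_k$-divergence follow as you sketch, with no separate cusp estimate needed in this direction.
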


The main reason that we study extended geometrically finite representations which are divergent is that they ``extend the convergence dynamics more canonical'', that is, we have a canonical choice of the boundary extension and a canonical choice of the family of open sets $(C_p)_{p\in \partial(\Gamma,\Pc)}$ in the definition. More concretely, as long as a representation $\rho:\Gamma\to \SL(d,\Kb)$ is $P_k$-divergent, it has a well-defined limit set \[\Lc(\rho) = \{ \lim\limits_{n\to +\infty} (U_k(\rho(\gamma_n)),U_{d-k}(\rho(\gamma_n))): (\gamma_n)_{n\in \Nb}\text{ a sequence of}\] \[\text{pairwise distinct elements in }\Gamma\}\subset \Fc_{k,d-k}~,\] where $U_k(g)$ denotes the eigenspace of $gg^t$ for the largest $k$ eigenvalues for a matrix $g\in\SL(d,\Kb)$ that has a singular values gap at $k$, i.e. $\sigma_k(g)>\sigma_{k+1}(g)$. If $\rho$ is moreover extended geometrically finite, then their is a canonical choice of the boundary extension in $\Fc_{k,d-k}$ given by $\Lc(\rho)$. More concretely,

\begin{theorem}\label{MainRefinedtoMinimal}
    If a representation $\rho:\Gamma\to \SL(d,\Kb)$ is $P_k$-divergent and extended geometrically finite relative to $\Pc$, then $\rho$ is extended geometrically finite with boundary extension $\zeta: \Lc(\rho) \to \partial (\Gamma, \Pc)$ defined such that 
    \begin{itemize}
        \item If $(\gamma_n)_{n\in \Nb}\subset \Gamma$ is a sequence with $\gamma_n\to q\in\partial_{con}(\Gamma,\Pc)$ then $\zeta^{-1}(q)$ is the singleton \[\lim\limits_{n\to +\infty} (U_k(\rho(\gamma_n)),U_{d-k}(\rho(\gamma_n)))~;\]
        \item For any $p\in\partial_{par}(\Gamma,\Pc)$, \[\zeta^{-1}(p) = \{ \lim\limits_{n\to +\infty} (U_k(\rho(\gamma_n)),U_{d-k}(\rho(\gamma_n))): (\gamma_n)_{n\in \Nb}\subset P\] \[\text{ a sequence of pairwise distinct elements}\}~.\]
    \end{itemize}
\end{theorem}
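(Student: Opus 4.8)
The plan is to build the boundary extension $\zeta$ by restricting an arbitrary one to the limit set $\Lc(\rho)$. Using the hypothesis, fix a transverse boundary extension $\zeta_0\colon\Lambda_0\to\partial(\Gamma,\Pc)$ in $\Fc_{k,d-k}$ extending the convergence dynamics, with family of open sets $(C^0_p)_{p\in\partial(\Gamma,\Pc)}$ so that $\Lambda_0\setminus\zeta_0^{-1}(p)\subseteq C^0_p$. Throughout I would use the standard singular value estimates for $P_k$-divergent sequences (as in \cite{BPS} and \cite{KLP18}): writing $\Xi_g:=(U_k(g),U_{d-k}(g))\in\Fc_{k,d-k}$ when the relevant gaps are present (as in the definition of $\Lc(\rho)$), the flag $\Xi_g$ is attracting and $\Xi_{g^{-1}}$ repelling for the action of $g$ on $\Fc_{k,d-k}$ — the repelling object genuinely lies in $\Fc_{k,d-k}$ precisely because $k\leqslant d/2$ — so $g\cdot x\to\Xi_g$, uniformly for $x$ in compact families of flags transverse to $\Xi_{g^{-1}}$; and for fixed $h\in\SL(d,\Kb)$, $\Xi_{hg}$ and $\Xi_{gh}$ are asymptotic to $h\cdot\Xi_g$ and $\Xi_g$ respectively. $P_k$-divergence makes these hypotheses available along any sequence of pairwise distinct elements, and it is then routine that $\Lc(\rho)$ is a nonempty closed $\rho(\Gamma)$-invariant subset of $\Fc_{k,d-k}$.

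The heart of the proof is a single lemma: if $v\in\Lc(\rho)$ and $(\delta_n)$ are pairwise distinct with $\Xi_{\rho(\delta_n)}\to v$, then $(\delta_n)$ converges in $\Gamma\cup\partial(\Gamma,\Pc)$ to a boundary point $p_v$, and $v\in\Lambda_0$ with $\zeta_0(v)=p_v$. To prove it, pass to a subsequence so that $\delta_n\to p$, $\delta_n^{-1}\to q$ in $\partial(\Gamma,\Pc)$ and $\Xi_{\rho(\delta_n^{-1})}\to v^-$. The open set $C^0_q$ is nonempty and the flags transverse to $v^-$ are open and dense, so their intersection contains some $x$; then $\rho(\delta_n)x\to v$ by the estimates, and the extended convergence dynamics of $\zeta_0$ force $v\in\zeta_0^{-1}(p)\subseteq\Lambda_0$ with $\zeta_0(v)=p$. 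As $\zeta_0(v)$ is now determined, every convergent subsequence of $(\delta_n)$ has limit $\zeta_0(v)$, so $\delta_n\to\zeta_0(v)$. Two consequences: $\Lc(\rho)\subseteq\Lambda_0$, so $(\Lc(\rho),\zeta)$ with $\zeta:=\zeta_0|_{\Lc(\rho)}$ is a boundary extension contained in every other; and $\zeta$ depends only on $\rho$ (it is characterised by $\zeta(v)=\lim\delta_n$), is continuous and $\rho$-equivariant, and is transverse, since $\zeta(x)\neq\zeta(y)$ implies $\zeta_0(x)\neq\zeta_0(y)$, hence $x\pitchfork y$.

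Next I would identify the fibers. For a bounded parabolic point $p$ with peripheral stabiliser $\Gamma_p$: any pairwise distinct $(\eta_n)\subseteq\Gamma_p$ satisfies $\eta_n\to p$, so each subsequential limit of $\Xi_{\rho(\eta_n)}$ lies in $\Lc(\rho)\cap\zeta_0^{-1}(p)=\zeta^{-1}(p)$ by the lemma; conversely, if $v\in\zeta^{-1}(p)$ is realised by pairwise distinct $\gamma_n$, then $\gamma_n\to p$ by the lemma, and the coarse geometry of the Gromov model $X$ near the bounded parabolic point $p$ yields a decomposition $\gamma_n=\beta_nc_n$ with $\beta_n\in\Gamma_p$ and $(c_n)$ ranging over a finite subset of $\Gamma$ (bounded horoball penetration together with proper discontinuity of the action on $X$); passing to a subsequence with $c_n$ constant and using $\rho(\gamma_n)=\rho(\beta_n)\rho(c_n)$ gives $v=\lim\Xi_{\rho(\beta_n)}$ with the $\beta_n\in\Gamma_p$ pairwise distinct, which is the asserted description of $\zeta^{-1}(p)$. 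For a conical point $q$, the fiber $\zeta_0^{-1}(q)$ is a single flag — this belongs to the general structure theory of extended geometrically finite representations, and I would cite it from \cite{Weisman} rather than reprove it — so $\zeta^{-1}(q)=\Lc(\rho)\cap\zeta_0^{-1}(q)$, being nonempty (approximate $q$ by a pairwise distinct sequence) and contained in $\zeta_0^{-1}(q)$, equals $\zeta_0^{-1}(q)$; and for any pairwise distinct $\gamma_n\to q$ every subsequential limit of $\Xi_{\rho(\gamma_n)}$ lies in this singleton, so the limit exists and equals it. These two descriptions also give surjectivity of $\zeta$.

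It remains to verify that $(\Lc(\rho),\zeta)$ extends the convergence dynamics, which I would do with $C_p:=\{x\in\Fc_{k,d-k}:x\pitchfork y\text{ for all }y\in\zeta^{-1}(p)\}$; these are open ($\zeta^{-1}(p)$ is compact and transversality is open) and contain $\Lc(\rho)\setminus\zeta^{-1}(p)$ by transversality of $\zeta$. Given pairwise distinct $\gamma_n$ with $\gamma_n\to p$, $\gamma_n^{-1}\to q$ and $x\in C_q$: along any subsequence, refine so $\Xi_{\rho(\gamma_n)}\to v^+$ and $\Xi_{\rho(\gamma_n^{-1})}\to v^-$; the lemma gives $v^+\in\zeta^{-1}(p)$ and $v^-\in\zeta^{-1}(q)$, so $x\pitchfork v^-$ and $\rho(\gamma_n)x\to v^+\in\zeta^{-1}(p)$, whence every accumulation point of $\rho(\gamma_n)x$ lies in $\zeta^{-1}(p)$; uniformity over a compact $Q\subseteq C_q$ follows from a standard extraction once one notes that the transversality defect is bounded below on $Q\times\zeta^{-1}(q)$. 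Together with transversality, this shows $(\Lc(\rho),\zeta)$ is a boundary extension witnessing extended geometric finiteness, with the stated fibers. The step I expect to be the main obstacle is the coset decomposition at parabolic points, which requires the fine geometry of $X$; the singleton property of conical fibers I would import from \cite{Weisman}, and the rest is bookkeeping with the singular value estimates (including the $k\leqslant d/2$ point that makes $\Xi_{g^{-1}}$ a genuine flag).
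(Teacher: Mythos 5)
Most of your argument runs parallel to the paper's proof (Proposition \ref{RefinedtoMinimal}): your key lemma is exactly the paper's mechanism for showing $\Lc(\rho)\subset\Lambda$ and reading off the $\zeta_0$-value of a flag limit via Lemma \ref{DivergenceLemma}, and your final verification of the extended dynamics with $C_p$ the set of flags transverse to $\zeta^{-1}(p)$ is Remark \ref{OpenSettobeWholeTransverse} plus Lemma \ref{ExtendedDivergenceLemma}. The genuine gap is the converse inclusion at a parabolic point $p$ with stabilizer $P$: to show that every $v\in\Lc(\rho)\cap\zeta_0^{-1}(p)$, realized as $v=\lim (U_k(\rho(\gamma_n)),U_{d-k}(\rho(\gamma_n)))$ with $\gamma_n\to p$, is already a limit along a sequence in $P$, you decompose $\gamma_n=\beta_n c_n$ with $\beta_n\in P$ and $c_n$ in a finite subset of $\Gamma$. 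That decomposition is false in general: convergence $\gamma_n\to p$ in the convergence-group sense does not force the sequence to stay within bounded distance of $P$. For instance, in $\PSL(2,\Zb)$ with $p=\infty$ and $P=\Gamma_\infty$, the elements $\gamma_n=\left(\begin{smallmatrix} n^2 & n^3-1\\ 1 & n\end{smallmatrix}\right)$ satisfy $\gamma_n\to\infty$ and $\gamma_n^{-1}\to\infty$, while $d_{\Hb^2}(\gamma_n\cdot i,\,\Gamma_\infty\cdot i)\asymp\log(1+n^2)\to+\infty$, so no finite set of $c_n$ exists; bounded horoball penetration is a property of geodesics entering a cusp, not of arbitrary sequences converging to the parabolic point, and proper discontinuity does not rescue the claim. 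Note also that the flag limit along $\gamma_n=\beta_n c_n$ with $c_n$ varying is not controlled by $(U_k(\rho(\beta_n)),U_{d-k}(\rho(\beta_n)))$ alone, so there is genuinely something to prove here.

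The paper sidesteps this by first invoking Weisman's refinement result (quoted as Proposition \ref{BoundaryExtensionRefinement}) to replace the given extension by a refined one, for which $\zeta^{-1}(p)$ is \emph{by definition} the closure of the accumulation points of $\rho(\beta_n)x$ with $\beta_n\in P$ and $x\in C_p$; Lemma \ref{DivergenceLemma} together with transversality then converts each such accumulation point into a flag limit along a sequence in $P$, and the set of such flag limits is closed, which yields the asserted description of the parabolic fibers. Relatedly, your appeal to \cite{Weisman} for the statement that conical fibers are singletons is a property of the refined extension produced by that proposition, not of an arbitrary boundary extension $\zeta_0$, so as written it does not apply to your chosen $\zeta_0$. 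Both issues disappear if you refine $\zeta_0$ at the outset and then run your restriction-to-$\Lc(\rho)$ argument, which is essentially the paper's route.
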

With this canonical boundary extension in $\Fc_{k,d-k}$, the family $(C_p)_{p\in \partial (\Gamma, \Pc)}$ of open subsets of $\Fc_{k,d-k}$ in the definition can be given by $C_p = \{x\in \Fc_{k,d-k}: x\text{ is transverse to }\zeta^{-1}(p)\}$ for each $p\in \partial (\Gamma, \Pc)$ (see Remark \ref{OpenSettobeWholeTransverse}).\\

Analogous to the cusped Hitchin representations studied by \cite{CZZ}, we show that this type of representations described in Theorem \ref{MainTheorem} is stable under ``small type preserving deformations''. For a representation $\rho_0$ of a relatively hyperbolic group $(\Gamma,\Pc)$ into $\SL(d,\Kb)$. We denote \[\Hom_{\Pc}(\rho_0) = \{\rho\in \Hom(\Gamma,\SL(d,\Kb)):\forall P\in\Pc, \rho|_P\text{ is conjugate to }\rho_0|_P\}\] the collection of representations that have the same ``parabolic type'' as $\rho_0$. Then we have the following theorem.

\begin{theorem}\label{MainStabilityTheorem}
    Let $(\Gamma,\Pc)$ be a relatively hyperbolic group and let $\rho_0:\Gamma\to\SL(d,\Kb)$ be a representation that is $P_k$-divergent and extended geometrically finite relative to $\Pc$. Then there exists an open neighborhood $O\subset\Hom_{\Pc}(\rho_0)$ of $\rho_0$ such that any $\rho\in O$ is $P_k$-divergent and extended geometrically finite relative to $\Pc$.
\end{theorem}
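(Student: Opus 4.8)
The plan is to leverage the equivalence established in Theorem~\ref{MainTheorem}: a representation is $P_k$-divergent and extended geometrically finite relative to $\Pc$ precisely when it is $P_k$-Anosov in restriction to some flow space $\Fc(\Lambda,\zeta,X)$ built from a Gromov model $X$. So I would first fix the Gromov model $X$ and the minimal boundary extension $\zeta_0:\Lambda_0\to\partial(\Gamma,\Pc)$ furnished by Theorem~\ref{MainTheorem}~(2) applied to $\rho_0$, so that $\rho_0$ admits a dominated splitting of rank $k$ on $E_{\rho_0}=\Gamma\backslash(\Fc(\Lambda_0,\zeta_0,X)\times\Kb^d)$ with respect to some metric $\norm{\cdot}_0$. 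The goal is then to show that for $\rho$ close to $\rho_0$ in $\Hom_{\Pc}(\rho_0)$, the flat bundle $E_\rho$ over (a suitably adjusted copy of) the same flow space still carries a dominated splitting, and then invoke Theorem~\ref{MainTheorem} in the reverse direction to conclude that $\rho$ is $P_k$-divergent and extended geometrically finite.

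The key structural point is that the flow space $\Fc(\Lambda,\zeta,X)$ decomposes $\Gamma$-equivariantly into a ``thick part'' on which $\Gamma$ acts cocompactly and a collection of ``cusp neighborhoods'' associated to the peripheral subgroups $P\in\Pc$; since $\Gamma$ has finitely many conjugacy classes of peripherals, only finitely many cusp types occur. On the thick part, domination is an open condition in the ordinary sense (as in the classical Anosov stability argument of Labourie/Guichard--Wienhard, or \cite{BPS}): a cone-field/cocycle argument shows that the invariant splitting persists and stays dominated under a $C^0$-small perturbation of the cocycle, which here is $\rho\mapsto\rho(\gamma)$ along flow segments of bounded length. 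The real work is in the cusps. Here the hypothesis $\rho\in\Hom_{\Pc}(\rho_0)$ is essential and must be used in full force: for each $P\in\Pc$, $\rho|_P$ is conjugate to $\rho_0|_P$, so after applying the conjugating element (which can be chosen to vary continuously, shrinking the neighborhood $O$ if needed) the restriction of the flat bundle to the $P$-cusp of $E_\rho$ is \emph{isometric}, not merely close, to that of $E_{\rho_0}$ with respect to the pulled-back metric. Thus the contraction estimate in the cusp for $\rho$ is literally the same estimate that holds for $\rho_0$; no perturbation of the peripheral dynamics occurs.

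The remaining step is to glue: one must choose a metric $\norm{\cdot}_\rho$ on $E_\rho$ that agrees with the cusp metrics coming from $\rho_0$ (via the conjugations) in the cusp neighborhoods and is $C^0$-close to $\norm{\cdot}_0$ on the thick part, and then check that the contraction constants $C,c$ from the two regions can be amalgamated into global constants, using that flow lines spend only boundedly long transitioning between thick part and cusps, and that the splitting subbundles match up across the interface (this uses continuity of the limit maps / boundary extension and the transversality built into $\Fc(\Lambda,\zeta,X)$). I expect the main obstacle to be precisely this gluing across the thick--cusp interface: one needs the perturbed splitting produced on the thick part to extend continuously and $\Gamma$-equivariantly into the cusps so as to coincide there with the (unperturbed, up to conjugacy) peripheral splitting, and to control the metric and the transition regions uniformly — this is where the careful bookkeeping of the Gromov model geometry from \cite{Mineyev} and the openness of transversality for the boundary extension must be combined. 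Once domination is established for $E_\rho$ over $\Fc(\Lambda_\rho,\zeta_\rho,X)$ with $(\Lambda_\rho,\zeta_\rho)$ the corresponding extension for $\rho$, Theorem~\ref{MainTheorem}~$(2)\Rightarrow(1)$ gives both $P_k$-divergence and extended geometric finiteness of $\rho$, completing the proof.
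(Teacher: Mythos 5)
Your overall route coincides with the paper's at the strategic level: fix the flow space $\Fc(\Lambda,\zeta,X)$ furnished by Theorem \ref{MainTheorem} for $\rho_0$, show that the restricted Anosov property persists for $\rho\in\Hom_{\Pc}(\rho_0)$ near $\rho_0$, and then apply Theorem \ref{MainTheorem} (2)$\Rightarrow$(1) to recover divergence and extended geometric finiteness. One clarification first: your phrases ``a suitably adjusted copy of the same flow space'' and ``$\Fc(\Lambda_\rho,\zeta_\rho,X)$'' are circular as written, since a boundary extension for $\rho$ is the conclusion, not an available input. The paper keeps literally the same $\Gamma$-flow $\Fc(\Lambda,\zeta,X)$ (the $\Gamma$-action on $\Lambda$ is through $\rho_0$; this is legitimate because Theorem \ref{TheoremRestrictedtoEGF} only needs an abstract minimal boundary extension), and only the linear action on the $\Kb^d$ factor changes with $\rho$.

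The genuine gap is the step you yourself flag as ``the main obstacle'': gluing a perturbed splitting on the thick part to the unperturbed peripheral splitting across the thick--cusp interface. Your proposal does not carry this out, and it is exactly the delicate point: the stable/unstable bundles produced by an openness argument on the cocompact part have no a priori reason to agree at the interface with the bundles coming from the conjugated peripheral representation, and a dominated splitting must be a single flow-invariant splitting, not two splittings matched approximately on an overlap. The paper's proof (Theorem \ref{StabilityTheorem}) sidesteps this by gluing at the level of bundle isomorphisms rather than splittings: Lemma \ref{LocalDeformationIsomorphisms} combines, via a partition of unity, an averaging construction over the cocompact thick part with the conjugators $A_P(\rho)$ on the cusp regions to produce a continuous map $g:O\times\Fc(\Lambda,\zeta,X)\to\GL(d,\Kb)$ satisfying $\rho(\gamma)g(\rho,z)=g(\rho,\gamma z)\rho_0(\gamma)$ and $g(\rho_0,\cdot)\equiv\Id$; convex combinations of maps into matrices are harmless (invertibility is retained after shrinking $O$), whereas convex combinations of subbundles are not. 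The splitting and metric for $\rho_0$ are then transported by $g$, and a single global graph-transform / contraction-mapping argument on bounded sections of $\Hom(E^u_O,E^s_O)$ (following \cite{CZZ}, \cite{Shub} and \cite[Theorem 12.1]{ZZ1}) yields, after shrinking $O$, a flow-invariant dominated splitting for every $\rho\in O$ simultaneously; no interface matching of separately constructed splittings ever arises. To complete your sketch you should replace the thick/cusp gluing of splittings by this bundle-isomorphism-plus-contraction scheme (or supply an argument of comparable strength for the interface problem).
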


It was shown in \cite[Theorem 1.3]{ZZ1} that a representation is relatively Anosov if and only if there exists a pair of compatible, transverse and continuous limit maps that induces a dominated splitting over a flow space associated to some Gromov model.
As an application of Theorem \ref{MainTheorem}, we refine this result in two aspects. Firstly, we do not require the existence of limit maps in advance, using the concept of restricted Anosov representation. Secondly, together with another hyperbolic geometry fact Proposition \ref{RatioGrowthLinearLemma}, we show that the result holds for any Gromov model.

\begin{corollary}\label{LimitMapWaiveCor}
    Let $(\Gamma,\Pc)$ be a relatively hyperbolic group and let $\rho:\Gamma\to \SL(d,\Kb)$ be a representation. The following are equivalent.
    \begin{itemize}
        \item[(1)] $\rho$ is $P_k$-Anosov relative to $\Pc$;
        \item[(2)] $\rho$ is $P_k$-Anosov in restriction to $\Fc(X)$ for some Gromov model $X$ of $(\Gamma,\Pc)$;
        \item[(3)] $\rho$ is $P_k$-Anosov in restriction to $\Fc(X)$ for any Gromov model $X$ of $(\Gamma,\Pc)$;
    \end{itemize}
\end{corollary}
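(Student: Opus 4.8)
The plan is to establish the cycle $(1)\Rightarrow(3)\Rightarrow(2)\Rightarrow(1)$, the implication $(3)\Rightarrow(2)$ being immediate since Gromov models of $(\Gamma,\Pc)$ exist. The two substantive directions both route through the machinery of Theorem~\ref{MainTheorem}, using the observation that for a \emph{relatively Anosov} representation the relevant boundary extension is a homeomorphism, so that $\Fc(\Lambda,\zeta,X)$ collapses $\Gamma$-equivariantly and flow-equivariantly onto $\Fc(X)$ via the endpoint maps; consequently a dominated splitting over one of these flow spaces transfers to the other.

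For $(1)\Rightarrow(3)$: assume $\rho$ is $P_k$-Anosov relative to $\Pc$ with limit map $\xi=(\xi^k,\xi^{d-k})$. By \cite[Proposition~4.4]{ZZ1} the representation is relatively asymptotically embedded, so $\rho$ is $P_k$-divergent and $\xi$ is a homeomorphism onto its image; setting $\Lambda=\xi(\partial(\Gamma,\Pc))$ and $\zeta=(\xi|_\Lambda)^{-1}$, one checks that $\zeta$ is a transverse boundary extension extending the convergence dynamics (with $C_p$ the set of flags transverse to $\xi(p)$, using strong dynamics preservation of $\xi$), so $\rho$ is extended geometrically finite; since $\zeta$ is a homeomorphism it is minimal, and it coincides with the canonical boundary extension of Theorem~\ref{MainRefinedtoMinimal}. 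Theorem~\ref{MainTheorem} then produces a Gromov model $X_0$ for which $\rho$ is $P_k$-Anosov in restriction to $\Fc(\Lambda,\zeta,X_0)\cong\Fc(X_0)$ and is $P_k$-weakly dominated with respect to $X_0$. Now Proposition~\ref{RatioGrowthLinearLemma} shows the displacement functions of any two Gromov models of $(\Gamma,\Pc)$ grow linearly with respect to each other, so $P_k$-weak domination holds with respect to any Gromov model $X$; feeding $\xi$ together with this weak domination into the construction of the dominated splitting in the proof of Theorem~\ref{MainTheorem} — which builds an adapted metric on $\Gamma\backslash(\Fc(\Lambda,\zeta,X)\times\Kb^d)$ from precisely these ingredients — yields the dominated splitting over $\Fc(\Lambda,\zeta,X)\cong\Fc(X)$, which is $(3)$.

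For $(2)\Rightarrow(1)$: suppose $E_\rho=\Gamma\backslash(\Fc(X)\times\Kb^d)=E^s_\rho\oplus E^u_\rho$ is a dominated splitting of rank $k$ for some Gromov model $X$ and some metric. Lifting to $\Fc(X)\times\Kb^d$, since the lifted flow acts as the identity on fibers, the stable subbundle is constant along each flow line and defines a continuous $\rho$-equivariant map $z\mapsto E^s_z\in\Gr_k(\Kb^d)$; the exponential contraction forces $E^s_z$ to depend only on the forward endpoint of the flow line through $z$, so it descends to a continuous $\rho$-equivariant map $\xi^k:\partial(\Gamma,\Pc)\to\Gr_k(\Kb^d)$, and symmetrically $E^u_\rho$ produces $\xi^{d-k}$. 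Transversality comes from the direct sum and compatibility from a standard argument at conical limit points. Thus $\xi$ is a compatible, transverse, continuous limit map inducing a dominated splitting over $\Fc(X)$, so $\rho$ is $P_k$-Anosov relative to $\Pc$ by \cite[Theorem~1.3]{ZZ1}; alternatively one may invoke Theorem~\ref{MainTheorem} applied to the minimal boundary extension $(\Lambda,\zeta)=(\xi(\partial(\Gamma,\Pc)),\xi^{-1})$, again using $\Fc(\Lambda,\zeta,X)\cong\Fc(X)$.

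The main obstacle I expect lies in $(2)\Rightarrow(1)$: extracting the limit map when the $\Gamma$-action on $\Fc(X)$ is not cocompact and the metric on $E_\rho$ is controlled only away from the cusps. One must show that the stable subbundle genuinely stabilizes along flow lines escaping into the cusps and that $\xi$ is continuous at parabolic points, adapting the cocompact-case arguments of \cite{BPS} to the Mineyev flow space of a relatively hyperbolic group. A secondary difficulty is the model-independence step in $(1)\Rightarrow(3)$: Proposition~\ref{RatioGrowthLinearLemma} supplies the coarse comparison of displacement functions, but one still has to verify that the construction in Theorem~\ref{MainTheorem} of the metric on the flat bundle realizing the dominated splitting goes through over an arbitrary Gromov model, not merely the one that theorem constructs.
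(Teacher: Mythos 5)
Your overall architecture ((2)$\Rightarrow$(1) via Theorem \ref{MainTheorem} applied to the identity boundary extension on $\Fc(X)=\Fc(\partial(\Gamma,\Pc),\mathrm{id},X)$, and (1)$\Rightarrow$(3) by noting that for a relatively Anosov representation the canonical boundary extension is a homeomorphism so $\Fc(\Lc(\rho),\xi^{-1},X)=\Fc(X)$, then rerunning the construction of Theorem \ref{MainTheorem} over an arbitrary Gromov model) matches the paper. But the step you use to make the construction model-independent contains a genuine error. You claim that Proposition \ref{RatioGrowthLinearLemma} ``shows the displacement functions of any two Gromov models of $(\Gamma,\Pc)$ grow linearly with respect to each other,'' and from this you transfer the $P_k$-weak domination with respect to the particular model $X_0$ produced by Theorem \ref{MainTheorem} to every Gromov model $X$. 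Proposition \ref{RatioGrowthLinearLemma} says nothing of the sort: it only bounds $\abs{\gamma}_X$ from above by $\lambda\log_2^+\abs{\gamma}_S+\epsilon$ for $\gamma$ in a parabolic subgroup, i.e.\ it says the Groves--Manning-type model has (coarsely) the largest parabolic displacement, not that all models are linearly comparable. They are not: Section \ref{TheChoiceofGromovModel} exhibits a divergent, extended geometrically finite representation whose parabolic restriction is weakly dominated with respect to a suitably shrunk cusp space but not with respect to the Groves--Manning space, which is only possible because the two displacement functions fail to be linearly comparable on the parabolics. Moreover, the model $X_0$ supplied by Theorem \ref{MainTheorem} comes from Lemma \ref{GromovModelDivtoDom} and may a priori have very small parabolic displacement, so weak domination with respect to $X_0$ gives no control of $\dfrac{\sigma_{k+1}}{\sigma_k}$ in terms of $\abs{\gamma}_X$ for an arbitrary $X$.

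The correct route — and the one the paper takes — does not try to compare two Gromov models at all. It uses the relatively Anosov hypothesis directly through Proposition \ref{ZZ1P42andT81} (from \cite{ZZ1}): for $\gamma$ in a parabolic subgroup, $\log\dfrac{\sigma_k(\rho(\gamma))}{\sigma_{k+1}(\rho(\gamma))}\geqslant \alpha\log\abs{\gamma}_S-\beta$. Combining this with Proposition \ref{RatioGrowthLinearLemma} (which bounds $\abs{\gamma}_X$ above by a multiple of $\log_2^+\abs{\gamma}_S$ for \emph{every} Gromov model $X$) yields that $\rho|_P$ is $P_k$-weakly dominated with respect to any Gromov model, and this is the only place in the proof of Theorem \ref{TheoremEGFtoRestricted} where the choice of model enters (via Lemma \ref{ThinBoundaryPointsEstimation}); the rest of the dominated-splitting construction then goes through over $\Fc(X)$ verbatim, which is your (3). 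Your proposal never invokes Proposition \ref{ZZ1P42andT81}, and without it (or an equivalent estimate on the parabolic singular value gaps against the word metric) the transfer to arbitrary Gromov models cannot be justified. The sketch of (2)$\Rightarrow$(1) is acceptable, since the extraction of the limit map you worry about is exactly what Theorem \ref{MainTheorem} (2)$\Rightarrow$(1) (Proposition \ref{LimitMapWaivePro} and Theorem \ref{TheoremRestrictedtoEGF}) already provides, after which transversality, divergence and equivariance give asymptotic embeddedness, hence relative Anosovness via \cite[Proposition 4.4]{ZZ1}.
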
\ \\

We provide a new example of divergent, extended geometrically finite representation, but not relatively Anosov, which is originally constructed by \cite{TW23} in the study of simple Anosov representations. Let $\Sigma$ be a closed hyperbolic surface and let $p: \widetilde{\Sigma}\to \Sigma$ be a Galois covering of degree $d$. Let $\Gamma$ (respectively, $\Gamma'$) be the fundamental group of $\Sigma$ (respectively, $\widetilde{\Sigma}$). There is a simple closed geodesic $c$ in $\widetilde{\Sigma}$ such that $p(c)$ has self-intersection (hence $p(c)$ has multiple of lifts in $\widetilde{\Sigma}$), and a geometrically finite representation $\rho_1: \Gamma' \to \SL(2,\Cb)$ where the parabolic elements are exactly the $\Gamma'$-conjugates of powers of the homotopy class of $c$. Let $\Pc$ be a finite collection of subgroups of $\Gamma'$ (hence of $\Gamma$) with each of them generated by the homotopy class of one of the lifts of $p(c)$.

\begin{proposition}
    With the assumptions above, $(\Gamma,\Pc)$ is a relatively hyperbolic group and the induced representation $\mathrm{Ind}^{\Gamma}_{\Gamma'}(\rho_1):\Gamma\to \SL(2d,\Cb)$ is $P_d$-divergent and extended geometrically finite relative to $\Pc$.  
\end{proposition}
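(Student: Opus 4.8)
The plan is to verify directly that $\mathrm{Ind}^{\Gamma}_{\Gamma'}(\rho_1)$ is $P_d$-divergent and that it admits a boundary extension in $\Fc_{d,2d}$ satisfying the two conditions (transversality and extending the convergence dynamics) in the definition of extended geometric finiteness. The relative hyperbolicity of $(\Gamma,\Pc)$ is a purely group-theoretic statement: since $\rho_1$ is geometrically finite with parabolics exactly the $\Gamma'$-conjugates of powers of $[c]$, the pair $(\Gamma',\Pc')$ with $\Pc'$ the collection of maximal cyclic subgroups generated by (lifts of) $[c]$ is relatively hyperbolic; passing to the finite-index overgroup $\Gamma$ and using that the collection of lifts of $p(c)$ to $\widetilde\Sigma$ falls into finitely many $\Gamma$-conjugacy classes (because $p$ is a finite cover and $p(c)$ is a single closed curve), one sees $(\Gamma,\Pc)$ is relatively hyperbolic with Bowditch boundary identified with $\partial(\Gamma',\Pc')$ after collapsing. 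I would make this precise using standard facts about relative hyperbolicity and finite-index subgroups (e.g. Drutu--Sapir, or the peripheral structure induced on a finite-index overgroup).

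Next I would analyze the induced representation concretely. Writing $\Gamma = \bigsqcup_{i=1}^{d} \Gamma' g_i$ for coset representatives, the induced representation acts on $\Kb^{2d} \cong \bigoplus_{i=1}^d \Kb^2$, permuting the $d$ blocks according to the action of $\Gamma$ on cosets and acting on each block by a conjugate of $\rho_1$. For an element $\gamma\in\Gamma$ the matrix $\mathrm{Ind}(\rho_1)(\gamma)$ is block-monomial: its $2\times 2$ blocks are (conjugates of) products of $\rho_1$-values, and the relevant singular values of $\mathrm{Ind}(\rho_1)(\gamma)$ are governed by the largest singular values among these $d$ blocks. Using that $\rho_1$ is $P_1$-divergent (a geometrically finite subgroup of $\SL(2,\Cb)$ is discrete, so $\sigma_1/\sigma_2 \to \infty$ along any injective sequence), I would show $\sigma_d/\sigma_{d+1}$ of $\mathrm{Ind}(\rho_1)(\gamma_n) \to \infty$ along any injective sequence in $\Gamma$: the top $d$ singular values come, roughly one each, from the top singular value of each block, while the $(d+1)$-st comes from the second singular value of the ``largest'' block, so the ratio is controlled by the worst of the $d$ individual gaps, which still diverges. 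This is essentially the content of \cite{TW23}'s analysis of simple Anosov representations; I would cite that and extract the divergence statement.

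Then I construct the boundary extension. By Theorem~\ref{MainRefinedtoMinimal} (applied in reverse, as a target), the candidate is $\Lc(\mathrm{Ind}(\rho_1)) \subset \Fc_{d,2d}$, the limit set; concretely a point of $\Lc$ is obtained as a limit of $(U_d(\mathrm{Ind}(\rho_1)(\gamma_n)), U_{2d}(\mathrm{Ind}(\rho_1)(\gamma_n)))$, which is a ``diagonal'' flag built from the $\rho_1$-limit flags in the individual $\Kb^2$ blocks (with block indices permuted by the $\Gamma$-action on cosets). Over a conical limit point of $(\Gamma,\Pc)$ this gives a single flag, while over a parabolic point $p$ fixed by some $P\in\Pc$, the fiber $\zeta^{-1}(p)$ is the set of all such diagonal flags arising from sequences inside $P$. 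Since $P$ is (conjugate into) a cyclic group generated by a homotopy class of a lift of $p(c)$, and distinct lifts pass through distinct blocks, the parabolic fiber $\zeta^{-1}(p)$ will be a positive-dimensional set (a product/join of projective lines coming from the several blocks on which $P$ acts parabolically) --- this is exactly why the extension is \emph{not} a homeomorphism and $\rho$ is not relatively Anosov. Transversality of $\Lambda$ reduces, via the block-diagonal structure, to transversality of the $\rho_1$-limit flags in each block, which holds because $\rho_1$ is an antipodal (geometrically finite Kleinian) representation. Extending the convergence dynamics then follows from the block-wise convergence dynamics of $\rho_1$ together with the combinatorics of how a sequence $\gamma_n\to p$, $\gamma_n^{-1}\to q$ in $\Gamma$ distributes across cosets; one takes $C_p = \{x : x \text{ transverse to } \zeta^{-1}(p)\}$ as in Remark~\ref{OpenSettobeWholeTransverse} and checks the accumulation statement block by block, with the uniformity on compacta inherited from the $\SL(2,\Cb)$ case.

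The main obstacle I expect is the bookkeeping for the parabolic fibers and the verification of ``extending the convergence dynamics'' at parabolic points: one must carefully track how the coset-permutation action of $\Gamma$ interacts with which blocks ``see'' a given peripheral $P$ acting parabolically versus loxodromically (since a lift of $p(c)$ need not lie in every $\Gamma'$-coset's block, and $p(c)$ self-intersects), and confirm that the resulting $\zeta^{-1}(p)$ is genuinely the full set of accumulation flags and that $C_q$ is large enough for the dynamics statement to hold uniformly. Establishing that the extension is non-homeomorphic --- i.e. that some parabolic fiber is not a single point --- is the concrete payoff and should follow from exhibiting two distinct diagonal limit flags over the same parabolic point, using that $P$ acts with at least two independent parabolic blocks because $p(c)$ has a self-intersection (so at least two lifts of $p(c)$ are freely homotopic, up to the covering group, into distinct blocks).
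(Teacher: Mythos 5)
Your overall plan (prove $P_d$-divergence directly from the block-monomial structure, then verify Weisman's definition with the limit set as boundary extension) is a genuinely different route from the paper, which never checks the definition directly: the paper pulls back the rank-one dominated splittings of the conjugate representations $\rho_i(\cdot)=\rho_1(\alpha_i^{-1}\cdot\alpha_i)$ to the common flow space $\Fc(\partial\Gamma,\tau,X)$, takes their direct sum (Remark \ref{UnitVolumeMetric}) to obtain a rank-$d$ dominated splitting whose metric is $\rho(\Gamma)$-invariant by the very definition of the induced representation, and then invokes Theorem \ref{TheoremRestrictedtoEGF}, which delivers divergence and extended geometric finiteness simultaneously; relative hyperbolicity comes from Proposition \ref{UndistortedtoRelativelyQuasiConvex} and Theorem \ref{RelativeBoundaryExtension} rather than from finite-index considerations. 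Your divergence computation is correct (each $2\times 2$ block has determinant one, so the top $d$ singular values of $\mathrm{Ind}(\rho_1)(\eta)$ are exactly the per-block $\sigma_1$'s), and the transversality reduction to the blocks is fine.

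There is, however, a genuine error in your description of the parabolic fibers, which is precisely the structural point of the example. A peripheral subgroup $P_j=\langle\alpha_j\gamma\alpha_j^{-1}\rangle$ does \emph{not} act parabolically on two or more blocks: under $\rho|_{\Gamma'}=\oplus_{i}\rho_i$, the element $\alpha_j\gamma\alpha_j^{-1}$ is parabolic in block $j$ only and loxodromic in the other $d-1$ blocks, because for $i\ne j$ the element $\alpha_i^{-1}\alpha_j\gamma\alpha_j^{-1}\alpha_i$ is not $\Gamma'$-conjugate into $\langle\gamma\rangle$ (the lifts $c_i$ are distinct closed geodesics); the self-intersection of $p(c)$ is only used to guarantee that there are at least two distinct lifts, i.e. that loxodromic blocks exist. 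Consequently $\zeta^{-1}(p_j)$ is not a positive-dimensional join of projective lines: since every block is driven by the same power of the generator, the only accumulation flags over $p_j$ are the two obtained as the exponent tends to $+\infty$ and to $-\infty$, which share the parabolic fixed line in block $j$ and carry the attracting, respectively repelling, lines in the remaining blocks. A two-point fiber is still enough for the extension to be non-homeomorphic and for $\rho$ to fail to be relatively Anosov (this is exactly the paper's closing remark), but the mechanism you propose would mislead the bookkeeping at parabolic points. Relatedly, checking ``extending the convergence dynamics \ldots block by block, with the uniformity on compacta inherited from the $\SL(2,\Cb)$ case'' does not work as stated, since a compact subset of $C_q\subset\Gr_d(\Cb^{2d})$ consists of arbitrary $d$-planes, not block-diagonal ones; once $P_d$-divergence is established you should reduce, via Lemma \ref{ExtendedDivergenceLemma} together with Proposition \ref{RefinedtoMinimal} and Remark \ref{OpenSettobeWholeTransverse}, to the accumulation of the singular subspaces $U_d(\rho(\gamma_n))$ and $U_d(\rho(\gamma_n)^{-1})$, which do split block-wise. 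You would also still need to check that your candidate $\zeta$ is a bona fide boundary extension (continuity and surjectivity on the limit set); the quickest way is to exhibit $\Lambda$ as the image of the equivariant homeomorphism $\xi(q)=\oplus_i\xi_i(\widetilde{\tau}_i(q))$ of $\partial\Gamma$ and take $\zeta=\tau\circ\xi^{-1}$, as the paper does.
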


To see that $(\Gamma,\Pc)$ is relatively hyperbolic and $\mathrm{Ind}^{\Gamma}_{\Gamma'}(\rho_1)$ has the desired properties, we will need the following two preliminary theorems. The first theorem is a generalization to relatively hyperbolic groups of a result for hyperbolic groups given by \cite{Bow12},\cite{Tran13},\cite{Man15}.

\begin{theorem}\label{IntroRelativeBoundaryExtension}
    Let $(\Gamma,\Hc)$ be a relatively hyperbolic group, if $\Pc$ is a finite, almost malnormal collection of finitely generated, infinite, relatively quasi-convex subgroups such that each $H\in\Hc$ is contained in some $P\in\Pc$.
    For each $P\in\Pc$, let $\Hc_P$ be a collection of representatives of $P$-conjugation orbits in \[\{\gamma H\gamma^{-1}: H\in\Hc,\gamma\in\Gamma,\text{ such that }\gamma H\gamma^{-1}\subset P\}~.\]
    Then $(\Gamma,\Pc)$ and $(P,\Hc_P)$ for each $P\in\Pc$, are relatively hyperbolic. Moreover, $\partial(P,\Hc_P)\subset \partial(\Gamma,\Hc)$ identifies with the limit set of $P$ in $\partial(\Gamma,\Hc)$, and there exists a $\Gamma$-equivariant quotient map $\tau:\partial(\Gamma,\Hc)\to \partial(\Gamma,\Pc)$ by exactly identifying $\gamma\partial(P,\Hc_P)$ to the unique parabolic point in $\partial(\Gamma,\Pc)$ fixed by $\gamma P\gamma^{-1}$ for each $P\in\Pc$ and $\gamma\in \Gamma$.
\end{theorem}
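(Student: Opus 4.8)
The plan is to verify Yaman's dynamical characterization of relative hyperbolicity on a suitable quotient of $M := \partial(\Gamma,\Hc)$. Recall that $\Gamma$ acts on the compact metrizable space $M$ as a geometrically finite convergence group whose maximal parabolic subgroups are the $\Gamma$-conjugates of the members of $\Hc$. For each $P \in \Pc$, relative quasiconvexity says precisely that $P$ acts geometrically finitely on its limit set $\Lambda(P) \subset M$, with maximal parabolics the infinite intersections $P \cap \gamma H \gamma^{-1}$ ($H \in \Hc$, $\gamma \in \Gamma$), which form finitely many $P$-conjugacy classes --- the members of $\Hc_P$ --- and are finitely generated. Hence $(P,\Hc_P)$ is relatively hyperbolic with Bowditch boundary $\Gamma$-equivariantly identified with $\Lambda(P)$; this is exactly the assertion of the theorem about $(P,\Hc_P)$ and about $\partial(P,\Hc_P)$ being the limit set of $P$ in $\partial(\Gamma,\Hc)$. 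It remains to construct $\partial(\Gamma,\Pc)$ and the collapsing map $\tau$.

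Define $x \sim y$ on $M$ when $x = y$ or $x,y \in g\Lambda(P)$ for some $g \in \Gamma$, $P \in \Pc$. The first step is to show, using standard facts on limit sets of relatively quasiconvex subgroups and their intersections, that two \emph{distinct} translates $g\Lambda(P)$, $g'\Lambda(P')$ are disjoint: writing $G = gPg^{-1}$, $G' = g'P'(g')^{-1}$ with $G \ne G'$, a common point that is parabolic for $M$ would have stabilizer a maximal parabolic $H_0$ meeting both $G$ and $G'$ in infinite subgroups, and since $H_0$ lies in some $\Gamma$-conjugate $Q$ of a member of $\Pc$ (this is where the hypothesis that each $H \in \Hc$ lies in some $P \in \Pc$ enters), almost malnormality of $\Pc$ forces $G = Q = G'$; while a common point that is conical for $M$ would be conical for $G \cap G'$, forcing $G \cap G'$ to be infinite, again contradicting almost malnormality. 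The same argument applied with $G' = \gamma G \gamma^{-1}$ shows $\mathrm{Stab}_\Gamma(\Lambda(P)) = P$. Thus $\sim$ is an equivalence relation whose nontrivial classes are exactly the translates $g\Lambda(P)$; these form a null family of pairwise disjoint compacta (a standard consequence of relative quasiconvexity of infinite-index $P$; the translates of maximal-parabolic members are single points), so collapsing each of them to a point produces a compact metrizable space $N := M/{\sim}$ with a continuous $\Gamma$-action and a continuous, surjective, $\Gamma$-equivariant quotient map $\tau \colon M \to N$ that is injective off the union of the translates.

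I would then check that $\Gamma \curvearrowright N$ is a geometrically finite convergence action with maximal parabolics the conjugates of $\Pc$. The convergence property passes through $\tau$: since $\tau$ is proper and $\tau^{-1}$ of a compact set avoiding $\tau(b)$ is a compact set avoiding $b$, a sequence $g_n \to a$ off $\{b\}$ on $M$ pushes to $g_n \to \tau(a)$ off $\{\tau(b)\}$ on $N$. For geometric finiteness, each collapsed point $\tau(g\Lambda(P))$ has $\Gamma$-stabilizer $gPg^{-1}$ (by the stabilizer computation above) and is a \emph{bounded} parabolic point, which one gets by transporting the geometric finiteness of $P \curvearrowright \Lambda(P)$ through $\tau$ to see that $gPg^{-1}$ acts cocompactly on $N \setminus \{\tau(g\Lambda(P))\}$. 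Any other point of $N$ is $\tau(x)$ for a single $x \in M$ lying in no translate; such $x$ cannot be parabolic for $M$ (its stabilizer lies in a conjugate of a member of $\Pc$, so $x$ would lie in a translate), hence is conical for $M$, and one checks that $\tau(x)$ stays conical in $N$. Yaman's theorem then gives that $(\Gamma,\Pc)$ is relatively hyperbolic with $\partial(\Gamma,\Pc) = N$, and because $\tau^{-1}(\tau(g\Lambda(P))) = g\Lambda(P) = g\,\partial(P,\Hc_P)$ has stabilizer $gPg^{-1}$, the map $\tau$ is exactly the one described in the statement.

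The hard part will be the geometric finiteness of $\Gamma \curvearrowright N$, i.e.\ the two dynamical statements that must survive the collapse: (i) that each collapsed point is a \emph{bounded} parabolic point, for which one must understand how the remaining translates are distributed in $M \setminus \Lambda(P)$ relative to the $P$-action before quotienting; and (ii) that a conical point of $M$ in no translate remains conical in $N$, since a priori the collapse could identify the attracting and repelling points of a witnessing sequence --- here one either produces a corrected witnessing sequence, using that the point avoids all translates and compactness of $M$, or argues through an intrinsic characterization of conical limit points. An alternative, less dynamical route is to build a Gromov model for $(\Gamma,\Pc)$ by surgery on a Gromov model of $(\Gamma,\Hc)$: excise the horoball neighbourhoods of the cusps absorbed by each conjugate of a member of $\Pc$, glue in copies of Gromov models of the $(P,\Hc_P)$, and attach new horoballs along the $P$-orbits, following the combination theorems of Dahmani and of Mj--Reeves; the boundary of the resulting space is $M$ with the translates $g\Lambda(P)$ collapsed, giving the same conclusion at the cost of more geometric bookkeeping.
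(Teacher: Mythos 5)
Your overall strategy is the same as the paper's: collapse the translates $g\Lambda(P)$ in $M=\partial(\Gamma,\Hc)$, check Yaman's criterion on the quotient, and get $(P,\Hc_P)$ relatively hyperbolic with $\partial(P,\Hc_P)\cong\Lambda(P)$ from relative quasi-convexity (the paper quotes Hruska for this, and Manning--Daverman decomposition-space results for metrizability of the quotient). Your disjointness/stabilizer argument via almost malnormality is fine and is in fact more explicit than what the paper writes down. But the proposal has a genuine gap exactly where the paper's proof does its real work, namely the two dynamical verifications you yourself flag as ``the hard part'' and then do not carry out. For (i), your claim that bounded parabolicity of $\tau(g\Lambda(P))$ follows ``by transporting the geometric finiteness of $P\curvearrowright\Lambda(P)$ through $\tau$'' does not work as stated: geometric finiteness of $P$ on its limit set is a statement about the dynamics \emph{on} $\Lambda(P)$, whereas what is needed is cocompactness of the $P$-action on $M\setminus\Lambda(P)$ (equivalently on $N\setminus\{\tau(g\Lambda(P))\}$), which is not a formal consequence of it. The paper proves this geometrically: it realizes a Gromov model of $(P,\Hc_P)$ as $X_P=\Nc_{C_1}(\Hull(\Lambda(P)))$ inside a Gromov model $X$ of $(\Gamma,\Hc)$, arranges the horoballs $B_H$ with $H\subset P$ to lie inside $X_P$ so that $P$ acts cocompactly on $\Nc_R(X_P)\setminus X_P$, and then uses a coarse projection of points of $\partial(\Gamma,\Hc)\setminus\Lambda(P)$ to $\Nc_R(X_P)\setminus X_P$ to pull cocompactness back to the boundary.

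For (ii), the conicality of points outside all translates, you only list two possible strategies (``corrected witnessing sequence'' or an ``intrinsic characterization'') and an alternative combination-theorem route, none of which is executed; this is precisely the delicate step in the paper. There the argument chooses, along a geodesic $\ell$ ending at $x$, times $t_n$ at which $\ell(t_n)$ lies near the thick part and the forward ray $\ell([t_n,+\infty))$ meets any single $X_{P'}$, $P'\in\Pc^\Gamma$, for length at most $2D$, where $D$ bounds $\diam\bigl(\Nc_R(X_{P'})\cap\Nc_R(X_{P''})\bigr)$ for distinct translates (the Drutu--Sapir/Manning bounded-intersection lemma); pushing by group elements $\gamma_n$ tracking $\ell(t_n)$ and assuming $\tau(a)=\tau(b)$ for the limits $a,b$ forces the translated geodesics to lie in a single $\Hull(\Lambda(P))$, contradicting the choice of $t_n$. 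Without this (or a genuinely worked-out substitute such as the surgery/combination construction you sketch), the proposal establishes the easy parts of the theorem but not the geometric finiteness of the action on the quotient, so it does not yet prove the statement.
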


The second theorem states that the divergent extended geometrically finite property can pass between different relatively hyperbolic structures in the setting of Theorem \ref{IntroRelativeBoundaryExtension}.

\begin{theorem}\label{RelativeAnosovExtension}
    Let the assumptions be as in Theorem \ref{IntroRelativeBoundaryExtension} and let $\rho:\Gamma\to \SL(d,\Kb)$ be a $P_k$-divergent representation. Then
    \begin{itemize}
        \item[(1)] $\rho$ is extended geometrically finite relative to $\Hc$ if and only if $\rho$ is extended geometrically finite relative to $\Pc$ and $\rho|_P$ is extended geometrically finite relative to $\Hc_P$ for each $P\in \Pc$;
        \item[(2)] $\rho$ is $P_k$-Anosov relative to $\Hc$ if and only if $\rho$ is extended geometrically finite relative to $\Pc$ and $\rho|_P$ is $P_k$-Anosov relative to $\Hc_P$ for each $P\in \Pc$.
    \end{itemize}
\end{theorem}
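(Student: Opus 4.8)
The plan is to deduce both parts from the canonical boundary extension of Theorem~\ref{MainRefinedtoMinimal}, using that the limit set $\Lc(\rho)\subset\Fc_{k,d-k}$ depends only on $\rho$ — it records the limits $\lim_n U_k(\rho(\gamma_n))$ over pairwise distinct sequences in $\Gamma$ — so that the three relatively hyperbolic structures $(\Gamma,\Hc)$, $(\Gamma,\Pc)$ and $(P,\Hc_P)$ carry boundary extensions supported on the same set $\Lc(\rho)$, respectively on $\Lc(\rho|_P)\subseteq\Lc(\rho)$ (and $\rho|_P$ is again $P_k$-divergent). The bridge between them is the identity
\[\zeta_\Pc^{-1}(p)=\rho(\gamma)\,\Lc(\rho|_P)\qquad\text{for }p\in\partial_{par}(\Gamma,\Pc)\text{ the point fixed by }\gamma P\gamma^{-1},\]
in which both sides equal $\{\lim_n U_k(\rho(\eta_n)):(\eta_n)\subset\gamma P\gamma^{-1}\text{ pairwise distinct}\}$: the left side by Theorem~\ref{MainRefinedtoMinimal}, the right side because $\lim_n U_k(g h_n g^{-1})=g\cdot\lim_n U_k(h_n)$ for fixed $g\in\SL(d,\Kb)$ and any $P_k$-divergent $(h_n)$ (the attracting subspace transforms by $g$).

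For the forward implication of (1), let $\zeta_\Hc\colon\Lc(\rho)\to\partial(\Gamma,\Hc)$ be the canonical boundary extension and set $\zeta_\Pc:=\tau\circ\zeta_\Hc$. Equivariance, continuity and surjectivity pass from $\tau$ and $\zeta_\Hc$; transversality holds because $\tau(\zeta_\Hc(x))\neq\tau(\zeta_\Hc(y))$ forces $\zeta_\Hc(x)\neq\zeta_\Hc(y)$; and for ``extending the convergence dynamics'' one takes $C_p=\{x:x\text{ transverse to }\zeta_\Pc^{-1}(p)\}$, lifts a $(\Gamma,\Pc)$-convergence $\gamma_n\to p$, $\gamma_n^{-1}\to q$ to a $(\Gamma,\Hc)$-convergence $\gamma_n\to\hat p$, $\gamma_n^{-1}\to\hat q$ along a subsequence (compactness of $\partial(\Gamma,\Hc)$), and uses $C_q\subseteq\{x:x\text{ transverse to }\zeta_\Hc^{-1}(\hat q)\}$ to push every accumulation point of $\rho(\gamma_n)x$ into $\zeta_\Hc^{-1}(\hat p)\subseteq\zeta_\Pc^{-1}(p)$, the uniformity on compacta surviving the extraction by a contradiction argument. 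For $\rho|_P$ I would restrict $\zeta_\Hc$ to $\zeta_\Hc^{-1}(\partial(P,\Hc_P))$; since $\partial(P,\Hc_P)\subseteq\partial(\Gamma,\Hc)$ carries the restricted $P$-convergence action (Theorem~\ref{IntroRelativeBoundaryExtension}), every axiom is inherited verbatim.

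For the converse implication of (1), I would build $\zeta_\Hc\colon\Lc(\rho)\to\partial(\Gamma,\Hc)$ by gluing: on $\zeta_\Pc^{-1}(\partial_{con}(\Gamma,\Pc))$ put $\zeta_\Hc=\tau^{-1}\circ\zeta_\Pc$ (well defined, as $\tau$ is injective over conical points), and on each $\zeta_\Pc^{-1}(p)=\rho(\gamma)\Lc(\rho|_P)$ put $\zeta_\Hc(\rho(\gamma)x')=\gamma\cdot\zeta_P(x')$, where $\zeta_P\colon\Lc(\rho|_P)\to\partial(P,\Hc_P)$ is the canonical boundary extension of $\rho|_P$; well-definedness uses almost malnormality of $\Pc$ (so the stabiliser of $p$ is exactly $\gamma P\gamma^{-1}$, hence two presentations $\rho(\gamma)x'=\rho(\gamma')x''$ of a flag in $\zeta_\Pc^{-1}(p)$ differ by $\rho$ of an element of $P$, absorbed by equivariance of $\zeta_P$). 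Equivariance, surjectivity and transversality of $\zeta_\Hc$ are then routine piecewise checks against those of $\zeta_\Pc$ and the $\zeta_P$'s. The crux is continuity of $\zeta_\Hc$ together with the convergence-dynamics axiom: away from the peripheral fibres these follow from continuity of $\zeta_\Pc$ and local injectivity of $\tau$, and along a peripheral fibre they reduce to the corresponding properties of $\zeta_P$; the work is to control a sequence $x_n\to x$ with $x\in\zeta_\Pc^{-1}(p)$ and $x_n\notin\zeta_\Pc^{-1}(p)$ (or a convergent group action approaching such a fibre from outside) and to rule out $\zeta_\Hc$ landing at the wrong point of the peripheral copy $\tau^{-1}(p)\cong\partial(P,\Hc_P)$. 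I expect to handle this by renormalisation: realise the limiting flag as $\lim_n U_k(\rho(\eta_n))$ with $\eta_n\in\gamma P\gamma^{-1}$, conjugate the picture back towards a basepoint by these $\eta_n$, and invoke the \emph{uniformity} in the extended-geometric-finiteness hypotheses relative to $\Pc$ and to $\Hc_P$ to pin down the limit. This is the main obstacle of the proof.

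Finally, (2) reduces to (1) together with the equivalence of relative Anosov and relative asymptotic embedding (\cite[Proposition~4.4]{ZZ1}). If $\rho$ is $P_k$-Anosov relative to $\Hc$, it is in particular extended geometrically finite relative to $\Hc$, hence by (1) extended geometrically finite relative to $\Pc$ and to each $\Hc_P$; restricting the limit map $\xi_\Hc\colon\partial(\Gamma,\Hc)\to\Fc_{k,d-k}$ to the subspace $\partial(P,\Hc_P)$ gives a transverse, strongly dynamics preserving limit map for $\rho|_P$, so $\rho|_P$ is $P_k$-Anosov relative to $\Hc_P$. Conversely, if $\rho$ is extended geometrically finite relative to $\Pc$ and $P_k$-Anosov (hence extended geometrically finite) relative to each $\Hc_P$, then by (1) $\rho$ is extended geometrically finite relative to $\Hc$ with canonical boundary extension $\zeta_\Hc$ as above, and it suffices to prove $\zeta_\Hc$ injective. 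Over a conical point of $(\Gamma,\Hc)$ the fibre is a singleton by Theorem~\ref{MainRefinedtoMinimal}; over a point of a peripheral copy it is the image under $\rho(\gamma)$ of a fibre of $\zeta_P$, again a singleton because $\zeta_P$ is now a homeomorphism; and distinct fibres have distinct images since $\tau\circ\zeta_\Hc=\zeta_\Pc$ has singleton fibres over conical points of $(\Gamma,\Pc)$ while two parabolic fibres are separated by the injective $\zeta_P$. Thus $\zeta_\Hc$ is a continuous bijection from the compact set $\Lc(\rho)$ onto the Hausdorff space $\partial(\Gamma,\Hc)$, hence a homeomorphism, so $\rho$ is $P_k$-asymptotically embedded relative to $\Hc$ and, being $P_k$-divergent, $P_k$-Anosov relative to $\Hc$.
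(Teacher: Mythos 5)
Your forward implication of (1), the restriction to $\partial(P,\Hc_P)$, and the reduction of (2) to injectivity/asymptotic embeddedness of the glued extension all match the paper's argument in substance, and the bridge identity $\zeta_{\Pc}^{-1}(p)=\rho(\gamma)\Lc(\rho|_P)$ is correctly justified. The genuine gap is in the converse of (1), exactly at the step you yourself flag as ``the main obstacle'': continuity of the glued map $\zeta_{\Hc}$ (and with it the verification that it extends the convergence dynamics) is only sketched, and the sketched mechanism is not the one that works. Conjugating by elements $\eta_n\in\gamma P\gamma^{-1}$ that realise the limiting flag $x$ as $\lim_n U_k(\rho(\eta_n))$ does not control a sequence $x_n\to x$ approaching the fibre from outside: those $\eta_n$ depend only on $x$, not on the $x_n$, so there is no reason that $\rho(\eta_n)^{-1}x_n$ stays in a fixed compact set transverse to $\Lc(\rho|_P)$, which is what one needs in order to invoke Lemma \ref{DivergenceLemma}. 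Likewise, an appeal to ``uniformity in the extended geometric finiteness hypotheses'' does not by itself determine at which point of the peripheral copy $\partial(P,\Hc_P)$ the value lands.

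The argument that closes this gap renormalises the boundary points of the approaching sequence, not the limiting flag. Suppose $x_n\to x$ in $\Lc(\rho)$ with $\zeta_{\Hc}(x_n)\to p\neq q=\zeta_{\Hc}(x)$; then $\tau(p)=\tau(q)$ is a parabolic point, fixed (after a $\Gamma$-translation) by some $P\in\Pc$, and one may assume $\tau(\zeta_{\Hc}(x_n))\neq\tau(p)$ for all $n$. Because $\tau(p)$ is a bounded parabolic point, $P$ acts cocompactly on $\partial(\Gamma,\Pc)\setminus\{\tau(p)\}$, so there are $\gamma_n\in P$ with $\gamma_n^{-1}\tau(\zeta_{\Hc}(x_n))$ in a fixed compact set $K_0\subset\partial(\Gamma,\Pc)\setminus\{\tau(p)\}$. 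Then: (i) the convergence action of $P$ on $\partial(\Gamma,\Hc)$ forces the attracting point of $(\gamma_n)$ to be $p$; (ii) transversality of the $\Pc$-extension makes $\zeta_{\Pc}^{-1}(K_0)$ a compact set transverse to $\Lc(\rho|_P)$ which contains $\rho(\gamma_n)^{-1}x_n$, so $P_k$-divergence and Lemma \ref{DivergenceLemma} give $x_n=\rho(\gamma_n)\bigl(\rho(\gamma_n)^{-1}x_n\bigr)\to\lim_n U_k(\rho(\gamma_n))\in\Lc(\rho|_P)$, hence $x$ equals this limit; (iii) the description of the fibres of $\zeta_P$ (Theorem \ref{MainRefinedtoMinimal} applied to $\rho|_P$, or the convergence-dynamics property of $\zeta_P$) then yields $q=\zeta_P(x)=p$, a contradiction. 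Without this step the boundary extension relative to $\Hc$ is not shown to be continuous, so the converse of (1) is incomplete, and the converse of (2), which rests on it, is as well; the remaining pieces of your proposal (well-definedness via almost malnormality, transversality, the homeomorphism argument for (2)) are fine once continuity is in place.
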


\subsection{Further remarks and discussions}

\subsubsection{Weakly dominated representations} Since we mentioned that Anosov representations can be charaterized by the dominated property, it is natural to consider the following condition. Let $(\Gamma,\Pc)$ be a relatively hyperbolic group and let $X$ be Gromov model of $(\Gamma,\Pc)$ with a fixed base point $o\in X$.

\begin{definition}\label{DefWeaklyDominated}
    A representation $\rho:\Gamma\to \SL(d,\Kb)$ is \emph{$P_k$-weakly dominated with respect to $X$} if there exist constant $C,c > 0$, such that \[\dfrac{\sigma_{k+1}(\rho(\gamma))}{\sigma_{k}(\rho(\gamma))}\leqslant Ce^{-c\abs{\gamma}_X}~,\] for any $\gamma\in \Gamma$, where $\abs{\gamma}_X = d(o,\gamma o)$.
\end{definition}

We will show that a $P_k$-divergent extended geometrically finite representation $\rho$ is $P_k$-weakly dominated (see Corollary \ref{WeaklyDominatedCorollary}) with respect to some Gromov model of $(\Gamma,\Pc)$. However, the contrary is unclear.

\begin{question}
    If $\rho$ is $P_k$-weakly dominated with respect to $X$, is it extended geometrically finite relative to $\Pc$?
\end{question}

Recall that when $\rho$ is $P_k$-divergent, the limit set $\Lc(\rho)$ is well-defined. To answer the question, we wish to show that $\Lambda$ provides a transverse boundary extension. The naive way to define such a boundary extension is by mapping each $P_k$-limit of $(\rho(\gamma_n) )_{n\in \Nb}$ in $\Fc_{k,d-k}$, that is, $\lim\limits_{n\to +\infty} (U_k(\rho(\gamma_n)),U_{d-k}(\rho(\gamma_n)))$, to the limit of $(\gamma_n)_{n\in \Nb}$ on $\partial(\Gamma,\Pc)$, which would be well-defined and transverse when a ``uniform transversality'' condition holds. Due to the results in Section \ref{SectionResultW23}, this ``uniform transversality'' condition holds when restricted to any cocompact subflow of $\Fc(\Lambda,\zeta,X)$. However, it seems to get weaker when one enlarges the flow space.

\subsubsection{The choice of Gromov models}\label{TheChoiceofGromovModel}
Let $(\Gamma,\Pc)$ be a relatively hyperbolic group and let $\rho:\Gamma\to \SL(d,\Kb)$ be a representation. Recall that in Corollary \ref{LimitMapWaiveCor}, we state that if $\rho$ is $P_k$-Anosov relative to $\Pc$, then it is $P_k$-Anosov in restriction to $\Fc(X)$ for any Gromov model $X$ of $(\Gamma,\Pc)$. The proof due to the fact that when $\rho$ is $P_k$-Anosov relative to $\Pc$, $\rho|_P$ is weakly unipotent (see \cite[Proposition 5.1]{ZZ1}) and hence $P_k$-weakly dominated with respect to any Gromov model $X$ of $(\Gamma,\Pc)$ for each $P\in \Pc$ (see Section \ref{SectionRemarkonRAR}).

In the general case when $\rho$ is $P_k$-divergent and extended geometrically finite relative to $\Pc$, there are some (but not any) Gromov models, such that $\rho|_P$ is $P_k$-weakly dominated with respect them, for each $P\in \Pc$. Therefore, we cannot refine Theorem \ref{MainTheorem} (2) to say ``for any Gromov model''. We provide here a method of constructing a representation that is $P_k$-divergent and extended geometrically finite relative to $\Pc$, which is not weakly dominated when restricted to the parabolic subgroups in $\Pc$ with respect to some Gromov model $X$.

The work of Mitra \cite{Mitra98} shows that there is a discrete subgroup $G$ of $\PSL(2,\Cb)$ that isomorphic to the free group of two generators, whose distortion has arbitrarily fast growth, by studying a class of examples from \cite{Minsky99}. 
Roughly speaking, the distortion having arbitrarily fast growth in this case is the same as $\log\dfrac{\sigma_1(\gamma)}{\sigma_2(\gamma)}$ diverging arbitrarily slow with respect to $\abs{\gamma}$ for $\gamma\in G$, where $\abs{\cdot}$ denotes the word metric on $G$. So we may assume that there are no constants $A>0$ and $B\geqslant 0$ such that \[\log\dfrac{\sigma_1(\gamma)}{\sigma_2(\gamma)} \geqslant A \log\abs{\gamma} - B\] holds for all $\gamma\in G$, that is, the divergence is strictly slower than any power of the word length.
It is easy to see that $(G*\Zb,\{G\})$ is relatively hyperbolic by the criterion given by \cite{Bow12}, or applying a combination theorem in \cite{Dah03}.

Embed $\PSL(2,\Cb)\cong \SO^+(3,1)$ into $\SO(4,1)$ naively, and we pick a loxodromic elements $\eta$ in $\SO(4,1)$ with two fixed points away from the limit set of $G$. Then by the ping-pong lemma, $G*\Zb \cong \langle G,\eta \rangle $ up to replacing $\eta$ by its powers, and hence we view $G*\Zb$ as a subgroup of $\SO(4,1)$. $G*\Zb$ is divergent since it is discrete in $\SO(4,1)$, a semisimple Lie group of rank $1$. The embedding of $G*\Zb$ into $\SO(4,1)$ is extended geometrically finite relative to $\{G\}$ by Lemma \ref{ExtendedDivergenceLemma}, Proposition \ref{RefinedtoMinimal} and Remark \ref{OpenSettobeWholeTransverse}.

However, whether $G$ is weakly dominated depends on the choice of the Gromov model. For example, let $X_{GM}$ be the  Groves-Manning cusp space of $(G*\Zb,\{G\})$, which is a Gromov model constructed by \cite{GM08}. By \cite[Proposition 3.9]{ZZ1}, there exist constants $\alpha\geqslant 1$ and $\beta\geqslant 0$, such that \[ \alpha^{-1} \log\abs{\gamma} -\beta \leqslant \abs{\gamma}_{X_{GM}} \leqslant \alpha \log\abs{\gamma} + \beta \] for any $\id\ne \gamma \in G$. Therefore, by our assumption above, $\rho|_G$ is not weakly dominated with respect to $X_{GM}$. To make it weakly dominated, we need to pick a Gromov model $X$, such that it ``shrinks the distance in the cusp region enough that match the distortion'' (see Section \ref{SectionGMCuspSpace} and Lemma \ref{GromovModelDivtoDom}).

\subsection{Structure of the paper}
In Section \ref{SectionRHG}, we recall some preliminaries of relatively hyperbolic groups and their Gromov models. In particular, we introduce the construction of a generalized Groves--Manning cusp space.
In Section \ref{SectionFlows}, we introduce the notion of flow, especially the flow associated to relatively hyperbolic groups, then we define and discuss about some basic properties of restricted Anosov representations.
In Section \ref{SectionBoundaryExtensions}, we introduce boundary extensions of relatively hyperbolic groups and associated flow spaces. We recall the definitions and basic results related to extended geometrically finite representations, relatively Anosov representations and asymptotically embedded representations.
Then we show Theorem \ref{MainTheorem} (2)$\Rightarrow$(1) in Section \ref{Section2to1} and then deduce the second part of Theorem \ref{MainTheorem} as a corollary. We show Theorem \ref{MainTheorem} (1)$\Rightarrow$(2), Corollary \ref{LimitMapWaiveCor}, Theorem \ref{MainRefinedtoMinimal} and Theorem \ref{MainStabilityTheorem} in Section \ref{Section1to2}.
Finally, in Section \ref{SectionRHExtensions}, we show Theorem \ref{IntroRelativeBoundaryExtension} and Theorem \ref{RelativeAnosovExtension}, then deduce that the example given in \cite{TW23} is a divergent, extended geometrically finite representation, which is not relatively Anosov.

\subsection{Acknowledgements}
The author would like to thank Nicolas Tholozan and Tengren Zhang for their patient guidance and many useful advice. The author also thank Jeffrey Danciger, Fran{\c{c}}ois Gu{\'e}ritaud, Mitul Islam, Max Riestenberg, Theodore Weisman and Feng Zhu for many interesting discussions, and thank Hee Oh and Andrew Zimmer for pointing out errors in previous versions.

\section{Relatively hyperbolic groups}\label{SectionRHG}

\subsection{Definitions and notations}
If $(X,d_X)$ is a metric space and $A\subset X$ is a subset, we denote $\Nc_R(A)$ the neighborhood of $A$ of radius $R$. A geodesic metric space $(X,d_X)$ is \emph{taut} if there exists a constant $R\geqslant 0$, such that for any point $x\in X$, there exists a bi-infinite geodesic $\ell$ in $X$ with $x$ contains in $\Nc_R(\ell(\Rb))$.

Recall that $\Gamma$ is a hyperbolic group if $\Gamma$ admits a properly discontinuous and cocompact action on a Gromov hyperbolic space $X$ by isometries, in which case $X$ is called a Gromov model of $\Gamma$. The notion of relatively hyperbolic group, introduced by Gromov \cite{Gromov}, generalized the notion of hyperbolic group, by asking the $\Gamma$-action to be ``geometrically finite'' instead of cocompact.

\begin{definition}\label{RelativelyHyperbolicGroupsDefinition}
Let $\Gamma$ be a finitely generated group with $\Pc$ a finite collection of finitely generated, infinite subgroups of $\Gamma$. Let \[\Pc^\Gamma = \{\gamma P\gamma^{-1} : \gamma\in \Gamma,P\in \Pc\}\] be the collection of conjugates of the subgroups in $\Pc$. The pair $(\Gamma,\Pc)$ is said to be a \emph{relatively hyperbolic group} if there exists a taut Gromov hyperbolic space $(X,d_X)$ and a properly discontinuous $\Gamma$-action on $X$ by isometries such that
\begin{itemize}
    \item[(1)] There exists $\Bc = \{B_P : P\in \Pc^\Gamma \}$, a collection of $\Gamma$-invariant, disjoint open horoballs of $X$, with $P\in \Pc^\Gamma$ the stabilizer of $B_P$ for each $P\in \Pc^\Gamma$;
    \item[(2)] $\Gamma$ acts on $X^{th} = X \setminus \bigcup_{B\in \Bc } B $, the thick part of $X$, cocompactly.
\end{itemize}
\end{definition}

With the conditions in the definition, we say that the $\Gamma$-action on $X$ is a \emph{cusp uniform action}, and $X$ is called a \emph{Gromov model} of $(\Gamma,\Pc)$. The decomposition $X = X^{th} \cup (\bigcup_{B\in \Bc} B )$ is called a \emph{thick-thin decomposition} of $X$. The Gromov boundary of $X$ is called the \emph{Bowditch boundary} of $(\Gamma,P)$, denoted by $\partial(\Gamma,\Pc)$. The elements of $\Pc^\Gamma$ are called \emph{parabolic subgroups} of $\Gamma$. For each $P\in\Pc^\Gamma$, we say $P$ is the \emph{stabilizer} (up to finite index) of the center $p$ of the horoball $B_P$, and $p$ is the \emph{parabolic point} fixed by $P$.
\\

Let $(\Gamma,\Pc)$ now be a relatively hyperbolic group. We say that a generating set $S$ of $\Gamma$ is adapted if $S$ is finite, symmetric and for each $P\in \Pc$, $S\cap P$ generates $P$. We denote by $\abs{\cdot}_S$ the word metric on $\Gamma$ with respect to $S$.

We may always assume that $X^{th}$ is a length space with the length metric induced by the metric on $X$, up to replacing $X^{th}$ by $\Nc_R(X^{th})$ for some $R>0$. By Milnor-\v{S}varc Lemma, $(\Gamma,\abs{\cdot}_S)$ is quasi-isometric to $X^{th}$.

For a fixed base point $o\in X^{th}$, we define the norm $\abs{\cdot}_X$ on $\Gamma$ by $\abs{\gamma}_X = d_X(o,\gamma o)$, for each $\gamma\in\Gamma$. Up to additive constants, $\abs{\cdot}_X$ is independent of the choice of the base point $o$, since for a different choice of the base point $o'$, the metric defined is $(1,2d(o,o'))$-quasi-isometric to the one above.
\\

By Bowditch \cite{Bow12}, $\partial(\Gamma,\Pc)$ is defined independent of the choice of the Gromov model and $\Gamma$ acts on $\partial(\Gamma,\Pc)$ as a convergence group, moreover, $\partial(\Gamma,\Pc)$ consists of only conical limit points and bounded parabolic points, in which case, we say the $\Gamma$-action is \emph{geometrically finite}. The converse is also true by the following theorem of Yaman \cite{Yam}.

\begin{theorem}[\cite{Yam}]\label{YamanCriterion}
    Suppose a discrete group $\Gamma$ acts on a nonempty, perfect, metrizable, compact space $M$, as a geometrically finite convergence group, and there are only finitely many $\Gamma$-orbits of bounded parabolic points, with the stabilizer of each bounded parabolic point infinite, finitely generated. Let $\{p_1,p_2,...,p_n\}$ be a set of representatives of the $\Gamma$-orbits of bounded parabolic points. Let $\Pc = \{P_1,P_2,...,P_n\}$ with each $P_i$ the stabilizer of $p_i$ for $i=1,2,...,n$. Then $(\Gamma,\Pc)$ is a relatively hyperbolic group and $M$ is $\Gamma$-equivariantly homeomorphic to $\partial(\Gamma,\Pc)$.
\end{theorem}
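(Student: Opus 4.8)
This is Yaman's converse to Bowditch's dynamical description of relatively hyperbolic groups, and since its proof amounts to manufacturing a Gromov model of $(\Gamma,\Pc)$ out of the dynamical hypotheses alone, I will outline that construction. The plan is to build a connected graph $K$ carrying an isometric $\Gamma$-action and then to verify three things: that $K$ is Gromov hyperbolic, that the $\Gamma$-action on $K$ is cusp uniform with peripheral structure $\Pc^{\Gamma}$, and that $\partial K$ is $\Gamma$-equivariantly homeomorphic to $M$ --- whereupon $M\cong\partial(\Gamma,\Pc)$ by the definition of the Bowditch boundary and its independence of the Gromov model (Bowditch \cite{Bow12}). Two soft inputs make this possible. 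First, a convergence action on a perfect compact metrizable space is properly discontinuous on the space $\Theta^{3}(M)$ of distinct triples of points of $M$ and minimal on $M$ (which under the hypotheses is the limit set, since every point of $M$ is either a conical limit point or a bounded parabolic point). Second, the local structure at a bounded parabolic point $p$ with stabiliser $P$: by definition $P$ acts cocompactly on $M\setminus\{p\}$, so from the finite generation of $P$ one produces, for each orbit representative $p_i$, a $P_i$-cocompact ``horospherical region'' destined to become a combinatorial horoball.

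Concretely I would fix a small symmetric entourage of the unique uniformity on $M$ and let $K$ have as vertices the $\Gamma$-translates of a suitable finite seed of configurations in $M$ together with the $\Gamma$-orbits of the points $p_i$, joining two vertices when the associated configurations are entourage-close; over each coset $\gamma P_i$ one glues a Groves--Manning-type combinatorial horoball \cite{GM08} built from the $P_i$-action on $M\setminus\{p_i\}$. The verifications, in order, would be: (i) $K$ is connected and $\Gamma$ acts on it with finite edge stabilisers and with finitely many orbits of edges outside the horoballs, which in particular exhibits $\Gamma$ as finitely generated; (ii) $K$ is Gromov hyperbolic, proved by a thin-triangles estimate in which a geodesic triangle is shadowed by a bounded configuration in $M$ and the convergence dynamics forces contraction away from the parabolic cusps --- this is where Bowditch's theory of fine hyperbolic graphs \cite{Bow12} enters; (iii) the glued horoballs $B_{\gamma P_i}$ are $\Gamma$-invariant and pairwise disjoint and $\Gamma$ acts cocompactly on the complement of their union, so that the action is cusp uniform with $\Pc^{\Gamma}=\{\gamma P_i\gamma^{-1}:\gamma\in\Gamma,\ i=1,\dots,n\}$; (iv) $\partial K$ is identified with $M$ by sending the endpoint of a geodesic ray to the point of $M$ that it shadows, this map being well defined, bijective, bicontinuous and $\Gamma$-equivariant by the convergence property together with geometrical finiteness. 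The asserted description of $\Pc$ is then just the passage from horoball centres to their stabilisers, the list $\{p_1,\dots,p_n\}$ recording the finitely many $\Gamma$-orbits of bounded parabolic points.

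The main obstacle is step (ii) together with the metric half of (iii): converting the purely topological ``contraction off the cusps'' furnished by the convergence action into a genuine uniform hyperbolicity constant for a geodesic metric space, while simultaneously arranging that the regions around the $p_i$ are disjoint and behave like honest horoballs with cocompact thick part. This is the technical heart of \cite{Yam}, and for our purposes we simply quote the theorem.
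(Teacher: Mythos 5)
The paper offers no proof of this statement at all --- it is quoted directly from Yaman \cite{Yam} --- and your closing decision to ``simply quote the theorem'' is exactly the paper's treatment, so in that sense the approaches coincide. Your outline is a fair sketch of Yaman's strategy (build a cusped hyperbolic model from the dynamics, verify cusp uniformity, identify the boundary with $M$), with the minor caveat that her argument runs through Bowditch-style fine-graph/annulus techniques rather than literally through Groves--Manning combinatorial horoballs \cite{GM08}, which postdate her paper.
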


We denote by $\partial_{con}(\Gamma,\Pc)\subset \partial(\Gamma,\Pc)$ the collection of conical limit points and $\partial_{par}(\Gamma,\Pc)\subset \partial(\Gamma,\Pc)$ the collection of parabolic points.

\subsection{Constructing Gromov models}\label{SectionGMCuspSpace}
For a relatively hyperbolic group $(\Gamma,\Pc)$, Groves--Manning \cite{GM08} (see also \cite[Section 3.4]{ZZ1}) defined a Gromov model by gluing combinatorial horoballs to the Cayley graph of $\Gamma$.

Let $f:\Rb_{\geqslant 0} \to \Rb_{\geqslant 0}$ be an increasing function such that $f(0) = 1$ and $f(t)\to +\infty$ as $t\to +\infty$.

\begin{definition}
    Let $(T,d_T)$ be a metric graph, that is, a graph $T=(T^{(0)},T^{(1)})$ endowed with a metric $d_T$ with each edge of length $1$. The \emph{$f$-combinatorial horoball} based on $T$ is a graph $\Hc_f(T)$ with the set of vertices $\Hc_f(T)^{(0)} = T \times \Nb$. The set of edges $\Hc_f(T)^{(1)}$ consists of the following types of edges
    \begin{itemize}
        \item for any $v,w\in T^{(0)}$ and $k\in \Nb$ such that $0<d_T(v,w)\leqslant f(k)$, there is an edge between $(v,k)$ and $(w,k)$;
        \item for each $v\in T^{(0)}$ and $k\in \Nb$, there is an edge between $(v,k)$ and $(v,k+1)$.
    \end{itemize}
    The full subgraph of $\Hc_f(T)$ with vertices $T\times\{0\}$ is a copy of $T$, we call which the \emph{$0$-level} of $\Hc_f(T)$.
\end{definition}

Let $S$ be an adapted generating set of $(\Gamma,\Pc)$. We define \[X_f(\Gamma,\Pc,S) = \Cay(\Gamma,S) \cup\big(\bigcup_{\gamma\in\Gamma,P\in\Pc} \Hc_f(\gamma\Cay(P,S\cap P))\big)~,\] where we identify the subgraph $\gamma\Cay(P,S\cap P)$ in $\Cay(\Gamma,S)$ and the $0$-level of $\Hc_f(\gamma\Cay(P,S\cap P))$, for each $\gamma\in\Gamma$ and $P\in\Pc$.

\begin{theorem}[\cite{GM08}]
    If $f(s+t)\geqslant 2^t f(s)$ for any $s,t\in \Rb_{\geqslant 0}$, then $X_f(\Gamma,\Pc,S)$ is a Gromov model of $(\Gamma,\Pc)$.
\end{theorem}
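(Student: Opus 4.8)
The plan is to verify the two defining properties of a Gromov model for $X = X_f(\Gamma,\Pc,S)$: first that $X$ is Gromov hyperbolic, and second that the $\Gamma$-action on it is cusp uniform with the combinatorial horoballs playing the role of the horoballs $\Bc$. The natural strategy, following \cite{GM08}, is to prove these properties for a single $f$-combinatorial horoball $\Hc_f(T)$ first, and then assemble the global statement by a combination argument.

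First I would analyze the geometry of $\Hc_f(T)$ in isolation. The key local facts to establish are: (i) distances in $\Hc_f(T)$ have an explicit logarithmic-type formula, namely $d_{\Hc_f(T)}((v,k),(w,l)) \asymp |k-l| + \log_2\!\big(1 + d_T(v,w)/f(\min\{k,l\})\big)$ up to uniform additive/multiplicative error --- this uses the hypothesis $f(s+t)\geqslant 2^t f(s)$ to control how fast one must climb to connect two far-apart points at a given level; (ii) $\Hc_f(T)$ is $\delta$-hyperbolic with $\delta$ \emph{independent of $T$} and of the choice of $f$ in the admissible class --- this follows from the distance formula by checking the thin-triangles (or four-point) condition directly, the point being that geodesics always go ``up'' toward higher levels and the horoball has a tree-like coarse structure; (iii) the vertical rays $k\mapsto (v,k)$ are all asymptotic, defining a single boundary point $e_T$, and the horoball is a genuine (combinatorial) horoball centered at $e_T$, i.e. the function $(v,k)\mapsto$ (signed distance to the $0$-level, suitably normalized) is a Busemann-type function. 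These are essentially the lemmas of \cite[Section 3]{GM08}; the hypothesis $f(s+t)\geqslant 2^t f(s)$ is exactly what makes the constants in (i) and (ii) uniform.

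Next I would globalize. Writing $X = \Cay(\Gamma,S)\cup\bigcup \Hc_f(\gamma\Cay(P,S\cap P))$, the spaces $\Cay(\Gamma,S)$ and each horoball $\Hc_f(\gamma\Cay(P,S\cap P))$ are glued along the $0$-levels, which are the cosets $\gamma P$; these cosets are uniformly quasiconvex and ``uniformly separated'' in $\Cay(\Gamma,S)$ because $\Pc$ is a peripheral structure of a relatively hyperbolic group (this is where I invoke that $(\Gamma,\Pc)$ is already known to be relatively hyperbolic, via some Gromov model $X_0$, so the cosets of $\Pc$ form a collection with the ``bounded coset penetration'' property / are quasi-isometrically embedded with the right separation). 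Hyperbolicity of the glued space then follows from a standard combination theorem for spaces obtained by coning-off / gluing hyperbolic pieces along uniformly quasiconvex subspaces (e.g. Bowditch's or Dahmani's combination results, or the specific argument in \cite{GM08}): one shows thin triangles in $X$ by projecting triangle sides onto the horoball pieces and using local hyperbolicity plus the quasiconvexity of the $0$-levels. For the cusp-uniform property: let $B_{\gamma P}\subset \Hc_f(\gamma\Cay(P,S\cap P))$ be the sub-horoball consisting of vertices at level $\geqslant 1$ (an open horoball centered at the boundary point $e_{\gamma P}$ by fact (iii)); these are $\Gamma$-invariant as a family, pairwise disjoint for a suitable level cutoff (again using uniform separation of the $0$-levels), with stabilizers exactly the conjugates $\gamma P\gamma^{-1}$; and the thick part $X^{th}= X\setminus\bigcup B_{\gamma P}$ deformation-retracts $\Gamma$-equivariantly and coarsely onto $\Cay(\Gamma,S)$, on which $\Gamma$ acts cocompactly. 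One also checks tautness, which is immediate since every vertex lies on a bi-infinite geodesic (extend downward into a horoball and out, or use that $\Cay(\Gamma,S)$ with two-ended subgroups is coarsely taut). Finally I would note that $\partial X \cong \partial(\Gamma,\Pc)$ by Bowditch's uniqueness of the Bowditch boundary, or alternatively verify Yaman's criterion (Theorem \ref{YamanCriterion}) directly using that the $\gamma P\gamma^{-1}$ are the bounded parabolic stabilizers.

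The main obstacle I expect is the globalization step: proving hyperbolicity of the glued space $X$ with \emph{uniform} constants and correctly identifying the horoballs as \emph{disjoint} and \emph{genuinely horoball-shaped}. The subtlety is that the combinatorial horoballs $\Hc_f(\gamma\Cay(P,S\cap P))$ are attached along cosets $\gamma P$ that are not convex but only quasiconvex in the Cayley graph, and their mutual coarse intersections (how deeply geodesics between points of two different horoballs penetrate a third) must be controlled; this is exactly the ``bounded coset penetration'' phenomenon, and the cleanest route is to leverage the already-known relative hyperbolicity of $(\Gamma,\Pc)$ so that these separation estimates come for free from a comparison quasi-isometry $\Cay(\Gamma,S)^{th_0} \sim X_0^{th}$. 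A secondary, more bookkeeping-type obstacle is verifying that the horoball-distance estimate (i) and hyperbolicity constant (ii) are truly independent of which $f$ in the admissible class $\{f : f(s+t)\geqslant 2^t f(s)\}$ one picks, so that the later results of the paper (which vary $f$ to tune distances in the cusps) can quote a single $\delta$.
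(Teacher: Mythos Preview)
Your proposal outlines a correct and self-contained geometric approach, but it is genuinely different from what the paper does. The paper does not give a proof at all: it simply cites \cite[Theorem~3.25]{GM08} for the case $f(t)=2^t$, where Groves--Manning establish Gromov hyperbolicity via a \emph{linear homological isoperimetric inequality} for $X_f(\Gamma,\Pc,S)$, and then observes in a remark that their argument goes through verbatim for any $f$ with the property that two adjacent horizontal edges at level $k$ can be ``collapsed'' to a single horizontal edge at level $k+1$ (i.e.\ if $(v_1,k)\sim(v_2,k)$ and $(v_2,k)\sim(v_3,k)$ then $(v_1,k+1)\sim(v_3,k+1)$). The hypothesis $f(s+t)\geqslant 2^t f(s)$ with $t=1$ gives $f(k+1)\geqslant 2f(k)$, which is exactly what makes this edge-collapse property hold; that is the entire content of the paper's justification.

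By contrast, you propose to (i) derive an explicit distance formula in $\Hc_f(T)$, (ii) prove uniform hyperbolicity of each horoball via thin triangles, and (iii) globalize via a combination/bounded-coset-penetration argument. This is a legitimate route and is closer in spirit to how some other references treat cusped spaces, but it is considerably more work than the paper intends: the paper is content to treat the theorem as a black box from \cite{GM08}, checking only the one combinatorial fact needed to feed the isoperimetric-inequality machine. Your approach has the advantage of being more transparent about the geometry (and would make explicit the uniformity in $f$ that you flag as a concern), while the paper's approach is much shorter and avoids redoing any of Groves--Manning's work.
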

\begin{remark}
    When $f(t) = 2^t$, the result is given by \cite[Theorem 3.25]{GM08}, by showing a linear homological isoperimetric inequality for $X(\Gamma,\Pc,f)$. The argument actually still holds as long as the $f$-combinatorial horoball satisfies that if there is a edge between $(v_1,k)$ and $(v_2,k)$, and a edge between $(v_2,k)$ and $(v_3,k)$, then there is a edge between $(v_1,k+1)$ and $(v_3,k+1)$. 
\end{remark}

We call $X_f = X_f(\Gamma,\Pc,S)$ the \emph{generalized Groves--Manning cusp space associated to $f$}.

Let $\Xc_f = \Xc_f(\Gamma,\Pc,S)$ be the collection of Gromov models $X$ that admit constants $\lambda\geqslant 1$ and $\epsilon\geqslant 0$, such that $f(\lambda^{-1}\abs{\gamma}_X-\epsilon)\leqslant \abs{\gamma}_S$ for any $P\in \Pc$ and $\gamma\in P$. Then the generalized Groves-Manning cusp space $X_f(\Gamma,\Pc,S)\in \Xc_f(\Gamma,\Pc,S)$.

We show that the function in $2^t$ in the theorem is optimal up to quasi-isometries. Let $\log^+_2(\cdot) = \max(\log_2(\cdot), 0)$.

\begin{proposition}\label{RatioGrowthLinearLemma}
    Let $X$ be a Gromov model of $(\Gamma,\Pc)$, then there exist constants $\lambda\geqslant 1$ and $\epsilon\geqslant 0$, such that for any $P\in \Pc$ and $\gamma\in P$, $\abs{\gamma}_X\leqslant \lambda \log^+_2 \abs{\gamma}_S + \epsilon$.
\end{proposition}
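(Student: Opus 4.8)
The plan is to exploit the fact that any two Gromov models of $(\Gamma,\Pc)$ are quasi-isometric via a $\Gamma$-equivariant quasi-isometry (more precisely, the thick parts are quasi-isometric to $\Gamma$ with a word metric, and the horoballs corresponding to the same parabolic subgroup must be comparable), and to compare the given model $X$ against the Groves--Manning cusp space $X_{2^t} = X_{2^t}(\Gamma,\Pc,S)$ built with $f(t) = 2^t$. Since the statement is a claim about each parabolic subgroup $P$ separately, and there are only finitely many conjugacy classes in $\Pc$, it suffices to prove a uniform estimate for a fixed $P\in\Pc$ acting on its horoball. So first I would reduce to estimating, for $\gamma\in P$, the distance $d_X(o, \gamma o)$ where $o$ lies in the thick part, in terms of $|\gamma|_S = d_{\mathrm{Cay}(P,S\cap P)}(e,\gamma)$ (up to the quasi-isometry between $P$ with its word metric and a fixed horosphere, which changes $|\gamma|_S$ only by a multiplicative and additive constant).

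The key geometric input is a lower bound on the ``width'' of a horoball at depth $t$: inside any Gromov hyperbolic horoball $B$ based at a parabolic point $p$, two points at depth $t$ (distance $t$ from the horosphere $\partial B$) whose projections to $\partial B$ are at distance $D$ along $\partial B$ can be joined inside $B$ by a path of length roughly $2t + \log_2 D$ — equivalently, a path of length $\ell$ at depth $\approx t$ can span horospherical distance at most $\approx 2^{\ell/2}$, and more relevantly a geodesic in $X$ between two points of $\partial B$ at horospherical distance $D$ has length $\asymp 2\log_2 D$ (this is the standard ``exponential horoball'' estimate, cf. the $f(t)=2^t$ model). I would extract this from Gromov hyperbolicity of $X$ together with the horoball structure: a geodesic from $\gamma^{-1} o'$ to $o'$ (where $o'\in\partial B$ is a fixed basepoint on the horosphere stabilized by $P$) dips into the horoball to some depth $t$, and the four-point / thin-triangle condition forces $d_X$ to be within additive error of $2t + (\text{horospherical distance at depth } t)$, while the horospherical metric at depth $t$ contracts the $\partial B$-distance by a factor comparable to an exponential in $t$ (here one uses that $X$ is Gromov hyperbolic, so horospheres are exponentially distorted — this is where a constant depending only on the hyperbolicity constant $\delta$ of $X$ enters). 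Optimizing over $t$ gives $d_X(o', \gamma o') \leqslant \lambda \log^+_2 d_{\partial B}(o',\gamma o') + \epsilon$ for constants $\lambda,\epsilon$ depending only on $X$ and the finitely many $P$; combined with the quasi-isometry $\partial B \sim (P,|\cdot|_S)$ and a bounded change of basepoint from $o'$ to $o$, this is exactly the claimed bound.

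Concretely the steps are: (i) reduce to a single $P\in\Pc$ and its horoball $B=B_P$, fixing $o'\in\partial B$; (ii) establish that $(P, |\cdot|_S)$ is $(\lambda_0,\epsilon_0)$-quasi-isometric to the horosphere $\partial B$ with its induced length metric (Milnor--\v{S}varc applied to the cocompact action of $P$ on $\partial B$, or on $\mathcal{N}_R(\partial B)$); (iii) prove the horoball width estimate $d_X(x,y) \leqslant 2\log_2^+ d_{\partial B}(x,y) + C$ for $x,y\in\partial B$, using only $\delta$-hyperbolicity of $X$ and the definition of an open horoball (a geodesic $[x,y]$ in $X$ enters $B$ to depth $t\le \tfrac12 d_X(x,y)+O(1)$, and at depth $t$ the points of $[x,y]$ trace a path whose horospherical length is at least $d_{\partial B}(x,y) \cdot 2^{-t}$ up to constants — invert this); (iv) combine (ii), (iii), and a bounded-distance change of basepoint $o'\to o$ to conclude. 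I expect step (iii) — pinning down the exponential lower bound for horospherical distortion purely from the abstract hyperbolicity constant, uniformly over the finitely many models of horoballs that can occur — to be the main obstacle; it is essentially the content of the Groves--Manning ``$2^t$ is optimal'' heuristic and may already be available in \cite{GM08} or \cite{Bow12}, in which case I would cite it rather than reprove it. Everything else is bookkeeping with quasi-isometry constants, and the finiteness of $\Pc$ up to conjugacy ensures the constants can be taken uniform.
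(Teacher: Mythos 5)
Your high-level plan (reduce to a single $P\in\Pc$, compare $d_X$ between two horosphere points with the logarithm of their horospherical/word distance, and sweep up constants by finiteness of $\Pc$) is the right shape, but the central step is not actually established, and the sketch you give for it argues the wrong inequality. First, the opening premise that any two Gromov models are $\Gamma$-equivariantly quasi-isometric (``the horoballs corresponding to the same parabolic must be comparable'') is false: the generalized Groves--Manning spaces $X_f$ for different admissible $f$ give $\abs{\gamma}_{X}\asymp \log\abs{\gamma}_S$ versus, say, $\asymp\sqrt{\log\abs{\gamma}_S}$ on a parabolic orbit, so no equivariant comparison with $X_{2^t}$ is available; the proposition is exactly the one-sided statement that survives, so you cannot assume it. Second, and more seriously, in your step (iii) the two facts you propose to combine --- the geodesic reaches depth $t\leqslant \tfrac12 d_X(x,y)+O(1)$, and the portion at depth $t$ has length at least $d_{\partial B}(x,y)\cdot 2^{-t}$ --- only give $d_X(x,y)\gtrsim \log_2 d_{\partial B}(x,y)$, i.e.\ the \emph{lower} bound on distortion (the direction of \cite[Proposition 3.9]{ZZ1}), not the upper bound $\abs{\gamma}_X\lesssim\log_2^+\abs{\gamma}_S$ that the proposition asserts. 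To get the upper bound by your ``descend--traverse--ascend'' path you would need the reverse estimate: that the induced length metric on the depth-$t$ horosphere contracts the horospherical distance by an exponential factor with rate depending only on $\delta$, uniformly for an \emph{arbitrary} Gromov model. That is not a citable black box in \cite{GM08} or \cite{Bow12} (those control the combinatorial model built by hand, not the intrinsic geometry of horospheres at depth in a general model), and it is essentially the content of the proposition itself; deferring it to a hoped-for citation is the gap. Also, your constant $2\log_2^+$ cannot be right in general --- the multiplicative constant must involve $\delta$ --- though this is harmless for the statement.

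What closes the gap, and what the paper does, is an indirect argument that never touches the intrinsic geometry deep in the horoball. Take $x_0,\gamma x_0$ on the horosphere, let $R=d_X(x_0,\gamma x_0)$, and consider the ideal triangle with the parabolic point $p$: by $2\delta$-thinness a point $m$ on $[x_0,\gamma x_0]$ near its middle is $2\delta$-close to a point $x$ on the ray from $x_0$ (or $\gamma x_0$) to $p$ with $d(x,x_0)\geqslant R/2-2\delta$, and the horofunction property gives $d(x,X^{th})=d(x,x_0)$, so the geodesic is forced to depth at least $R/2-O(\delta)$. On the other hand, \cite[Part III, Proposition 1.6]{BH} (exponential divergence) applied to a path between $x_0$ and $\gamma x_0$ lying at depth zero --- the paper uses a thick-part geodesic of length $R_{th}\lesssim\abs{\gamma}_S$; your horospherical path would also do, at the cost of invoking undistortion of $P$ to compare $\abs{\gamma}_{S\cap P}$ with $\abs{\gamma}_S$ --- shows every point of $[x_0,\gamma x_0]$ is within $\delta\log_2^+ R_{th}+1$ of that path, hence has depth at most $\delta\log_2^+ R_{th}+1$. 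Combining the two bounds on the depth of $m$ gives $R\leqslant 2\delta\log_2^+ R_{th}+O(\delta)$, and the Milnor--\v{S}varc bookkeeping you describe then yields the proposition. So the missing idea is precisely this pairing of the depth lower bound (thin ideal triangle at $p$ plus the Busemann property) with the exponential-divergence lemma as an upper bound on depth; without it, your outline proves only the reverse inequality.
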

\begin{proof}
    We fix a thick-thin decomposition of $X$ and let $X^{th}$ be the thick part. Since $\Gamma$ acts on $X^{th}$ cocompactly, there exists a compact set $K$ that contains the base point $o$, such that $\Gamma\cdot K = X^{th}$ and $P\cdot(K\cap \overline{B_P}) = \overline{B_P}\setminus B_P$ for each $P\in \Pc$. Let $D$ be the diameter of $K$. For $\gamma\in P$, let $x_0 \in K \cap \overline{B_P}$ be a point on $\overline{B_P}\setminus B_P$, the horosphere of $B_P$. Then $\gamma x_0$ is also on the horosphere. Let $p$ be the parabolic point fixed by $P$ and let $\ell$ and $\ell'$ be geodesic rays from $x_0$ and $\gamma x_0$ to $p$ respectively. Let $[x_0,\gamma x_0]$ and $[x_0,\gamma x_0]^{th}$ be geodesic segments between $x_0$ and $\gamma x_0$ in $X$ and in $X^{th}$ (with the induced length metric) respectively. Let $R$ be the length of $[x_0,\gamma x_0]$ and let $R_{th}$ be the length of $[x_0,\gamma x_0]^{th}$. Let $\delta \geqslant 0$ be a constant such that $X$ is $\delta$-hyperbolic, then any geodesic triangle (might with vertices on the boundary) is $2\delta$-thin (see, for example, \cite[Lemma 3.8]{Can}). Let $m \in [x_0,\gamma x_0]$ be a point such that there exists $x$ on $l$ and $x'$ on $l'$, with $d(m,x)\leqslant 2\delta$ and $d(m,x')\leqslant 2\delta$. Then by \cite[Part III, Proposition 1.6]{BH}, we have that \[d(m,[x_0,\gamma x_0]^{th})\leqslant \delta \log^+_2 R_{th} +1~.\] One of $d(x_0,m)$ and $d(\gamma x_0, m)$ is at least $R/2$. Without loss of generality, we assume $d(x_0,m)\geqslant R/2$, then $d(x, x_0)\geqslant d(x_0,m) - d(m,x) \geqslant R/2 - 2\delta$. Then we have \[d(m, [x_0,\gamma x_0]^{th})+ 2\delta \geqslant d(x, [x_0,\gamma x_0]^{th}) \geqslant d(x, X^{th}) = d(x, x_0) \geqslant R/2 - 2\delta~,\] where $d(x, X^{th}) = d(x, x_0)$ follows from the properties of horofunctions and horoballs. Therefore, \[R/2 - 4\delta \leqslant d(m,[x_0,\gamma x_0]^{th})\leqslant \delta \log^+_2 R_{th} +1~,\] and then \[ R \leqslant 2\delta \log^+_2 R_{th} + 2+ 8\delta ~.\]
    Since $(\Gamma,\abs{\cdot}_S)$ is quasi-isometric to $X^{th}$ by an orbit map that maps $\id$ to $o$, there exists $A\geqslant 1$ and $B\geqslant 0$, such that $R_{th} \leqslant A \abs{\gamma}_S +B + 2D$. On the other hand, $R \geqslant \abs{\gamma}_X -2D$. Then we deduce that there exists constants $\lambda\geqslant 1$ and $\epsilon\geqslant 0$, such that $\abs{\gamma}_X\leqslant \lambda \log^+_2 \abs{\gamma}_S + \epsilon$ for any $\gamma\in P$. Finally, since $\Pc$ is finite, the constants $\lambda,\epsilon$ can be picked uniform.
\end{proof}

\section{Flows and restricted Anosov representations}\label{SectionFlows}

\subsection{Restricted Anosov representations}
The notion of Anosov representation in restriction to closed subflows of the geodesic flow of a hyperbolic group was introduced and studied in \cite{W23}. Then \cite{TW23} came up with a more general notion, the restricted Anosov representations. We recall some basic definitions here. We always assume that $\Kb$ is the field $\Rb$ or $\Cb$, and $k,d\in \Nb$ such that $1\leqslant k \leqslant d/2$. Let $\Gamma$ be a discrete group.

\begin{definition}\cite[Definition 2.6]{TW23}
     A flow space $(Y,\phi)$ is called a \emph{$\Gamma$-flow} if there exists a properly discontinuous $\Gamma$-action on $Y$ that commutes with $\phi$. We say a $\Gamma$-flow $(Y,\phi)$ is cocompact if the $\Gamma$-action on $Y$ is cocompact. 
\end{definition}

Let $(Y,\phi)$ be a $\Gamma$-flow. For a representation $\rho:\Gamma\to\SL(d,\Kb)$, we consider the flat bundle $E_\rho(Y) = \Gamma\backslash (Y\times\Kb^d)$, where the $\Gamma$-action on the trivial bundle $Y\times\Kb^d$ is defined by \[\gamma(y,v) = (\gamma y,\rho(\gamma)v)\] for any $\gamma\in\Gamma,y\in Y$ and $v\in\Kb^d$. The flow $\phi$ lifts to $Y\times\Kb^d$ naturally by parallel transformation, that is, \[\phi(y,v) = (\phi(y),v)~,\] for any $y\in Y$ and $v\in\Kb^d$. Since this flow commutes with the $\Gamma$-action on $Y\times\Kb^d$, it projects to a well-defined flow on $E_\rho(Y)$, still denoted by $\phi$.

We say that $E_\rho(Y) = E^s_\rho(Y) \oplus E^u_\rho(Y)$ is a \emph{dominated splitting of rank $k$} of $E_\rho(Y)$, with respect to a metric $\norm{\cdot}$ on $E_\rho(Y)$ (a Riemann metric when $\Kb=\Rb$, or a Hermitian metric when $\Kb=\Cb$), if $E^s_\rho(Y)$ and $E^u_\rho(Y)$ are $\phi$-invariant subbundles of $E_\rho(Y)$ with $\rank(E^s_\rho(Y))=k$, and there exist constants $C,c > 0$, such that \[ \dfrac{\Vert \phi^t_y(v)\Vert}{\Vert \phi^t_y(w)\Vert} \leqslant C e^{-c t}\dfrac{\Vert v\Vert}{\Vert w\Vert}~,\] for any $y\in \Gamma\backslash Y$, $v\in (E^s_\rho)_y$, $w\in (E^u_\rho)_y$ nonzero vectors and $t\in\Rb_{\geqslant 0}$. $E^s_\rho(Y)$ (respectively, $E^u_\rho(Y)$) is called the stable (respectively, unstable) direction.

\begin{definition}\cite[Definition 3.1]{TW23}\label{DefRestricetedAnosov}
    A representation $\rho:\Gamma\to\SL(d,\Kb)$ is \emph{$P_k$-Anosov in restriction to $(Y,\phi)$} if there is a metric on $E_\rho(Y)$, such that $E_\rho(Y)$ admits a dominated splitting of rank $k$ with respect to the metric.
\end{definition}

\begin{remark}\label{LiftAbuseNotations}
    We also abuse the to say that $Y\times \Kb^d = E^s_\rho\oplus E^u_\rho$ is a dominated splitting of $Y\times \Kb^d$ of rank $k$ with respect to a $\rho(\Gamma)$-invariant metric if they are the lifts from a such dominated splitting of $E_\rho(Y)$ through the quotient.
\end{remark}

\begin{remark}\cite[Remark 3.4]{TW23}\label{UnitVolumeMetric}
    We denote the standard metric on $\Kb^d$ by $\norm{\cdot}_0$. In the context of Definition \ref{DefRestricetedAnosov} and Remark \ref{LiftAbuseNotations}, We say that a metric $\norm{\cdot}$ on $Y\times \Kb^d$ is \emph{of unit volume} if there exists a map $A: Y\to \SL(d,\Kb)$, such that $\norm{\cdot} = \norm{A(y)\cdot}_0$. Since the ratio of the norms of two vectors is preserved by rescaling, we may always assume that the metric we pick is of unit volume. When $d=2$ and $k=1$, if $Y\times \Kb^d = E^s_\rho\oplus E^u_\rho$ is a dominated splitting of rank $1$ with respect to a metric $\norm{\cdot}$ of unit volume, then there exist constants $C,c > 0$ such that
    \[\norm{\phi^t(v)} \leqslant C e^{-c t}\norm{v}\text{ and } \norm{\phi^t(w)} \geqslant C e^{c t}\norm{ w}~,\] for any $y\in Y$, $v\in (E^s_\rho)_y$, $w\in (E^u_\rho)_y$ and $t\in\Rb_{\geqslant 0}$. This implies that the direct sum of several such dominated splittings is again a dominated splitting.
\end{remark}

\subsection{The flow spaces}

Suppose $X$ is a taut Gromov hyperbolic space and a group $\Gamma$ acts on $X$ properly discontinuous by isometries. In particular, we are interested in the case when $\Gamma$ is a hyperbolic group, or $(\Gamma,\Pc)$ is a relatively hyperbolic group, and $X$ is a Gromov model of $\Gamma$ or $(\Gamma,\Pc)$, respectively.

A direct way to construct a $\Gamma$-flow is to consider the collection of parameterized geodesics of $X$. Define \[\mathcal{G}(X)=\{\ell: \Rb \to X\ |\ \ell \text{ is a geodesic parametrized by length }\}~,\] equipped with the flow $\psi$ defined by \[\phi^t(\ell): s \mapsto \ell(s + t)~.\] Then $(\mathcal G(X), \phi)$ is a $\Gamma$-flow. One can define the metric $d$ on $\mathcal{G}(X)$ by \[d(\ell,\ell') = \int_{\Rb} \frac{e^{-\abs{t}}}{2}d_X\big(\ell(s),\ell'(s)\big)ds~,\] for any $\ell,\ell'\in \mathcal{G}(X)$ and verify that the projection $\pi_M':\mathcal{G}(X)\to X$ defined by $\pi_M'(\ell)=\ell(0)$ is a $\Isom(X)$-equivariant quasi-isometry.\\

\begin{definition}
    A conical limit point $x\in\partial X =\partial(\Gamma,\Pc)$ is \emph{compactly attained} if there is a geodesic ray $\ell $ in $X$, such that $\ell (+\infty) = x$ and the projection of $\ell $ into $\Gamma\backslash X$ is contained in a compact set. We denote the set of compactly attained conical limit point by $\partial_{cc}X$.
\end{definition}

\begin{notation}\label{notation: compact conical limit flow}
    We set \[\Fc(X) = \overline{\{\ell\in \Gc(X): (\ell(+\infty),\ell(-\infty)) \in \partial_{cc}^2X \}}\subset \Gc(X)~.\].
\end{notation}

Now suppose $(\Gamma,\Pc)$ is relatively hyperbolic group. Since the action of $\Gamma$ on $\partial(\Gamma,\Pc)$ is minimal, the subset $\partial_{cc}X$ is dense in $\partial X$. Consequently, for every pair of distinct points in $\partial^{(2)}(\Gamma,\Pc)$, there exists at least one flow line in $\Fc(X)$ having this pair as its endpoints. In particular, if $X$ is uniquely geodesic, in the sense that any two bi-infinite geodesics with the same endpoints coincide, then $\Fc(X)=\Gc(X)$.

The reason for considering the flow space $\Fc(X)$ rather than the full geodesic space $\Gc(X)$ is that $\Fc(X)$ is designed to capture the part of the dynamics arising from the cocompact region. This is the appropriate setting for studying representations on compact flow spaces and their limits, while excluding isolated flow lines that do not occur as limits of geodesics coming from the cocompact part.

We summaries the properties of $\Fc(X)$ here.

\begin{proposition}\label{proposition: compact conical limit flow}
    \begin{itemize}
        \item[(1)] The $\Gamma$-equivariant projection $p_M:(\Fc(X),d)\to (X,d_X)$ is a quasi-isometry.
        \item[(2)] Any flow line in $\Fc(X)$ is a geodesic in $\Fc(X)$.
        \item[(3)] The flow $\phi$ on $\Fc(X)$ commutes with the $\Gamma$-action.
        \item[(4)] The reverse on $\Fc(X)$ defined by $(\ell:t\mapsto \ell(t)) \mapsto (\hat{\ell}: t\mapsto \ell(-t))$, commutes with the $\Gamma$-action on $\Fc(X)$.
\end{itemize}
\end{proposition}


When $\Gamma$ is a hyperbolic group and $X$ is a Gromov model of $\Gamma$ (e.g. its Cayley graph with respect to a finite generating set), we define the \emph{geodesic flow of $\Gamma$} to be the space $\mathcal F(\Gamma) = \mathcal F(X)$ equipped with the flow $\phi$ and the action of $\Gamma$. This $\Gamma$-flow is well-defined up to quasi-isometric isomorphisms.

\subsection{Anosov in restriction to cocompact subflows}\label{SectionResultW23}
Let $(\Gamma,\Pc)$ be a relatively hyperbolic group and let $(X,d_X)$ be a Gromov model of $(\Gamma,\Pc)$. We fix a thick part $X^{th}$ of $X$. For any $R\geqslant 0$, let $\Fc_R(X)$ be the closed subflow of $\Fc(X)$ that consisting of flow lines corresponding to the geodesics of $X$ contained in $\overline{\Nc_R(X^{th})}$, that is, $(x,y,\Rb)\subset \Fc_R(X)$ if and only if there exists a geodesic $\ell $ of $X$, such that $\ell (+\infty)=x$, $\ell (-\infty)=y$ and $\ell (\Rb)\subset \overline{\Nc_R(X^{th})}$.

\begin{definition}
    A conical limit point $x\in\partial X =\partial(\Gamma,\Pc)$ is \emph{compactly attained} if there is a geodesic ray $\ell $ in $X$, such that $\ell (+\infty) = x$ and the projection of $\ell $ into $\Gamma\backslash X$ is contained in a compact set. We denote the set of compactly attained conical limit point by $\partial_{cc}X$.
\end{definition}

A cocompact subflow of $\Fc(X)$ can be expressed as $F\times\Rb$, where $F$ is a closed, $\Gamma$-invariant subset of $\partial^{(2)} X$ contained in $\partial_{cc}^{(2)}X = \{(x,y)\in \partial_{cc}^{2}X: x\ne y\})$. It is not hard to see that $\Fc_R(X)$ is a cocompact subflow of $\Fc(X)$ and any cocompact subflow of $\Fc(X)$ is contained in $\Fc_R(X)$ for some $R\geqslant 0$ large enough. 

Notice also that by Proposition \ref{proposition: compact conical limit flow}, there is a $\Gamma$-equivariant quasi-isometry $p_M: \Fc(X) \to X$.

Now we fix a cocompact subflow $F\times\Rb \subset \Fc_R(X)$. Let $K$ be a compact subset of $\Fc(X)$, such that $\Gamma \cdot K \supset \Fc_R(X)$ and $\Gamma \cdot p_M(K) \supset \Nc_R(X^{th})$. Let $z_0\in K$ be a fixed base point. Recall that $\abs{\gamma}_X = d_X(p_M(z_0), \gamma p_M(z_0))$ defines a metric on $\Gamma$.

We consider the subset of $\Gamma$ consisting of elements that tracking the geodesics of $F\times\Rb$, that is, \[\Gamma^+_{F,K} = \{\gamma\in \Gamma : \exists z\in K\cap (F\times\Rb), t\in \Rb_{\geqslant 0},\text{ such that }\gamma^{-1}\phi^t(z)\in K\}~.\]

We say that a representation $\rho: \Gamma\to \mathrm{SL}(d,\mathbb{K})$ is \emph{$P_k$-dominated on $\Gamma^+_{F,K}$} if there are constants $C,c > 0$, such that for all $\gamma \in \Gamma^+_{F,K}$, \[\dfrac{\sigma_{k+1}(\rho(\gamma))}{\sigma_k(\rho(\gamma))}\leqslant Ce^{-c\abs{\gamma}_S}~,\] where $\sigma_k(\cdot)$ denotes the $k$-th singular value.

\begin{notation}
    For $A,B\in\Rb$, $\lambda\geqslant 1$ and $\epsilon\geqslant 0$, we write $A\sim_{(\lambda,\epsilon)}B$ if $\lambda^{-1}A - \epsilon \leqslant B \leqslant \lambda A + \epsilon$.
\end{notation}

Similar to \cite[Lemma 3.8]{W23}, we have the following estimation.

\begin{lemma}\label{QuasiIsometricEstimation}
    Let $\gamma\in\Gamma^+_{F,K}$ with $z,\gamma^{-1}\phi^t(z)\in K$, then there exist constants $\lambda\geqslant 1$ and $\epsilon\geqslant 0$, such that, \[ \abs{\gamma}_X \sim_{(\lambda,\epsilon)} \abs{t} \sim_{(\lambda,\epsilon)} \abs{\gamma}_S\]
\end{lemma}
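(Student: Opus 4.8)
The plan is to establish the three-way comparison $\abs{\gamma}_X \sim \abs{t} \sim \abs{\gamma}_S$ by chaining together the quasi-isometries already available in the setup. The key structural inputs are: (i) the Mineyev quasi-isometry $p_M : \Fc(X) \to X$ from Theorem \ref{MineyevFlowSpace}(1), which is $\Gamma$-equivariant; (ii) the fact that $F\times\Rb$ is a cocompact subflow, so $\Gamma\cdot K \supset \Fc_R(X) \supset F\times\Rb$; and (iii) Milnor--\v{S}varc, which gives that the orbit map $\gamma \mapsto \gamma\cdot p_M(z_0)$ is a quasi-isometry from $(\Gamma,\abs{\cdot}_S)$ to $X^{th}$, hence (since the flow lines of $F\times\Rb$ stay in $\Nc_R(X^{th})$) a quasi-isometric embedding relative to the relevant points. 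So the statement is really a matter of bookkeeping the composition of these maps.

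\medskip
\noindent\emph{Step 1: $\abs{t} \sim \abs{\gamma}_X$.} Since $z, \gamma^{-1}\phi^t(z) \in K$ and $K$ is compact, $d(z, \gamma^{-1}\phi^t(z)) \leqslant \diam(K)$, so $d(\gamma z, \phi^t(z)) \leqslant \diam(K)$ as $\Gamma$ acts by isometries on $\Fc(X)$. Now the flow line through $z$ is (uniformly) a quasi-geodesic by Theorem \ref{MineyevFlowSpace}(2), so $d(z,\phi^t(z)) \sim \abs{t}$ with uniform constants. Combining, $d(\gamma z, z) \sim \abs{t}$. Applying $p_M$ (a quasi-isometry) and comparing $p_M(z), p_M(\gamma z) = \gamma p_M(z)$ with the basepoint $p_M(z_0)$ — using $d(z,z_0)\leqslant\diam(K)$ again — gives $d_X(p_M(z_0), \gamma p_M(z_0)) \sim \abs{t}$, i.e. $\abs{\gamma}_X \sim \abs{t}$.

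\medskip
\noindent\emph{Step 2: $\abs{\gamma}_X \sim \abs{\gamma}_S$.} The inequality $\abs{\gamma}_X \leqslant \lambda\abs{\gamma}_S + \epsilon$ is immediate from Milnor--\v{S}varc (the word metric dominates any orbit metric up to quasi-isometry, independently of whether the action is cocompact). For the reverse, the point is that $\gamma$ lies in $\Gamma^+_{F,K}$, so $\gamma p_M(z_0)$ is within bounded distance of $p_M(\phi^t(z))$, which lies on a flow line of $F\times\Rb\subset\Fc_R(X)$; hence $\gamma p_M(z_0)$ is within bounded distance of $\Nc_R(X^{th})$, i.e. essentially in the thick part. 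Since $\Gamma$ acts cocompactly on $X^{th}$ (indeed on $\Nc_R(X^{th})$), Milnor--\v{S}varc applied there gives $\abs{\gamma}_S \leqslant \lambda'\abs{\gamma}_X + \epsilon'$ for such $\gamma$. Taking all constants uniform (using compactness of $K$ and finiteness of $\Pc$) yields the claim. Chaining Steps 1 and 2, and enlarging $(\lambda,\epsilon)$ if necessary, completes the proof.

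\medskip
\noindent The main obstacle — really the only subtle point — is Step 2's reverse inequality: one must genuinely use membership in $\Gamma^+_{F,K}$ (rather than just $\gamma\in\Gamma$) to confine $\gamma p_M(z_0)$ to the thick part, since in a relatively hyperbolic group $\abs{\cdot}_S$ is \emph{not} quasi-isometric to $\abs{\cdot}_X$ in general (distances are distorted in the cusps). Once one observes that the flow lines of a cocompact subflow are trapped in $\Nc_R(X^{th})$ by definition of $\Fc_R(X)$, the cocompactness of the $\Gamma$-action on the thick part delivers the bound. This is exactly the argument of \cite[Lemma 3.8]{W23}, so I would simply indicate the adaptation and point the reader there for the routine estimates.
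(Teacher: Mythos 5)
Your Step 1 ($\abs{t}\sim\abs{\gamma}_X$) is exactly the paper's argument: flow lines are uniform quasi-geodesics, $p_M$ is a $\Gamma$-equivariant quasi-isometry, and compactness of $K$ absorbs the basepoint errors. The problem is the reverse inequality of Step 2 as you have actually written it. From ``$\gamma p_M(z_0)$ is within bounded distance of $\Nc_R(X^{th})$'' together with ``$\Gamma$ acts cocompactly on $\Nc_R(X^{th})$'' you cannot conclude $\abs{\gamma}_S\leqslant \lambda'\abs{\gamma}_X+\epsilon'$: Milnor--\v{S}varc compares $\abs{\gamma}_S$ with the distance from $o$ to $\gamma o$ in the \emph{induced length metric} of $\Nc_R(X^{th})$, not with the ambient distance $d_X(o,\gamma o)=\abs{\gamma}_X$, and having both endpoints in the thick part does not make these two comparable. (A parabolic $\gamma$ translating a basepoint along a horosphere keeps $\gamma o$ in the thick part while $\abs{\gamma}_S$ is exponentially larger than $\abs{\gamma}_X$; your deduction uses nothing about $\gamma\in\Gamma^+_{F,K}$ beyond the position of the single point $\gamma p_M(z_0)$, so it cannot rule this behaviour out.)

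What repairs the step is the observation you relegate to your closing paragraph, and it has to enter the estimate itself: since $z$ and $\phi^t(z)$ lie on a flow line of $F\times\Rb\subset\Fc_R(X)$, the entire $p_M$-image of the segment $\{\phi^s(z):0\leqslant s\leqslant t\}$ stays in $\overline{\Nc_{R'}(X^{th})}$ for a uniform $R'$, and because it is a uniform quasi-geodesic, points at unit flow-time spacing are at uniformly bounded $d_X$-distance, hence (by properness and cocompactness of the action on the thick neighborhood) at uniformly bounded distance in its length metric. This gives a bound of the thick-part length-metric distance between $o$ and $\gamma o$ by a constant multiple of $\abs{t}$ plus a constant, and Milnor--\v{S}varc then yields $\abs{\gamma}_S\leqslant \lambda'\abs{t}+\epsilon'$; together with the trivial opposite bounds and your Step 1 this proves the lemma. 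This is in fact how the paper argues: it compares $\abs{t}$ directly with $\abs{\gamma}_S$ via the quasi-isometry of $(\Gamma,\abs{\cdot}_S)$ with $\Nc_R(X^{th})$, rather than trying to relate $\abs{\gamma}_X$ and $\abs{\gamma}_S$ without passing through the flow parameter. So keep Step 1 and replace Step 2 by the estimate $\abs{t}\sim\abs{\gamma}_S$ obtained from the trapped flow segment.
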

\begin{proof}
    By Milnor-\v{S}varc lemma, $(\Gamma,\abs{\cdot}_S)$ is quasi-isometric to $\Nc_R(X^{th})$ for any fixed $R\geqslant 0$, then $\abs{t} \sim_{(\lambda_1,\epsilon_1)} \abs{\gamma}_S$ for some  constants $\lambda_1\geqslant 1$ and $\epsilon_2\geqslant 0$. There are also constants $\lambda_2\geqslant 1$ and $\epsilon_2\geqslant 0$, such that
    $\abs{\gamma}_X \sim_{(\lambda_2,\epsilon_2)} \abs{t}$ following from Proposition \ref{proposition: compact conical limit flow}, which states that $\{\phi^t(z):z\in\Rb\}$ is a geodesic. Finally, we take $\lambda = \max(\lambda_1, \lambda_2)$ and $\epsilon = \max(\epsilon_1, \epsilon_2)$.
\end{proof}

Let \[\Pi_i(F)=\{x_i\in\partial(\Gamma,\Pc):\text{there is some element }(x_1,x_2)\in F \}\] for $i=1,2$. For a pair of limit maps \[\xi^k: \Pi_1(F)\to \Gr_k(\mathbb{K}^d)\text{ and }\xi^{d-k}:  \Pi_2(F)\to \Gr_{d-k}(\mathbb{K}^d)~,\] we say $(\xi^k,\xi^{d-k})$ is \emph{transverse on $F$} if for any $(x,y)\in F$, $\xi^k(x)\oplus \xi^{d-k}(y) = \Kb^d$, and we say it is \emph{strongly dynamics preserving on $F$} if for any sequence $(\gamma_n)_{n\in \Nb}\subset \Gamma^+_{F,K}$, with $\gamma_n\to x\in \Pi_1(F)$ and $\gamma_n^{-1}\to y\in \Pi_2(F)$ as $n\to +\infty$, we have $\gamma_n V \to \xi^k(x)$ as $n\to +\infty$, uniformly on any compact subset of $\Gr_k(\Kb^d)$ transverse to $\xi^{d-k}(y)$.

Recall that for a matrix $g\in\SL(d,\Kb)$, we denote the eigenspace of $gg^t$ for the largest $k$ eigenvalues  by $U_k(g)$. Then $U_{d-k}(g^{-1}) = g^{-1}U_k(g)^{\perp}$.

Using Lemma \ref{QuasiIsometricEstimation}, it is direct to apply the proof of \cite[Theorem 3.7, Proposition 5.6, Theorem 5.8, Proposition 5.9, Proposition 5.12 and Theorem 5.15]{W23} to deduce the following theorem.

\begin{theorem}\label{TheoremCompactSubflow}
    Let $\rho$ be a representation of $\Gamma$ into $\SL(d,\Kb)$, then the following are equivalent.
    \begin{itemize}
        \item[(1)] $\rho$ is $P_k$-Anosov in restriction to $F\times \Rb$;
        \item[(2)] $\rho$ is $P_k$-dominated on $\Gamma^+_{F,K}$;
        \item[(3)] There exists a unique pair of limit maps $\xi^k: \Pi_1(F)\to \Gr_k(\mathbb{K}^d)$ and
        $\xi^{d-k}:  \Pi_2(F)\to \Gr_{d-k}(\Kb^d)$, with
        $(\xi^k,\xi^{d-k}): F\to \Gr_k(\Kb^d)\times \Gr_{d-k}(\Kb^d)$ continuous, transverse and strongly dynamics preserving on $F$.
    \end{itemize}
    Moreover, if the conditions above hold, then \[\xi^k(x) = \lim\limits_{n\to +\infty} U_k(\rho(\gamma_n))\quad \text{and} \quad \xi^{d-k}(y) = \lim\limits_{n\to +\infty} U_{d-k}(\rho(\eta_n))\] for any sequence $(\gamma_n)_{n\in \Nb},\{\eta_n^{-1}\}\subset \Gamma^+_{F,K}$ with $\gamma_n\to x\in \Pi_1(F)$ and $\eta_n\to y\in \Pi_2(F)$ as $n\to +\infty$. The dominated splitting at $z=(x,y,t)\in F\times \Rb$ is given by $\xi^k(x)\oplus\xi^{d-k}(y)$.
\end{theorem}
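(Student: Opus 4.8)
The plan is to reduce the statement to the cocompact setting treated in \cite{W23} for closed subflows of the geodesic flow of a hyperbolic group, with Lemma \ref{QuasiIsometricEstimation} as the bridge. Observe first that $F\times\Rb$ is a cocompact $\Gamma$-flow, with $\Gamma\cdot K\supset\Fc_R(X)\supset F\times\Rb$ for the compact $K$ fixed above. By Theorem \ref{MineyevFlowSpace} the restriction of $p_M$ to $F\times\Rb$ is a $\Gamma$-equivariant quasi-isometry onto its image in $\overline{\Nc_R(X^{th})}$, each flow line $\phi^\cdot(z)$ being a uniform quasi-geodesic; combined with Milnor--\v{S}varc this is precisely the content of Lemma \ref{QuasiIsometricEstimation}, namely $\abs{\gamma}_X\sim_{(\lambda,\epsilon)}\abs{t}\sim_{(\lambda,\epsilon)}\abs{\gamma}_S$ for $\gamma\in\Gamma^+_{F,K}$ realised by $z,\gamma^{-1}\phi^t(z)\in K$. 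Since the base $\Gamma\backslash(F\times\Rb)$ is compact, by Remark \ref{UnitVolumeMetric} I would work with a $\rho(\Gamma)$-invariant unit-volume metric on $F\times\Rb\times\Kb^d$, any two metrics on $E_\rho(F\times\Rb)$ being bi-Lipschitz; all estimates below are transported across these comparisons.

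For (1)$\Rightarrow$(2): given a dominated splitting $E_\rho(F\times\Rb)=E^s_\rho\oplus E^u_\rho$ of rank $k$ with constants $C,c$, I would lift to a $\rho(\Gamma)$-invariant splitting of the trivial bundle and evaluate the contraction along the flow segment joining $z\in K$ to $\gamma^{-1}\phi^t(z)\in K$; because the metric is bounded above and below by the standard metric over the compact $K$ uniformly in $\gamma$, this bounds $\sigma_{k+1}(\rho(\gamma))/\sigma_k(\rho(\gamma))$ by a uniform constant times $e^{-ct}$, and Lemma \ref{QuasiIsometricEstimation} turns $e^{-ct}$ into $C'e^{-c'\abs{\gamma}_S}$, i.e. $\rho$ is $P_k$-dominated on $\Gamma^+_{F,K}$. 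The step (2)$\Rightarrow$(3) is the core, following \cite[Theorem 3.7, Proposition 5.6]{W23}: for $x\in\Pi_1(F)$ one picks $(\gamma_n)\subset\Gamma^+_{F,K}$ with $\gamma_n\to x$ (such a sequence exists because $x\in\partial_{cc}X$, so a geodesic ray to $x$ inside $\overline{\Nc_R(X^{th})}$ lifts to a flow ray in $F\times\Rb$ meeting translates of $K$); the uniform singular-value gap together with the fellow-travelling of flow lines sharing the endpoint $x$ (Gromov product growing with $\min(\abs{\gamma_n}_S,\abs{\gamma_m}_S)$) forces $(U_k(\rho(\gamma_n)))$ to be Cauchy in $\Gr_k(\Kb^d)$ with limit depending only on $x$; this defines $\xi^k$ on $\Pi_1(F)$, symmetrically $\xi^{d-k}$ on $\Pi_2(F)$ via $\eta_n^{-1}$, and the same estimates give continuity, transversality on $F$, and strong dynamics preserving, the limit formulas in the ``moreover'' clause being exactly how $\xi^k,\xi^{d-k}$ are produced. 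Uniqueness is automatic: strong dynamics preserving pins $\xi^k(x)$ down as the attracting Cartan limit of any approximating sequence.

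For (3)$\Rightarrow$(1): set $(E^s_\rho)_z=\xi^k(x)$ and $(E^u_\rho)_z=\xi^{d-k}(y)$ at $z=(x,y,t)\in F\times\Rb$; transversality on $F$ makes this a genuine splitting of the trivial bundle, $\phi$-invariant (constant along flow lines) and $\rho(\Gamma)$-invariant by equivariance, hence descending to $E_\rho(F\times\Rb)$. To see it is dominated I would argue as in \cite[Theorem 5.8, Proposition 5.9, Proposition 5.12, Theorem 5.15]{W23}: strong dynamics preserving on $F$, uniform over the compact $K$, yields a single-step contraction estimate on $\Hom(E^u_\rho,E^s_\rho)$ for the return map to $K$, and a Fekete-type subadditivity argument over the compact quotient upgrades this to exponential contraction in the flow parameter, which by Lemma \ref{QuasiIsometricEstimation} is comparable to $\abs{\gamma}_S$; uniqueness then forces the dominated splitting at $(x,y,t)$ to be $\xi^k(x)\oplus\xi^{d-k}(y)$, completing the cycle.

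The step I expect to be the main obstacle is (2)$\Rightarrow$(3), specifically producing for every $x\in\Pi_1(F)$ an approximating sequence inside $\Gamma^+_{F,K}$ and showing the attracting subspaces converge independently of it. This needs the three length functions $\abs{\cdot}_X$, flow-time, and $\abs{\cdot}_S$ to stay mutually comparable precisely on $\Gamma^+_{F,K}$ and not on all of $\Gamma$ (where the comparison breaks in the cusps), which is why Lemma \ref{QuasiIsometricEstimation} is stated for $\Gamma^+_{F,K}$; once that is in hand, the quantitative linear-algebra lemmas of \cite{W23} apply essentially verbatim, the only change being that the word metric on the Cayley graph of a hyperbolic group is replaced by $\abs{\cdot}_X$ measured along flow lines via the quasi-isometry $p_M$ of Theorem \ref{MineyevFlowSpace}.
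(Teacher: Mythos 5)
Your proposal is correct and follows essentially the same route as the paper: the paper's entire proof of this theorem consists of noting that, once Lemma \ref{QuasiIsometricEstimation} makes $\abs{\cdot}_X$, the flow time, and $\abs{\cdot}_S$ mutually comparable on $\Gamma^+_{F,K}$, the arguments of \cite[Theorem 3.7, Proposition 5.6, Theorem 5.8, Proposition 5.9, Proposition 5.12, Theorem 5.15]{W23} for cocompact subflows apply verbatim, which is exactly the reduction you carry out (your sketches of (1)$\Rightarrow$(2), (2)$\Rightarrow$(3), (3)$\Rightarrow$(1) are an unpacking of those cited proofs).
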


\section{Boundary extensions}\label{SectionBoundaryExtensions}
In this section, we introduce boundary extensions of relatively hyperbolic groups and their associated flows, then we recall the definition of extended geometrically finite representations. We always assume that $(\Gamma,\Pc)$ is a relatively hyperbolic group.

\subsection{Boundary extensions and associated flows}\label{BoundaryExtensionSection}

\begin{definition}
    We say that $\zeta: \Lambda \to \partial (\Gamma, \Pc)$ is a \emph{boundary extension} of $(\Gamma,\Pc)$ if $\Lambda$ is a compact metrizable Hausdorff space with a $\Gamma$-action on it, and $\zeta:\Lambda\to\partial(\Gamma,\Pc)$ is a continuous $\Gamma$-equivariant, surjective map.
    We say that a boundary extension $\zeta: \Lambda \to \partial (\Gamma, \Pc)$ of $(\Gamma,\Pc)$ is \emph{minimal} if
    \begin{itemize}
        \item[(1)] for any $p\in \partial_{con}(\Gamma,\Pc)$, $\zeta^{-1}(p)$ is a singleton;
        \item[(2)] for any $p\in \partial_{par}(\Gamma,\Pc)$ with stabilizer $P\in\Pc^\Gamma$ and any $y\in \zeta^{-1}(p)$, there exists a sequence $(\gamma_n)_{n\in \Nb}\subset P$ and $x\in \zeta^{-1}(\partial_{con}(\Gamma,\Pc))$, such that $\gamma_n x\to y$ as $n\to +\infty$.
    \end{itemize}
\end{definition}

\begin{example}
    If $\Gamma$ is a hyperbolic group and $\Pc$ is a collection of subgroups of $\Gamma$ such that $(\Gamma,\Pc)$ is relatively hyperbolic, then there is a continuous, $\Gamma$-equivariant quotient $\tau:\partial\Gamma \to \partial(\Gamma,\Pc)$ by identifying the limit set of $P$ in $\partial\Gamma$ to the parabolic point fixed by $P$ in $\partial(\Gamma,\Pc)$ for each $P\in\Pc^\Gamma$. In this case, $\tau$ is a minimal boundary extension. See Theorem \ref{ClassicalExtensionReltoHyp} for details and see Theorem \ref{RelativeBoundaryExtension} for a generalization to the relatively hyperbolic case.
\end{example}

Let $\zeta: \Lambda \to \partial (\Gamma, \Pc)$ be a boundary extension of $(\Gamma,\Pc)$ and let $X$ be a Gromov model of $(\Gamma,\Pc)$. We denote $(\Lambda,\zeta)^{(2)} = \Lambda^2\setminus \{(x,y)\in\Lambda^2: \zeta(x)=\zeta(y) \}$. Now we define a $\Gamma$-flow associated to these data. Consider the flow space
\[\Fc(\zeta,\Lambda,X) = \{ (x,y,\ell)\in (\Lambda,\zeta)^{(2)} \times \Fc(X) : (\ell(+\infty),\ell(-\infty) = (\zeta(x),\zeta(y))\}\]
with the flow
\begin{eqnarray*}
    \phi^t: \Fc(\Lambda,\zeta,X) & \to & \Fc(\Lambda,\zeta,X) \\
    (x,y,\ell) & \mapsto & (x,y,\phi^t(\ell))~.
\end{eqnarray*}
Notice that there is a natural surjection 
\begin{eqnarray*}
    \pi:\Fc(\Lambda,\zeta,X) & \to & \Fc(X) \\
    (x,y,\ell) & \mapsto & \ell~.
\end{eqnarray*}

Then the $\Gamma$-action on $\Fc(X)$ induces a $\Gamma$-action on $\Fc(\Lambda,\zeta,X)$ by coordinates. Recall that we defined a reverse on $\Fc(X)$ in Proposition \ref{proposition: compact conical limit flow}, which also extends to $\Fc(\Lambda,\zeta,X)$ by $(x,y,\ell) \to (y,x,\hat{\ell})$ for any $(x,y,\ell)\in \Fc(\Lambda,\zeta,X)$.\\

\begin{remark}\label{asymptoticrigid}
    When talk about a representation $\rho:\Gamma\to \SL(d,\Kb)$, we further assume that the dominated splitting along two asymptotic flow lines are the same, that is, if $E_\rho = E^s \oplus E^u$ is a dominated splitting of $E_\rho = \Fc(\Lambda,\zeta,X) \times \Kb^d$, then $E^s_z = E^s_{z'}$ and $E^u_z = E^u_{z'}$ if $z = (x,y,\ell)$ and $z'= (x,y,\ell')$ share the same ``endpoints '' $x,y\in \Lambda$.
\end{remark}

Since $\Lambda$ is compact, the following lemma is direct.

\begin{lemma}\label{CompactPreimage}
    $\pi$ is proper, that is any compact subset of $\Fc(X)$ has compact preimage under $\pi$.
\end{lemma}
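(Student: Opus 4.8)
The plan is to verify properness of $\pi$ directly from the definitions, using that $\Lambda$ is compact and that $\zeta$ is continuous. The map $\pi$ sends $(x,y,t) \in (\Lambda,\zeta)^{(2)}\times\Rb$ to $(\zeta(x),\zeta(y),t)\in\partial^{(2)}X\times\Rb$, and it acts as the identity on the $\Rb$-coordinate. So if $C\subset\Fc(X)$ is compact, its image under the projection to $\Rb$ is a compact (hence bounded) interval $I\subset\Rb$; thus $\pi^{-1}(C)$ is contained in $(\Lambda,\zeta)^{(2)}\times I$, whose closure in $\Lambda\times\Lambda\times\Rb$ is contained in the compact set $\Lambda\times\Lambda\times \bar I$.

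**Next I would** show that $\pi^{-1}(C)$ is closed in $\Lambda\times\Lambda\times\Rb$, which together with the boundedness above gives compactness. The subtlety is that $(\Lambda,\zeta)^{(2)}$ is an open subset of $\Lambda^2$ (being the complement of the closed set $\{\zeta(x)=\zeta(y)\}$, which is closed since $\zeta$ is continuous and $\partial^{(2)}X$ is open in $X\times X$ with closed complement the diagonal of $\partial X$), so $\Fc(\Lambda,\zeta,X)$ is not itself closed in $\Lambda^2\times\Rb$. However, $\pi^{-1}(C)$ \emph{is} closed: suppose $(x_n,y_n,t_n)\in\pi^{-1}(C)$ converges to $(x,y,t)\in\Lambda^2\times\Rb$. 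Then $(\zeta(x_n),\zeta(y_n),t_n)\in C$ converges, by continuity of $\zeta$, to $(\zeta(x),\zeta(y),t)$, which lies in $C$ because $C$ is closed; in particular $(\zeta(x),\zeta(y),t)\in\Fc(X)=\partial^{(2)}X\times\Rb$, forcing $\zeta(x)\ne\zeta(y)$. Hence $(x,y,t)\in(\Lambda,\zeta)^{(2)}\times\Rb$ and $\pi(x,y,t)\in C$, so $(x,y,t)\in\pi^{-1}(C)$. Therefore $\pi^{-1}(C)$ is a closed subset of the compact set $\Lambda\times\Lambda\times\bar I$, hence compact.

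**The only point requiring care** is the observation that the constraint defining $\Fc(X)$ — namely $\zeta(x)\ne\zeta(y)$ — is automatically preserved in the limit precisely because the limit point is required to land in the \emph{closed} set $C\subset\Fc(X)$, not merely in the closure of $\Fc(X)$ in $\partial X\times\partial X\times\Rb$. This is what rules out the escape of mass to the ``diagonal at infinity''. I do not expect any serious obstacle here; the statement is genuinely routine once one writes $\pi$ as ``apply $\zeta\times\zeta$ on the endpoints, identity on the flow parameter'' and uses compactness of $\Lambda$.
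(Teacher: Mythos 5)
Your argument is correct and is exactly the routine verification the paper has in mind: the paper offers no written proof, simply remarking that the lemma is ``direct'' since $\Lambda$ is compact, and your use of compactness of $\Lambda$ (to bound the endpoint factors), the identity on the $\Rb$-coordinate, and continuity of $\zeta$ together with closedness of the compact set $C$ inside $\Fc(X)$ (to keep the limit away from the locus $\zeta(x)=\zeta(y)$) supplies precisely those details. Nothing further is needed.
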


The following lemmas allow us to do estimation between the length $\abs{\cdot}_X$ and the flow time $t$.

\begin{lemma}\cite[Lemma A.3]{ZZ1}\label{TautTwoPointsEstimation}
    If $Y$ is a taut Gromov hyperbolic space, then there exists a constant $R\geqslant 0$, such that for any $y_1,y_2\in Y$, there is a bi-infinite geodesic $\ell$ in $Y$, such that $y_1,y_2\in \Nc_R(\ell(\Rb))$.
\end{lemma}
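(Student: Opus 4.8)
The plan is to manufacture, for an arbitrary pair $y_1,y_2\in Y$, a bi-infinite \emph{quasi-geodesic} passing uniformly close to both points by splicing together geodesic pieces supplied by tautness, and then to replace that quasi-geodesic by a genuine bi-infinite geodesic using stability of quasi-geodesics in a hyperbolic space. Fix $\delta\geqslant 0$ so that $Y$ is $\delta$-hyperbolic, let $R_0$ be the tautness constant of $Y$, and let $\delta'=\delta'(\delta)$ be a constant large enough for all the coarse estimates below. First I would dispose of the short-range case: if $d_Y(y_1,y_2)\leqslant C$ for a threshold $C=C(\delta)$ to be fixed, then choosing by tautness a bi-infinite geodesic $\ell$ with $d_Y(y_1,\ell(\Rb))\leqslant R_0$ already gives $d_Y(y_2,\ell(\Rb))\leqslant C+R_0$, so both points lie in $\Nc_{C+R_0}(\ell(\Rb))$. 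Hence from now on assume $d_Y(y_1,y_2)>C$.

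By tautness, pick bi-infinite geodesics $\ell_1,\ell_2$ and points $p_i\in\ell_i$ with $d_Y(y_i,p_i)\leqslant R_0$; then $L:=d_Y(p_1,p_2)\geqslant d_Y(y_1,y_2)-2R_0$ is as large as we wish. Denote by $a_i^{\pm}\in\partial Y$ the two ideal endpoints of $\ell_i$. Since $p_1$ lies on the geodesic $\ell_1$ joining $a_1^{+}$ to $a_1^{-}$ we have $(a_1^{+}\mid a_1^{-})_{p_1}\leqslant\delta'$, so the hyperbolic inequality for Gromov products (extended to the boundary) gives $\min\bigl((a_1^{+}\mid p_2)_{p_1},(a_1^{-}\mid p_2)_{p_1}\bigr)\leqslant\delta'$; relabel so that $(a_1^{+}\mid p_2)_{p_1}\leqslant\delta'$ and let $\rho_1\subset\ell_1$ be the sub-ray from $p_1$ towards $a_1^{+}$. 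Symmetrically choose a sub-ray $\rho_2\subset\ell_2$ from $p_2$ towards an endpoint $a_2^{+}$ with $(a_2^{+}\mid p_1)_{p_2}\leqslant\delta'$. Let $P$ be the bi-infinite path $\rho_1^{-1}\cdot[p_1,p_2]\cdot\rho_2$, i.e. the path running in from $a_1^{+}$ to $p_1$, across the geodesic segment $[p_1,p_2]$, and out to $a_2^{+}$.

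The main step is to show that $P$ is a $(1,\kappa)$-quasi-geodesic line with $\kappa=\kappa(\delta)$, i.e. every finite sub-path of $P$ has length at most $\kappa$ more than the distance between its endpoints. If the two endpoints of the sub-path lie on a common geodesic piece there is nothing to prove. If they straddle exactly one of the two junctions $p_1,p_2$, this follows from the standard fact that a concatenation $[x_0,x_1]\cup[x_1,x_2]$ of geodesics with $(x_0\mid x_2)_{x_1}\leqslant\delta'$ is a $(1,2\delta')$-quasi-geodesic, together with monotonicity of the Gromov product along geodesics (which upgrades the junction bounds $(a_1^{+}\mid p_2)_{p_1}\leqslant\delta'$ and $(a_2^{+}\mid p_1)_{p_2}\leqslant\delta'$ to the bound $(u\mid v)_{p_i}\leqslant\delta'$ at the actual sub-path endpoints $u,v$). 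In the remaining case, $u\in\rho_1$ and $v\in\rho_2$, the sub-path has length $d_Y(u,p_1)+L+d_Y(p_2,v)$; from $(u\mid p_2)_{p_1}\leqslant\delta'$ one computes $d_Y(u,p_2)\geqslant d_Y(u,p_1)+L-2\delta'$, hence $(u\mid p_1)_{p_2}\geqslant L-\delta'$, and feeding this together with $(a_2^{+}\mid p_1)_{p_2}\leqslant\delta'$ into the hyperbolic inequality at $p_2$ forces $(a_2^{+}\mid u)_{p_2}\leqslant 2\delta'$ (this is precisely where $L>C\gg\delta'$ is used), so by monotonicity $(u\mid v)_{p_2}\leqslant 2\delta'$, and finally $d_Y(u,v)\geqslant d_Y(u,p_2)+d_Y(p_2,v)-4\delta'\geqslant d_Y(u,p_1)+L+d_Y(p_2,v)-6\delta'$. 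Thus $P$ is a $(1,6\delta')$-quasi-geodesic line; in particular its two ends converge to distinct points $\omega_1,\omega_2\in\partial Y$.

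Finally, let $\ell$ be a bi-infinite geodesic with ideal endpoints $\omega_1,\omega_2$ — such a geodesic exists by a diagonal-limit (Arzel\`a--Ascoli) argument applied to the geodesic segments $[\rho_1(n),\rho_2(n)]$, using that the Gromov models appearing in this paper are proper. By stability of quasi-geodesics in a $\delta$-hyperbolic space, $P(\Rb)$ and $\ell(\Rb)$ lie at Hausdorff distance at most some $H=H(\delta)$. Therefore $y_i\in\Nc_{R_0}(p_i)\subset\Nc_{R_0}(P(\Rb))\subset\Nc_{R_0+H}(\ell(\Rb))$ for $i=1,2$, so $R:=\max(C+R_0,\,R_0+H)$ works. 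I expect the main obstacle to be the uniform quasi-geodesic estimate of the third paragraph: organizing the Gromov-product bookkeeping so that the two controlled junctions together with the long middle segment genuinely force the whole bi-infinite path to be a quasi-geodesic with constants depending only on $\delta$. The only other point needing care — producing the limiting geodesic $\ell$ — is harmless here, since all the Gromov models in play are proper.
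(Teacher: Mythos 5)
The paper itself does not prove this lemma; it is imported verbatim as \cite[Lemma A.3]{ZZ1}, so there is no in-paper argument to compare yours against. On its own terms, your construction is the natural one and the bookkeeping checks out: tautness gives lines $\ell_1,\ell_2$ with $p_i\in\ell_i$ near $y_i$; the hyperbolic inequality at $p_1$ (using $(a_1^+\mid a_1^-)_{p_1}\lesssim\delta$) lets you pick the sub-ray whose far endpoint has small Gromov product with $p_2$, and symmetrically at $p_2$; the monotonicity upgrades $(a_1^+\mid p_2)_{p_1}\leqslant\delta'$ to $(u\mid p_2)_{p_1}\lesssim\delta'$ for $u$ on the ray are correct (up to additive $O(\delta)$, which your ``$\delta'$ large enough'' absorbs); and the case $u\in\rho_1$, $v\in\rho_2$ is exactly where the lower bound $L>C\gg\delta'$ on the middle segment is needed, as you say, to force $(a_2^+\mid u)_{p_2}\lesssim\delta'$ from the two competing products at $p_2$. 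So $P$ is a uniform $(1,\kappa(\delta))$-quasi-geodesic line and Morse stability does the rest.

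The one genuine caveat is the final step, which you flagged yourself: producing an honest bi-infinite geodesic from the quasi-geodesic line. The lemma as stated assumes only that $Y$ is a taut Gromov hyperbolic space; without properness (or a visibility-type hypothesis) two distinct boundary points need not be joined by a geodesic, and the Arzel\`a--Ascoli limit you invoke need not exist, so what you have actually proved is the lemma under the additional hypothesis that $Y$ is proper. Note also that the boundary points $\omega_1,\omega_2$ are dispensable: one can take a limit of the finite geodesics $[\rho_1(n),\rho_2(n)]$ directly, since by stability each of these already passes within a uniform distance of $p_1$ and $p_2$; but that limit again needs properness, so the issue is intrinsic to this route rather than to your Gromov-product bookkeeping. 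For the purposes of this paper the gap is harmless --- the Gromov models to which the lemma is applied (e.g.\ the generalized Groves--Manning spaces, which are locally finite graphs) are proper, and the cited source works in that setting --- but as a proof of the statement exactly as written, the properness assumption should either be added to the statement or replaced by an argument that works in an arbitrary taut hyperbolic space.
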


\begin{lemma}\label{TrackingLemma}
    For any fixed base point $z_0\in\Fc(\Lambda,\zeta,X)$, there exists a compact set $K\subset \Fc(\Lambda,\zeta,X)$ with the following properties. 
    \begin{itemize}
        \item[(1)] $z_0\in K$;
        \item[(2)] For any $\gamma\in\Gamma$, there exists $z\in K$ and $t\in \Rb$, such that $\gamma^{-1}\phi^{t}(z)\in K$.
        \item[(3)] There exist constants $\lambda\geqslant 1$ and $\epsilon\geqslant 0$, such that for any $z\in K$, $\gamma\in\Gamma$ and $t\in \Rb$ such that $\gamma^{-1}\phi^{t}(z)\in K$, it holds $\abs{\gamma}_X\sim_{(\lambda,\epsilon)} \abs{t}$.
    \end{itemize}
\end{lemma}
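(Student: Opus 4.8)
The statement is a "tracking" lemma: it asserts that the flow space $\Fc(\Lambda,\zeta,X)$ has a compact "fundamental-type" region $K$ through which every element of $\Gamma$ can be realized as a bounded displacement, with flow time comparable to $\abs{\cdot}_X$. The key point is that $\Fc(\Lambda,\zeta,X)$ is \emph{not} a cocompact $\Gamma$-flow, so we cannot simply invoke compactness of $\Gamma\backslash\Fc(\Lambda,\zeta,X)$; instead we must build $K$ by hand, and the only compactness we really have at our disposal is that of $\Lambda$ (Lemma \ref{CompactPreimage}) together with cusp-uniformity of the $\Gamma$-action on $X$.

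The plan is to first work downstairs in $\Fc(X)$ and then lift through the proper map $\pi$. First I would fix the base point $z_0\in\Fc(\Lambda,\zeta,X)$ and set $w_0=\pi(z_0)\in\Fc(X)$; write $o=p_M(w_0)\in X$, using the $\Gamma$-equivariant quasi-isometry $p_M:\Fc(X)\to X$ from Theorem \ref{MineyevFlowSpace}. Given $\gamma\in\Gamma$, I want to connect $o$ and $\gamma o$ efficiently: by Lemma \ref{TautTwoPointsEstimation} (tautness of $X$) there is a constant $R\geqslant 0$ and a bi-infinite geodesic $\ell$ with $o,\gamma o\in\Nc_R(\ell(\Rb))$. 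The endpoints of $\ell$ give a point $(\ell(+\infty),\ell(-\infty))\in\partial^{(2)}X$, and by Theorem \ref{MineyevFlowSpace}(2) the corresponding flow line $(\ell(+\infty),\ell(-\infty),\Rb)$ in $\Fc(X)$ is a uniform quasi-geodesic whose $p_M$-image is a quasi-geodesic with the same endpoints; hence it fellow-travels $\ell$ within a uniform bound depending only on $X$. Therefore there is $w\in\Fc(X)$ on this flow line with $p_M(w)$ within a uniform distance of $o$, and a time $t$ with $\phi^t(w)$ having $p_M$-image within a uniform distance of $\gamma o$, i.e. $\gamma^{-1}\phi^t(w)$ has $p_M$-image within a uniform distance of $o$. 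Using cusp-uniformity and the fact that $w$ and $\gamma^{-1}\phi^t(w)$ lie on a flow line tracking a geodesic that passes uniformly close to $o$ (hence, after shifting, lies in a bounded neighborhood of a compact fundamental set for the thick part), I would produce a single compact set $K_0\subset\Fc(X)$ with $w_0\in K_0$ containing (a $\Gamma$-translate of) both $w$ and $\gamma^{-1}\phi^t(w)$; concretely $K_0$ can be taken as the set of flow lines through $\Nc_{R'}(\text{compact fundamental set of }X^{th})$ truncated to a bounded flow-time window, which is compact since $p_M$ is a quasi-isometry. Then set $K=\pi^{-1}(K_0)$, which is compact by Lemma \ref{CompactPreimage}; lifting $w,\phi^t(w)$ through the surjection $\pi$ (and adjusting by the $\Gamma$-action, which is compatible via the cocycle $c(\zeta(\cdot),\zeta(\cdot),\cdot)$) gives $z\in K$ with $\gamma^{-1}\phi^t(z)\in K$, establishing (1) and (2).

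For (3), once $z,\gamma^{-1}\phi^t(z)\in K$, apply $\pi$ to get $\pi(z),\gamma^{-1}\phi^t(\pi(z))\in K_0$, a compact subset of $\Fc(X)$. On $K_0$ the $p_M$-image has bounded diameter, so $d_X(p_M(\pi(z)),o)$ and $d_X(p_M(\gamma^{-1}\phi^t(\pi(z))),o)$ are uniformly bounded, giving $\abs{\gamma}_X = d_X(o,\gamma o)$ within an additive constant of $d_X(p_M(\pi(z)),p_M(\phi^t(\pi(z))))$. Since $\{\phi^s(\pi(z)):s\in\Rb\}$ is a uniform quasi-geodesic in $\Fc(X)$ and $p_M$ is a uniform quasi-isometry, $d_X(p_M(\pi(z)),p_M(\phi^t(\pi(z))))\sim_{(\lambda',\epsilon')}\abs{t}$ for uniform constants. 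Combining these two comparisons yields $\abs{\gamma}_X\sim_{(\lambda,\epsilon)}\abs{t}$ with uniform $\lambda,\epsilon$, as desired. (This is exactly the mechanism already used in Lemma \ref{QuasiIsometricEstimation}, now applied to all of $\Gamma$ rather than to $\Gamma^+_{F,K}$.)

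The main obstacle is the construction of $K_0$ in step two: one must check that a \emph{single} bounded flow-time window, centered appropriately along each relevant flow line, simultaneously captures a point near $o$ and a point near $\gamma o$ for \emph{every} $\gamma$, uniformly, despite the flow lines entering deep into the horoballs. The resolution is that we never need to follow a flow line into a horoball — we only need its two nearly-symmetric "landing points" near $o$ and near $\gamma o$, and tautness plus the uniform fellow-traveling between Mineyev quasi-geodesics and honest geodesics (Theorem \ref{MineyevFlowSpace}(2)) guarantees such points exist at uniformly bounded distance from $o$ along the line; the cocompactness of the $\Gamma$-action on the thick part $X^{th}$ then lets us translate everything into one compact region. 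After that, the passage to $\Fc(\Lambda,\zeta,X)$ is soft, using only compactness of $\Lambda$ via Lemma \ref{CompactPreimage} and the fact that $\pi$ intertwines the two $\Gamma$-actions.
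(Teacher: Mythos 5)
Your proposal is correct and takes essentially the same route as the paper: tautness (Lemma \ref{TautTwoPointsEstimation}) together with Theorem \ref{MineyevFlowSpace} and the Morse lemma shows that both $w$ and $\gamma^{-1}\phi^{t}(w)$ lie within a uniform distance $R'$ of $\pi(z_0)$, one takes $K$ to be the $\pi$-preimage of a compact neighborhood of $\pi(z_0)$ (compact by Lemma \ref{CompactPreimage}), and (3) follows because $p_M$ is a quasi-isometry and flow lines are uniform quasi-geodesics. The paper simply uses the closed $R'$-ball around $\pi(z_0)$ in $\Fc(X)$, so your fundamental-set construction and the parenthetical ``a $\Gamma$-translate of'' are unnecessary detours --- and the translate phrasing should in fact be dropped, since property (2) requires the same $z$ and the same $\gamma$, which your uniform bounds around $o$ already provide without translating.
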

\begin{proof}
    Recall that $p_M:\Fc(X)\to X$ denotes the quasi-isometry provided by Proposition \ref{proposition: compact conical limit flow}. Since $X$ is taut, by Lemma \ref{TautTwoPointsEstimation}, there exists a constant $R\geqslant 0$, such that for any $\gamma\in\Gamma$, there exists a geodesic $\ell $ in $X$ with $p_M(\pi(z_0))$ and $p_M(\pi(\gamma z_0))$ both belong to the $R$-neighborhood of $\ell(\Rb)$. Then $\pi(z_0)$ and $\pi(\gamma z_0)$ are contained in the $R'$-neighborhood of $(\ell(+\infty),\ell(-\infty),\Rb)$, where $R'$ is a constant that only depends on $R$, the quasi-isometric constants of $p_M$ and the Morse lemma. We consider the set \[K = \{x\in \Fc(\Lambda,\zeta,X) : d(\pi(x),\pi(z_0))\leqslant R'\}~.\] By Lemma \ref{CompactPreimage}, $K$ is compact since it is the preimage under $\pi$ of a compact set in $\Fc(X)$. It is easy to see that $K$ satisfies (2). Finally, (3) follows from the fact that $p_M$ is a quasi-isometry.
\end{proof}

\subsection{Extended geometrically finite representations}
One of the alternative ways to describe Anosov representation is by the ``dynamics preserving'' property of the limit maps. The notion of extended geometrically finite representation of a relatively hyperbolic group into a semisimple Lie group $G$ was introduced by Weisman \cite{Weisman}, which generalized the notion of Anosov representation to relatively hyperbolic groups by considering the ``extended dynamics preserving'' property of the boundary extension. In the context of geometrically finite representations, the boundary extension is taken as a map from a subset $\Lambda$ of the flag manifold $G/P$ for some parabolic subgroup $P$ of $G$, since we also wish to look at the $\Gamma$-action not only on $\Lambda$, but also on the whole flag manifold through a representation.

For convenience, we still consider the case when $G = \SL(d,\Kb)$ with the flag manifold \[\Fc_{k,d-k} = \{(V,W)\in\Gr_k(\Kb^d)\times\Gr_{d-k}(\Kb^d): V\subset W\}~.\] We say that two element $(V_1, W_1), (V_2,W_2)\in \Fc_{k,d-k}$ are transverse if $V_1\oplus W_2 = V_2 \oplus W_1 = \Kb^d$. We say that two subsets $A,B\subset \Fc_{k,d-k}$ are transverse if any element of $A$ is transverse to any element of $B$.

Given a representation $\rho:\Gamma\to \SL(d,\Kb)$, $\rho(\Gamma)$ naturally acts on $\Fc_{k,d-k}$. 

\begin{definition}
    We say that a boundary extension $\zeta: \Lambda \to \partial (\Gamma, \Pc)$ is \emph{in $\Fc_{k,d-k}$}, if $\Lambda$ is a closed, $\rho(\Gamma)$-invariant subset of $\Fc_{k,d-k}$ and $\Gamma$ acts on $\Lambda$ through $\rho$. We say that $\zeta: \Lambda \to \partial (\Gamma, \Pc)$, a boundary extension in $\Fc_{k,d-k}$, is \emph{transverse} if for any $p\ne q \in\partial (\Gamma, \Pc)$, $\zeta^{-1}(p)$ and $\zeta^{-1}(q)$ are transverse.
\end{definition}

\begin{definition}\label{DefExtendedGeometricallyFinite}
    A representation $\rho:\Gamma\to \SL(d,\Kb)$ is \emph{extended geometrically finite} with boundary extension $\zeta: \Lambda \to \partial (\Gamma, \Pc)$ if
    \begin{itemize}
        \item[(1)] $\zeta: \Lambda \to \partial (\Gamma, \Pc)$ is a transverse boundary extension in $\Fc_{k,d-k}$.
        \item[(2)] $\zeta: \Lambda \to \partial (\Gamma, \Pc)$ \emph{extends the convergence dynamics}, that is, there exists a family $(C_p)_{p\in \partial (\Gamma, \Pc)}$ of open subsets of $\Fc_{k,d-k}$, such that $\Lambda\setminus \zeta^{-1}(p)\subset C_p$ for each $p\in \partial (\Gamma, \Pc)$, and if $(\gamma_n)_{n\in \Nb}$ is a sequence in $\Gamma$ with $\gamma_n\to p\in\partial(\Gamma,\Pc)$ and $\gamma_n^{-1}\to q\in \partial(\Gamma,\Pc)$ as $n\to +\infty$, then for any compact set $L\subset C_q$ and any open set $U\supset \zeta^{-1}(p)$, $\rho(\gamma_n)L\subset U$ when $n$ is large enough.
    \end{itemize}
\end{definition}

We simply say that $\rho:\Gamma\to \SL(d,\Kb)$ is extended geometrically finite relative to $\Pc$ if it is extended geometrically finite with some boundary extension of $(\Gamma,\Pc)$ in $\Fc_{k,d-k}$.

\begin{remark}
    Following \cite{Weisman} Proposition 4.5, we can always make the choice of the open set $C_p$ such that it is transverse to $\zeta^{-1}(p)$ for each $p\in \partial(\Gamma,\Pc)$.
\end{remark}

\begin{proposition}\cite[Proposition 4.8]{Weisman}\label{BoundaryExtensionRefinement}
    If a representation $\rho:\Gamma\to \SL(d,\Kb)$ is extended geometrically finite with boundary extension $\zeta: \Lambda \to \partial (\Gamma, \Pc)$ in $\Fc_{k,d-k}$, then there exists another boundary extension $\zeta': \Lambda' \to \partial (\Gamma, \Pc)$ in $\Fc_{k,d-k}$, with the following properties.
    \begin{itemize}
        \item[(1)] For any $p\in\partial_{con}(\Gamma,\Pc)$, $\zeta^{-1}(p)$ is a singleton;
        \item[(2)] For any $p\in\partial_{par}(\Gamma,\Pc)$, $\zeta^{-1}(p)$ is the closure of all accumulation points of $\gamma_nx$ where $(\gamma_n)_{n\in \Nb}$ is a sequence of pairwise distinct elements in $P$, the stabilizer of $p$, and $x\in C_p$ .
    \end{itemize}
    We say that a boundary extension $\zeta': \Lambda' \to \partial (\Gamma, \Pc)$ with the above properties is refined.
\end{proposition}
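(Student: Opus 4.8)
The plan is to build the refined extension as a \emph{subset of the given one}. For each $p\in\partial(\Gamma,\Pc)$ I will specify a set $\Lambda'_p\subseteq\zeta^{-1}(p)$, put $\Lambda':=\bigcup_{p\in\partial(\Gamma,\Pc)}\Lambda'_p\subseteq\Lambda$, and take $\zeta':=\zeta|_{\Lambda'}$. With this design most of the required properties are automatic: $\zeta'$ is continuous and $\Gamma$-equivariant because $\zeta$ is; $\zeta'$ is transverse because each fiber $\Lambda'_p$ lies inside the fiber $\zeta^{-1}(p)$ of the transverse map $\zeta$; and $\zeta'$ extends the convergence dynamics with the \emph{same} family $(C_p)_p$ as $\zeta$ --- which, invoking the remark preceding this proposition (\cite[Proposition 4.5]{Weisman}), I take so that $C_p$ is transverse to $\zeta^{-1}(p)$, so that $C_p\cap\zeta^{-1}(p)=\emptyset$ while $\Lambda\setminus\zeta^{-1}(p)\subseteq C_p$ --- since $\Lambda'\setminus(\zeta')^{-1}(p)\subseteq\Lambda\setminus\zeta^{-1}(p)\subseteq C_p$ and the dynamical condition only refers to $\rho(\gamma_n)$ acting on compact subsets of the $C_q$. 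Thus the only real content is: (a) each $\Lambda'_p$ is a nonempty subset of $\zeta^{-1}(p)$ (which also yields surjectivity of $\zeta'$); (b) $\Lambda'$ is closed in $\Fc_{k,d-k}$; (c) $\Lambda'_p$ is a single point when $p$ is conical; and that the parabolic fibers are exactly those in (2).

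For a parabolic point $p$ with stabilizer $P$, I would simply \emph{define} $\Lambda'_p$ to be the closure of the set of all accumulation points of sequences $(\rho(\gamma_n)x)_n$ with $(\gamma_n)_n\subset P$ pairwise distinct and $x\in C_p$; this is tautologically the fiber demanded by (2). That $\Lambda'_p\subseteq\zeta^{-1}(p)$ follows from the standard convergence-group fact that a sequence of pairwise distinct elements of the stabilizer of a bounded parabolic point $p$ converges to $p$ both forwards and backwards, so Definition \ref{DefExtendedGeometricallyFinite}(2), applied with $L=\{x\}$, places every accumulation point of $(\rho(\gamma_n)x)_n$ into the closed set $\zeta^{-1}(p)$; nonemptiness is immediate from compactness of $\Fc_{k,d-k}$ together with $C_p\supseteq\Lambda\setminus\zeta^{-1}(p)\neq\emptyset$ ($\partial(\Gamma,\Pc)$ being infinite).

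For a conical point $p$, I would fix a Gromov model $X$ with basepoint $o$ and use that conicality furnishes a sequence $(\gamma_n)_n$ in $\Gamma$ and a point $q\neq p$ with $\gamma_n\to p$, $\gamma_n^{-1}\to q$, and $(\gamma_n^{-1}o)_n$ tracking a geodesic ray to $p$ within bounded distance. Choosing any $x_0\in\Lambda$ with $\zeta(x_0)\notin\{p,q\}$ (hence $x_0\in C_q$), Definition \ref{DefExtendedGeometricallyFinite}(2) confines the accumulation points of $(\rho(\gamma_n)x_0)_n$ to $\zeta^{-1}(p)$, and I define $\Lambda'_p$ to be the collection of all such accumulation points over all admissible $(\gamma_n),q,x_0$. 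Proving that this set is a \emph{single} point $\xi(p)$ is the step I expect to be the main obstacle, and it is where conicality and transversality of the boundary extension genuinely enter. Concretely, I would first check that $\Lambda'_p$ is $\mathrm{Stab}_\Gamma(p)$-invariant (replacing $(\gamma_n)$ by $(h\gamma_n)$ is again admissible for $p$) and closed; then, given two admissible sequences $(\gamma_n),(\gamma'_n)$ for $p$, $\delta$-hyperbolicity of $X$ forces the rays they track to fellow-travel near $p$, so after reparametrizing their ``ratio'' $\gamma'_n\gamma_n^{-1}$ lies in a finite subset of $\Gamma$ and is eventually a constant $\mu$, whence $\lim\rho(\gamma'_n)x_0=\rho(\mu)\lim\rho(\gamma_n)x_0$ with both limits in $\zeta^{-1}(p)$, forcing $\mu p=p$ by $\Gamma$-equivariance of $\zeta$; thus $\Lambda'_p$ is a single $\mathrm{Stab}_\Gamma(p)$-orbit. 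Finally it collapses to a point: when $\mathrm{Stab}_\Gamma(p)$ is infinite it contains a loxodromic $g$ with $g^{+}=p$, and taking $(\gamma_n)=(g^n)$ gives $\rho(g)\xi(p)=\lim\rho(g^{n+1})x_0=\xi(p)$, so the closed, invariant, single-orbit set $\Lambda'_p$ equals $\{\xi(p)\}$; the case of finite $\mathrm{Stab}_\Gamma(p)$ follows from the same orbit/closedness bookkeeping together with the transversality of $(C_p)$ and the uniform-on-compacta clause.

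It then remains to see that $\Lambda'=\bigcup_p\Lambda'_p$ is closed. If $z_n\in\Lambda'_{p_n}$ and $z_n\to z$, then $z\in\Lambda$ (as $\Lambda$ is closed) and $p_n=\zeta(z_n)\to\zeta(z)=:p$; I would show $z\in\Lambda'_p$ by a diagonal argument --- for $p$ parabolic, threading the defining sequences of the $z_n$ into a single sequence of pairwise distinct elements of $\mathrm{Stab}(p)$ converging to $p$; for $p$ conical, combining the singleton statement just described with the uniform-on-compact-sets clause of Definition \ref{DefExtendedGeometricallyFinite}(2). This makes $\Lambda'$ a closed, $\rho(\Gamma)$-invariant subset of $\Fc_{k,d-k}$, $\zeta'=\zeta|_{\Lambda'}$ a transverse boundary extension that extends the convergence dynamics, and (1)--(2) hold by construction --- so $\zeta'$ is refined. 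The hard part, to reiterate, is (c): the passage from an \emph{a priori} accumulation set over conical points to an honest singleton, which is the only place the full strength of the extended-dynamics hypothesis and of transversality is used.
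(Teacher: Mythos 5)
The paper gives no proof of this statement; it is imported verbatim from \cite{Weisman} (Proposition 4.8), so your argument can only be judged on its own terms, and it has a genuine gap at its core. Your reduction ``$\zeta'$ extends the convergence dynamics with the \emph{same} family $(C_p)_p$ \dots since the dynamical condition only refers to $\rho(\gamma_n)$ acting on compact subsets of the $C_q$'' is not correct: in Definition \ref{DefExtendedGeometricallyFinite}(2) the attracting sets are the fibers of the boundary extension itself. For $\zeta'$ one must show that $\rho(\gamma_n)L\subset U$ eventually for \emph{every} open $U$ containing $(\zeta')^{-1}(p)$, and $(\zeta')^{-1}(p)$ is in general a proper subset of $\zeta^{-1}(p)$, so open sets around the refined fiber need not contain the old fiber and the hypothesis on $\zeta$ gives nothing directly. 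Verifying that an arbitrary sequence with $\gamma_n\to p$, $\gamma_n^{-1}\to q$ (not only sequences inside the parabolic subgroup, and not only conical ones) pushes compacta of $C_q$ into the shrunken fibers is exactly the content of Weisman's proposition; it is where all the work lies, and your proposal treats it as automatic. Relatedly, $\rho(\Gamma)$-invariance of $\Lambda'=\bigcup_p\Lambda'_p$ is never checked (it needs an equivariant choice of $(C_p)_p$, or an argument that the fibers do not depend on that choice), and closedness of $\Lambda'$ is only asserted via an unspecified diagonal argument.

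The conical-singleton step, which you correctly flag as the main obstacle, is also not established by the sketch given. Fellow-traveling of the orbit points bounds the \emph{right} difference: $d(\gamma_n o,\gamma'_{m(n)}o)\leqslant R$ gives $\gamma_n^{-1}\gamma'_{m(n)}=\mu$ in a finite set, hence $\rho(\gamma'_{m(n)})x_0=\rho(\gamma_n)\rho(\mu)x_0$; this compares $\rho(\gamma_n)$ applied to two different points of $C_q$ and does not yield $\lim\rho(\gamma'_n)x_0=\rho(\mu)\lim\rho(\gamma_n)x_0$ as you wrote (that would require controlling $\gamma'_n\gamma_n^{-1}$, which fellow-traveling does not give). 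Moreover, nothing in the argument shows that even a single sequence $(\rho(\gamma_n)x_0)_n$ has a one-point accumulation set --- the EGF hypothesis only confines it to $\zeta^{-1}(p)$ --- and the final collapse via a loxodromic $g$ with attracting fixed point $p$ is unavailable for the generic conical point, whose stabilizer is finite or trivial; deferring that case to ``orbit/closedness bookkeeping'' is precisely the missing argument. A correct proof has to use the full strength of the extended convergence dynamics together with transversality of $\zeta$ (as in \cite{Weisman}), not just hyperbolic fellow-traveling of orbits.
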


\begin{proposition}\cite[Proposition 4.6]{Weisman}\label{CriterionEGF}
    Let $\rho:\Gamma\to \SL(d,\Kb)$ be a representation and let $\zeta:\Lambda\to \partial(\Gamma,\Pc)$ be a transverse boundary extension in $\Fc_{k,d-k}$. Then $\rho$ is extended geometrically finite with boundary extension $\zeta:\Lambda\to \partial(\Gamma,\Pc)$ if and only if the following conditions hold.
    \begin{itemize}
        \item[(1)] If $(\gamma_n)_{n\in \Nb}\subset\Gamma$ is a sequence converging conically to a point in $\partial(\Gamma,\Pc)$, then $\dfrac{\sigma_{k}(\rho(\gamma_n))}{\sigma_{k+1}(\rho(\gamma_n))}\to +\infty$ as $n\to +\infty$ and \[\lim\limits_{n\to +\infty} (U_k(\rho(\gamma_n)),U_{d-k}(\rho(\gamma_n))) \in \Lambda~;\]
        \item[(2)] For any $p\in\partial_{par}(\Gamma,\Pc)$ with stabilizer $P\in\Pc^\Gamma$, there is a open set $C_p\subset \Fc_{k,d-k}$ that contains $\Lambda\setminus \zeta^{-1}(p)$, such that for any sequence of pairwise distinct elements $(\gamma_n)_{n\in \Nb}\subset P$, any compact subset $L\subset \Fc_{k,d-k}$ contained in $C_p$, and any open subset $U\subset\Fc_{k,d-k}$ containing $\zeta^{-1}(p)$, $\rho(\gamma_n)L\subset U$ for $n$ large enough.
    \end{itemize}
\end{proposition}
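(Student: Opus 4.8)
The plan is to prove the two implications separately; the forward one is the direct implication, while the converse, where the two "tested" convergence statements must be promoted to the full extended convergence dynamics, carries all the difficulty. For the forward implication, suppose $\rho$ is extended geometrically finite with boundary extension $\zeta\colon\Lambda\to\partial(\Gamma,\Pc)$ and family $(C_p)_p$, which by the Remark after Definition~\ref{DefExtendedGeometricallyFinite} we may assume chosen with each $C_p$ transverse to $\zeta^{-1}(p)$. Condition (2) is then immediate: if $(\gamma_n)\subset P$ is a sequence of pairwise distinct elements and $p$ is the parabolic point fixed by $P$, then, $p$ being a bounded parabolic point, it is the only accumulation point of $(\gamma_n)$ and of $(\gamma_n^{-1})$ in $\partial(\Gamma,\Pc)$, so Definition~\ref{DefExtendedGeometricallyFinite}(2) for this sequence is exactly (2). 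For condition (1), I would first invoke Proposition~\ref{BoundaryExtensionRefinement} to replace $\zeta$ by a refined boundary extension $\zeta'\colon\Lambda'\to\partial(\Gamma,\Pc)$ (still witnessing extended geometric finiteness, and with $\Lambda'\subseteq\Lambda$, since each fibre of $\zeta'$ consists of accumulation points already forced into $\Lambda$), so that $\zeta'^{-1}(p)$ is a single flag for every conical $p$. Given $(\gamma_n)$ converging conically to $p$, pass to a subsequence with $\gamma_n^{-1}\to q\neq p$. Taking a fixed closed ball $\bar B$ inside the open set $C'_q=\{F\in\Fc_{k,d-k}:F\text{ transverse to }\zeta'^{-1}(q)\}$, the extended convergence dynamics gives $\diam(\rho(\gamma_n)\bar B)\to 0$. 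By the standard lemma on the action of $\SL(d,\Kb)$ on $\Gr_k(\Kb^d)$ (a sequence collapsing a fixed open ball must have the $k$-th singular value gap tend to infinity --- via Cartan decompositions $\rho(\gamma_n)=a_n\Sigma_nb_n$, compactness of the orthogonal/unitary group, and the fact that a bounded ratio $\sigma_k/\sigma_{k+1}$ prevents strong attraction on $\Gr_k$), we get $\sigma_k(\rho(\gamma_n))/\sigma_{k+1}(\rho(\gamma_n))\to+\infty$; the same applied to $\gamma_n^{-1}$ (also efficient conical, to $q$) gives the complementary gap, so $U_k(\rho(\gamma_n))$ and $U_{d-k}(\rho(\gamma_n))$ are eventually defined, and evaluating the now strong Grassmannian dynamics on a point of $\Lambda'\cap C'_q$ and using continuity of $\zeta'$ identifies $\lim_n(U_k(\rho(\gamma_n)),U_{d-k}(\rho(\gamma_n)))$ with $\zeta'^{-1}(p)\in\Lambda'\subseteq\Lambda$, which is (1).

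For the converse, assume (1) and (2) and set $C_p=\{F\in\Fc_{k,d-k}:F\text{ transverse to }\zeta^{-1}(p)\}$, which is open and contains $\Lambda\setminus\zeta^{-1}(p)$ by transversality of $\zeta$. By Definition~\ref{DefExtendedGeometricallyFinite} I must show: for every sequence $(\gamma_n)$ with $\gamma_n\to p$ and $\gamma_n^{-1}\to q$ in $\partial(\Gamma,\Pc)$, every compact $L\subset C_q$, and every open $U\supset\zeta^{-1}(p)$, one has $\rho(\gamma_n)L\subset U$ for $n$ large. The first step is to establish this for the two tested kinds of sequence. When $(\gamma_n)\subset P$ consists of distinct elements (so $p=q$ is the fixed point of $P$) this is (2), once one checks that the $C_p$ above is an admissible choice of open set there. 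When the geodesics $[o,\gamma_no]$ in a Gromov model $X$ stay within bounded distance of a fixed ray to $p$ --- call these \emph{efficient} conical sequences --- condition (1) supplies the singular value gap, hence $U_k(\rho(\gamma_n))$, $U_{d-k}(\rho(\gamma_n))$ are eventually defined and converge, their limit lies in $\zeta^{-1}(p)$ by the continuity argument of the previous paragraph, and the strong Grassmannian dynamics of $\rho(\gamma_n)$ then give $\rho(\gamma_n)L\to\zeta^{-1}(p)$ uniformly on compacta of $C_q$ (for the uniformity one also uses that $(\gamma_n^{-1})$ is efficient conical converging to $q$, so the repelling data $U_{d-k}(\rho(\gamma_n^{-1}))$ converges into $\zeta^{-1}(q)$, to which $L\subset C_q$ is uniformly transverse).

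The second --- and decisive --- step is to reduce an arbitrary sequence $(\gamma_n)$, $\gamma_n\to p$, $\gamma_n^{-1}\to q$, to the tested ones using the geometry of $X$: the geodesic $[o,\gamma_no]$ fellow-travels a ray to $p$ for a time tending to $\infty$ and then runs along a ``tail'' which, like the earlier part of the geodesic, decomposes --- up to bounded error, hence into finitely many types after passing to a subsequence --- into thick-part segments of bounded type and deep excursions into horoballs. Accordingly I would factor $\gamma_n$ into a bounded number of elements, each lying in a fixed finite set, or a power of a parabolic element penetrating a horoball, or an efficient conical element, and then apply the tested estimates (2) and the efficient-conical estimate iteratively along this factorization, checking at each stage that the image compact set from the previous stage lands in the correct $C_{(\cdot)}$. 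These transversality hypotheses line up, because the ``$q$'' relevant at a given stage is the image of the repelling flag set of the next factor, and transversality of $\zeta$ together with the fact that the fixed points of consecutive parabolic/cuspidal factors are distinct makes the hypotheses compatible. \emph{The main obstacle will be uniformity}: the number of horoball excursions along $[o,\gamma_no]$ may grow with $n$, so iterating the tested estimates an unbounded number of times requires uniform (not merely pointwise) versions of (2) and of the efficient-conical estimate --- concretely, that a fixed compact neighbourhood of $\zeta^{-1}(p)$ receives every sufficiently deep parabolic element applied to a fixed compact subset of $C_p$, with the depth threshold independent of the element --- which I would extract from the openness of transversality, compactness of $\Lambda$, and a diagonal argument. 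Assembling these uniform estimates with the geodesic combinatorics of the relatively hyperbolic space is the technical heart of the proof.
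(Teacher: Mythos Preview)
The paper does not prove this proposition; it is quoted verbatim from \cite[Proposition~4.6]{Weisman} and no argument is given. So there is no in-paper proof to compare your attempt against, and I will assess the proposal on its own terms.

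Your forward direction is essentially correct. One small point: in deducing (1) you should not need to pass through Proposition~\ref{BoundaryExtensionRefinement}; a sequence converging \emph{conically} to $p$ already has $\gamma_n^{-1}\to q\neq p$, and the extended convergence dynamics applied to an open ball in $C_q$ together with Lemma~\ref{DivergenceLemma} (3)$\Rightarrow$(2) gives the singular value gap and identifies the limit flag directly.

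The converse is where the real content lies, and your sketch has a genuine gap at precisely the point you flag as ``the main obstacle''. You propose to factor $\gamma_n$ along the successive horoball excursions of $[o,\gamma_n o]$ and then iterate the tested estimates (1) and (2) along this chain; you then correctly observe that the number of excursions may be unbounded in $n$, and fall back on hoping to extract \emph{uniform} versions of the tested estimates by ``openness of transversality, compactness of $\Lambda$, and a diagonal argument''. This does not work as written: condition (2) only asserts that for each fixed compact $L$ and open $U$ one has $\rho(\gamma_n)L\subset U$ eventually --- it gives no metric contraction and no rate --- and an unbounded composition of maps each of which merely ``eventually lands in $U$'' need not land in $U$. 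Your diagonal/compactness appeal does not manufacture the missing contraction.

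The repair is not uniformity but a \emph{bounded} factorization. Because $\gamma_n\to p$ and $\gamma_n^{-1}\to q$, thin-triangle considerations force $[o,\gamma_n o]$ to be covered, up to bounded error, by an initial segment fellow-travelling $[o,p)$ and a terminal segment which (after pulling back by $\gamma_n^{-1}$) fellow-travels $[o,q)$; there is no long uncontrolled middle, regardless of how many horoballs the geodesic visits. Choosing $\alpha_n$ with $\alpha_n o$ near the junction gives $\gamma_n=\alpha_n\beta_n$ where each of $\alpha_n$ and $\beta_n^{-1}$ is, after bounded correction, of tested type (efficient conical, or eventually in a single parabolic subgroup, according to whether $p$, respectively $q$, is conical or parabolic). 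Two applications of the tested dynamics, with one transversality check at the handoff, then establish the extended convergence. Your iterated scheme is working harder than necessary and, as you yourself suspect, does not close.
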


If $\zeta: \Lambda \to \partial (\Gamma, \Pc)$ is a refined boundary extension in $\Fc_{k,d-k}$, we can identify $\partial_{con}(\Gamma,\Pc)$ as a subset of $\Lambda$ that contains all the compactly attained conical limit points. Therefore, we can easily show the following proposition.

\begin{proposition}
    If $\zeta: \Lambda \to \partial (\Gamma, \Pc)$ is a refined boundary extension in $\Fc_{k,d-k}$, then there is a natural one-to-one correspondence between the collection of cocompact subflows of $\Fc(\Lambda,\zeta,X)$ and the collection of cocompact subflows of $\Fc(X)$ through $\pi:\Fc(\Lambda,\zeta,X)\to \Fc(X)$.
\end{proposition}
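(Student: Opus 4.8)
The plan is to show that $\Psi\mapsto\pi^{-1}(\Psi)$ and $\Psi'\mapsto\pi(\Psi')$ are mutually inverse bijections between the set of cocompact subflows of $\Fc(X)$ and the set of cocompact subflows of $\Fc(\Lambda,\zeta,X)$. I will use that $\pi$ is continuous, surjective, $\Gamma$-equivariant, proper (Lemma \ref{CompactPreimage}), and intertwines the flows; that $\Fc(X)$ is locally compact Hausdorff and carries a properly discontinuous $\Gamma$-action (Theorem \ref{MineyevFlowSpace}); and the elementary fact that a flow-invariant subset of $\Fc(\Lambda,\zeta,X)=(\Lambda,\zeta)^{(2)}\times\Rb$ has the form $F'\times\Rb$, with $\pi(F'\times\Rb)=\pi_0(F')\times\Rb$, where $\pi_0\colon(\Lambda,\zeta)^{(2)}\to\partial^{(2)}X$ is $(x,y)\mapsto(\zeta(x),\zeta(y))$.

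First I would check that both maps preserve cocompactness. If $\Psi\subset\Fc(X)$ is a cocompact subflow, pick a compact $K_0\subset\Psi$ with $\Gamma K_0=\Psi$; then $\pi^{-1}(\Psi)$ is closed, flow- and $\Gamma$-invariant, $\pi^{-1}(K_0)$ is compact by Lemma \ref{CompactPreimage}, and $\Gamma\,\pi^{-1}(K_0)=\pi^{-1}(\Gamma K_0)=\pi^{-1}(\Psi)$, so $\pi^{-1}(\Psi)$ is a cocompact subflow of $\Fc(\Lambda,\zeta,X)$, while $\pi(\pi^{-1}(\Psi))=\Psi$ by surjectivity. Conversely, if $\Psi'\subset\Fc(\Lambda,\zeta,X)$ is a cocompact subflow, pick a compact $K'$ with $\Gamma K'=\Psi'$; then $\pi(K')$ is compact and $\pi(\Psi')=\Gamma\,\pi(K')$, which is closed (the $\Gamma$-saturation of a compact set under a properly discontinuous action on a locally compact Hausdorff space is closed; equivalently, the proper map $\pi$ is closed), flow- and $\Gamma$-invariant, hence a cocompact subflow of $\Fc(X)$.

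It remains to prove $\pi^{-1}(\pi(\Psi'))=\Psi'$ for every cocompact subflow $\Psi'=F'\times\Rb$ of $\Fc(\Lambda,\zeta,X)$; the inclusion $\supseteq$ is clear, and the reverse is the only place the hypothesis that $\zeta$ is \emph{refined} is needed. By the previous step, $\pi(\Psi')=\pi_0(F')\times\Rb$ is a cocompact subflow of $\Fc(X)$, so by the description of cocompact subflows of $\Fc(X)$ recalled in Section \ref{SectionResultW23} we have $\pi_0(F')\subset\partial_{cc}^{(2)}X$; in particular $\zeta(x),\zeta(y)\in\partial_{con}(\Gamma,\Pc)$ for every $(x,y)\in F'$. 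Since $\zeta$ is refined, $\zeta^{-1}(q)$ is a singleton for each conical limit point $q$, so $\zeta^{-1}(\zeta(x))=\{x\}$ and $\zeta^{-1}(\zeta(y))=\{y\}$. Hence if $(x',y')\in(\Lambda,\zeta)^{(2)}$ and $\pi_0(x',y')=\pi_0(x,y)$ for some $(x,y)\in F'$, then $x'=x$ and $y'=y$, so $(x',y')\in F'$; that is, $\pi_0^{-1}(\pi_0(F'))=F'$, i.e.\ $\pi^{-1}(\pi(\Psi'))=\Psi'$. Combined with $\pi(\pi^{-1}(\Psi))=\Psi$ from the second paragraph, the two assignments are mutually inverse, which is exactly the claimed correspondence through $\pi$.

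I expect the only real obstacle to be this last step — more precisely, the observation that cocompactness of $\Psi'$ forces $\pi_0(F')$ to lie in $\partial_{cc}^{(2)}X$, so that the flag endpoints project to conical limit points and refinedness collapses each fiber to a point; the remaining verifications are purely formal once $\pi$ is known to be proper and $\Gamma$-equivariant. All the inputs needed — properness of $\pi$, proper discontinuity of the $\Gamma$-actions, the $F\times\Rb$ normal form for cocompact subflows of $\Fc(X)$ with $F\subset\partial_{cc}^{(2)}X$, and the singleton-fiber property of refined boundary extensions over $\partial_{con}(\Gamma,\Pc)$ — are available in the material above.
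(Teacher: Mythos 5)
Your argument is correct and follows exactly the route the paper intends (the paper only sketches it): cocompact subflows of $\Fc(X)$ have endpoint pairs in $\partial_{cc}^{(2)}X$, refinedness makes $\zeta$ a singleton over conical points so $\pi$ is injective over such subflows, and properness of $\pi$ (Lemma \ref{CompactPreimage}) together with $\Gamma$-equivariance transfers cocompactness both ways. Nothing essential is missing.
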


Then by Theorem \ref{TheoremCompactSubflow} (1)$\Leftrightarrow$(3), we deduce that

\begin{corollary}
    If $\rho$ is extended geometrically finite with a refined boundary extension $\zeta: \Lambda \to \partial (\Gamma, \Pc)$ in $\Fc_{k,d-k}$, then $\rho$ is $P_k$-Anosov in restriction to any cocompact subflow of $\Fc(X)$, and hence $P_k$-Anosov in restriction to any cocompact subflow of $\Fc(\Lambda,\zeta,X)$.
\end{corollary}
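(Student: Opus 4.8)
The plan is to check condition~(3) of Theorem~\ref{TheoremCompactSubflow} for every cocompact subflow of $\Fc(X)$ and then to transfer the conclusion to $\Fc(\Lambda,\zeta,X)$ through the one-to-one correspondence of cocompact subflows recorded just above. Fix a cocompact subflow $F\times\Rb\subset\Fc(X)$, together with the compact set $K$ and base point $z_0$ as in Section~\ref{SectionResultW23}; recall that $F$ is a closed $\Gamma$-invariant subset of $\partial_{cc}^{(2)}X$, so $\Pi_1(F),\Pi_2(F)\subset\partial_{cc}X\subset\partial_{con}(\Gamma,\Pc)$. Since $\zeta$ is refined, $\zeta^{-1}(p)$ is a singleton for each $p\in\partial_{con}(\Gamma,\Pc)$; we may therefore \emph{define} $(\xi^k(p),\xi^{d-k}(p))\in\Fc_{k,d-k}$ to be the unique flag of $\zeta^{-1}(p)$, and take $\xi^k$ (resp.\ $\xi^{d-k}$) to be its $\Gr_k(\Kb^d)$-component on $\Pi_1(F)$ (resp.\ its $\Gr_{d-k}(\Kb^d)$-component on $\Pi_2(F)$). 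Continuity is a compactness argument: if $p_n\to p$, pass to a subsequence so that the flags $\zeta^{-1}(p_n)$ converge in the compact space $\Lambda$ to some $\ell$; continuity of $\zeta$ gives $\zeta(\ell)=\lim\zeta(\zeta^{-1}(p_n))=p$, hence $\ell=\zeta^{-1}(p)$, and as this holds along every subsequence the maps are continuous. Transversality on $F$ is immediate: if $(x,y)\in F$ then $x\neq y$ are conical, so transversality of $\zeta$ makes $\zeta^{-1}(x)$ and $\zeta^{-1}(y)$ transverse, i.e.\ $\xi^k(x)\oplus\xi^{d-k}(y)=\Kb^d$.

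The crux is strong dynamics preservation on $F$. Let $(\gamma_n)\subset\Gamma^+_{F,K}$ with $\gamma_n\to x\in\Pi_1(F)$ and $\gamma_n^{-1}\to y\in\Pi_2(F)$. The elements of $\Gamma^+_{F,K}$ track flow lines of $F\times\Rb$, which project under $p_M$ to uniform quasi-geodesics staying in a bounded neighborhood of $X^{th}$; hence $(\gamma_n)$ converges conically, and Proposition~\ref{CriterionEGF}(1) gives $\sigma_{k+1}(\rho(\gamma_n))/\sigma_k(\rho(\gamma_n))\to 0$. Using $\sigma_i(g^{-1})=\sigma_{d+1-i}(g)^{-1}$, this is the same as $\sigma_{d-k+1}(\rho(\gamma_n^{-1}))/\sigma_{d-k}(\rho(\gamma_n^{-1}))\to 0$. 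On the other hand, Definition~\ref{DefExtendedGeometricallyFinite}(2) applied to $(\gamma_n)$ and to $(\gamma_n^{-1})$ (with $C_x,C_y$ as in that definition) gives $\rho(\gamma_n)\ell\to\zeta^{-1}(x)$ for every $\ell\in C_y$ and $\rho(\gamma_n^{-1})\ell'\to\zeta^{-1}(y)$ for every $\ell'\in C_x$, the singletons being approached because $x,y$ are conical.

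Next I identify the relevant singular-value limits. Passing to a subsequence along which $U_k(\rho(\gamma_n))\to V^+$ and $U_{d-k}(\rho(\gamma_n)^{-1})\to W^-$, pick a flag $(V_\ell,W_\ell)\in C_y$ with $V_\ell$ transverse to $W^-$; this is possible because $C_y$ is nonempty (it contains the flag $\zeta^{-1}(x)\in\Lambda\setminus\zeta^{-1}(y)$) and open, and its projection to $\Gr_k(\Kb^d)$ is open, hence meets the dense locus of $k$-planes transverse to $W^-$. The standard singular-value contraction estimate, together with $\sigma_{k+1}(\rho(\gamma_n))/\sigma_k(\rho(\gamma_n))\to 0$ and $d\big(V_\ell,U_{d-k}(\rho(\gamma_n)^{-1})\big)$ bounded away from $0$, yields $\rho(\gamma_n)V_\ell\to V^+$; comparing with the $\Gr_k(\Kb^d)$-component of $\rho(\gamma_n)(V_\ell,W_\ell)\to\zeta^{-1}(x)=(\xi^k(x),\xi^{d-k}(x))$ forces $V^+=\xi^k(x)$, so $U_k(\rho(\gamma_n))\to\xi^k(x)$. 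The symmetric argument for $(\gamma_n^{-1})$ on $\Gr_{d-k}(\Kb^d)$, using the index-$(d-k)$ gap above and a flag in $C_x$ whose $(d-k)$-plane is transverse to $\lim U_k(\rho(\gamma_n))$, gives $U_{d-k}(\rho(\gamma_n)^{-1})=U_{d-k}(\rho(\gamma_n^{-1}))\to\xi^{d-k}(y)$. Finally, for any compact $Q\subset\Gr_k(\Kb^d)$ transverse to $\xi^{d-k}(y)$ the contraction estimate is uniform in $V\in Q$, since $d\big(V,U_{d-k}(\rho(\gamma_n)^{-1})\big)$ is bounded away from $0$ uniformly on $Q$ for $n$ large; hence $\rho(\gamma_n)V\to\xi^k(x)$ uniformly on $Q$, which is strong dynamics preservation on $F$.

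By Theorem~\ref{TheoremCompactSubflow}, $(3)\Rightarrow(1)$, $\rho$ is $P_k$-Anosov in restriction to $F\times\Rb$, proving the first assertion. For the second, a cocompact subflow $\widetilde F\times\Rb\subset\Fc(\Lambda,\zeta,X)$ corresponds under the preceding proposition to $F\times\Rb:=\pi(\widetilde F\times\Rb)\subset\Fc(X)$, and $\pi$ restricts to a $\Gamma$-equivariant, flow-equivariant homeomorphism $\widetilde F\times\Rb\to F\times\Rb$ (a proper continuous $\Gamma$-equivariant surjection that is injective, since $\zeta$ is injective over $\partial_{con}(\Gamma,\Pc)$, which carries every cocompact subflow); hence the two $\rho$-twisted flat bundles are isomorphic as flows and a dominated splitting of rank $k$ transfers. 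The main obstacle is the identification step of the third paragraph: ``extending the convergence dynamics'' only yields convergence in $\Fc_{k,d-k}$, whereas strong dynamics preservation on $F$ is a statement in $\Gr_k(\Kb^d)$, and bridging the two requires pinning the singular-value limits of $\rho(\gamma_n)$ and $\rho(\gamma_n^{-1})$ to $\zeta^{-1}(x)$ and $\zeta^{-1}(y)$ via Proposition~\ref{CriterionEGF}(1) and the contraction estimates.
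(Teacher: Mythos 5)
Your proposal is correct and follows the paper's (largely implicit) route: it builds the limit maps on $\Pi_1(F),\Pi_2(F)$ from the singleton fibers of the refined extension over conical points, verifies condition (3) of Theorem \ref{TheoremCompactSubflow} using Proposition \ref{CriterionEGF} together with the convergence dynamics and the standard singular-value estimates, and then transfers the conclusion to $\Fc(\Lambda,\zeta,X)$ via the cocompact-subflow correspondence. This is exactly the deduction the paper intends when it cites Theorem \ref{TheoremCompactSubflow} (1)$\Leftrightarrow$(3); you have simply written out the details it leaves to the reader.
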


\subsection{Asymptotically embedded and relatively Anosov representations}\label{SectionRelationAsymptoticallyEmbeddedandandRelativelyAnosov}
The notion of Asymptotically embedded representation introduced by Kapovich--Leeb \cite{KL}, and the notion of relatively Anosov representation introduced by Zhu--Zimmer \cite{ZZ1} are also meant to generalize Anosov representations for relatively hyperbolic groups. It was shown in \cite{ZZ1} that they are equivalent. Then Weisman \cite{Weisman} showed that asymptotically embedded is a special case of extended geometrically finite when the boundary extension is a homeomorphism. We provide the details in this section.

Recall that a sequence $(g_n)_{n\in \Nb}\subset \SL(d,\Kb)$ is \emph{$P_k$-divergent} if \[\lim\limits_{n\to +\infty} \dfrac{\sigma_{k}(g_n)}{\sigma_{k+1}(g_n)}=+\infty~,\] and representation $\rho:\Gamma\to \SL(d,\Kb)$ is \emph{$P_k$-divergent} if for any sequence of pairwise distinct elements $(\gamma_n)_{n\in \Nb}\subset\Gamma$, $(\rho(\gamma_n) )_{n\in \Nb}\subset \SL(d,\Kb)$ is a $P_k$-divergent sequence. In case $\rho:\Gamma\to \SL(d,\Kb)$ is a $P_k$-divergent, the \emph{limit set} of $\rho$ in $\Fc_{k,d-k}$ is given to be \[\Lc(\rho) = \{ \lim\limits_{n\to +\infty} (U_k(\rho(\gamma_n)),U_{d-k}(\rho(\gamma_n))): (\gamma_n)_{n\in \Nb}\text{ a sequence of}\] \[\text{pairwise distinct elements in }\Gamma\}\subset \Fc_{k,d-k}~.\]

\begin{definition}\cite[Definition 7.1]{KL}\label{DefAsymptoticallyEmbedded}
    A representation $\rho:\Gamma\to \SL(d,\Kb)$ is $P_k$-asymptotically embedded relative to $\Pc$ if $\rho$ is $P_k$-divergent, and there exists a $\rho$-equivariant homeomorphism, called the \emph{limit map}, \[\xi:\partial(\Gamma,\Pc)\to \Lc(\rho)\] such that if $p\ne q\in \partial(\Gamma,\Pc)$, $\xi(p)$ and $\xi(q)$ are transverse.
\end{definition}
\begin{remark}\label{RelationAsymptoticallyEmbeddedandEGF}
    In the context of the definition, the map $\xi^{-1}:\Lc(\rho)\to \partial(\Gamma,\Pc)$ is a transverse boundary extension in $\Fc_{k,d-k}$. Then we have that a representation $\rho:\Gamma\to \SL(d,\Kb)$ is $P_k$-asymptotically embedded relative to $\Pc$ with limit map $\xi:\partial(\Gamma,\Pc)\to \Lc(\rho)$ if and only if $\rho$ is extended geometrically finite with a homeomorphic, transverse boundary extension $\xi^{-1}:\Lc(\rho)\to \partial(\Gamma,\Pc)$ in $\Fc_{k,d-k}$ (see \cite[Theorem 1.7]{Weisman}). In general, we will see in Proposition \ref{RefinedtoMinimal} that for a representation $\rho:\Gamma\to \SL(d,\Kb)$ that is $P_k$-divergent and extended geometrically finite relative to $\Pc$, a canonical choice of the boundary extension is also a $\rho$-equivariant map from $\Lc(\rho)$ to $\partial(\Gamma,\Pc)$.
\end{remark}

The definition of relatively Anosov representation is similar to asymptotically embedded representation. Instead of asking the representation to be divergent and the limit map to be a homeomorphism to its image, it requires the strongly dynamics preserving property.

\begin{definition}\cite[Definition 1.1]{ZZ1}\label{DefRelativelyAnosov}
    A representation $\rho:\Gamma\to \SL(d,\Kb)$ is $P_k$-Anosov relative to $\Pc$ if there exists a continuous, $\rho$-equivariant map \[\xi:\partial(\Gamma,\Pc)\to \Fc_{k,d-k}\] that is
    \begin{itemize}
        \item[(1)] (transverse) For any $p\ne q\in \partial(\Gamma,\Pc)$, $\xi(p)$ and $\xi(q)$ are transverse.
        \item[(2)] (strongly dynamics preserving) For any sequence of pairwise distinct elements $(\gamma_n)_{n\in \Nb}\subset \Gamma$ with $\gamma_n\to p$ and $\gamma_n^{-1}\to q$ as $n\to +\infty$, any compact subset $L\subset \Fc_{k,d-k}$ transverse to $\xi(y)$, and any open subset $U\subset \Fc_{k,d-k}$ that contains $\xi(x)$ $\rho(\gamma_n)L \subset U$ for $n$ large enough.
    \end{itemize}
\end{definition}

It is not hard to see that these two definitions are equivalent. Actually, if a representation $\rho:\Gamma\to \SL(d,\Kb)$ is $P_k$-asymptotically embedded relative to $\Pc$, the limit map is strongly dynamics preserving directly following from Lemma \ref{DivergenceLemma} (2)$\Rightarrow$(1). If $\rho$ is $P_k$-Anosov relative to $\Pc$, then the limit map is a homeomorphism since it is continuous and transverse, and $\rho$ is $P_k$-divergent which follows Lemma \ref{DivergenceLemma} (1)$\Rightarrow$(2). See \cite[Proposition 4.4]{ZZ1} for a detailed proof.

\section{Extended geometric finiteness via flows}\label{Section2to1}
We show that (2) implies (1) in Theorem \ref{MainTheorem} in this section. 

We recall the following lemmas for preparation. Let $d_{\angle}$ be the angle distance on $\Gr_k(\Kb^d)$ or $\Gr_{d-k}(\Kb^d)$ with respect to the standard metric on $\Kb^d$.

\begin{lemma}\cite[Lemma A.4, Lemma A.5]{BPS}\label{EstimationSingularValues}
    Let $g,h$ be two invertible matrices. If $g$ and $gh$ have singular value gaps at index $k$. Then
    \begin{itemize}
        \item[(1)] $d_{\angle}(U_k(g),U_k(gh))\leqslant \norm{h} \norm{h^{-1}} \dfrac{\sigma_{k+1}(g)}{\sigma_k(g)};$
        \item[(2)] $d_{\angle}(g U_k(h),U_k(gh))\leqslant \norm{g} \norm{g^{-1}} \dfrac{\sigma_{k+1}(h)}{\sigma_k(h)}.$
    \end{itemize}
\end{lemma}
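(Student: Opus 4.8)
\emph{Proof proposal.} The plan is to deduce both inequalities from a single observation: for subspaces $V,W\subseteq\Kb^d$ of the \emph{same} dimension, the one-sided quantity $\max\{\norm{\pi_{W^{\perp}}z}:z\in V,\ \norm{z}=1\}$ (with $\pi_{W^{\perp}}$ the orthogonal projection onto $W^{\perp}$, so that $\norm{\pi_{W^{\perp}}z}=\dist(z,W)$) is symmetric in $V$ and $W$, is unchanged under replacing $V,W$ by $V^{\perp},W^{\perp}$, and equals $d_{\angle}(V,W)$. Hence for (1) it is enough to bound $\norm{\pi_{U_k(g)}z}$ for a unit vector $z\in U_k(gh)^{\perp}$, and for (2) to bound $\norm{\pi_{U_k(gh)}z}$ for a unit vector $z\in(gU_k(h))^{\perp}$. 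In each case the point is that the relevant orthogonal complement is spanned by the bottom $d-k$ left singular vectors of the matrix in question (or the $(g^{*})^{-1}$-image of such a span), on which the corresponding adjoint operator is strongly contracting.

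For (1), I would first observe that if $z\in U_k(gh)^{\perp}$ is a unit vector then $\norm{(gh)^{*}z}\leqslant\sigma_{k+1}(gh)$, since $U_k(gh)^{\perp}$ is spanned by left singular vectors of $gh$ whose singular values are $\sigma_{k+1}(gh)\geqslant\cdots\geqslant\sigma_{d}(gh)$. Writing $(gh)^{*}=h^{*}g^{*}$ and combining $\norm{h^{*}g^{*}z}\geqslant\sigma_{d}(h)\norm{g^{*}z}=\norm{g^{*}z}/\norm{h^{-1}}$ with the singular value inequality $\sigma_{k+1}(gh)\leqslant\sigma_{k+1}(g)\norm{h}$, one gets $\norm{g^{*}z}\leqslant\norm{h}\,\norm{h^{-1}}\,\sigma_{k+1}(g)$. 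Now decompose $z=z'+z''$ with $z'\in U_k(g)$ and $z''\in U_k(g)^{\perp}$. Since $g^{*}$ sends $U_k(g)$ and $U_k(g)^{\perp}$ into the mutually orthogonal subspaces spanned by the top $k$, respectively the bottom $d-k$, right singular vectors of $g$, we have $\norm{g^{*}z'}\leqslant\norm{g^{*}z}$; and since $g^{*}$ expands every vector of $U_k(g)$ by at least $\sigma_{k}(g)$, this yields $\norm{z'}\leqslant\norm{g^{*}z}/\sigma_{k}(g)\leqslant\norm{h}\,\norm{h^{-1}}\,\sigma_{k+1}(g)/\sigma_{k}(g)$, which is exactly the bound on $\norm{\pi_{U_k(g)}z}$ needed.

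For (2), the argument runs in parallel. From $(gU_k(h))^{\perp}=(g^{*})^{-1}\big(U_k(h)^{\perp}\big)$ one sees that for a unit vector $z\in(gU_k(h))^{\perp}$ the vector $w:=g^{*}z$ lies in $U_k(h)^{\perp}$, so $\norm{h^{*}w}\leqslant\sigma_{k+1}(h)\norm{w}$; hence $\norm{(gh)^{*}z}=\norm{h^{*}w}\leqslant\sigma_{k+1}(h)\norm{g^{*}z}\leqslant\sigma_{k+1}(h)\norm{g}$. Splitting $z=z'+z''$ along $U_k(gh)\oplus U_k(gh)^{\perp}$ and using that $(gh)^{*}$ expands $U_k(gh)$ by at least $\sigma_{k}(gh)$, together with $\sigma_{k}(gh)\geqslant\sigma_{k}(h)/\norm{g^{-1}}$ (which comes from $h=g^{-1}(gh)$ and the same singular value inequality), gives $\norm{z'}\leqslant\norm{(gh)^{*}z}/\sigma_{k}(gh)\leqslant\norm{g}\,\norm{g^{-1}}\,\sigma_{k+1}(h)/\sigma_{k}(h)$, as required.

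The computation is very short once one has passed to orthogonal complements, so the step I would be most careful about is the reduction in the first paragraph --- that it suffices to estimate the one-sided distance between the \emph{complementary} subspaces --- which rests on the standard facts that, for subspaces of equal dimension, the largest principal angle is symmetric in its two arguments and is unaffected by taking orthogonal complements. No separate treatment of degenerate cases is required: whenever the right-hand side of either inequality is at least $1$ the inequality is automatic from $\norm{z}=1$, and otherwise it is precisely the content of the estimate above. (This essentially reproduces the argument of \cite[Appendix~A]{BPS}.)
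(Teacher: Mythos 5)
Your argument is correct: the paper gives no proof of this lemma (it is quoted directly from \cite{BPS}), and your SVD-based estimates --- bounding $\norm{g^{*}z}$ (resp.\ $\norm{(gh)^{*}z}$) for unit vectors in the complementary singular subspaces and converting this, via the expansion by $\sigma_k$ and orthogonality of the images of $U_k$ and $U_k^{\perp}$, into a bound on the projection onto $U_k(g)$ (resp.\ $U_k(gh)$) --- are exactly the standard computation behind Lemmas A.4--A.5 of \cite{BPS}. The only convention-dependent point is your identification of $d_{\angle}$ with the sine of the largest principal angle (the gap metric); any of the usual comparable metrics on the Grassmannian changes the bound only by a universal factor, which is immaterial for every use of the lemma in this paper.
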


The following are the key lemmas related to the strongly dynamics preserving property.

\begin{lemma}\textnormal{(see} \cite[Lemma 2.2]{CZZ} \textnormal{or} \cite[Lemma 4.19]{KLP17}\textnormal{)}\label{DivergenceLemma}
    Let $V_0\in \Gr_k(\Kb^d)$, $W_0\in \Gr_{d-k}(\Kb^d)$ and $(g_n)_{n\in \Nb}$ a sequence in $\SL(d,\Kb)$. Then the following are equivalent.
    \begin{itemize}
        \item[(1)] $g_n V \to V_0$ as $n\to +\infty$ for any $V\in \Gr_k(\Kb^d)$ transverse to $W_0$ with the convergence uniform on any compact subset of $\Gr_k(\Kb^d)$ that transverse to $W_0$.
        \item[(2)] $\dfrac{\sigma_{k}(g_n)}{\sigma_{k+1}(g_n)}\to +\infty$, $U_k(g_n)\to V_0$ and $U_{d-k}(g_n^{-1})\to W_0$ as $n\to +\infty$
        \item[(3)] There are open set $O\subset Gr_k(\Kb^d)$ and $O'\subset Gr_{d-k}(\Kb^d)$ such that $g_n V \to V_0$ and $g_n^{-1}W \to W_0$ as $n\to +\infty$, for any $V\in O$ and $W\in O'$.
    \end{itemize}
\end{lemma}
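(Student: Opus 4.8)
The plan is to prove the cycle of implications $(2)\Rightarrow(1)$, $(1)\Rightarrow(2)$, $(2)\Rightarrow(3)$, $(3)\Rightarrow(2)$, where $(2)\Rightarrow(1)$ carries the analytic content and the remaining implications are compactness-and-contradiction arguments resting on it. Throughout I write the Cartan ($KAK$) decomposition $g_n=k_na_nl_n$, with $k_n,l_n$ in a maximal compact subgroup and $a_n=\diag(\sigma_1(g_n),\dots,\sigma_d(g_n))$, so that $U_k(g_n)=k_n\langle e_1,\dots,e_k\rangle$ and $U_{d-k}(g_n^{-1})=l_n^{-1}\langle e_{k+1},\dots,e_d\rangle$.

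For $(2)\Rightarrow(1)$: fix a compact set $C\subset\Gr_k(\Kb^d)$ of $k$-planes transverse to $W_0$. Since the angle function is continuous and $U_{d-k}(g_n^{-1})\to W_0$, there are $\theta_0>0$ and $N\in\Nb$ with $d_\angle(V,U_{d-k}(g_n^{-1}))\geqslant\theta_0$ for all $V\in C$ and $n\geqslant N$; equivalently, $l_nV$ is the graph of a linear map $\langle e_1,\dots,e_k\rangle\to\langle e_{k+1},\dots,e_d\rangle$ whose operator norm is bounded in terms of $\theta_0$. Pushing this graph forward by $a_n$ multiplies its norm by at most $\sigma_{k+1}(g_n)/\sigma_k(g_n)$, so — in the spirit of Lemma \ref{EstimationSingularValues} — one obtains $d_\angle(g_nV,U_k(g_n))\leqslant C(\theta_0)\,\sigma_{k+1}(g_n)/\sigma_k(g_n)$ uniformly for $V\in C$. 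As the right-hand side tends to $0$ and $U_k(g_n)\to V_0$, this gives $g_nV\to V_0$ uniformly on $C$, which is $(1)$.

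For $(1)\Rightarrow(2)$: I first claim that $\sigma_k(g_n)/\sigma_{k+1}(g_n)\to+\infty$. If not, pass to a subsequence along which this ratio stays bounded, $k_n\to k_\infty$, $l_n\to l_\infty$, and each normalized singular value $\sigma_i(g_n)/\sigma_k(g_n)$ converges in $[0,+\infty]$. Splitting the indices according to whether this limit is $+\infty$, a finite positive number, or $0$, and tracking the induced behaviour of the diagonal part on $\Gr_k(\Kb^d)$, one checks that — precisely because the $(k+1)$-st normalized singular value does not tend to $0$ — the limiting transformation does not collapse the $(k+1)$-st coordinate direction, so $V\mapsto\lim_n g_nV$ is well-defined and non-constant on a dense open subset of $\Gr_k(\Kb^d)$; this contradicts that it equals the constant $V_0$ on an open set. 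Hence the gap diverges, and we may pass to a subsequence with $U_k(g_n)\to V_0'$ and $U_{d-k}(g_n^{-1})\to W_0'$. Applying $(2)\Rightarrow(1)$ along this subsequence gives $g_nV\to V_0'$ for every $V$ transverse to $W_0'$, whence $V_0=V_0'$ on the dense open locus of planes transverse to both $W_0$ and $W_0'$. If $W_0\ne W_0'$, choose a line $L_n\subset U_{d-k}(g_n^{-1})$ converging to a line of $W_0'$ not contained in $W_0$, and extend each $L_n$ to a $k$-plane $V_n$ staying at angle $\geqslant\eta>0$ from $W_0$; then $g_nV_n\supseteq g_nL_n\subset k_n\langle e_{k+1},\dots,e_d\rangle$, whose accumulation points lie in a complement of $\lim U_k(g_n)=V_0$, so $g_nV_n\not\to V_0$, contradicting the uniform convergence in $(1)$. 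Thus $W_0=W_0'$, and since every subsequence has a further subsequence producing $(V_0,W_0)$, we get $U_k(g_n)\to V_0$ and $U_{d-k}(g_n^{-1})\to W_0$, i.e.\ $(2)$.

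For $(2)\Rightarrow(3)$ and $(3)\Rightarrow(2)$: given $(2)$, apply $(2)\Rightarrow(1)$ once to $g_n$ at index $k$ and once to $g_n^{-1}$ at index $d-k$ (using $\sigma_{d-k}(g_n^{-1})/\sigma_{d-k+1}(g_n^{-1})=\sigma_k(g_n)/\sigma_{k+1}(g_n)\to+\infty$, $U_{d-k}(g_n^{-1})\to W_0$ and $U_k(g_n)\to V_0$) to obtain $(3)$ with $O=\{V:V\text{ transverse to }W_0\}$ and $O'=\{W:W\text{ transverse to }V_0\}$. Conversely, given $(3)$, the non-constancy argument above — now applied on the open set $O$, on which $g_nV\to V_0$ — forces the gap to diverge; passing to a subsequence with $U_k(g_n)\to V_0^*$ and $U_{d-k}(g_n^{-1})\to W_0^*$ and applying $(2)\Rightarrow(1)$ to $g_n$ and to $g_n^{-1}$ identifies $V_0^*$ with $V_0$ on $O$ and $W_0^*$ with $W_0$ on $O'$, giving $(2)$. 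The step I expect to be most delicate is the non-constancy bookkeeping in $(1)\Rightarrow(2)$ and $(3)\Rightarrow(2)$: one must verify carefully that a bounded gap at index $k$ always yields a genuinely non-constant limiting map on $\Gr_k(\Kb^d)$, by analysing the block of singular values whose normalized limits coincide with that of index $k$; the construction of the auxiliary planes $V_n$ detecting the true repelling subspace $W_0'$ is the other point requiring care, and there one uses that $U_{d-k}(g_n^{-1})\to W_0'\ne W_0$ forces these subspaces to contain lines bounded away from $W_0$.
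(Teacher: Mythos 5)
The paper does not prove this lemma at all: it is quoted from \cite[Lemma 2.2]{CZZ} and \cite[Lemma 4.19]{KLP17}, so there is no in-paper argument to compare against. Your proposal is correct and follows the same standard route as those references: the Cartan decomposition $g_n=k_na_nl_n$, the graph-norm contraction estimate giving $d_\angle(g_nV,U_k(g_n))\lesssim C(\theta_0)\,\sigma_{k+1}(g_n)/\sigma_k(g_n)$ for $(2)\Rightarrow(1)$, the identity $\sigma_{d-k}(g_n^{-1})/\sigma_{d-k+1}(g_n^{-1})=\sigma_k(g_n)/\sigma_{k+1}(g_n)$ for passing to $g_n^{-1}$, and compactness-plus-subsequence arguments for the converses. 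The two points you flag as delicate are indeed the only places needing more care in a full write-up: in the bounded-gap contradiction you should make explicit that, after extracting $k_n\to k_\infty$, $l_n\to l_\infty$ and normalized singular values, the convergence $a_nU\to F_\infty\oplus D\bigl(U\cap(F_{fin}\oplus F_0)\bigr)$ is locally uniform on the generic open cell (so that $l_n$ may be replaced by $l_\infty$), and that the limiting map is non-constant on \emph{every} nonempty open subset of that cell because $\mu_{k+1}>0$ forces $\dim F_{fin}>k-r$ and $D$ is injective on $F_{fin}$; and in the detection of $W_0'$ the auxiliary planes can be taken as $V_n=L_n\oplus S$ for a fixed $(k-1)$-plane $S$ with $S\cap(L\oplus W_0)=0$, which makes the uniform transversality to $W_0$ immediate. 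With these details filled in, the argument is complete and consistent with the cited proofs.
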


\begin{lemma}\label{ExtendedDivergenceLemma}
    Let $(g_n)_{n\in \Nb}$ be a $P_k$-divergent sequence in $\SL(d,\Kb)$. Let $A\subset\Gr_k(\Kb^d)$ and $B\subset\Gr_{d-k}(\Kb^d)$ be two closed proper subsets. Then the following are equivalent.
    \begin{itemize}
        \item[(1)] For any compact set $L$ in $\Gr_k(\Kb^d)$ transverse to $B$ and any open set $U$ in $\Gr_k(\Kb^d)$ that contains $A$, $g_n L\subset U$ for $n$ large enough, and for any compact set $L'$ in $\Gr_{d-k}(\Kb^d)$ transverse to $A$ and any open set $U'$ in $\Gr_{d-k}(\Kb^d)$ that contains $B$, $g_n^{-1} L'\subset U'$ for $n$ large enough.
        \item[(2)] $U_k(g_n)$ has all accumulation points contained in $A$ and $U_{d-k}(g_n^{-1})$ has all accumulation points contained in $B$.
    \end{itemize}
\end{lemma}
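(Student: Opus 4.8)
The plan is to prove both implications by contradiction, each time reducing to a convergent subsequence along which the single-sequence Lemma~\ref{DivergenceLemma} applies. The key point is compactness of $\Gr_k(\Kb^d)$ and $\Gr_{d-k}(\Kb^d)$: from any subsequence of $(g_n)$ one may extract a further subsequence $(g_{n_j})$ along which both $U_k(g_{n_j})\to V_0$ and $U_{d-k}(g_{n_j}^{-1})\to W_0$ converge. Since a subsequence of a $P_k$-divergent sequence is again $P_k$-divergent, Lemma~\ref{DivergenceLemma}~(2)$\Rightarrow$(1) then gives $g_{n_j}V\to V_0$ uniformly on compact subsets of $\Gr_k(\Kb^d)$ transverse to $W_0$; applying the same lemma at index $d-k$ to the inverse sequence $(g_{n_j}^{-1})$ (which is $P_{d-k}$-divergent, with $U_{d-k}(g_{n_j}^{-1})\to W_0$ and $U_k(g_{n_j})\to V_0$) gives $g_{n_j}^{-1}W\to W_0$ uniformly on compact subsets of $\Gr_{d-k}(\Kb^d)$ transverse to $V_0$. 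I will use that Lemma~\ref{DivergenceLemma} holds for every rank $1\leqslant j\leqslant d-1$, its proof being insensitive to the normalization $j\leqslant d/2$; for the inverse sequence this rests on $\sigma_{d-k}(g^{-1})/\sigma_{d-k+1}(g^{-1})=\sigma_k(g)/\sigma_{k+1}(g)$ and $U_{d-(d-k)}((g^{-1})^{-1})=U_k(g)$.

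For (2)$\Rightarrow$(1), suppose the first assertion of (1) fails: there are a compact $L$ transverse to $B$, an open $U\supset A$, a subsequence $(g_{n_j})$, and points $V_j\in L$ with $g_{n_j}V_j\notin U$. Passing to a further subsequence, assume $V_j\to V\in L$ (so $V$ is transverse to $B$, since $B$ is closed), $U_k(g_{n_j})\to V_0\in A$ and $U_{d-k}(g_{n_j}^{-1})\to W_0\in B$, the memberships coming from (2). As $V$ is transverse to $B\ni W_0$, it is transverse to $W_0$; pick a compact neighbourhood $N$ of $V$ consisting of flags transverse to $W_0$. By the uniform convergence above, $g_{n_j}$ tends to the constant $V_0$ uniformly on $N$, and $V_j\in N$ for $j$ large, so $g_{n_j}V_j\to V_0\in A\subset U$, contradicting $g_{n_j}V_j\notin U$. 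The second assertion of (1) is handled identically using the uniform convergence of $g_{n_j}^{-1}$ on compacts transverse to $V_0$.

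For (1)$\Rightarrow$(2), suppose (2) fails; by the symmetry of both conditions under $g_n\leftrightarrow g_n^{-1}$, $k\leftrightarrow d-k$, $A\leftrightarrow B$, assume $U_k(g_n)$ has an accumulation point $V_0\notin A$, and extract $U_k(g_{n_j})\to V_0\notin A$, $U_{d-k}(g_{n_j}^{-1})\to W_0$. I will exhibit a test configuration contradicting (1). If $W_0\in B$, choose a flag $V_1\in\Gr_k(\Kb^d)$ transverse to $B$ (hence to $W_0$): then $L=\{V_1\}$ is transverse to $B$, $g_{n_j}V_1\to V_0$, and for $U\supset A$ with $V_0\notin\overline{U}$ we get $g_{n_j}V_1\notin U$ eventually, contradicting the first assertion of (1). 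If $W_0\notin B$, choose instead $W_1\in\Gr_{d-k}(\Kb^d)$ transverse to $A$ and to $V_0$: then $L'=\{W_1\}$ is transverse to $A$, $g_{n_j}^{-1}W_1\to W_0\notin B$, and for $U'\supset B$ with $W_0\notin\overline{U'}$ we get $g_{n_j}^{-1}W_1\notin U'$ eventually, contradicting the second assertion of (1).

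The main obstacle is the bookkeeping of transversality: one must ensure at each step that the test set used is transverse to the accumulation point actually controlling the relevant limit — namely $W_0$ for the $g_{n_j}$-action on $\Gr_k(\Kb^d)$ and $V_0$ for the $g_{n_j}^{-1}$-action on $\Gr_{d-k}(\Kb^d)$ — and not merely to $A$ or $B$. This is why, in the last paragraph, the needed test flags are obtained by intersecting the open dense set of flags transverse to the single accumulation point $W_0$ (resp.\ $V_0$) with the open set of flags transverse to $B$ (resp.\ $A$); that these intersections are nonempty is the place where one uses that the complementary Grassmannian contains flags transverse to $B$ (resp.\ $A$), which holds in the situations where the lemma is applied. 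Everything else is routine manipulation with the Cartan projection packaged inside Lemma~\ref{DivergenceLemma}.
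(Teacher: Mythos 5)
Your argument is correct and takes essentially the same route as the paper's: extract subsequences along which $U_k(g_n)$ and $U_{d-k}(g_n^{-1})$ converge and feed them into Lemma \ref{DivergenceLemma}, then use the open set $U$ (resp.\ $U'$) to separate the limit from the bad accumulation point; your contradiction-with-case-split in $(1)\Rightarrow(2)$ is only a cosmetic variant of the paper's direct argument via the decomposition $g_n=P_nA_nQ_n$. The transversality caveat you flag at the end (existence of $k$-planes transverse to $B$, resp.\ $(d-k)$-planes transverse to $A$) is equally implicit in the paper's own choice of the test set $L$, so it does not set your proof apart.
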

\begin{proof}
    We firstly check (2) implies (1). Suppose there exists (up to a subsequence) a sequence $(V_n)_{n\in \Nb}\subset\Gr_k(\Kb^d)$ that are all contained in a compact set $L$ transverse to $B$ but with $g_n V_n\to V\not\in A$ as $n\to +\infty$. By taking a subsequence of $(g_n)_{n\in \Nb}$, we may assume $U_k(g_n)\to V_0$ and $U_{d-k}(g_n^{-1})\to W_0$ as $n\to +\infty$ for some $V_0\in A$ and $W_0\in B$. Then by Lemma \ref{DivergenceLemma}, for any open set $U$ that contains $A$, there exists a constant $N\geqslant 0$, such that $g_n L\subset U$ for $n\geqslant N$, which gives a contradiction when $V\not\in U$. The other part of (1) can be proved similarly.

    Now we check that (1) implies (2). Up to a subsequence, we assume $U_k(g_n)\to V_0$ as $n\to +\infty$. If we write $g_n = P_n A_n Q_n$, where $P_n,Q_n\in \SO(d)$ and $A_n = \diag(\sigma_1(g_n),\sigma_2(g_n),...,\sigma_d(g_n))$ with $P_n\to P$ and $Q_n\to Q$ as $n\to +\infty$. Since $g_n$ is $P_k$-divergent, we have $V_0 = P\cdot\Span(e_1,e_2,...,e_k)$. By picking a subset $L$ in an open set in $\Gr_k(\Kb^d)$ transverse to $Q^{-1}\cdot\Span(e_{k+1},...,e_d)$ and $B$, we see that $g_n L\to V_0$ as $n\to +\infty$ and hence $V_0\in A$. The proof of the other part of (2) is similar.
\end{proof}

Let $(\Gamma,\Pc)$ be a relatively hyperbolic group with $X$ a Gromov model of $(\Gamma,\Pc)$ and let $\zeta: \Lambda \to \partial (\Gamma, \Pc)$ be a minimal boundary extension of $(\Gamma,\Pc)$. Let $\rho:\Gamma\to\SL(d,\Kb)$ be a representation which is $P_k$-Anosov in restriction to $\Fc(\Lambda,\zeta,X)$.
Following Remark \ref{LiftAbuseNotations}, there is a dominated splitting $\Fc(\Lambda,\zeta,X)\times \Kb^d = E^s\oplus E^u$ of rank $k$, with respect to a $\rho(\Gamma)$-invariant metric $\norm{\cdot}$ on $\Fc(\Lambda,\zeta,X)\times \Kb^d$. We then restate and prove Theorem \ref{MainTheorem} (2) $\Rightarrow$ (1).

\begin{theorem}\label{TheoremRestrictedtoEGF}
    Let $\zeta: \Lambda \to \partial (\Gamma, \Pc)$ be a minimal boundary extension of $(\Gamma,\Pc)$ and let $X$ be a Gromov model of $(\Gamma,\Pc)$. If $\rho$ is $P_k$-Anosov in restriction to $\Fc(\Lambda,\zeta,X)$, then $\rho$ is $P_k$-divergent and there exists a $\rho$-equivariant, $\zeta$-transverse, continuous map $\xi = (\xi^k,\xi^{d-k}): \Lambda \to \Fc_{k,d-k}$ such that $\rho$ is extended geometrically finite with boundary extension $(\xi(\Lambda),\zeta\circ\xi^{-1})$.
\end{theorem}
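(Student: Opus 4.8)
Write $\Fc(\Lambda,\zeta,X)\times\Kb^d = E^s\oplus E^u$ for the given dominated splitting of rank $k$, with respect to a $\rho(\Gamma)$-invariant metric which, by Remark~\ref{UnitVolumeMetric}, we may take of unit volume, $\|\cdot\| = \|A(\cdot)\,\cdot\|_0$ with $A$ valued in $\SL(d,\Kb)$. Since $\phi$ acts by parallel transport, $E^s$ and $E^u$ are constant along flow lines, so $E^s_{(x,y,s)}$ and $E^u_{(x,y,s)}$ do not depend on $s$. First I would prove divergence. Fix the compact set $K$ and constants $\lambda,\epsilon$ of Lemma~\ref{TrackingLemma}, enlarging $K$ so that it is invariant under the reverse $z\mapsto\hat z$. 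Given $\gamma\in\Gamma$, pick $z\in K$, $t\in\Rb$ with $\gamma^{-1}\phi^t(z)\in K$ and $|\gamma|_X\sim_{(\lambda,\epsilon)}|t|$; reversing $z$ if needed, $t\geqslant 0$. Since $A$ has uniformly bounded norm and co-norm on $K$, the $\rho(\Gamma)$-invariance of $\|\cdot\|$ turns the domination inequality at $z$ into
\[
\frac{\|\rho(\gamma)^{-1}v\|_0}{\|\rho(\gamma)^{-1}w\|_0}\ \leqslant\ C'e^{-ct}\,\frac{\|v\|_0}{\|w\|_0}\qquad(v\in E^s_z,\ w\in E^u_z),
\]
where the $k$-plane $E^s_z$ and the transverse $(d-k)$-plane $E^u_z$ range over compact families, hence have mutual angle bounded below. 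A standard linear-algebra estimate (in the spirit of Lemma~\ref{EstimationSingularValues}, see the appendix of \cite{BPS}) then yields $\sigma_{k+1}(\rho(\gamma))/\sigma_k(\rho(\gamma))\leqslant C''e^{-c''|\gamma|_X}$, which is $P_k$-weak domination with respect to $X$; applying the same to $\gamma^{-1}$ and using $|\gamma^{-1}|_X=|\gamma|_X$ gives the symmetric bound $\sigma_{d-k+1}(\rho(\gamma))/\sigma_{d-k}(\rho(\gamma))\leqslant C''e^{-c''|\gamma|_X}$. By properness of the $\Gamma$-action on $X$, $|\gamma_n|_X\to\infty$ for pairwise distinct $\gamma_n$, so $\rho$ is both $P_k$- and $P_{d-k}$-divergent, and the limit set $\Lc(\rho)\subset\Fc_{k,d-k}$ (note $U_k\subset U_{d-k}$ at every matrix) is well defined. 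This also supplies the ``moreover'' clause of Theorem~\ref{MainTheorem}.

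\textbf{Step 2 (construction of $\xi$).}
For $x\in\Lambda$ with $\zeta(x)$ conical, a geodesic ray to $\zeta(x)$ together with Lemma~\ref{TrackingLemma} produces a flow line through $x$ and a sequence $\gamma_n\in\Gamma$ tracking it with $\gamma_n o\to\zeta(x)$; equivariance and flow-invariance give $E^s_{(x,y,0)} = \rho(\gamma_n)V_n$ with $V_n$ in the compact family $\{E^s_z:z\in K\}$, and running the argument of Theorem~\ref{TheoremCompactSubflow} (whose mechanics apply here, the needed input being the domination of Step~1 along the tracking sequence) one gets $E^s_{(x,y,0)} = \lim_n U_k(\rho(\gamma_n))$ and, tracking toward the backward endpoint, $E^u_{(x,y,0)} = \lim_n U_{d-k}(\rho(\gamma_n''))$ with $\gamma_n'' o\to\zeta(y)$. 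Two flow lines sharing an endpoint in $\Lambda$ are tracked by sequences differing by a bounded element of $\Gamma$ (Morse lemma and hyperbolicity of $X$), and $U_k,U_{d-k}$ are asymptotically insensitive to such perturbations (Lemma~\ref{EstimationSingularValues}(1) with divergence); hence $E^s_{(x,y,0)}$ depends only on $x$, $E^u_{(x,y,0)}$ only on $y$, and comparing the two descriptions at a single point gives $E^s_{(x,y,0)}\subset E^u_{(x',x,0)}$. Thus $\xi(x) := (E^s_{(x,\cdot,0)},E^u_{(\cdot,x,0)})\in\Fc_{k,d-k}$ is well defined on $\zeta^{-1}(\partial_{con}(\Gamma,\Pc))$, $\rho$-equivariant, continuous (locally, varying the auxiliary endpoint), with $\xi(x)$ and $\xi(x')$ transverse whenever $\zeta(x)\neq\zeta(x')$, and $\xi(\zeta^{-1}(\partial_{con}))\subset\Lc(\rho)$. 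For $x\in\zeta^{-1}(p)$, $p$ parabolic with stabilizer $P$, minimality of $\zeta$ gives pairwise distinct $\delta_n\in P$ and $x_0\in\zeta^{-1}(\partial_{con})$ with $\delta_nx_0\to x$; I set $\xi(x):=\lim_n\rho(\delta_n)\xi(x_0)$, the limit existing and being choice-independent because $\rho|_P$ is divergent (Step~1) and $\xi(x_0)$ is transverse to the repelling subspaces of the $\rho(\delta_n)$ (the transversality already proved on $(\Lambda,\zeta)^{(2)}$). A compactness argument then shows $\xi:\Lambda\to\Fc_{k,d-k}$ is continuous and $\zeta$-transverse, so $(\xi(\Lambda),\zeta\circ\xi^{-1})$ is a transverse boundary extension in $\Fc_{k,d-k}$.

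\textbf{Step 3 (verifying extended geometric finiteness).}
I would apply Proposition~\ref{CriterionEGF} to $(\xi(\Lambda),\zeta\circ\xi^{-1})$, taking $C_p$ to be the set of flags transverse to $\xi(\zeta^{-1}(p))$. Condition (1) — conically converging $\gamma_n$ satisfy $\sigma_k/\sigma_{k+1}\to\infty$ and $\lim(U_k,U_{d-k})\in\xi(\Lambda)$ — follows from the divergence of Step~1 and the conical part of Step~2, since such a sequence is, up to bounded error, a tracking sequence of a flow line ending at its limit point. For condition (2), the parabolic description of $\xi(\zeta^{-1}(p))$ from minimality and Step~2 shows that for pairwise distinct $\gamma_n\in P$ all accumulation points of $U_k(\rho(\gamma_n))$ and of $U_{d-k}(\rho(\gamma_n)^{-1})$ lie in $\xi(\zeta^{-1}(p))$; Lemma~\ref{ExtendedDivergenceLemma} then upgrades this to the required uniform convergence $\rho(\gamma_n)L\subset U$ for compact $L\subset C_p$ and open $U\supset\xi(\zeta^{-1}(p))$, completing the proof.

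\textbf{Main obstacle.}
The crux is Step~2: extracting well-defined, continuous limit maps from a dominated splitting over the \emph{non-cocompact} flow space $\Fc(\Lambda,\zeta,X)$. Over cocompact subflows this is precisely Theorem~\ref{TheoremCompactSubflow}, but here one must control the bundles as flow lines escape into the cusp directions — exactly where the non-uniform, merely $\rho(\Gamma)$-invariant metric is essential — and, most delicately, treat the fibers $\zeta^{-1}(p)$ over parabolic points, over which no flow line can be tracked to infinity inside $\Gamma$; it is here that minimality of $\zeta$ is indispensable, letting one transport $\xi$ from a conical fiber by a divergent parabolic sequence. Reconciling the forward- and backward-endpoint descriptions so that $\xi^k(x)\subset\xi^{d-k}(x)$, and establishing continuity of $\xi$ at parabolic points, are the technically delicate subpoints.
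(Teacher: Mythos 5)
Your Step 1 (divergence and weak domination via Lemma \ref{TrackingLemma}, compactness of $K$, and a singular-value estimate) is essentially the paper's argument, with the reference lemma of \cite{ZZ1} replaced by a direct linear-algebra computation; that part is fine. The genuine gap is in Step 2, at the parabolic fibers, and it propagates into Step 3. You define $\xi(x):=\lim_n\rho(\delta_n)\xi(x_0)$ for $x\in\zeta^{-1}(p)$ using minimality to produce $\delta_n\in P$ with $\delta_nx_0\to x$, and justify existence and choice-independence of the limit by ``$\rho|_P$ is divergent and $\xi(x_0)$ is transverse to the repelling subspaces.'' Divergence only gives that along subsequences where $U_k(\rho(\delta_n))$ and $U_{d-k}(\rho(\delta_n^{-1}))$ converge, $\rho(\delta_n)\xi^k(x_0)$ converges to the corresponding limit of $U_k(\rho(\delta_n))$; different subsequences (or different choices of $(\delta_n,x_0)$ with the same abstract limit $x$) can a priori produce different limits --- that is exactly why parabolic fibers are not singletons. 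Nothing in your setup ties the subsequential limits of $U_k(\rho(\delta_n))$ to the particular point $x\in\Lambda$, since $\Lambda$ is an abstract compact space and the convergence $\delta_nx_0\to x$ carries no flag-manifold information by itself. Hence well-definedness, continuity, and $\zeta$-transversality of $\xi$ at parabolic fibers are unproved, and your verification of Proposition \ref{CriterionEGF}(2) in Step 3, which leans on ``the parabolic description of $\xi(\zeta^{-1}(p))$ from Step 2,'' inherits the gap.

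The paper closes this by exploiting a resource you set aside: the hypothesis already provides continuous, $\phi$-invariant subbundles $E^s,E^u$ over \emph{all} of $\Fc(\Lambda,\zeta,X)$, including flow lines with an endpoint in a parabolic fiber. Theorem \ref{TheoremCompactSubflow}, applied to cocompact subflows, shows $E^s_z$ depends only on $z^+$ and $E^u_z$ only on $z^-$ when both endpoints lie in $\partial_{cc}X$; minimality is used only to make such points dense in $\Lambda$, and continuity of the bundles then extends the endpoint-dependence to all of $\Lambda$ (Proposition \ref{LimitMapWaivePro}). So $\xi(x)$ is simply the value of the bundle on any flow line through $x$ --- no limiting procedure, and continuity and compatibility at parabolic fibers come for free. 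The extended convergence dynamics is then checked for an arbitrary sequence $\gamma_n\to p$, $\gamma_n^{-1}\to q$ (conical or parabolic, no case split) by the tracking lemma together with Lemma \ref{EstimationSingularValues} and Lemma \ref{DominatedSplittingMetricTransEstimation}, which identify every accumulation point of $U_k(\rho(\gamma_n))$ with $E^s_z=\xi^k(z^+)$ for a limit $z$ of tracked base points with $\zeta(z^+)=p$, and then Lemma \ref{ExtendedDivergenceLemma} gives the uniform convergence. If you want to salvage your outline, replace the pushforward definition of $\xi$ on parabolic fibers by the bundle-value definition and route the parabolic case of Step 3 through the same tracking estimate rather than through the unproved fiber description.
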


We say a map 
\begin{align*}
    \xi: \Lambda & \to \Fc_{k,d-k} \\
    x & \mapsto \xi(x) = (\xi^k(x),\xi^{d-k}(x))
\end{align*}
is $\rho$-equivariant if $\rho(\gamma) \xi(x) =\xi(\gamma x)$ for any $\gamma\in\Gamma$ and $x\in\Lambda$, is $\zeta$-transverse if for any $p,q\in\partial(\Gamma,\Pc)$, $\xi^k(\zeta^{-1}(p))$ and $\xi^{d-k}(\zeta^{-1}(q))$ are transverse.

We firstly show the existence of a $\rho$-equivariant, $\zeta$-transverse, continuous map $\xi = (\xi^k,\xi^{d-k}): \Lambda \to \Fc_{k,d-k}$ who defines the dominated splitting as described in Proposition \ref{LimitMapWaivePro}. Then we show that the $\xi$-image of $\Lambda$ provides a boundary extension to make $\rho$ extended geometrically finite.

\begin{proposition}\label{LimitMapWaivePro}
    There exists a $\zeta$-transverse, continuous, $\rho$-equivariant map $\xi = (\xi^k,\xi^{d-k}): \Lambda \to \Fc_{k,d-k}$ with the dominated splitting $\Fc(\Lambda,\zeta,X)\times \Kb^d = E^s\oplus E^u$ given by $(E^s_z, E^u_z) = (\xi^k(z^+),\xi^{d-k}(z^-))$ for any $z = (z^+,z^-,\ell) \in\Fc(\Lambda,\zeta,X)$.    
\end{proposition}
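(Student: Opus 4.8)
The goal is to produce a continuous, $\rho$-equivariant, $\zeta$-transverse map $\xi = (\xi^k,\xi^{d-k}):\Lambda\to\Fc_{k,d-k}$ that realizes the given dominated splitting fiberwise. My strategy is to first build $\xi$ over the ``rich'' part of $\Lambda$ by pulling back the cocompact-subflow theory (Theorem \ref{TheoremCompactSubflow}), then to extend it continuously to all of $\Lambda$ using the minimality hypothesis on the boundary extension together with $P_k$-divergence, which I will have to establish along the way.

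\textbf{Step 1: Extract fiberwise limit maps from the dominated splitting.} Since $\Fc(\Lambda,\zeta,X)\times\Kb^d = E^s\oplus E^u$ is a dominated splitting of rank $k$ with respect to a $\rho(\Gamma)$-invariant metric $\norm{\cdot}$, the subbundles $E^s,E^u$ are $\phi$-invariant. Flow-invariance along $(x,y,\Rb)$ forces $E^s_{(x,y,t)}$ and $E^u_{(x,y,t)}$ to be independent of $t$. I want to argue they depend only on $x$ (resp. $y$). For this, note that if $(x,y)$ and $(x,y')$ both lie in $(\Lambda,\zeta)^{(2)}$, the flow lines share the forward direction; the domination inequality, applied along $\phi^t$ as $t\to+\infty$, pins $E^s$ down as the maximally contracted $k$-plane, which should only see the forward endpoint $x$ (and symmetrically $E^u$ sees $y$ via the reverse flow $z\mapsto\hat z$). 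This gives well-defined maps $\xi^k$ on $\Pi_1(\Lambda) := \{x : \exists y,\ (x,y)\in(\Lambda,\zeta)^{(2)}\}$ and $\xi^{d-k}$ on $\Pi_2(\Lambda)$, with $(E^s_z,E^u_z) = (\xi^k(z^+),\xi^{d-k}(z^-))$. Transversality $E^s_z\oplus E^u_z = \Kb^d$ at every $z$ gives $\zeta$-transversality of the pair, and $\rho(\Gamma)$-invariance of $E^s\oplus E^u$ gives $\rho$-equivariance of $\xi$.

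\textbf{Step 2: Continuity, via cocompact subflows and divergence.} Continuity of $\xi$ is the main obstacle, because the $\Gamma$-action on $\Fc(\Lambda,\zeta,X)$ is not cocompact, so a single compactness argument does not suffice. My plan is two-tiered. First, on each cocompact subflow $F\times\Rb$ of $\Fc(\Lambda,\zeta,X)$ — equivalently, over the compactly attained part of $\Lambda$ — Theorem \ref{TheoremCompactSubflow} applies directly (with the quasi-isometric estimate of Lemma \ref{TrackingLemma} replacing Lemma \ref{QuasiIsometricEstimation}) to give that $\xi$ restricted to $F$ is continuous, coincides with $x\mapsto\lim_n U_k(\rho(\gamma_n))$ along sequences $\gamma_n\to x$ in $\Gamma^+_{F,K}$, and hence also that $\rho$ is $P_k$-dominated on $\Gamma^+_{F,K}$. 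Exhausting $\Fc(\Lambda,\zeta,X)$ by the subflows $\pi^{-1}(\Fc_R(X))$ (cf. Lemma \ref{CompactPreimage}) and letting $R\to\infty$ covers all of $\partial_{cc}X$, and in particular shows $\rho$ is $P_k$-divergent (any sequence of distinct $\gamma_n$ can be arranged, after passing to a subsequence and using the geometry of the Gromov model, to track flow lines in some $\Fc_R(X)$, or to converge to a parabolic point, which I handle next). Second, to get continuity at the remaining points $y\in\zeta^{-1}(\partial_{par})$ or $y\in\Lambda\setminus\partial_{cc}X$, I use minimality: by definition there is a sequence $\gamma_n$ in the relevant parabolic $P$ and a point $x$ in the compactly-attained part with $\gamma_n x\to y$; then $\xi^k(y)$ must be the limit of $\rho(\gamma_n)\xi^k(x)$ by equivariance, and $P_k$-divergence plus Lemma \ref{EstimationSingularValues}(2) force this limit to exist and to vary continuously. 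Assembling these, $\xi$ is continuous on all of $\Lambda$; $\zeta$-transversality was already noted, and it is preserved in the limit because transversality is an open condition and divergence keeps the limiting flags apart (Lemma \ref{DivergenceLemma}).

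\textbf{Step 3: Conclude.} The pair $(E^s_z,E^u_z) = (\xi^k(z^+),\xi^{d-k}(z^-))$ holds by construction on the dense compactly-attained part and extends to all of $\Fc(\Lambda,\zeta,X)$ by continuity of $\xi$ and of the bundle decomposition, giving the displayed formula in the statement. I expect the genuinely delicate point to be Step 2: showing that the limit maps obtained subflow-by-subflow glue to a \emph{globally} continuous map on $\Lambda$, especially continuity as one approaches parabolic points from the conical locus — this is exactly where the minimality of $\zeta$ and the uniform $P_k$-divergence must be used in tandem, and where the estimates of Lemma \ref{EstimationSingularValues} control the rate at which $U_k(\rho(\gamma_n\eta))$ approaches its limit.
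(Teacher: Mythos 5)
Your Step 1 identification over cocompact subflows and your use of Theorem \ref{TheoremCompactSubflow} match the paper's first step, but the way you handle the rest of $\Lambda$ has a genuine gap. You treat the limit maps as if they were only available on the compactly attained part and then try to \emph{construct} $\xi^k$ on $\zeta^{-1}(\partial_{par}(\Gamma,\Pc))$ dynamically, as $\lim_n\rho(\gamma_n)\xi^k(x)$ for $\gamma_n$ in the parabolic subgroup, invoking $P_k$-divergence. This is circular: at this stage divergence is not available, and your own justification of it (``any sequence of distinct $\gamma_n$ tracks flow lines in some $\Fc_R(X)$, or converges to a parabolic point, which I handle next'') fails exactly for sequences going deep into a parabolic subgroup -- such sequences are not in $\Gamma^+_{F,K}$ for any cocompact subflow, so the domination furnished by Theorem \ref{TheoremCompactSubflow} says nothing about them, while your ``handling'' of parabolic points presupposes the divergence you are trying to prove. (In the paper, $P_k$-divergence is deduced only \emph{after} this proposition, in the proof of Theorem \ref{TheoremRestrictedtoEGF}, using Lemma \ref{TrackingLemma} and Lemma \ref{DominatedSplittingMetricTransEstimation}, i.e.\ the splitting over the whole flow space including the cusp regions.) Even granting divergence, you would still need to show the limit exists, is independent of the sequence and of $x$, coincides with the given fiber $E^s_z$ when $z^+$ lies in a parabolic fiber, and varies continuously there -- none of which is carried out, and the last point is precisely the hard one. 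Your Step 1 heuristic that domination ``pins $E^s$ down'' as a function of the forward endpoint alone is also not a proof over the non-cocompact flow space, since the metric is not uniformly comparable between distinct flow lines sharing a forward endpoint; that independence is only directly available on cocompact subflows.

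The missing idea is that nothing needs to be constructed at parabolic points: the subbundles $E^s,E^u$ are already given, continuously, over \emph{all} of $\Fc(\Lambda,\zeta,X)$, including flow lines whose endpoints lie in parabolic fibers. The only issue is well-definedness of $\xi^k(z^+):=E^s_{(z^+,z^-,t)}$, i.e.\ independence of $z^-$ and $t$. The paper gets this by combining three soft facts: Theorem \ref{TheoremCompactSubflow} gives the independence when $z^+,z^-\in\partial_{cc}X$ (there $E^s_z$ is the singular-value limit attached to $z^+$); minimality of $\zeta$ makes $\partial_{cc}X$ dense in $\Lambda$; and continuity of the subbundle $E^s$ in the variable $z^-$ (and then in $z^+$) propagates the independence to all of $(\Lambda,\zeta)^{(2)}$. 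Continuity, equivariance, compatibility and $\zeta$-transversality of $\xi=(\xi^k,\xi^{d-k})$ are then inherited from the bundle decomposition itself -- no divergence, no parabolic dynamics, and no use of Lemma \ref{EstimationSingularValues} is needed for this proposition.
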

\begin{proof}
    Since $\rho$ is $P_k$-Anosov in restriction to $\Fc(\Lambda,\zeta,X)$, then $\rho$ is $P_k$-Anosov in restriction to any cocompact subflow of $\Fc(\Lambda,\zeta,X)$. By Theorem \ref{TheoremCompactSubflow}, there exists a pair of transverse limit maps $\xi^k: \partial_{cc}X\to \Gr_k(\Kb^d)$ and $\xi^{d-k}: \partial_{cc}X\to \Gr_{d-k}(\Kb^d)$, such that $(E^s_z, E^u_z) = (\xi^k(z^+),\xi^{d-k}(z^-))$ if $z = (z^+,z^-,\ell) \in\Fc(\Lambda,\zeta,X)$ with $z^+,z^-\in \partial_{cc}X$, where we identify $\partial_{cc}X\subset \partial_{con}X$ as a subset of $\Lambda$. This means that when we fix a limit point $z^+\in \partial_{cc}(X)$, $\xi^k(z^+) = E^s_{(z^+,z^-,\ell)}$ is a constant, i.e., independent of the choice of $z^-\in \partial_{cc}X$. Notice that $\partial_{cc}X$ is a dense subset of $\Lambda$ as $\zeta: \Lambda \to \partial (\Gamma, \Pc)$ is a minimal boundary extension and $E^s_{(z^+,z^-,\ell)}$ is continuous for the variable $z^-$. We deduce that for any fixed $z^+\in \partial_{cc}(X)$, $\xi^k(z^+) = E^s_{(z^+,z^-,\ell)}$ is independent of the choice of $z^-\in \Lambda$ and $\ell$ with $(\ell(+\infty),\ell(-\infty))=(z^+,z^-)$ by Remark \ref{asymptoticrigid}. Apply the fact that $E^s_{(z^+,z^-,\ell)}$ is also continuous for the variable $z^+$, we have that for any fixed $z^+\in \Lambda$, $E^s_{(z^+,z^-,\ell)}$ is a constant denoted by $\xi^k(z^+)$. We obtain that $\xi^k: \Lambda \to \Gr_k(\Kb^d)$ is a continuous, $\rho$-equivariant map such that $E^s_z = \xi^k(z^+)$. Similarly, we have a well-defined, continuous, $\rho$-equivariant map $\xi^{d-k}: \Lambda \to \Gr_{d-k}(\Kb^d)$ such that $E^s_z = \xi^k(z^+)$. $\xi^k$ and $\xi^{d-k}$ are compatible, i.e., $\xi=(\xi^k,\xi^{d-k})$ has image in $\Fc_{k,d-k}$, since they are compatible when they are restricted $\partial_{cc}X$. Moreover, $\xi^k$ and $\xi^{d-k}$ are $\zeta$-transverse since they define a dominated splitting over $\Fc(\Lambda,\zeta,X)$.
\end{proof}

\begin{lemma}
    $\zeta\circ\xi^{-1}: \xi(\Lambda)\to \partial(\Gamma,\Pc)$ is a well-defined minimal boundary extension of $(\Gamma,\Pc)$.
\end{lemma}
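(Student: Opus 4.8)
The plan is to verify, in order, that $\zeta\circ\xi^{-1}$ is (a) well-defined on $\xi(\Lambda)$, (b) a boundary extension of $(\Gamma,\Pc)$, and (c) minimal. The only step carrying genuine content is (a), where $\zeta$-transversality is used; the rest is unwinding definitions, the one mildly non-formal point being continuity, which I would obtain from a quotient-map argument.

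First I would show that $\zeta$ is constant on the fibres of $\xi$, so that $\bar\zeta:=\zeta\circ\xi^{-1}\colon\xi(\Lambda)\to\partial(\Gamma,\Pc)$ is a well-defined map. This is where $\zeta$-transversality enters. Suppose $x,y\in\Lambda$ satisfy $\xi(x)=\xi(y)$ but $\zeta(x)\ne\zeta(y)$. Then $\xi^k(x)$ and $\xi^{d-k}(y)=\xi^{d-k}(x)$ are transverse, i.e. $\xi^k(x)\oplus\xi^{d-k}(x)=\Kb^d$; but $(\xi^k(x),\xi^{d-k}(x))\in\Fc_{k,d-k}$ forces $\xi^k(x)\subseteq\xi^{d-k}(x)$, and $\xi^k(x)\ne\{0\}$ since $k\geqslant 1$, a contradiction. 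Hence $\zeta(x)=\zeta(y)$ and $\bar\zeta$ is well-defined.

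Next I would check the boundary-extension axioms. The space $\xi(\Lambda)$ is compact, being a continuous image of the compact space $\Lambda$, hence closed and metrizable as a subspace of $\Fc_{k,d-k}$; by $\rho$-equivariance of $\xi$ we have $\rho(\gamma)\xi(\Lambda)=\xi(\gamma\Lambda)=\xi(\Lambda)$, so $\xi(\Lambda)$ is $\rho(\Gamma)$-invariant and $\Gamma$ acts on it through $\rho$. Surjectivity of $\bar\zeta$ follows from surjectivity of $\zeta$ and of $\xi\colon\Lambda\to\xi(\Lambda)$, and $\Gamma$-equivariance is the computation $\bar\zeta(\rho(\gamma)\xi(x))=\bar\zeta(\xi(\gamma x))=\zeta(\gamma x)=\gamma\zeta(x)=\gamma\bar\zeta(\xi(x))$. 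For continuity, $\xi\colon\Lambda\to\xi(\Lambda)$ is a continuous surjection from a compact space onto a Hausdorff space, hence a closed map and therefore a topological quotient map; since $\bar\zeta\circ\xi=\zeta$ is continuous, so is $\bar\zeta$.

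Finally, minimality of $\bar\zeta$ transfers directly from that of $\zeta$. For $p\in\partial_{con}(\Gamma,\Pc)$, $\bar\zeta^{-1}(p)=\xi(\zeta^{-1}(p))$ is the image of a singleton, hence a singleton. For $p\in\partial_{par}(\Gamma,\Pc)$ with stabiliser $P$ and $\bar y\in\bar\zeta^{-1}(p)$, choose $x\in\zeta^{-1}(p)$ with $\xi(x)=\bar y$; minimality of $\zeta$ furnishes $(\gamma_n)_{n\in\Nb}\subset P$ and $x_0\in\zeta^{-1}(\partial_{con}(\Gamma,\Pc))$ with $\gamma_n x_0\to x$ in $\Lambda$, whence by continuity of $\xi$ we get $\gamma_n\cdot\xi(x_0)=\xi(\gamma_n x_0)\to\xi(x)=\bar y$ with $\xi(x_0)\in\bar\zeta^{-1}(\partial_{con}(\Gamma,\Pc))$. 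I do not expect a real obstacle here: the only step that is not pure bookkeeping is the well-definedness above, and even that is a short argument once one recalls that flags in $\Fc_{k,d-k}$ are nested, so that transversality of $\xi^k(x)$ with $\xi^{d-k}(x)$ is outright impossible.
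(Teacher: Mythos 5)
Your proof is correct and follows essentially the same route as the paper: well-definedness comes from the contradiction that a compatible flag cannot be transverse to itself (so $\zeta$-transversality forces $\zeta(x)=\zeta(y)$ whenever $\xi(x)=\xi(y)$), and minimality is inherited directly from that of $\zeta$ via continuity and equivariance of $\xi$. The paper's proof is just a terser version of the same argument; your additional verifications (compactness of $\xi(\Lambda)$, the quotient-map continuity argument) are routine and consistent with it.
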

\begin{proof}
    If $x,y\in \Lambda$ with $\xi(x) =\xi(y)$, then $\zeta(x) = \zeta(y)$ since otherwise, $\xi(x)$ and $\xi(y)$ would be transverse. This implies that $\zeta\circ\xi^{-1}$ is well-defined and $\xi$ is injective on $\partial_{con}(\Gamma,\Pc)$. It is minimal since $\zeta: \Lambda \to \partial (\Gamma, \Pc)$ is minimal.
\end{proof}

Let $\norm{\cdot}_0$ be the standard metric on $\Kb^d$. Let $A: \Fc(\Lambda,\zeta,X) \to \GL(d,\Kb)$ be a continuous map such that $\norm{\cdot}_0 = \norm{A_z\cdot}_z$ for any $z\in \Fc(\Lambda,\zeta,X)$.

The proof of \cite[Lemma 6.2 and Lemma 6.3]{ZZ1} apply here, since they only depend on the dominated splitting, which tell that

\begin{lemma}\label{DominatedSplittingMetricTransEstimation}
    (1). There exist constants $C,c > 0$, such that \[\dfrac{\sigma_{k+1}(A_z^{-1}A_{\phi^t(z)})}{\sigma_{k}(A_z^{-1}A_{\phi^t(z)})}\leqslant C e^{-c t}\] for any $z\in \Fc(\Lambda,\zeta,X)$ and $t\in \Rb_{\geqslant 0}$.\\
    (2). \[\lim\limits_{t\to +\infty} \sup\limits_{z\in \Fc(\Lambda,\zeta,X)} d(U_k(A_{\phi^t(z)}^{-1}A_z),A_z^{-1}E^s_z) = 0~.\]
\end{lemma}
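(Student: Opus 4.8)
The plan is to reduce both assertions to a one‑step comparison of singular values of the transition cocycle $h^t_z:=A_z^{-1}A_{\phi^t(z)}$ expressed in the frame given by $A$, then to deduce (1) from the Courant–Fischer min‑max principle and (2) from a direct singular‑vector estimate of the type recorded in Lemma~\ref{EstimationSingularValues}. The point to keep in view is that no cocompactness of $\Fc(\Lambda,\zeta,X)$ is needed: every constant below is a constant $C,c$ of the dominated splitting, and uniformity over $z$ is already part of the definition of a dominated splitting. This is exactly why the proofs of \cite[Lemmas 6.2 and 6.3]{ZZ1} transfer.

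First I would translate the dominated splitting. Since parallel transport is the identity in the trivialization $\Fc(\Lambda,\zeta,X)\times\Kb^d$ and $E^s,E^u$ are $\phi$‑invariant subbundles, for every $z$ and $t\geqslant 0$ one has $E^s_z=E^s_{\phi^t(z)}$, $E^u_z=E^u_{\phi^t(z)}$, and the dominated‑splitting inequality becomes $\norm{v}_{\phi^t(z)}/\norm{w}_{\phi^t(z)}\leqslant Ce^{-ct}\,\norm{v}_z/\norm{w}_z$ for $v\in E^s_z$, $w\in E^u_z$. The relation $\norm{\cdot}_0=\norm{A_y\,\cdot\,}_y$ says precisely that $A_y^{-1}$ is a linear isometry $(\Kb^d,\norm{\cdot}_y)\to(\Kb^d,\norm{\cdot}_0)$. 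Substituting $v=A_{\phi^t(z)}\hat v$, $w=A_{\phi^t(z)}\hat w$ and setting
\[
V_t:=A_{\phi^t(z)}^{-1}E^s_{\phi^t(z)},\qquad W_t:=A_{\phi^t(z)}^{-1}E^u_{\phi^t(z)},
\]
so that $\dim V_t=k$, $\dim W_t=d-k$ and $h^t_z V_t=A_z^{-1}E^s_z$, the inequality becomes, for unit vectors $\hat v\in V_t$, $\hat w\in W_t$, the estimate $\norm{h^t_z\hat w}_0\leqslant Ce^{-ct}\norm{h^t_z\hat v}_0$; maximizing over $\hat w$ and minimizing over $\hat v$ yields
\[
\max_{\hat w\in W_t,\ \norm{\hat w}_0=1}\norm{h^t_z\hat w}_0\ \leqslant\ Ce^{-ct}\ \min_{\hat v\in V_t,\ \norm{\hat v}_0=1}\norm{h^t_z\hat v}_0 .
\]

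For (1): Courant–Fischer applied with the competitor subspace $W_t$ (dimension $d-k$) gives $\sigma_{k+1}(h^t_z)\leqslant\max_{\hat w\in W_t}\norm{h^t_z\hat w}_0$, and applied with $V_t$ (dimension $k$) gives $\min_{\hat v\in V_t}\norm{h^t_z\hat v}_0\leqslant\sigma_k(h^t_z)$; combining with the displayed bound gives $\sigma_{k+1}(h^t_z)\leqslant Ce^{-ct}\sigma_k(h^t_z)$, which is the claim. No lower bound on the angle between $V_t$ and $W_t$, hence no compactness, is used. For (2): by (1) the matrix $h^t_z$ has a singular value gap at $k$, so $U_k(h^t_z)$ is defined. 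For a unit vector $u\in h^t_z V_t=A_z^{-1}E^s_z$, write $u=h^t_z\hat v/\norm{h^t_z\hat v}_0$ with $\hat v\in V_t$ of unit norm; then $\norm{(h^t_z)^{-1}u}_0=1/\norm{h^t_z\hat v}_0\leqslant 1/\min_{\hat v'\in V_t}\norm{h^t_z\hat v'}_0$. Expanding $u$ in the basis of left singular vectors of $h^t_z$ and using $\sigma_i(h^t_z)\leqslant\sigma_{k+1}(h^t_z)$ for $i>k$ gives
\[
\norm{\mathrm{proj}_{U_k(h^t_z)^\perp}u}_0\ \leqslant\ \sigma_{k+1}(h^t_z)\,\norm{(h^t_z)^{-1}u}_0\ \leqslant\ \frac{\sigma_{k+1}(h^t_z)}{\min_{\hat v'\in V_t}\norm{h^t_z\hat v'}_0}\ \leqslant\ Ce^{-ct},
\]
the last step by the displayed bound. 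As this holds for every unit $u\in A_z^{-1}E^s_z$, it controls the angle distance: $d_{\angle}\!\big(A_z^{-1}E^s_z,\,U_k(h^t_z)\big)\lesssim Ce^{-ct}$, uniformly in $z$, so taking the supremum over $z$ and letting $t\to+\infty$ gives (2).

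The main thing to be careful about is the bookkeeping in the translation step — tracking which flag ($E^s$ versus $E^u$, together with the direction in which $A_z$ is applied) ends up on which side, so that the reduced single‑matrix estimate lands on exactly the cocycle $A_z^{-1}A_{\phi^t(z)}$ appearing in the statement rather than its inverse. Once that normalization is pinned down the analytic content consists only of Courant–Fischer and the one singular‑vector estimate above, exactly as in \cite[Lemmas 6.2, 6.3]{ZZ1}; in particular the fact that $\Fc(\Lambda,\zeta,X)$ is not cocompact never enters, since the only uniformity used is the built‑in uniformity of the constants in the dominated splitting.
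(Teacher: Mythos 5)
Your proof is correct and, unlike the paper's, self-contained: at this point the paper only asserts that the proofs of \cite[Lemma 6.2 and Lemma 6.3]{ZZ1} apply because they use nothing beyond the dominated splitting. Your reduction --- using that $E^s_{\phi^t(z)}=E^s_z$, $E^u_{\phi^t(z)}=E^u_z$ and that each $A_y^{-1}$ is an isometry $(\Kb^d,\norm{\cdot}_y)\to(\Kb^d,\norm{\cdot}_0)$ to rewrite domination as $\norm{h^t_z\hat w}_0\leqslant Ce^{-ct}\norm{h^t_z\hat v}_0$ for unit $\hat v\in V_t$, $\hat w\in W_t$, then Courant--Fischer for (1) and the left-singular-vector projection estimate for (2) --- is exactly the computation the citation hides, and your remark that no cocompactness enters is precisely why the transfer to the non-cocompact flow space $\Fc(\Lambda,\zeta,X)$ is legitimate.

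The one discrepancy is the orientation issue you flag at the end, and it resolves in your favor: what you prove in (2) is $\sup_z d_{\angle}\big(U_k(A_z^{-1}A_{\phi^t(z)}),A_z^{-1}E^s_z\big)\to 0$, whereas the displayed statement has $U_k(A_{\phi^t(z)}^{-1}A_z)=U_k((h^t_z)^{-1})$. These are not interchangeable: already along a single flow line with $A_z=\Id$, $A_{\phi^t(z)}=\diag(e^t,e^{-t})$, $E^s\equiv\Span(e_1)$, $E^u\equiv\Span(e_2)$ (a legitimate dominated splitting), one has $U_1((h^t_z)^{-1})=\Span(e_2)$, at maximal distance from $A_z^{-1}E^s_z=\Span(e_1)$, so the printed version cannot follow from the dominated splitting alone. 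It is your version that the paper actually uses: in the proof of Theorem \ref{TheoremRestrictedtoEGF}, Lemma \ref{DominatedSplittingMetricTransEstimation} (2) is invoked to identify $\lim_n A_{z_n}U_k(A_{z_n}^{-1}A_{\phi^{t_n}(z_n)})$ with $\lim_n E^s_{z_n}$, which requires exactly $U_k(A_z^{-1}A_{\phi^t(z)})\approx A_z^{-1}E^s_z$. So your bookkeeping lands on the intended statement, and the displayed formula in (2) has the two factors swapped. Two minor points to spell out in a write-up: the gap $\sigma_k(h^t_z)>\sigma_{k+1}(h^t_z)$, needed for $U_k(h^t_z)$ to be defined, is only guaranteed once $Ce^{-ct}<1$ (harmless for a limit as $t\to+\infty$); and the passage from the one-sided bound on projections of unit vectors of $A_z^{-1}E^s_z$ onto $U_k(h^t_z)^{\perp}$ to the angle distance between the two $k$-planes uses the standard fact that for subspaces of equal dimension the maximal principal angle is symmetric.
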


\begin{proof}[Proof of Theorem \ref{TheoremRestrictedtoEGF}]
    We fix a base point $z_0\in \Fc(\Lambda,\zeta,X)$. By Lemma \ref{TrackingLemma}, there exists a compact set $K\subset \Fc(\Lambda,\zeta,X)$ that contains $z_0$ and constants $\lambda\geqslant 1$ and $\epsilon \geqslant 0$, with the property that for any $\gamma\in \Gamma$, there exists $z_\gamma\in K$ and $t_\gamma\in\Rb$, such that $\gamma^{-1}\phi^{t_\gamma}(z_\gamma) \in K$ and $\abs{\gamma}_X \sim_{(\lambda,\epsilon)} \abs{t}$. Here we can assume $t_\gamma \geqslant 0$, as we may replace $z_\gamma$ by its reverse $\hat{z_\gamma}$.

    Since $K$ is compact, there exists a constant $C_K\geqslant 1$ such that \[ \norm{\cdot}_z =  \norm{A_z^{-1} \cdot}_0 \sim_{(C_K,0)} \norm{\cdot}_0\] for any $z\in K$. Then for any $\gamma\in\Gamma$, \[ \norm{\rho(\gamma)^{-1}\cdot}_0 \sim_{(C_K,0)} \norm{\rho(\gamma)^{-1}\cdot}_{\gamma^{-1}\phi^{t_\gamma}(z_\gamma)} = \norm{\cdot}_{\phi^{t_\gamma}(z_\gamma)}~.\] On the other hand, we have $\norm{\cdot}_{\phi^{t_\gamma}(z_\gamma)} = \norm{A_{\phi^{t_\gamma}(z_\gamma)}\cdot}_0$, and \[ \norm{A_{\phi^{t_\gamma}(z_\gamma)}\cdot}_0 \sim_{(C_K,0)} \norm{A_{\phi^{t_\gamma}(z_\gamma)}\cdot}_{z_\gamma} = \norm{A_{z_\gamma}^{-1}A_{\phi^{t_\gamma}(z_\gamma)}\cdot}_0~.\] Therefore \[ \norm{\rho(\gamma)^{-1}\cdot}_0 \sim_{(C_K^2,0)} \norm{A_{z_\gamma}^{-1}A_{\phi^{t_\gamma}(z_\gamma)}\cdot}_0~.\] Then by Lemma\ref{DominatedSplittingMetricTransEstimation} (1), 
    \begin{equation}\label{eqweaklydominated}
    \dfrac{\sigma_{k+1}(\rho(\gamma)^{-1})}{\sigma_{k}(\rho(\gamma)^{-1})}\leqslant C_K' \dfrac{\sigma_{k+1}(A_{z_\gamma}^{-1}A_{\phi^{t_\gamma}(z_\gamma)})}{\sigma_{k}(A_{z_\gamma}^{-1}A_{\phi^{t_\gamma}(z_\gamma)})}\leqslant C_K'C e^{-c {t_\gamma}}\leqslant C_K'Ce^{c\epsilon} e^{-c\lambda^{-1} \abs{\gamma}_X}
    \end{equation}
    where $C_K'$ is some power of $C_K$. Hence $\rho$ is $P_k$-divergent.

    Now we show that $\rho$ is extended geometrically finite with boundary extension $(\xi(\Lambda),\zeta\circ\xi^{-1})$. Let $(\gamma_n)_{n\in \Nb}$ be a sequence in $\Gamma$, with $\gamma_n \to p \in \partial(\Gamma,\Pc)$ and $\gamma_n^{-1}\to q \in \partial(\Gamma,\Pc)$ as $n\to +\infty$. Then $(\rho(\gamma_n) )_{n\in \Nb}$ is a $P_k$-divergent sequence. By Lemma \ref{ExtendedDivergenceLemma}, it suffices to show that $U_k(\rho(\gamma_n))$ has all accumulation points contained in $\xi^k(\zeta^{-1}(p))$ and $U_{d-k}(\rho(\gamma_n)^{-1})$ has all accumulation points contained in $\xi^{d-k}(\zeta^{-1}(q))$. We apply Lemma \ref{TrackingLemma} here again, then there exists $z_n\in K$ and $t_n \in \Rb_{\geqslant 0}$, such that $\gamma_n^{-1}\phi^{t_n}(z_n) \in K$ for each $n\in \Nb$. Then $t_n\to +\infty$ since $(\gamma_n)_{n\in \Nb}$ is a $P_k$-divergent sequence. Up to a subsequence, we may assume $U_k(\rho(\gamma_n))$ and $U_{d-k}(\rho(\gamma_n)^{-1})$ are convergent as $n\to +\infty$. Similar to the argument above, \[\norm{A_{\phi^{t_n}(z_n)}^{-1}\rho(\gamma_n)\cdot}_0 = \norm{\rho(\gamma_n)\cdot}_{\phi^{t_n}(z_n)} = \norm{\cdot}_{\gamma^{-1}\phi^{t_n}(z_n)}\sim_{(C_K,0)} \norm{\cdot}_0~.\] Therefore $\norm{\rho(\gamma_n)^{-1}A_{\phi^{t_n}(z_n)}}\norm{A_{\phi^{t_n}(z_n)}^{-1}\rho(\gamma_n)}$ is uniformly bounded for any $n$. By Lemma \ref{EstimationSingularValues} (1), 
    \[ d_{\angle}(U_k(\rho(\gamma_n)),U_k(A_{\phi^{t_n}(z_n)}))\leqslant \norm{\rho(\gamma_n)^{-1}A_{\phi^{t_n}(z_n)}}\norm{A_{\phi^{t_n}(z_n)}^{-1}\rho(\gamma_n)} \dfrac{\sigma_{k+1}(\rho(\gamma_n))}{\sigma_k(\rho(\gamma_n))}~.\]
    Up to a subsequence, we assume that $z_n\to z = (z^+,z^-,t)\in K$ as $n\to +\infty$, then $\zeta(z^+) = p$.
    Then \[ \lim\limits_{n\to +\infty} U_k(\rho(\gamma_n)) = \lim\limits_{n\to +\infty} U_k(A_{\phi^{t_n}(z_n)})~.\] On the other hand, by Lemma \ref{EstimationSingularValues} (2), \[d_{\angle}(A_{z_n} U_k(A_{z_n}^{-1}A_{\phi^{t_n}(z_n)}),U_k(A_{\phi^{t_n}(z_n)}))\leqslant \norm{A_{z_n}} \norm{A_{z_n}^{-1}} \dfrac{\sigma_{k+1}(A_{z_n}^{-1}A_{\phi^{t_n}(z_n)})}{\sigma_k(A_{z_n}^{-1}A_{\phi^{t_n}(z_n)})}~.\] Since $A_{z_n}$ remains in a compact set and $\dfrac{\sigma_{k+1}(A_{z_n}^{-1}A_{\phi^{t_n}(z_n)})}{\sigma_k(A_{z_n}^{-1}A_{\phi^{t_n}(z_n)})}\to 0$ as $n\to +\infty$ by Lemma \ref{DominatedSplittingMetricTransEstimation} (1), then we have
    \begin{align*}
    \lim\limits_{n\to +\infty} U_k(A_{\phi^{t_n}(z_n)}) & = \lim\limits_{n\to +\infty} A_{z_n} U_k(A_{z_n}^{-1}A_{\phi^{t_n}(z_n)})\\
    & = \lim\limits_{n\to +\infty} E_{z_n}^s\\
    & = E_{z}^s = \xi^k(z^+)\in \xi^k(\zeta^{-1}(p))~,
    \end{align*}
    where the last equality follows from $A_{z_n}$ remaining in a compact set and Lemma \ref{DominatedSplittingMetricTransEstimation} (2). Similarly, we have \[\lim\limits_{n\to +\infty} U_{d-k}(\rho(\gamma_n)^{-1}) \in \xi^{d-k}(\zeta^{-1}(q))~.\]
    Therefore, $\rho$ is extended geometrically finite with boundary extension $(\xi(\Lambda),\zeta\circ\xi^{-1})$.
\end{proof}

Recall Definition \ref{DefWeaklyDominated}. We deduce directly the ``moreover part'' of Theorem \ref{MainTheorem} from Inequality \ref{eqweaklydominated} in the argument above.

\begin{corollary}\label{WeaklyDominatedCorollary}
    $\rho$ is $P_k$-weakly dominated with respect to $X$.
\end{corollary}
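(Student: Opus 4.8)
The plan is to read the corollary off directly from Inequality~\eqref{eqweaklydominated}, which was established for \emph{every} $\gamma\in\Gamma$ during the proof of Theorem~\ref{TheoremRestrictedtoEGF}: there, for each $\gamma$ one produces $z_\gamma\in K$ and $t_\gamma\geqslant 0$ with $\gamma^{-1}\phi^{t_\gamma}(z_\gamma)\in K$ and $\abs{\gamma}_X\sim_{(\lambda,\epsilon)} t_\gamma$, the non-negativity of $t_\gamma$ being arranged by passing to the reverse $\hat{z_\gamma}$ if necessary. Thus \eqref{eqweaklydominated} supplies constants $C'=C_K'Ce^{c\epsilon}>0$ and $c'=c\lambda^{-1}>0$ with
\[
\frac{\sigma_{k+1}(\rho(\gamma)^{-1})}{\sigma_k(\rho(\gamma)^{-1})}\leqslant C'e^{-c'\abs{\gamma}_X}\qquad\text{for all }\gamma\in\Gamma .
\]

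First I would substitute $\gamma^{-1}$ for $\gamma$ in this inequality. On the left, $\rho(\gamma^{-1})^{-1}=\rho(\gamma)$, so the quotient becomes $\sigma_{k+1}(\rho(\gamma))/\sigma_k(\rho(\gamma))$ (this step matters because the naive symmetry $\sigma_i(g^{-1})=\sigma_{d+1-i}(g)^{-1}$ would instead turn the left-hand side into a singular value gap at index $d-k$, not $k$). On the right, $\abs{\gamma^{-1}}_X$ appears; but since $\Gamma$ acts on the Gromov model $X$ by isometries and $\abs{\eta}_X=d_X(o,\eta o)$ for the fixed base point $o$, one has $\abs{\gamma^{-1}}_X=d_X(o,\gamma^{-1}o)=d_X(\gamma o,o)=\abs{\gamma}_X$. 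Hence
\[
\frac{\sigma_{k+1}(\rho(\gamma))}{\sigma_k(\rho(\gamma))}\leqslant C'e^{-c'\abs{\gamma}_X}\qquad\text{for all }\gamma\in\Gamma ,
\]
which is precisely the condition in Definition~\ref{DefWeaklyDominated}, so $\rho$ is $P_k$-weakly dominated with respect to $X$.

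There is essentially no obstacle here; the whole content sits in \eqref{eqweaklydominated}, and the passage to the estimate on $\rho(\gamma)$ is just the $\gamma\mapsto\gamma^{-1}$ invariance of $\abs{\cdot}_X$. The only point deserving a sentence of care is that \eqref{eqweaklydominated} is genuinely uniform over all of $\Gamma$ — in particular that $t_\gamma$ can always be taken non-negative — but this is already part of the proof of Theorem~\ref{TheoremRestrictedtoEGF} and requires nothing new.
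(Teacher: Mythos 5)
Your proposal is correct and is exactly the paper's route: the corollary is read off directly from Inequality~\eqref{eqweaklydominated}, which the proof of Theorem~\ref{TheoremRestrictedtoEGF} establishes uniformly for all $\gamma\in\Gamma$ (with $t_\gamma\geqslant 0$ arranged via the reverse map). Your extra step of substituting $\gamma\mapsto\gamma^{-1}$ and using $\abs{\gamma^{-1}}_X=\abs{\gamma}_X$ to convert the gap for $\rho(\gamma)^{-1}$ into the gap at index $k$ for $\rho(\gamma)$ is precisely the small detail the paper leaves implicit, and it is handled correctly.
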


\section{Restricted Anosov from extended geometrically finite}\label{Section1to2}
\subsection{Proof of the Main Theorem}\label{Section1to2theFirst} Let $(\Gamma, \Pc)$ be a relatively hyperbolic group. We show that (1) implies (2) in Theorem \ref{MainTheorem} in this section. More concretely,
\begin{theorem}\label{TheoremEGFtoRestricted}
    If a representation $\rho:\Gamma\to \SL(d,\Kb)$ is $P_k$-divergent and extended geometrically finite with a refined boundary extension $\zeta: \Lambda \to \partial (\Gamma, \Pc)$ in $\Fc_{k,d-k}$, then $\zeta: \Lambda \to \partial (\Gamma, \Pc)$ is minimal, and there exists a Gromov model $X$ of $(\Gamma,\Pc)$, such that $\rho$ is $P_k$-Anosov in restriction to $\Fc(\Lambda,\zeta,X)$.
\end{theorem}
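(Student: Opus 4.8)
The plan has three components: deduce minimality of the refined boundary extension; produce a Gromov model $X$ of $(\Gamma,\Pc)$ adapted to the contraction rates of $\rho$ along the peripheral subgroups; and construct by hand a $\rho(\Gamma)$-invariant metric on $\Fc(\Lambda,\zeta,X)\times\Kb^d$ realizing the tautological flag splitting as a dominated splitting of rank $k$. Minimality is the quick part. Condition (1) of minimality is the first clause of ``refined''. For condition (2), the key observation is that the images in $\Lambda$ of the compactly attained conical limit points $\partial_{cc}X$ are dense in $\Lambda$: such a point is identified with its singleton preimage, and any $x\in\Lambda$ is a limit of them, since one approximates $\zeta(x)$ in $\partial(\Gamma,\Pc)$ by points of $\partial_{cc}X$, lifts, extracts a subsequence convergent in the compact space $\Lambda$, and identifies the limit with $x$ using continuity of $\zeta$ and the refined description of the fibres. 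Then, given a parabolic point $p$ with stabilizer $P$ and $y\in\zeta^{-1}(p)$, Proposition~\ref{BoundaryExtensionRefinement}(2) writes $y$ as a limit of accumulation points of $\rho(\gamma_n)x$ with $\gamma_n\in P$ pairwise distinct and $x\in C_p$; replacing these $x$ by nearby points of $\zeta^{-1}(\partial_{cc}X)$ and diagonalizing gives condition (2).

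Next, the Gromov model. Since $\rho$ is $P_k$-divergent, for each $P\in\Pc$ the quantity $\log\bigl(\sigma_k(\rho(\gamma))/\sigma_{k+1}(\rho(\gamma))\bigr)$ tends to $+\infty$ as $\gamma\to\infty$ in $P$, so the non-decreasing function $g_P(n)=\min\{\log(\sigma_k(\rho(\gamma))/\sigma_{k+1}(\rho(\gamma))):\gamma\in P,\ \abs{\gamma}_S\geqslant n\}$ diverges. I would build an increasing continuous $f$ with $f(0)=1$, $f(t)\to+\infty$, $f(s+t)\geqslant 2^t f(s)$ for all $s,t\geqslant 0$, and growing fast enough that $f^{-1}(n)\leqslant c^{-1}g_P(n)$ for all $n$ and all $P\in\Pc$ --- possible because enlarging $f$ only helps the estimate and the forced lower bound $f(s)\geqslant 2^s$ is automatic once $f$ is large --- and take $X=X_f(\Gamma,\Pc,S)$. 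The cone-point route inside a combinatorial horoball shows $\abs{\gamma}_X$ is bounded above by a constant multiple of $f^{-1}(\abs{\gamma}_S)$ for $\gamma$ in a peripheral subgroup, so by construction each $\rho|_P$ is $P_k$-weakly dominated with respect to $X$.

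It then remains to upgrade this to global weak domination and to build the metric. For global weak domination of $\rho$ with respect to $X$, one follows an $X$-geodesic from $o$ to $\gamma o$, decomposes it into horoball excursions and complementary thick-part segments, and bounds $\sigma_{k+1}(\rho(\gamma))/\sigma_k(\rho(\gamma))$ by the product of the contributions of the pieces: thick-part pieces decay exponentially in $\abs{\cdot}_X$ by the $P_k$-domination on the sets $\Gamma^+_{F,K}$ supplied by Theorem~\ref{TheoremCompactSubflow} (applicable since $\rho$ is $P_k$-Anosov in restriction to every cocompact subflow), horoball pieces decay by the previous paragraph, and the product estimate is legitimate because the flags $U_k$, $U_{d-k}$ of the partial products remain uniformly transverse --- precisely what ``extending the convergence dynamics'' provides, via Lemma~\ref{ExtendedDivergenceLemma} and Lemma~\ref{EstimationSingularValues}. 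Finally I would produce a $\rho(\Gamma)$-invariant unit-volume metric $\norm{\cdot}$ on $\Fc(\Lambda,\zeta,X)\times\Kb^d$ for which, writing $z^\pm=(V^\pm,W^\pm)\in\Lambda\subset\Fc_{k,d-k}$, the splitting $E^s_z=V^+$, $E^u_z=W^-$ (transverse by $\zeta$-transversality) is dominated of rank $k$: over a cocompact subflow use the metric of Theorem~\ref{TheoremCompactSubflow}, over the peripheral regions extend it using the $\rho(P)$-action and the weak-domination estimates for $\rho|_P$, and glue by a partition of unity, rescaling to unit volume as in Remark~\ref{UnitVolumeMetric}. The contraction inequality then holds with uniform constants since $\abs{t}\sim_{(\lambda,\epsilon)}\abs{\gamma}_X$ along flow lines (Lemma~\ref{TrackingLemma}) and global weak domination bounds $\sigma_{k+1}/\sigma_k$ uniformly, the residual non-compactness near the parabolic fibres being absorbed by the contraction of $\rho(P)$ towards $\zeta^{-1}(p)$.

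The hard part is this last construction. Unlike $\Fc(X)$, the flow space $\Fc(\Lambda,\zeta,X)$ has a non-compact $\Gamma$-quotient even after the usual cusps are accounted for, because the fibres $\zeta^{-1}(p)$ over parabolic points contribute genuinely extra non-compactness; so one cannot simply prescribe the metric on a compact fundamental domain, and the metric must be made to degenerate in the correct way along those fibre directions. It is exactly the adapted choice of Gromov model, together with the uniform transversality coming from the EGF property, that makes the resulting domination constants uniform there.
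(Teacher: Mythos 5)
Your outline follows the same skeleton as the paper (minimality; an adapted Gromov model chosen via a fast-growing $f$ so that each $\rho|_P$ becomes $P_k$-weakly dominated; an invariant metric making the tautological splitting dominated), but two of the three steps have genuine gaps. For minimality, your density claim and the ``replace $x$ by nearby conical preimages and diagonalize'' step do not work as stated. Continuity of $\zeta$ only tells you that a limit of lifted conical points lies in the fibre $\zeta^{-1}(\zeta(x))$, not that it equals $x$; over a parabolic point the fibre can be a continuum, so the density of $\zeta^{-1}(\partial_{cc}X)$ in $\Lambda$ is essentially the statement being proved, and the argument is circular. Similarly, for a fixed sequence $(\gamma_n)\subset P$ the EGF dynamics only force the accumulation points of $\rho(\gamma_n)x'$ to lie in $\zeta^{-1}(p)$; they do not force $\rho(\gamma_n)x'$ to converge to the same point $y$ that $\rho(\gamma_n)x$ converges to, so perturbing $x$ and diagonalizing does not produce a sequence converging to $y$. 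The missing ingredient is $P_k$-divergence: as in Proposition \ref{RefinedtoMinimal}, pass to a subsequence where $(U_k(\rho(\gamma_n)),U_{d-k}(\rho(\gamma_n)))\to y'$ and the corresponding flags for $\gamma_n^{-1}$ converge to $y''$, check $y',y''\in\zeta^{-1}(p)$, and apply Lemma \ref{DivergenceLemma}: every flag transverse to $y''$ --- in particular every point of $\Lambda\setminus\zeta^{-1}(p)$, hence any conical fibre point --- is attracted to $y'$, and $y'=y$ because $x\in C_p$ is transverse to $\zeta^{-1}(p)$. Your sketch never invokes divergence at this point.

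For the main construction, the Gromov-model step matches Lemma \ref{GromovModelDivtoDom} and the generalized Groves--Manning space, but the rest is not carried out. First, the intermediate goal of global weak domination of $\rho$ with respect to $X$ is both unproved (the ``uniform transversality of the flags of the partial products'' in your product-of-pieces estimate is precisely what would need an argument) and unnecessary --- in the paper it only appears as a corollary of the reverse implication. Second, a dominated splitting is an inequality on norms of fixed vectors $v\in\xi^k(z^+)$, $w\in\xi^{d-k}(z^-)$ at the two ends of a flow segment; converting singular-value gaps of group elements into such ratio bounds is the content of Lemma \ref{ThickPartPointsEstimation}, a compactness/contradiction argument using $\Lambda=\Lc(\rho)$, transversality of $\zeta$, and the convergence of $U_{d-k}(\rho(\gamma_n^{-1}))$ to the fibre element $\xi^{d-k}(z'^-)$; your proposal only gestures at this. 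Third, the metric on the thin regions cannot be obtained by a partition of unity or by ``extending using the $\rho(P)$-action'': $P$ acts cocompactly on $\partial^{\pm}\Fc_P$ but not on $\Fc_P$, and an averaged glue does not preserve exponential decay of the ratio. The paper instead prescribes the metric explicitly along each maximal thin excursion (scaling the three tautological subbundles by $e^{\pm ct}$ near the two ends and interpolating inner products on the middle third), proves the entrance-to-exit estimate Lemma \ref{ThinBoundaryPointsEstimation} from weak domination of $\rho|_P$ together with Lemma \ref{ThickPartPointsEstimation}, and then chains thick and thin estimates (the two Claims) to obtain uniform contraction. These are exactly the steps you label ``the hard part,'' and as written they are missing, so the proof is incomplete.
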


In the assumption of the theorem, $\Lambda$ is already a subset of $\Fc_{k,d-k}$, but for convenience, we still denote $x = (\xi^k(x),\xi^{d-k}(x))\in\Fc_{k,d-k}$, where $\xi^k$ ($\xi^{d-k})$ is just the map taking the $k$-dimensional ($d-k$-dimensional) subspace from a flag. The idea of the proof is as follows. We show that $\zeta: \Lambda \to \partial (\Gamma, \Pc)$ is minimal in Proposition \ref{RefinedtoMinimal}. Then, we find a proper Gromov model $X$ of $(\Gamma,\Pc)$ such that  $\rho|_P$ is $P_k$-weakly dominated with respect to $X$ for each $P\in\Pc$. Following an argument from \cite{CZZ} (for Anosov representations of geometrically finite Fuchsian groups) and \cite{ZZ1} (for relatively Anosov representations), we construct a $\rho(\Gamma)$-invariant, reverse invariant metric on $\Fc(\Lambda,\zeta,X)\times \Kb^d$ and show that the decomposition defined by $\Fc(\Lambda,\zeta,X)\times \Kb^d = E^s \oplus E^u$, $(E^s_z,E^u_z) = (\xi^k(z^+),\xi^{d-k}(z^-))$ at any point $z=(z^+,z^-,\ell)\in \Fc(\Lambda,\zeta,X)$, is a dominated splitting of rank $k$ with respect to this metric.

Before starting the proof of Theorem \ref{TheoremEGFtoRestricted}, we show the following proposition, and as two concequence, we deduce Theorem \ref{MainRefinedtoMinimal} and the first statement of Theorem \ref{TheoremEGFtoRestricted} that the boundary extension is minimal.

\begin{proposition}\label{RefinedtoMinimal}
    Let $\rho:\Gamma\to \SL(d,\Kb)$ be a representation that is $P_k$-divergent and extended geometrically finite with a refined boundary extension $\zeta: \Lambda \to \partial (\Gamma, \Pc)$ in $\Fc_{k,d-k}$. Then the limit set $\Lc(\rho)$ of $\rho$ in $\Fc_{k,d-k}$ is identified with $\Lambda$. If $(\gamma_n)_{n\in \Nb}\subset \Gamma$ is a sequence with $\gamma_n\to q\in\partial_{con}(\Gamma,\Pc)$ then $\zeta^{-1}(q)$ is the singleton \[\lim\limits_{n\to +\infty} (U_k(\rho(\gamma_n)),U_{d-k}(\rho(\gamma_n)))\] and for any $p\in\partial_{par}(\Gamma,\Pc)$, \[\zeta^{-1}(p) = \{ \lim\limits_{n\to +\infty} (U_k(\rho(\gamma_n)),U_{d-k}(\rho(\gamma_n))): (\gamma_n)_{n\in \Nb}\subset P\] \[\text{ a sequence of pairwise distinct elements}\}~.\]
    In particular, $\zeta: \Lambda \to \partial (\Gamma, \Pc)$ is minimal.
\end{proposition}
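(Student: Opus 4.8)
Since $\zeta$ is refined, Proposition~\ref{BoundaryExtensionRefinement} already provides that $\zeta^{-1}(q)$ is a single point for each conical $q$, and that for a parabolic point $p$ with stabilizer $P$ the fibre $\zeta^{-1}(p)$ is the closure of the set of accumulation points of sequences $\rho(\gamma_m)x$ with $(\gamma_m)$ a sequence of pairwise distinct elements of $P$ and $x\in C_p$. So the content to establish is: that these fibres are the asserted limit flags, that $\Lambda=\Lc(\rho)$, and that $\zeta$ is minimal. The engine is the following observation. Suppose $(\gamma_n)_{n\in\Nb}\subset\Gamma$ satisfies $\gamma_n\to a$ and $\gamma_n^{-1}\to b$ in $\partial(\Gamma,\Pc)$, and take the open set $C_b=\{x\in\Fc_{k,d-k}:x\text{ transverse to }\zeta^{-1}(b)\}$, an admissible choice by the remark following Definition~\ref{DefExtendedGeometricallyFinite} and by Remark~\ref{OpenSettobeWholeTransverse}. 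Passing to a subsequence, the flag $(U_k(\rho(\gamma_n)),U_{d-k}(\rho(\gamma_n)))$, which is defined for $n$ large because $\rho$ is $P_k$-divergent, converges to some $f\in\Fc_{k,d-k}$. Then $\rho(\gamma_n)x\to f$ for every $x$ in a suitable open dense subset of $\Fc_{k,d-k}$: this is Lemma~\ref{DivergenceLemma} applied to $\rho(\gamma_n)$ (to control the $k$-plane component) and to $\rho(\gamma_n)^{-1}$ (to control the $(d-k)$-plane component), together with the identity $U_k(g^{-1})=U_{d-k}(g)^{\perp}$. That open dense set meets the nonempty open set $C_b$, and for $x$ in the intersection the extended convergence dynamics of $\zeta$ forces every accumulation point of $\rho(\gamma_n)x$ into $\zeta^{-1}(a)$; since $\rho(\gamma_n)x\to f$, this gives $f\in\zeta^{-1}(a)$.

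\textbf{The fibres.} For conical $q$ and any sequence $\gamma_n\to q$: using that $\Gamma$ acts as a convergence group on $\partial(\Gamma,\Pc)$, pass to a subsequence with $\gamma_n^{-1}\to q'$ and with a limit flag $f$ as above; the engine gives $f\in\zeta^{-1}(q)=\{x_q\}$, so $f=x_q$. As every convergent subsequence has this limit, the full sequence converges to $x_q$, which is the first assertion. For parabolic $p$ with stabilizer $P$, let $\Lc_P$ denote the set of limit flags $\lim_m(U_k(\rho(\gamma_m)),U_{d-k}(\rho(\gamma_m)))$ over sequences $(\gamma_m)$ of pairwise distinct elements of $P$. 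Any such sequence converges to $p$, as does its inverse, so the engine (with $a=b=p$) shows $\Lc_P\subseteq\zeta^{-1}(p)$. Conversely, by refinedness every point of $\zeta^{-1}(p)$ is an accumulation point of some $\rho(\gamma_m)x$ with $(\gamma_m)\subset P$ distinct and $x\in C_p$; the engine identifies that accumulation point with a limit flag of $\rho(\gamma_m)$, so it lies in $\Lc_P$, and since $\Lc_P$ is closed (a diagonal argument) we obtain $\zeta^{-1}(p)=\Lc_P$.

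\textbf{The limit set and minimality.} From the fibre descriptions, $\Lambda=\bigcup_a\zeta^{-1}(a)\subseteq\Lc(\rho)$. For the reverse inclusion, a given $f\in\Lc(\rho)$ is the limit flag of some sequence of pairwise distinct elements of $\Gamma$; after passing to a subsequence, that sequence either converges conically to a conical limit point, so $f\in\Lambda$ by Proposition~\ref{CriterionEGF}(1), or else, after left translation by a fixed element, lies in a single parabolic subgroup, so $f\in\Lambda$ by the parabolic fibre description. Hence $\Lambda=\Lc(\rho)$. Finally, $\zeta$ is minimal: condition~(1) is part of refinedness, while for condition~(2) one uses that, in the parabolic fibre description, each point of $\zeta^{-1}(p)$ is $\lim_m\rho(\gamma_m)x$ for a generic $x$, and by density of $\zeta^{-1}(\partial_{con}(\Gamma,\Pc))$ in $\Lambda$ such an $x$ can be taken in $\zeta^{-1}(\partial_{con}(\Gamma,\Pc))$.

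\textbf{Main obstacle.} The delicate point is the parabolic case. Proposition~\ref{CriterionEGF}(2) and the refined description are phrased purely in terms of sequences inside the parabolic subgroup $P$ and accumulation of $\rho(\gamma_m)x$, so the argument must be carried out within $P$, and one has to keep track of both components of the attracting flag while the dynamical input (Lemma~\ref{ExtendedDivergenceLemma}) controls $U_k(\rho(\gamma_m))$ and $U_{d-k}(\rho(\gamma_m)^{-1})=U_k(\rho(\gamma_m))^{\perp}$ rather than $U_{d-k}(\rho(\gamma_m))$ itself. Converting this into control of the full flag, checking that the two descriptions of $\zeta^{-1}(p)$ agree, and making precise the closedness of $\Lc_P$ and the dichotomy for arbitrary sequences used for $\Lc(\rho)\subseteq\Lambda$, is where the work concentrates.
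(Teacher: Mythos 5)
Your overall skeleton (the ``engine'' = Lemma \ref{DivergenceLemma} plus the extended convergence dynamics, applied to conical fibres, parabolic fibres, $\Lc(\rho)=\Lambda$, then minimality) is the same as the paper's, and the conical case and the inclusion $\Lc_P\subseteq\zeta^{-1}(p)$ are fine. But two steps, as written, do not go through. First, your proof of $\Lc(\rho)\subseteq\Lambda$ rests on a dichotomy that is false: a sequence of pairwise distinct elements of $\Gamma$ converging to a parabolic point need not, after passing to a subsequence and translating by a fixed element, lie in a single parabolic subgroup (take $h$ generating a parabolic subgroup and $g$ loxodromic with fixed points away from the parabolic point; $\gamma_n=h^ng^n$ converges to the parabolic point but leaves every bounded neighbourhood of every coset of the parabolic subgroup). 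Likewise, a sequence converging to a conical limit point need not converge conically, so Proposition \ref{CriterionEGF}(1) does not apply to it directly. The fix is simply to run your own engine on the arbitrary sequence realizing $y\in\Lc(\rho)$: pass to a subsequence with $\gamma_n\to q$, $\gamma_n^{-1}\to q'$ and inverse limit flag $y'$, pick $x\in C_{q'}$ transverse to $y'$ (nonempty open meets dense open), conclude $\rho(\gamma_n)x\to y$ and hence $y\in\zeta^{-1}(q)\subseteq\Lambda$ — this is exactly what the paper does, and it needs no case distinction between conical and parabolic limits.

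Second, in the converse inclusion $\zeta^{-1}(p)\subseteq\Lc_P$ you assert that ``the engine identifies that accumulation point with a limit flag of $\rho(\gamma_m)$,'' but the point $x\in C_p$ furnished by refinedness is \emph{given}, and nothing you have said guarantees it lies in the dense set (flags transverse to $y''=\lim(U_k(\rho(\gamma_m^{-1})),U_{d-k}(\rho(\gamma_m^{-1})))$) on which the engine produces convergence to $y'$. You flag this as the ``main obstacle'' but do not resolve it, and the resolution is the actual content of the paper's parabolic step: first show (by the engine applied to the inverse sequence, which also lies in $P$) that $y''\in\zeta^{-1}(p)$, then use that $C_p$ may be chosen transverse to $\zeta^{-1}(p)$ (the remark after Definition \ref{DefExtendedGeometricallyFinite}, i.e.\ Weisman's Proposition 4.5 — not Remark \ref{OpenSettobeWholeTransverse}, which is a consequence of the present proposition and would be circular here), so that the given $x$ is automatically transverse to $y''$ and $\rho(\gamma_m)x\to y'$, identifying $y=y'$. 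The same observation repairs your minimality argument: you do not need density of $\zeta^{-1}(\partial_{con}(\Gamma,\Pc))$ in $\Lambda$ (which is not available at this stage); since $y''\in\zeta^{-1}(p)$ and $\zeta$ is transverse, every point of $\Lambda\setminus\zeta^{-1}(p)$ — in particular any conical fibre over $q\ne p$ — is transverse to $y''$ and is therefore attracted to $y$ under $\rho(\gamma_m)$.
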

\begin{proof}
    Since $\zeta: \Lambda \to \partial (\Gamma, \Pc)$ is a refined boundary extension, by Proposition \ref{BoundaryExtensionRefinement}, for any $q\in\partial_{con}(\Gamma,\Pc)$, $\zeta^{-1}(q)$ is a singleton, and for any $p\in\partial_{par}(\Gamma,\Pc)$, $\zeta^{-1}(p)$ is the closure of all accumulation points of $\gamma_nx$ where $(\gamma_n)_{n\in \Nb}$ is a sequence of elements in the stabilizer of $p$ and $x\in C_p$.\\ \\
    \textit{Conical limit points.}
    If $q\in \partial_{con}(\Gamma,\Pc)$, let $(\gamma_n)_{n\in \Nb}\subset \Gamma$ be a sequence such that $\gamma_n\to q$ as $n\to +\infty$. Assume that $\gamma_n^{-1}\to q'$ as $n\to +\infty$ up to a subsequence. By the definition of extended geometrically finite representation, there exists an open set $C_{p'}$ such that for any $x\in C_{p'}$, $\rho(\gamma_n)x$ converges to the singleton $\zeta^{-1}(q)$ as $n\to +\infty$. Therefore, by Lemma \ref{DivergenceLemma}, \[\zeta^{-1}(q) = \lim\limits_{n\to +\infty} (U_k(\rho(\gamma_n)),U_{d-k}(\rho(\gamma_n)))~.\]\ \\
    \textit{Parabolic points.}
    Let $p$ be a parabolic point with stabilizer $P\in \Pc^\Gamma$. Let $y\in \zeta^{-1}(p)$ be such that there is a sequence $(\gamma_n)_{n\in \Nb}\subset P$ and $x\in C_p$ with $\rho(\gamma_n) x \to y$ as $n\to +\infty$. Since $\rho$ is $P_k$-divergent, $(\gamma_n)_{n\in \Nb}$ is a $P_k$-divergent sequence, we may assume that up to a subsequence, $(U_k(\rho(\gamma_n)),U_{d-k}(\rho(\gamma_n)))\to y'$ and $(U_k(\rho(\gamma_n^{-1})),U_{d-k}(\rho(\gamma_n^{-1})))\to y''$ as $n\to +\infty$. Then by Lemma \ref{DivergenceLemma}, for any $x'$ transverse to $y''$, $\gamma_n x'\to y'$ as $n\to +\infty$. The set of flags transverse to $y''$ is open and dense in $\Fc_{k,d-k}$, hence intersects $C_p$ non-trivially. Then we have $y' \in \zeta^{-1}(p)$ and similarly, $y'' \in \zeta^{-1}(p)$. Since $x\in C_p$ is transverse to $\zeta^{-1}(p)$, $\rho(\gamma_n) x \to y' = y$ as $n\to +\infty$. This implies that \[\zeta^{-1}(p) = \{ \lim\limits_{n\to +\infty} (U_k(\rho(\gamma_n)),U_{d-k}(\rho(\gamma_n))): (\gamma_n)_{n\in \Nb}\subset P\] \[\text{ a sequence of pairwise distinct elements}\}~,\] as it is already a closed subset of $\Fc_{k,d-k}$. 
    
    Finally, since $\Lambda\setminus \zeta^{-1}(p)$ is transverse to $\zeta^{-1}(p)$ and hence transverse to $y''$, then for any $x'\in \Lambda\setminus \zeta^{-1}(p)$, $\rho(\gamma_n) x' \to y' = y$ as $n\to +\infty$. Therefore, $\zeta: \Lambda \to \partial (\Gamma, \Pc)$ is minimal.\\ \\
    \textit{$\Lambda = \Lc(\rho)$.} We know from above that $\Lambda\subset \Lc(\rho)$. If $y \in \Lc(\rho)$ is expressed as $y = \lim\limits_{n\to +\infty} (U_k(\rho(\gamma_n)),U_{d-k}(\rho(\gamma_n)))$ for some sequence $(\gamma_n)_{n\in \Nb}\subset \Gamma$. We assume that $\gamma_n\to q$, $\gamma_n^{-1}\to q'$ and $(U_k(\rho(\gamma_n^{-1})),U_{d-k}(\rho(\gamma_n^{-1})))\to y'$ as $n\to +\infty$ up to a subsequence. The intersection of $C_{q'}$ and the set of flags transverse to $y'$ is nonempty open, then $y\in \zeta^{-1}(p)\subset \Lambda$ by definition. Therefore $\Lc(\rho)\subset \Lambda$.
\end{proof}

\begin{remark}\label{OpenSettobeWholeTransverse}
    The proposition tells that when $\rho$ is $P_k$-divergent and extended geometrically finite relative to $\Pc$, a unique choice of refined boundary extensions is given by the limit set. Then by Lemma \ref{ExtendedDivergenceLemma}, we can take the set $C_p$ in Definition \ref{DefExtendedGeometricallyFinite} to be the set of all the flags transverse to $\zeta^{-1}(p)$, for any $p\in\partial(\Gamma,\Pc)$.
\end{remark}

Recall that for a parabolic subgroup $P\in \Pc$, we say that $\rho|_{P}$ is \emph{$P_k$-weakly dominated} with respect to a Gromov model $X$ of $(\Gamma,\Pc)$, if there exist constant $C,c > 0$, such that \[\dfrac{\sigma_{k+1}(\rho(\gamma))}{\sigma_{k}(\rho(\gamma))}\leqslant Ce^{-c\abs{\gamma}_X}~,\] for any $\gamma\in P$.

\begin{lemma}\label{GromovModelDivtoDom}
    There exists an increasing function $f:\Rb_{\geqslant 0} \to \Rb_{\geqslant 0}$, with $f(0) = 1$ and $f(t)\to +\infty$ as $t\to +\infty$, such that for any Gromov model $X\in \Xc_f(\Gamma,\Pc,S)$, the representation $\rho|_P$ is $P_k$-weakly dominated with respect to $X$ for each $P\in\Pc$.
\end{lemma}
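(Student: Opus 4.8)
The plan is to build the function $f$ by ``inverting'' the rate at which $\rho$ diverges along the parabolic subgroups, as measured against the word metric $\abs{\cdot}_S$.

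First, for $n\in\Nb$ I would set
\[
\psi(n) = \inf\Bigl\{\, \dfrac{\sigma_k(\rho(\gamma))}{\sigma_{k+1}(\rho(\gamma))} \ :\ \gamma\in P \text{ for some } P\in\Pc,\ \abs{\gamma}_S\geqslant n \,\Bigr\},
\]
and extend $\psi$ to $[1,+\infty)$ by $\psi(s)=\psi(\lfloor s\rfloor)$. Since each singular value ratio is at least $1$, the function $\psi$ is non-decreasing with $\psi\geqslant 1$, and by construction $\dfrac{\sigma_{k+1}(\rho(\gamma))}{\sigma_k(\rho(\gamma))}\leqslant \psi(\abs{\gamma}_S)^{-1}$ whenever $\gamma$ lies in some $P\in\Pc$. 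The one nontrivial input is that $\psi(n)\to+\infty$: if $\psi$ stayed bounded, one could select for each $n$ a witness $\gamma_n$ in some member of $\Pc$ with $\abs{\gamma_n}_S\geqslant n$ and bounded singular value ratio, extract (using $\abs{\gamma_n}_S\to+\infty$) a subsequence of pairwise distinct elements, and contradict the $P_k$-divergence of $\rho$.

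Second, I would choose $f$ so that $f^{-1}$ decays fast relative to $\log\psi$. Concretely, fix a strictly increasing sequence $1=n_0<n_1<n_2<\cdots$ with $\log\psi(n_j)\geqslant j+1$ for every $j\geqslant 1$ (possible since $\log\psi(n)\to+\infty$), put $f(j)=n_j$ for $j\in\Nb$, and interpolate linearly; then $f$ is an increasing homeomorphism from $[0,+\infty)$ onto $[1,+\infty)$ with $f(0)=1$ and $f(t)\to+\infty$, and for $s\geqslant n_1$, writing $n_j\leqslant s<n_{j+1}$ with $j\geqslant 1$ gives $f^{-1}(s)\leqslant j+1\leqslant\log\psi(n_j)\leqslant\log\psi(s)$. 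To see that $\Xc_f(\Gamma,\Pc,S)\ne\emptyset$ --- so that the statement is not vacuous and can feed into Theorem \ref{TheoremEGFtoRestricted} --- put $g(t):=2^tf(t)$; then $g$ is increasing with $g(0)=1$ and $g(s+t)\geqslant 2^tg(s)$, so $X_g(\Gamma,\Pc,S)$ is a Gromov model, and since $f\leqslant g$ pointwise it lies in $\Xc_g(\Gamma,\Pc,S)\subseteq\Xc_f(\Gamma,\Pc,S)$.

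Finally, I would check weak domination directly. Let $X\in\Xc_f(\Gamma,\Pc,S)$ with associated constants $\lambda\geqslant 1$, $\epsilon\geqslant 0$, let $P\in\Pc$, and take $\gamma\in P$ with $\abs{\gamma}_S\geqslant n_1$. From $f(\lambda^{-1}\abs{\gamma}_X-\epsilon)\leqslant\abs{\gamma}_S$ and monotonicity of $f$ (the case of a negative argument being trivial) one obtains $\abs{\gamma}_X\leqslant\lambda\bigl(f^{-1}(\abs{\gamma}_S)+\epsilon\bigr)\leqslant\lambda\bigl(\log\psi(\abs{\gamma}_S)+\epsilon\bigr)$, so with $c:=\lambda^{-1}$,
\[
\dfrac{\sigma_{k+1}(\rho(\gamma))}{\sigma_k(\rho(\gamma))}\leqslant \psi(\abs{\gamma}_S)^{-1}\leqslant e^{\epsilon}e^{-c\abs{\gamma}_X}.
\]
As there are only finitely many $\gamma\in\Gamma$ with $\abs{\gamma}_S<n_1$, enlarging the multiplicative constant absorbs the remaining elements of each $P\in\Pc$, yielding constants $C,c>0$ uniform over $P\in\Pc$. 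The only real work is the middle step: arranging $f^{-1}$ small enough relative to $\log\psi$ to swallow the \emph{a priori} unknown $\lambda,\epsilon$ attached to an arbitrary $X\in\Xc_f$, while keeping $\Xc_f$ large enough to contain a genuine Gromov model; both issues dissolve once $\psi(n)\to+\infty$ has been extracted from $P_k$-divergence, and no further analytic difficulty arises.
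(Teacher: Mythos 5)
Your argument is correct and is essentially the paper's own proof: your $\log\psi$ plays the role of the paper's divergence-rate function $g$ (with uniformity over $\Pc$ obtained by taking the infimum over all $P$ rather than by finiteness of $\Pc$), your condition $f^{-1}\leqslant\log\psi$ is exactly the paper's requirement $g(f(t))\geqslant t$, and the final inequality $\dfrac{\sigma_{k+1}(\rho(\gamma))}{\sigma_k(\rho(\gamma))}\leqslant e^{\epsilon}e^{-\lambda^{-1}\abs{\gamma}_X}$ is the same computation. Your explicit interpolation of $f$ and the verification that $\Xc_f(\Gamma,\Pc,S)\ne\emptyset$ via $g(t)=2^tf(t)$ are welcome extra details the paper leaves implicit, but they do not change the approach.
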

\begin{proof}
    If $\rho|_P$ is $P_k$-divergent for a subgroup $P\in \Pc$, then there exists an increasing function $g:\Rb_{\geqslant 0} \to \Rb_{\geqslant 0}$, such that for any $\gamma\in P$, \[\dfrac{\sigma_{k}(\rho(\gamma))}{\sigma_{k+1}(\rho(\gamma))}\geqslant e^{g(\abs{\gamma}_S)}~.\] Otherwise, there exists a sequence $(\gamma_n)_{n\in \Nb}\subset P$, such that $\dfrac{\sigma_{k}(\rho(\gamma_n))}{\sigma_{k+1}(\rho(\gamma_n))}$ has a uniform upper bound, which contradicts $P_k$-divergence. We pick a function $f$ with $f(0)=1$ that increases fast enough such that $g(f(t))\geqslant t$ for any $t\geqslant 0$. Since for any $X\in \Xc_f(\Gamma,\Pc,S)$, there exist constants $\lambda\geqslant 1$ and $\epsilon\geqslant 0$, such that for any $\gamma\in P$, \[f(\lambda^{-1}\abs{\gamma}_X-\epsilon)\leqslant \abs{\gamma}_S~,\] 
    then \[\lambda^{-1}\abs{\gamma}_X-\epsilon \leqslant g(f(\lambda^{-1}\abs{\gamma}_X-\epsilon))\leqslant g(\abs{\gamma}_S)~,\] and hence \[\dfrac{\sigma_{k}(\rho(\gamma))}{\sigma_{k+1}(\rho(\gamma))}\geqslant e^{g(\abs{\gamma}_S)} \geqslant e^{-\epsilon}e^{\lambda^{-1}\abs{\gamma}_X}~,\] which shows $\rho|_P$ is $P_k$-weakly dominated. The constants can be taken to be uniform for all $P\in \Pc$ since $\Pc$ is finite.
\end{proof}

In the rest of this section, we always assume that $\rho:\Gamma\to \SL(d,\Kb)$ is a $P_k$-divergent, extended geometrically finite representation with a refined boundary extension $\zeta: \Lambda \to \partial (\Gamma, \Pc)$ in $\Fc_{k,d-k}$. By Lemma \ref{GromovModelDivtoDom}, we fix a Gromov model $X$ of $(\Gamma,\Pc)$ such that $\rho|_P$ is $P_k$-weakly dominated with respect to $X$ for each $P\in\Pc$ and we fix a thick-thin decomposition of $X$, \[X = X^{th} \cup (\bigcup_{B\in\Bc} B)~,\] so that the distance between any two horoballs in $\Bc$ is large enough.

Let $p_M:\Fc(X)\to X$ be the quasi-isometric projection given in Proposition \ref{proposition: compact conical limit flow}. Recall that in Section \ref{BoundaryExtensionSection}, we defined the projection \[\pi:\Fc(\Lambda,\zeta,X)\to \Fc(X)~.\]

Let $R > 0$ be a constant, then $\{\Nc_R(p_M^{-1}(B)): B\in \Bc\}$ is a collection of mutually disjoint open sets. If $P\in \Pc^\Gamma$ is the stabilizer of $B\in \Bc$, let $\Fc_P = \pi^{-1}(\Nc_R(p_M^{-1}(B)))$, which can be understood as a horoball of the flow space $\Fc(\Lambda,\zeta,X)$. The reason that we take $R$-neighborhoods here is that we wish to make $\Fc_P$ open as $p_M$ is not necessarily continuous. Set $\Fc^{th} = \Fc(\Lambda,\zeta,X) \setminus (\bigcup_{P\in\Pc^\Gamma} \Fc_P)$, which gives \[\Fc(\Lambda,\zeta,X) = \Fc^{th} \cup (\bigcup_{P\in\Pc^\Gamma} \Fc_P)~,\] a thick-thin decomposition of $\Fc(\Lambda,\zeta,X)$.

Recall that $z=(x,y,\ell)\to \hat{z}=(y,x,\hat{\ell})$ denotes the reverse map on $\Fc(\Lambda,\zeta,X)$ defined in Section \ref{SectionBoundaryExtensions}.
Following the notations in \cite[Section 9]{ZZ1}, we denote \[\partial^+\Fc_P = \{z\in \Fc(\Lambda,\zeta,X): z\not\in \Fc_P\text{ and }\exists\ \epsilon > 0, \phi^{t}(z)\in \Fc_P, \forall\ t\in(0,\epsilon)\}~,\] \[\partial^-\Fc_P = \{z\in \Fc(\Lambda,\zeta,X): \hat{z}\in \partial^+\Fc_P\}~.\]
Since $P$ acts on $\overline{B}\setminus B$, the horosphere of $B$, cocompactly, we can easily see that $P$ acts on $\partial^+\Fc_P$ and $\partial^-\Fc_P$ cocompactly.
For $z\in \partial^+\Fc_P$, we denote \[T_z^+ = \min\{t\in (0,+\infty]: \phi^t(z)\not\in \Fc_P\}~,\] and $T_{\hat{z}}^- = - T_z^+$. Then \[\Fc_P = \big(\bigcup_{z\in \partial^+\Fc_P}\bigcup_{t\in (0,T_z^+)}\phi^t(z)\big)\cup\big(\bigcup_{z\in \partial^-\Fc_P}\bigcup_{t\in (T_z^-,0)}\phi^t(z)\big)~.\]

Let $E^s$ be the subbundle of $\Fc(\Lambda,\zeta,X)\times \Kb^d$ with fiber $E^s_z = \xi^k(z^+)$ at $z = (z^+,z^-,\ell)\in \Fc(\Lambda,\zeta,X)$. Similarly, let $E^u$ be the subbundle of $\Fc(\Lambda,\zeta,X)\times \Kb^d$ with fiber $E^u_z = \xi^{d-k}(z^-)$ at $z = (z^+,z^-,\ell)$. Then $\Fc(\Lambda,\zeta,X)\times \Kb^d = E^s \oplus E^u$ is a decomposition of $\Fc(\Lambda,\zeta,X)\times \Kb^d$ into $\Gamma$-invariant, reverse invariant, $\phi$-invariant subbundles.

Now we construct a $\rho(\Gamma)$-invariant, reverse invariant metric on $\Fc(\Lambda,\zeta,X)\times \Kb^d$, with respect to which, $\Fc(\Lambda,\zeta,X)\times \Kb^d = E^s \oplus E^u$ is a dominated splitting.

Since the $\Gamma$-action on $\Fc^{th}$ is cocompact, we fix a metric $\norm{\cdot}$ on $\Fc^{th}\times \Kb^d$ that is $\Gamma$-invariant and reverse invariant, such that for any $z = (z^+,z^-,\ell) \in \Fc^{th}$, $\xi^k(z^+)$ and $\xi^{d-k}(z^-)$ are orthogonal. Let $K\subset \Fc^{th}$ be a compact subset such that $\Gamma \cdot K = \Fc^{th}$.

\begin{lemma}\label{ThickPartPointsEstimation}
     There exists a constant $C_0 > 0$ such that for any $z\in K$, $t\in \Rb_{\geqslant 0}$ such that $\phi^t(z)\in \Fc^{th}$ and $\gamma\in\Gamma$ such that $\gamma^{-1}\phi^t(z)\in K$, we have \[ \dfrac{\norm{v}_{\phi^t(z)}}{\norm{w}_{\phi^t(z)}} \leqslant C_0 \dfrac{\sigma_{k+1}(\rho(\gamma))}{\sigma_{k}(\rho(\gamma))}\dfrac{\norm{v}_{z}}{\norm{w}_{z}}~,\] where $v\in E^s_z$ and $w\in E^u_z$ are nonzero vectors.
\end{lemma}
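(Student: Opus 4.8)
The plan is to move the estimate off the point $\phi^t(z)$ — which lies in $\Fc^{th}$ but possibly far from the base point — onto the compact set $K$, using the $\Gamma$-invariance of $\norm{\cdot}$, and thereby reduce the statement to a linear-algebraic comparison for $\rho(\gamma)^{-1}$. First I would note that, since $\phi^t(z)\in\Fc^{th}$ and $\gamma^{-1}\phi^t(z)\in K\subset\Fc^{th}$, the $\Gamma$-invariance gives $\norm{u}_{\phi^t(z)}=\norm{\rho(\gamma)^{-1}u}_{\gamma^{-1}\phi^t(z)}$ for every $u\in\Kb^d$. Compactness of $K$ and continuity of $y\mapsto\norm{\cdot}_y$ produce $C_K\geqslant 1$ with $C_K^{-1}\norm{\cdot}_0\leqslant\norm{\cdot}_y\leqslant C_K\norm{\cdot}_0$ for all $y\in K$; applying this at $y=\gamma^{-1}\phi^t(z)$ and at $y=z$, the claim reduces to producing $C>0$, uniform in $z,t,\gamma$, with
\[
\frac{\norm{\rho(\gamma)^{-1}v}_0}{\norm{\rho(\gamma)^{-1}w}_0}\ \leqslant\ C\,\frac{\sigma_{k+1}(\rho(\gamma))}{\sigma_k(\rho(\gamma))}\,\frac{\norm{v}_0}{\norm{w}_0}
\]
for nonzero $v\in E^s_z=\xi^k(z^+)$ and $w\in E^u_z=\xi^{d-k}(z^-)$, whenever $z,\gamma^{-1}\phi^t(z)\in K$; then $C_0=C_K^4C$.

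For the reduced inequality I would aim for the two one-sided bounds $\norm{\rho(\gamma)^{-1}v}_0\leqslant C\,\sigma_k(\rho(\gamma))^{-1}\norm{v}_0$ and $\norm{\rho(\gamma)^{-1}w}_0\geqslant c\,\sigma_{k+1}(\rho(\gamma))^{-1}\norm{w}_0$. Via the singular value / Grassmannian estimates of Lemma \ref{EstimationSingularValues}, these follow once one knows that $\xi^k(z^+)$ lies in the \emph{attracting position} for $\rho(\gamma)$ — uniformly close to $U_k(\rho(\gamma))$ and uniformly transverse to $U_{d-k}(\rho(\gamma)^{-1})$ — and symmetrically that $\xi^{d-k}(\gamma^{-1}z^-)$ is uniformly close to $U_{d-k}(\rho(\gamma)^{-1})$ and transverse to $U_k(\rho(\gamma))$. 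The geometric input is Lemma \ref{TrackingLemma}: the arc of the flow line through $z$ from $z$ to $\phi^t(z)$ maps under $p_M\circ\pi$ to a quasi-geodesic of $X$ running from a bounded neighbourhood of $o$ to one of $\gamma o$, so that $z^+$ is, up to the Morse constants, the forward endpoint of a ray from $o$ passing near $\gamma o$ (and $\gamma^{-1}z^-$ symmetrically a ray from $o$ passing near $\gamma^{-1}o$). On the sub-arcs lying in $\Fc^{th}$ one then reads off the attracting position from the fact that $\rho$ is $P_k$-Anosov in restriction to every cocompact subflow of $\Fc(\Lambda,\zeta,X)$ (the corollary following the correspondence of cocompact subflows) together with Lemma \ref{EstimationSingularValues}(2).

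I expect the main difficulty to be controlling the excursions of this flow arc into the horoballs $\Fc_P$. There $\gamma$ is a product of bounded ``thick'' pieces interleaved with long syllables from parabolic subgroups $\eta P_0\eta^{-1}$, and one must show that these parabolic syllables do not destroy the attracting position: that the Cartan subspaces of $\rho(\gamma)$ at index $k$ stay within bounded angular distance of $\xi^k(z^+)$ across each excursion, and that each excursion contributes to $\sigma_{k+1}(\rho(\gamma))/\sigma_k(\rho(\gamma))$ only a factor governed by its $X$-length. This is precisely where the choice of Gromov model from Lemma \ref{GromovModelDivtoDom} enters, since it makes each $\rho|_P$ be $P_k$-weakly dominated with respect to $X$. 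Assembling the thick-part estimate with this cusp control — telescoping over the finitely many alternating pieces of the arc via the cocycle identity (Remark \ref{ActionDefinedByCocycle}) and Lemma \ref{EstimationSingularValues} — yields the uniform $C$, and hence $C_0$; matching the alignment data at the transitions between $\Fc^{th}$ and the $\Fc_P$ is the delicate bookkeeping.
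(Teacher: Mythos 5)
Your opening reduction coincides with the paper's: $\Gamma$-invariance of the metric plus the bi-Lipschitz comparison with $\norm{\cdot}_0$ on the compact set $K$ reduces the lemma to the two one-sided bounds $\norm{\rho(\gamma)^{-1}v}_0\lesssim \sigma_k(\rho(\gamma))^{-1}\norm{v}_0$ and $\norm{\rho(\gamma)^{-1}w}_0\gtrsim \sigma_{k+1}(\rho(\gamma))^{-1}\norm{w}_0$, uniformly in $\gamma$. The gap is in how you propose to get this uniformity. First, your alignment claims are quantitatively too weak: knowing that $\xi^k(z^+)$ is within a \emph{bounded} angle of $U_k(\rho(\gamma))$ does not give the bound for $v\in E^s_z$, since a fixed nonzero angle already allows $\norm{\rho(\gamma)^{-1}v}_0$ to be of order $\sigma_d(\rho(\gamma))^{-1}\norm{v}_0$, far larger than $\sigma_k(\rho(\gamma))^{-1}\norm{v}_0$. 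What is actually needed is either closeness at angular scale comparable to $\sigma_{k+1}(\rho(\gamma))/\sigma_k(\rho(\gamma))$, or (as in the paper) a comparison of the \emph{images} $\rho(\gamma)^{-1}v^1$ and $\rho(\gamma)^{-1}v^2$ of the components of $v$ along and orthogonal to $U_k(\rho(\gamma))$. Second, and more seriously, the mechanism you offer for the cusp excursions cannot deliver even the weak alignment you state: weak domination of $\rho|_P$ (Lemma \ref{GromovModelDivtoDom}) controls only the singular value gap of parabolic elements and says nothing about where their Cartan attractors sit relative to $\Lambda$; moreover, for an extended geometrically finite representation the fiber $\zeta^{-1}(p)$ over a parabolic point is in general a whole continuum of flags, so there is no single attracting flag to track through an excursion, and your telescoping via Lemma \ref{EstimationSingularValues} has no transversality input at the thick/thin transitions. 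The hypotheses that actually supply this control --- transversality of the boundary extension and the extension of convergence dynamics, packaged in Proposition \ref{RefinedtoMinimal} and Lemmas \ref{DivergenceLemma}, \ref{ExtendedDivergenceLemma} --- never enter your argument. The ``delicate bookkeeping'' you defer is precisely the unproved content, and it would amount to a quantitative local-to-global alignment theorem that your listed ingredients do not provide.

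By contrast, the paper avoids any syllable decomposition and proves the uniform estimate by compactness: assuming failure, it takes sequences $z_n\in K$, $t_n\to+\infty$, $\gamma_n$ with $\gamma_n^{-1}\phi^{t_n}(z_n)\in K$, passes to limits $\gamma_n\to p$, $\gamma_n^{-1}\to q$, $z_n\to z$, $\gamma_n^{-1}\phi^{t_n}(z_n)\to z'$, uses Proposition \ref{RefinedtoMinimal} to place $\lim U_k(\rho(\gamma_n))$ in $\zeta^{-1}(p)$ and Lemma \ref{DivergenceLemma} to identify $\lim U_{d-k}(\rho(\gamma_n)^{-1})$ with $\xi^{d-k}(z'^-)=\lim \rho(\gamma_n^{-1})\xi^{d-k}(z_n^-)$; the lower bound for $w_n$ then comes from this alignment, and the upper bound for $v_n$ from a contradiction argument using the transversality of $\xi^k(z'^+)$ and $\xi^{d-k}(z'^-)$. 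In particular, the weakly dominated property of $\rho|_P$ plays no role in this lemma at all --- it is used only afterwards, in Lemma \ref{ThinBoundaryPointsEstimation}, to convert the singular value ratio into exponential decay in the excursion time. To repair your proposal you would either have to import the paper's compactness argument (making your route essentially the same) or genuinely prove the uniform alignment across cusps from the EGF hypotheses, which is the step currently missing.
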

\begin{proof}
    We show that there exists a constant $C_0 > 0$, for any sequence $z_n = (z_n^+,z_n^-,\ell_n)\in K$, $t_n\in \Rb_{\geqslant 0}$ with $\phi^{t_n}(z_n)\in \Fc^{th}$ and $t_n\to +\infty$, $\gamma_n\in\Gamma$ with $\gamma_n^{-1}\phi^{t_n}(z_n)\in K$, and any vectors $v_n\in E^s_{z_n}$ and $w_n\in E^u_{z_n}$ with $\norm{v_n}_{z_n} = \norm{w_n}_{z_n} = 1$, 
    \[\dfrac{\norm{v_n}_{\phi^{t_n}(z_n)}}{\norm{w_n}_{\phi^{t_n}(z_n)}} \leqslant C_0 \dfrac{\sigma_{d-k+1}(\rho(\gamma_n^{-1}))}{\sigma_{d-k}(\rho(\gamma_n^{-1}))} \]
    Up to a subsequence, we assume that \[\gamma_n\to p\in\partial(\Gamma,\Pc),\quad \gamma_n^{-1}\to q\in\partial(\Gamma,\Pc)~,\] \[z_n \to z = (z^+,z^-,\ell)\text{ and } \gamma_n^{-1}\phi^{t_n}(z_n)\to z' = (z'^+,z'^-,\ell')\] as $n\to +\infty$. Then we have $\zeta(z^+) = p$ and $\zeta(z'^-) = q$ since $\pi(z_n)\to\pi(z) = \ell$ and $\pi(\gamma_n^{-1}\phi^{t_n}(z_n))\to\pi(z') = \ell'$ as $n\to +\infty$. 
    Notice also that the $\Gamma$-action on $\Lambda\subset \Fc_{k,d-k}$ is through $\rho$, hence \[\gamma_n^{-1}z_n^- = (\rho(\gamma_n^{-1})\xi^{k}(z_n^-) ,\rho(\gamma_n^{-1})\xi^{d-k}(z_n^-)) \to z'^- = (\xi^{k}(z'^-),\xi^{d-k}(z'^-))\]
    We may assume $\zeta(z_n^-)$ is contained in a compact subset of $\partial(\Gamma,\Pc)\setminus \{p\}$ when $n$ is large enough since $\zeta(z_n^+)\to \zeta(z^+) = p$ as $n\to +\infty$ and there is a geodesic with endpoint $\zeta(z_n^+)$ and $\zeta(z_n^-)$ who always passes through the compact set $\pi(K)$ for each $n$. Then $z_n^-$ is contained in a compact set transverse to $\zeta^{-1}(p)$ as $\zeta: \Lambda\to \partial(\Gamma,\Pc)$ is transverse.
    By Lemma \ref{TrackingLemma} (3), $\abs{\gamma_n}_X\to +\infty$ as $n\to +\infty$, hence we may assume that the elements of $(\gamma_n )_{n\in \Nb}$ are pairwise distinct. Then $(\rho(\gamma_n) )_{n\in \Nb}$ is a $P_k$-divergent sequence as $\rho$ is $P_k$-divergent. By Proposition \ref{RefinedtoMinimal}, $\lim\limits_{n\to +\infty} (U_k(\rho(\gamma_n)),U_{d-k}(\rho(\gamma_n)))$ is contained in $\zeta^{-1}(p)$. Then by Lemma \ref{DivergenceLemma}, \[\lim\limits_{n\to +\infty} \rho(\gamma_n^{-1})\xi^{d-k}(z_n^-) = \lim\limits_{n\to +\infty} U_{d-k}(\rho(\gamma_n^{-1})) = \xi^{d-k}(z'^-)~.\]
    
    Since $K$ is compact, there exists a constant $C_K > 0$ such that $\norm{\cdot}$ is $C_K$-bi-Lipschitz to $\norm{\cdot}_0$ on $K$, where $\norm{\cdot}_0$ is the standard metric on $\Kb^d$. Then we have 
    \[ \dfrac{\norm{v_n}_{\phi^{t_n}(z_n)}}{\norm{w_n}_{\phi^{t_n}(z_n)}} = \dfrac{\norm{\rho(\gamma_n^{-1})v_n}_{\gamma_n^{-1}\phi^{t_n}(z_n)}}{\norm{\rho(\gamma_n^{-1})w_n}_{\gamma_n^{-1}\phi^{t_n}(z_n)}} \sim_{(C_K^2,0)} \dfrac{\norm{\rho(\gamma_n^{-1})v_n}_0}{\norm{\rho(\gamma_n^{-1})w_n}_0}~.\]
    We now show that \[\limsup\limits_{n\to +\infty}\dfrac{\norm{\rho(\gamma_n^{-1})v_n}_0}{\norm{\rho(\gamma_n^{-1})w_n}_0} \dfrac{\sigma_{d-k}(\rho(\gamma_n^{-1}))}{\sigma_{d-k+1}(\rho(\gamma_n^{-1}))}< +\infty~.\]
    Since $w_n\in E^u_{z_n} = \xi^{d-k}(z_n^-)$, $\rho(\gamma_n^{-1})w_n \in \xi^{d-k}(\gamma_n^{-1} z_n^-)$. Then by $U_{d-k}(\rho(\gamma_n^{-1})) \to \xi^{d-k}(z'^-)$ and $\xi^{d-k}(\gamma_n^{-1} z_n^-) \to \xi^{d-k}(z'^-)$ as $n\to +\infty$, we have \[\angle(\rho(\gamma_n^{-1})w_n, U_{d-k}(\rho(\gamma_n^{-1}))) \to 0\] as $n\to +\infty$. Then \[\liminf\limits_{n\to +\infty} \dfrac{\norm{\rho(\gamma_n^{-1})w_n}_0}{\sigma_{d-k}(\rho(\gamma_n^{-1}))} \geqslant \liminf\limits_{n\to +\infty}\norm{w_n}_0 \sim_{(C_K,0)}\norm{w_n}_{z_n} = 1\]
    Let $v_n = v_n^1 + v_n^2$ such that $v_n^1 \in U_k(\rho(\gamma_n))$ and $v_n^2$ is orthogonal to $U_k(\rho(\gamma_n))$. Then $\rho(\gamma_n^{-1})v_n^1$ and $\rho(\gamma_n^{-1})v_n^2$ are orthogonal and hence \[\norm{\rho(\gamma_n^{-1})v_n}^2_0 = \norm{\rho(\gamma_n^{-1})v_n^1}^2_0 + \norm{\rho(\gamma_n^{-1})v_n^2}^2_0~.\]
    For the first part, 
    \begin{align*}
    \norm{\rho(\gamma_n^{-1})v_n^1}_0 & \leqslant \sigma_{d-k+1}(\rho(\gamma_n^{-1}))\norm{v_n^1}_0\\
    & \leqslant \sigma_{d-k+1}(\rho(\gamma_n^{-1})) \norm{v_n}_0\\ 
    & \sim_{(C_K,0)} \sigma_{d-k+1}(\rho(\gamma_n^{-1})) \norm{v_n}_{z_n}\\
    & = \sigma_{d-k+1}(\rho(\gamma_n^{-1}))~.
    \end{align*}
    Then $\limsup\limits_{n\to +\infty}\dfrac{\norm{\rho(\gamma_n^{-1})v_n^1}_0}{\norm{\rho(\gamma_n^{-1})w_n}_0} \dfrac{\sigma_{d-k}(\rho(\gamma_n^{-1}))}{\sigma_{d-k+1}(\rho(\gamma_n^{-1}))}$ is bounded since $(\rho(\gamma_n) )_{n\in \Nb}$ is a $P_k$-divergent sequence.
    For the second part, suppose that, up to a subsequence,
    \[\lim\limits_{n\to +\infty} \dfrac{\norm{\rho(\gamma_n^{-1})v_n^2}_0}{\norm{\rho(\gamma_n^{-1})v_n^1}_0} = +\infty~.\]
    This implies that \[\limsup\limits_{n\to +\infty}\angle(\rho(\gamma_n^{-1})v_n^2, \rho(\gamma_n^{-1})v_n) = 0~.\]
    However, $\rho(\gamma_n^{-1})v_n^2$ is contained in $\rho(\gamma_n^{-1}) U_k(\rho(\gamma_n))^{\perp} = U_{d-k}(\rho(\gamma_n^{-1}))$ which has limit $\xi^{d-k}(z'^-)$ as $n\to +\infty$, while $\rho(\gamma_n^{-1})v_n$ is contained in $\rho(\gamma_n^{-1})\xi^k(z_n^+) = \xi^k(\gamma_n^{-1}z_n^+)$, which has limit $\xi^k(z'^+)$ as $n\to +\infty$. We then get a contradiction since $\xi^{d-k}(z'^-)$ and $\xi^k(z'^+)$ are transverse.

    Then there is a constant $C > 0$ such that \[\norm{\rho(\gamma_n^{-1})v_n^2}_0 \leqslant C \norm{\rho(\gamma_n^{-1})v_n^1}_0 \] and hence \[\norm{\rho(\gamma_n^{-1})v_n}_0 \sim_{(\sqrt{1+C^2},0)} \norm{\rho(\gamma_n^{-1})v_n^1}_0 \] The constant $C$ is uniform for any choice of the sequences of $z_n,t_n,\gamma_n,v_n,w_n$, as otherwise we would find another choice of these sequences leading to the same contradiction.

    Therefore we have the estimation
    \begin{align*}
    \dfrac{\norm{v_n}_{\phi^{t_n}(z_n)}}{\norm{w_n}_{\phi^{t_n}(z_n)}}  & \sim_{(C_K^2\sqrt{1+C^2},0)} \dfrac{\norm{\rho(\gamma_n^{-1})v_n^1}_0}{\norm{\rho(\gamma_n^{-1})w_n}_0}\\ 
    & \leqslant \dfrac{\sigma_{d-k+1}(\rho(\gamma_n^{-1}))}{\sigma_{d-k}(\rho(\gamma_n^{-1}))}\dfrac{\norm{v_n}_{z_n}}{\norm{w_n}_{z_n}}\\
    & \sim_{(C_K^2,0)} \dfrac{\sigma_{d-k+1}(\rho(\gamma_n^{-1}))}{\sigma_{d-k}(\rho(\gamma_n^{-1}))}~.
    \end{align*}
\end{proof}

\begin{remark}
    The idea of the proof follows from \cite[Proposition 6.5]{CZZ} (see also \cite[Theorem 5.15]{W23} and \cite[Lemma 9.4]{ZZ1}), but the way we deal with the boundary points is slightly different. Although the $\Gamma$-action on $\Lambda$ is no longer a convergence group action, the argument still works since $\Lambda$ is a subset of $\Fc_{k,d-k}$ which already contains the information of the ``$P_k$-limits of $\rho$''.
\end{remark}

\begin{lemma}\label{ThinBoundaryPointsEstimation}
    There exist constants $C,c > 0$ such that for any $z\in\partial^+\Fc_P$ with $T_z^+ < +\infty$, \[ \dfrac{\norm{v}_{\phi^{T_z^+}(z)}}{\norm{w}_{\phi^{T_z^+}(z)}} \leqslant C e^{-c T_z^+}\dfrac{\norm{v}_{z}}{\norm{w}_{z}} ~,\] where $v\in E^s_z$ and $w\in E^u_z$ are nonzero vectors.
\end{lemma}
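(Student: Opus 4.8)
The plan is to compress the entire cusp excursion $z\leadsto z':=\phi^{T_z^+}(z)$ into a single element of $P$, and then to turn the inequality into a singular value estimate for $\rho$ of that element, which is exactly what the $P_k$-weak domination of $\rho|_P$ with respect to $X$ (the property for which $X$ was selected in Lemma \ref{GromovModelDivtoDom}) controls. To produce the group element, note that both $z$ and $z'$ lie in $\Fc^{th}$, where $\norm{\cdot}$ is defined, and that since $z\in\partial^+\Fc_P$, $z'\in\partial^-\Fc_P$, the reverse $\widehat{z'}$ again lies in $\partial^+\Fc_P$. As $P$ acts cocompactly on $\partial^+\Fc_P$, fix a compact $\mathcal{K}\subset\partial^+\Fc_P$ with $P\cdot\mathcal{K}=\partial^+\Fc_P$ and write $z=\alpha z_1$, $\widehat{z'}=\beta z_2$ with $\alpha,\beta\in P$, $z_1,z_2\in\mathcal{K}$; put $\gamma:=\beta^{-1}\alpha\in P$. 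Since $z,z'$ lie on the same flow line, $(z')^{\pm}=z^{\pm}$, so $z_1^{\pm}=\alpha^{-1}z^{\pm}$ while $z_2^+=\beta^{-1}z^-$ and $z_2^-=\beta^{-1}z^+$, whence $\rho(\gamma)\xi^k(z_1^+)=\xi^k(z_2^-)$ and $\rho(\gamma)\xi^{d-k}(z_1^-)=\xi^{d-k}(z_2^+)$. Thus $\rho(\gamma)$ carries the pair $\big(\xi^k(z_1^+),\xi^{d-k}(z_1^-)\big)$ to $\big(\xi^k(z_2^-),\xi^{d-k}(z_2^+)\big)$, and since $z_1,z_2$ range over the compact set $\mathcal{K}$ and $\zeta\colon\Lambda\to\partial(\Gamma,\Pc)$ is transverse, both pairs vary in a fixed compact family of transverse pairs of flags.

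Next I compare $T_z^+$ with $\abs{\gamma}_X$. By Theorem \ref{MineyevFlowSpace}, $t\mapsto p_M(\pi(\phi^t(z)))$ is a uniform quasi-geodesic in $X$; its restriction to $[0,T_z^+]$ is a quasi-geodesic segment of length comparable to $T_z^+$ joining $p_M(\pi(z))=\alpha\,p_M(\pi(z_1))$ and $p_M(\pi(z'))=\beta\,p_M(\pi(\widehat{z_2}))$. Since $z_1,z_2$ lie in a fixed compact set, these endpoints are within bounded distance of $\alpha o$ and $\beta o$, so $d_X(p_M(\pi(z)),p_M(\pi(z')))=\abs{\gamma}_X+O(1)$, and therefore $T_z^+\sim_{(\lambda_0,\epsilon_0)}\abs{\gamma}_X$ for constants $\lambda_0\geqslant1$, $\epsilon_0\geqslant0$ independent of $z$ and---as $\Pc$ is finite---of $P$; in particular $\abs{\gamma}_X\geqslant\lambda_0^{-1}T_z^+-\epsilon_0$.

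Now I reduce the metric inequality. For nonzero $v\in E^s_z=E^s_{z'}$ and $w\in E^u_z=E^u_{z'}$, using that $\norm{\cdot}$ is $\Gamma$-invariant and reverse-invariant and that $\norm{\cdot}_{z_1},\norm{\cdot}_{z_2}$ are uniformly bi-Lipschitz to the standard norm $\norm{\cdot}_0$ on $\mathcal{K}$,
\[
\frac{\norm{v}_{z'}/\norm{w}_{z'}}{\norm{v}_{z}/\norm{w}_{z}}\ \sim_{(C_0,0)}\ \frac{\norm{\rho(\gamma)u}_0/\norm{u}_0}{\norm{\rho(\gamma)u'}_0/\norm{u'}_0},
\]
where $u:=\rho(\alpha^{-1})v\in\xi^k(z_1^+)$, $u':=\rho(\alpha^{-1})w\in\xi^{d-k}(z_1^-)$, and $C_0$ is uniform. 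It remains to bound the right-hand side by $C_1\,\sigma_{k+1}(\rho(\gamma))/\sigma_{k}(\rho(\gamma))$ for a uniform $C_1$: this is the same singular value bookkeeping as in the proof of Lemma \ref{ThickPartPointsEstimation} (compare \cite[Proposition 6.5]{CZZ}, \cite[Lemma 9.4]{ZZ1}, \cite[Theorem 5.15]{W23}), with $\gamma$ and the flags $\big(\xi^k(z_2^-),\xi^{d-k}(z_2^+)\big)$ here in the roles of $\gamma_n^{-1}$ and the exit-point flags there; the uniformity comes from the compactness of $\mathcal{K}$, the transversality of $\Lambda$, and Proposition \ref{RefinedtoMinimal}, which places the limits of $U_k(\rho(\gamma_n))$ and $U_{d-k}(\rho(\gamma_n^{-1}))$ inside $\zeta^{-1}(p)$ (for $p$ the parabolic point fixed by $P$) whenever $\abs{\gamma_n}_X\to+\infty$, so that a violation of the bound along a sequence would force two transverse limit flags to coincide. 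Combining the three inequalities with the $P_k$-weak domination $\sigma_{k+1}(\rho(\gamma))/\sigma_k(\rho(\gamma))\leqslant C_2e^{-c_2\abs{\gamma}_X}$ of $\rho|_P$ and $\abs{\gamma}_X\geqslant\lambda_0^{-1}T_z^+-\epsilon_0$ gives the lemma with $c=c_2/\lambda_0$ and a suitable $C$, uniform over $P\in\Pc$.

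I expect the main obstacle to be that last step---making the genericity of $u,u'$ for $\rho(\gamma)$ uniform over the non-compact $\partial^+\Fc_P$ and over $P\in\Pc$---which is precisely where the compactness of $\mathcal{K}$, the transversality of $\zeta\colon\Lambda\to\partial(\Gamma,\Pc)$, and Proposition \ref{RefinedtoMinimal} are jointly used, just as in Lemma \ref{ThickPartPointsEstimation}; as in the remark following that lemma, the point is that although the $\Gamma$-action on $\Lambda\subset\Fc_{k,d-k}$ is not a convergence action, $\Lambda$ already encodes the $P_k$-limits of $\rho$. A second essential point is that the estimate must run through $\abs{\gamma}_X$ rather than the word length $\abs{\gamma}_S$---inside a horoball these can differ by an exponential---which is exactly why the Gromov model $X$ was chosen via Lemma \ref{GromovModelDivtoDom}.
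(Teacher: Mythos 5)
Your proposal is correct and follows essentially the same route as the paper: use the cocompact $P$-action on $\partial^{\pm}\Fc_P$ to produce an element $\gamma\in P$ with $\abs{\gamma}_X\sim_{(\lambda,\epsilon)}T_z^+$ via the quasi-isometry $p_M$, bound the metric-ratio distortion by $\sigma_{k+1}(\rho(\gamma))/\sigma_k(\rho(\gamma))$, and conclude with the $P_k$-weak domination of $\rho|_P$ coming from the choice of $X$ in Lemma \ref{GromovModelDivtoDom}. The only difference is cosmetic: the paper normalizes $z$ into the compact set $K$ (enlarged to cover $\partial^{\pm}\Fc_P$ under the $P$-action) and invokes Lemma \ref{ThickPartPointsEstimation} as a black box for a single $\gamma\in P$, whereas you keep $\alpha,\beta$ and re-run that lemma's bookkeeping for $\gamma=\beta^{-1}\alpha$; after enlarging $K$ to contain $\mathcal{K}$ and its reverses you could simply cite it.
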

\begin{proof}
    Since $P$ acts on $\partial^+\Fc_P$ and $\partial^-\Fc_P$ cocompactly for each $P\in \Pc^\Gamma$ and $\Pc$ is finite,
    up to replacing $K$ by a larger compact set, we may assume that $P\cdot (K\cap \partial^+\Fc_P) = \partial^+\Fc_P$ and $P\cdot (K\cap \partial^-\Fc_P) = \partial^-\Fc_P$ for each $P\in\Pc$.
    Without loss of generality, it suffices to show the lemma for $z\in K \cap \partial^+\Fc_P$ with $P\in \Pc$, then we can find $\gamma\in P$ such that $\gamma^{-1}\phi^{T_z^+}(z)\in K$. There exist uniform constants $\lambda\geqslant 1$ and $\epsilon\geqslant 0$ such that $\abs{\gamma}_X\sim_{(\lambda,\epsilon)} T_z^+$ since $p_M$ is a quasi-isometry. Recall that the Gromov model $X$ is given such that $\rho|_P$ is $P_k$-weakly dominated, then there exist uniform constants $C_1, c_1 >0$, such that \[\dfrac{\sigma_{k+1}(\rho(\gamma))}{\sigma_{k}(\rho(\gamma))} \leqslant C_1 e^{-c_1|\gamma|_X}~.\] By Lemma \ref{ThickPartPointsEstimation}, 
    \begin{align*}
    \dfrac{\norm{v}_{\phi^{T_z^+}(z)}}{\norm{w}_{\phi^{T_z^+}(z)}} &\leqslant C_0 \dfrac{\sigma_{k+1}(\rho(\gamma))}{\sigma_{k}(\rho(\gamma))}\dfrac{\norm{v}_{z}}{\norm{w}_{z}} \\ &\leqslant C_0 C_1 e^{-c_1|\gamma|_X}\dfrac{\norm{v}_{z}}{\norm{w}_{z}}\\ & \leqslant C_0 C_1 e^ {c_1\epsilon} e^{-c_1 \lambda^{-1}T_z^+}\dfrac{\norm{v}_{z}}{\norm{w}_{z}}~,
    \end{align*}
    which complete the proof.
\end{proof}

We denote $E^1_z = E^s_z = \xi^k(z^+)$, $E^2_z = E^u_z \cap E^u_{\hat{z}} = \xi^{d-k}(z^+)\cap \xi^{d-k}(z^-)$ and $E^3_z = E^s_{\hat{z}} = \xi^k(z^-)$, and denote the metric on $E^i_z$ by $\norm{\cdot}_{i,z}$ for $i=1,2,3$, which are defined on $\Fc^{th}\times \Kb^d$ and $\norm{\cdot} = \norm{\cdot}_{1,z} + \norm{\cdot}_{2,z} + \norm{\cdot}_{3,z}$. 

\begin{lemma}\cite[Proposition 3.14]{ZZ1}
    Let $\langle \cdot,\cdot \rangle$ and $\langle \cdot,\cdot \rangle'$ be two inner products on $\Kb^d$. Then there is a basis $\{v_1,v_2,...,v_d\}$ of $\Kb^d$ that is orthogonal with respect to both $\langle \cdot,\cdot \rangle$ and $\langle \cdot,\cdot \rangle'$. Moreover, \[m(t) (v_i,v_j)= m(t,\langle \cdot,\cdot \rangle,\langle \cdot,\cdot \rangle')(v_i,v_j) = (\langle v_i,v_j \rangle)^{1-t}(\langle v_i,v_j \rangle')^{t}\] for any $i,j \in \{1,2,...,d\}$ defines a path in the space of inner products on $\Kb^d$ for $t\in [0,1]$ with $m(0)(\cdot,\cdot) = \langle \cdot,\cdot \rangle$ and $m(1)(\cdot,\cdot) = \langle \cdot,\cdot \rangle'$.
\end{lemma}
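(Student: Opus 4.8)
The plan is to reduce the statement to the spectral theorem applied to a single operator. Since $\langle\cdot,\cdot\rangle$ is positive definite, hence nondegenerate, there is a unique $\Kb$-linear operator $A$ on $\Kb^d$ with $\langle Av,w\rangle = \langle v,w\rangle'$ for all $v,w\in\Kb^d$: for each fixed $w$ the map $v\mapsto\langle v,w\rangle'$ is a linear functional, which by (finite-dimensional) Riesz representation is of the form $v\mapsto\langle v,u(w)\rangle$ for a unique $u(w)$, and one sets $A$ accordingly, paying attention to the conjugate-linear slot when $\Kb=\Cb$. First I would check that $A$ is self-adjoint for $\langle\cdot,\cdot\rangle$ — this is exactly the symmetry (resp. Hermitian symmetry) of $\langle\cdot,\cdot\rangle'$ — and that $A$ is positive, since $\langle Av,v\rangle = \langle v,v\rangle' > 0$ for $v\neq 0$.

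Next, by the spectral theorem (the real symmetric case when $\Kb=\Rb$, the Hermitian case when $\Kb=\Cb$), the operator $A$ admits an eigenbasis $\{v_1,\dots,v_d\}$ that is orthonormal with respect to $\langle\cdot,\cdot\rangle$, with eigenvalues $\lambda_1,\dots,\lambda_d > 0$. Then $\langle v_i,v_j\rangle = \delta_{ij}$ and $\langle v_i,v_j\rangle' = \langle Av_i,v_j\rangle = \lambda_i\langle v_i,v_j\rangle = \lambda_i\delta_{ij}$, so this single basis is orthogonal for both inner products, which is the first assertion.

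Finally, for the path $m(t)$, I would evaluate the defining formula on this basis. For $i\neq j$ both $\langle v_i,v_j\rangle$ and $\langle v_i,v_j\rangle'$ vanish, so $m(t)(v_i,v_j) = 0$; for $i=j$ one gets $m(t)(v_i,v_i) = 1^{1-t}\lambda_i^t = \lambda_i^t > 0$. In particular the powers are only ever applied to the positive reals $\langle v_i,v_i\rangle$ and $\langle v_i,v_i\rangle'$, so the expression is unambiguous. Extending $m(t)$ bi-/sesquilinearly, it is for each $t\in[0,1]$ a genuine inner product, being diagonal with positive diagonal entries in the basis $\{v_i\}$; it varies continuously in $t$ since each $\lambda_i^t$ does; and it agrees with $\langle\cdot,\cdot\rangle$ at $t=0$ and with $\langle\cdot,\cdot\rangle'$ at $t=1$, because all three forms are simultaneously diagonalized by $\{v_i\}$ and $m(0),m(1)$ match the respective diagonal entries. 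The argument is classical throughout; the only points requiring care are splitting into the cases $\Kb=\Rb$ and $\Kb=\Cb$ for the spectral theorem and confirming that no branch-of-logarithm ambiguity enters the formula for $m(t)$, so I do not expect a genuine obstacle.
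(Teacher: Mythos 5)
Your argument is correct: the paper itself gives no proof of this lemma (it is quoted from \cite[Proposition 3.14]{ZZ1}), and your reduction to the spectral theorem --- building the positive self-adjoint operator $A$ with $\langle Av,w\rangle=\langle v,w\rangle'$, diagonalizing it in a $\langle\cdot,\cdot\rangle$-orthonormal eigenbasis, and then checking that $m(t)$ is diagonal with entries $\lambda_i^t>0$ in that basis --- is exactly the standard simultaneous-diagonalization proof, with the $\Kb=\Cb$ sesquilinearity and the absence of branch ambiguity correctly noted. The only point you leave untouched is the subsequent remark that the path is independent of the choice of common orthogonal basis, but that is stated separately in the paper and is not part of the lemma you were asked to prove.
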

\begin{remark}
    The path $m(t,\langle \cdot,\cdot \rangle,\langle \cdot,\cdot \rangle')$ is independent of the choice of the common orthogonal basis $\{v_1,v_2,...,v_d\}$.
\end{remark}

Now we extend $\norm{\cdot}$ on $\Fc_P$ for each $P\in \Pc$ in the following way. Let $C, c > 0$ be the same constants from Lemma \ref{ThinBoundaryPointsEstimation}.
If $z\in \partial^+\Fc_P$, and $t\in [0, T^+_z/3]$, define \[\norm{\cdot}_{\phi^t(z)} = e^{-ct}\norm{\cdot}_{1,z} + \norm{\cdot}_{2,z} + e^{ct}\norm{\cdot}_{3,z}~.\] Since we hope to define a reverse invariant metric, we need to set that for $t\in [2T^+_z/3, T^+_z]$, \[\norm{\cdot}_{\phi^t(z)} = e^{c(T^+_z-t)}\norm{\cdot}_{1,\phi^{T^+_z}(z)} + \norm{\cdot}_{2,\phi^{T^+_z}(z)} + e^{-c(T^+_z-t)}\norm{\cdot}_{3,\phi^{T^+_z}(z)}~.\] For $t\in [T^+_z/3,2T^+_z/3]$, let $\norm{\cdot}_{\phi^{t}(z)}$ be the metric given by \[m(\dfrac{3t-T^+_z}{T^+_z},\norm{\cdot}_{\phi^{T^+_z/3}(z)},\norm{\cdot}_{\phi^{2T^+_z/3}(z)})~.\]
Finally, $\norm{\cdot}$ extends to a continuous, $\Gamma$-invariant, reverse invariant metric on $\Fc(\Lambda,\zeta,X)\times \Kb^d$.

By the construction of the metric on $\Fc_P$ and Lemma \ref{ThinBoundaryPointsEstimation}, we can compute and deduce the following lemma by direct computation.

\begin{lemma}\label{SingleThinEstimation}
    For any $z\in \partial^+\Fc_P$ and $0\leqslant s \leqslant s+t \leqslant T_z^+ $, \[ \dfrac{\norm{v}_{\phi^{s+t}(z)}}{\norm{w}_{\phi^{s+t}(z)}} \leqslant C e^{-c t}  \dfrac{\norm{v}_{\phi^{s}(z)}}{\norm{w}_{\phi^{s}(z)}}~,\] where $v\in E^s_{\phi^{s}(z)}$ and $w\in E^u_{\phi^{s}(z)}$ are nonzero vectors.
\end{lemma}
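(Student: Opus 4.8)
Write $T=T_z^+$, and recall the subspaces $E^1=\xi^k(z^+)$, $E^2=\xi^{d-k}(z^+)\cap\xi^{d-k}(z^-)$, $E^3=\xi^k(z^-)$, which are constant along the excursion $\phi^{[0,T]}(z)$. The plan begins with a structural reduction. First, $E^1,E^2,E^3$ are mutually orthogonal for $\norm{\cdot}_{\phi^t(z)}$ at \emph{every} $t\in[0,T]$: on $[0,T/3]$ and on $[2T/3,T]$ this is the definition (an orthogonal direct sum of rescaled metrics), and it persists on $[T/3,2T/3]$ because the interpolation $m$ of two inner products that both split orthogonally along $E^1\oplus E^2\oplus E^3$ may be computed in a common orthogonal basis refining that splitting. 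The same observation produces, for each $i$, a single basis of $E^i$ that is orthogonal for $\norm{\cdot}_{\phi^t(z)}|_{E^i}$ for \emph{all} $t\in[0,T]$ (namely a common orthogonal basis of $E^i$ for $\norm{\cdot}_{i,z}$ and $\norm{\cdot}_{i,\phi^{T}(z)}$). Fixing these bases, decomposing $w=w_2+w_3\in E^2\oplus E^3=E^u$ and $v$ along the basis of $E^1$, and applying the mediant inequality $\frac{a+b}{c+d}\le\max(\frac ac,\frac bd)$ to both numerator and denominator, the lemma is reduced to the two estimates, call them $(\star_2)$ and $(\star_3)$, obtained by taking $v$ a single basis vector of $E^1$ and $w$ a single basis vector of $E^2$, respectively of $E^3$.

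Next I would cut $[s,s+t]\subseteq[0,T]$ at $T/3$ and $2T/3$; since $\frac{\norm{v}_{\phi^{s+t}(z)}/\norm{w}_{\phi^{s+t}(z)}}{\norm{v}_{\phi^{s}(z)}/\norm{w}_{\phi^{s}(z)}}$ is multiplicative under subdivision, it suffices to prove $(\star_2)$ and $(\star_3)$ for a subinterval lying in one of the three pieces and to multiply the at most three resulting bounds. On the two outer pieces there is nothing to compute: there the metric restricted to $E^i$ is a fixed metric times $e^{\mp ct}$ on $E^1$, times $1$ on $E^2$, and times $e^{\pm ct}$ on $E^3$, so over a subinterval of length $\ell$ the factor in $(\star_2)$ is $e^{-c\ell}$ and that in $(\star_3)$ is $e^{-2c\ell}$.

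The middle piece is the heart of the argument. Log-linearity of $m$ means that in the fixed bases $\log\norm{e_i}^2_{\phi^{\tau(s)}(z)}$ is affine in the reparametrised time $s=(3\tau-T)/T$; hence for $[\tau,\tau']\subseteq[T/3,2T/3]$ each of the two ratio-of-ratios equals its value over the whole middle piece raised to the power $\tfrac{3(\tau'-\tau)}{T}\le1$, and since that base will be seen to be $\le1$ for $T$ large (the bounded range of $T$ being handled by hand), this raising costs only a bounded constant. So it is enough to bound the whole-middle ratios. Computing them from the two ends $\norm{\cdot}_{\phi^{T/3}(z)}$ and $\norm{\cdot}_{\phi^{2T/3}(z)}$ yields an explicit factor — $e^{4cT/3}$ for $(\star_2)$ and $e^{8cT/3}$ for $(\star_3)$ — times a comparison of thick end metrics of the form $\frac{\norm{v}^2_{\phi^{T}(z)}/\norm{v}^2_{z}}{\norm{w}^2_{\phi^{T}(z)}/\norm{w}^2_{z}}$. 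For $(\star_2)$ this comparison is $\le C^2e^{-2cT}$ straight from Lemma \ref{ThinBoundaryPointsEstimation} (stable vector in $E^1=E^s$, unstable vector in $E^2\subseteq E^u$), so the whole-middle $(\star_2)$ ratio is $\le C^2e^{-2cT/3}$. For $(\star_3)$ a single use of Lemma \ref{ThinBoundaryPointsEstimation} only gives $\le C^2e^{-2cT}$, which does not beat $e^{8cT/3}$; the remedy is to route through the neutral block $E^2$: apply Lemma \ref{ThinBoundaryPointsEstimation} once to $z$ (with unstable vector in $E^2$) and once to the reversed excursion $\widehat{\phi^{T}(z)}\in\partial^+\Fc_P$, whose stable block is $E^3$, whose unstable block contains $E^2$, and which has the same length $T$ (using reverse-invariance of the thick metric), and multiply the two inequalities to get the comparison $\le C^4e^{-4cT}$ and hence the whole-middle $(\star_3)$ ratio $\le C^4e^{-4cT/3}$. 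Multiplying the three pieces gives $(\star_2)$ and $(\star_3)$ on all of $[s,s+t]$ with rate $c$ and a constant that is a fixed power of the constant of Lemma \ref{ThinBoundaryPointsEstimation}, and unwinding the mediant reduction yields the lemma with that $C$ and the same $c$.

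I expect the main obstacle to be precisely the middle piece, and within it the stable-versus-neutral estimate $(\star_3)$: one has to realise that Lemma \ref{ThinBoundaryPointsEstimation} must be invoked twice — once for the excursion and once for its reverse — and composed through $E^2$, in order to produce enough decay to overcome the $e^{8cT/3}$ forced by the reverse-symmetric rescalings. The degenerate case $d=2k$, in which $E^2=\{0\}$ so that this routing is unavailable, would have to be argued separately, for instance by passing to a metric of unit volume as in Remark \ref{UnitVolumeMetric}.
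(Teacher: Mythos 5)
Your treatment of the case $2k<d$ is correct, and it supplies exactly the computation that the paper compresses into the phrase ``direct computation'': the metric is block-orthogonal for $E^1\oplus E^2\oplus E^3$ at every time of the excursion (including the interpolated middle third, since $m$ respects a common orthogonal basis adapted to the splitting), the two outer thirds are immediate from the explicit rescalings, and on the middle third you correctly observe that a single application of Lemma \ref{ThinBoundaryPointsEstimation} is not enough for the pairing $E^1$ versus $E^3$ (it only gives $Ce^{cT/3}$ against the factor $e^{8cT/3}$ in squared norms), so one must also apply Lemma \ref{ThinBoundaryPointsEstimation} to the reversed excursion $\widehat{\phi^{T}(z)}\in\partial^+\Fc_P$ and compose the two estimates through the neutral block $E^2$. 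The mediant reduction, the log-linearity of $m$ on basis vectors, and multiplicativity under subdivision are all used correctly, and the reverse-excursion step is legitimate given the reverse invariance of the thick metric and of the thick-thin decomposition that the construction already assumes.

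The genuine gap is the case $d=2k$, which you flag but do not close, and which the lemma must cover (it is precisely the case of the paper's examples: $P_d$ in $\SL(2d,\Cb)$ and $P_2$ in $\SL(4,\Rb)$). Passing to unit-volume metrics as in Remark \ref{UnitVolumeMetric} does not address it: the obstruction is not a normalization but the size of the exponent used in the construction. When $E^2=\xi^{d-k}(z^+)\cap\xi^{d-k}(z^-)=\{0\}$ there is no neutral block to route through, and the only input is $\norm{v}_{\phi^{T}(z)}\,\norm{w}_z\leqslant Ce^{-cT}\norm{v}_z\,\norm{w}_{\phi^{T}(z)}$ for $v\in E^1$, $w\in E^3$. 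If this is essentially sharp --- as happens already for a geometrically finite subgroup of $\SL(2,\Cb)$ with a cusp, where a depth-$T$ excursion contracts $\xi^1(z^+)$ by roughly $e^{-T/2}$ and expands $\xi^1(z^-)$ by roughly $e^{T/2}$, so the stable-to-unstable comparison decays no faster than the optimal constant of Lemma \ref{ThinBoundaryPointsEstimation} --- then, reading the rescalings at the level of norms with that same constant $c$, the ratio $\norm{v}/\norm{w}$ \emph{grows} at rate close to $c$ along the middle third, and no constant makes the asserted inequality hold for $s,s+t$ inside that third. In other words, in the degenerate case the statement must be rescued by adjusting the construction rather than the proof: rescale by $e^{\pm c't}$ with $c'\leqslant c/2$ (equivalently, read the formulas as rescaling inner products, so norms scale by $e^{\pm ct/2}$). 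Then the middle-third factor is at most $e^{4c'T/3}Ce^{-cT}\leqslant Ce^{-c'T/3}$, a single application of Lemma \ref{ThinBoundaryPointsEstimation} suffices for both pairings uniformly in $k$ (your routing trick is no longer needed), and the lemma holds with the smaller rate $c'$, which is all the subsequent domination argument uses.
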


Now we show that $\Fc(\Lambda,\zeta,X)\times \Kb^d = E^s \oplus E^u$ is a dominated splitting of rank $k$ with respect to $\norm{\cdot}$ 

By Lemma \ref{ThickPartPointsEstimation}, $\rho$ being $P_k$-divergent, and the $\Gamma$-action on $\Fc^{th}$ being cocompact, there exists a constant $T_1 >0$ such that for any $z\in \Fc^{th}$ and $t\geqslant T_1$ such that $\phi^t(z)\in\Fc^{th}\in \Fc^{th}$, one has \[\dfrac{\norm{v}_{\phi^t(z)}}{\norm{w}_{\phi^t(z)}} \leqslant \dfrac{1}{2C^2}\dfrac{\norm{v}_z}{\norm{w}_z}~,\] for any nonzero vectors $v\in E^s_{z}$ and $w\in E^u_{z}$.

For any $T\in \Rb_{\geqslant 0}$, let \[\Fc^{th}_T = \{\phi^t(z)\in \Fc(\Lambda,\zeta,X): z\in\Fc^{th}, t\in [-T,T]\}~.\] Notice that $\Gamma$ also acts on $\Fc^{th}_T$ cocompactly. Then there exists a constant $C(T) > 0$ depends on $T$, such that \[\dfrac{\norm{v}_{\phi^t(z)}}{\norm{w}_{\phi^t(z)}} \leqslant C(T)\dfrac{\norm{v}_z}{\norm{w}_z} \] for any $z\in \Fc^{th}$, $0\leqslant t\leqslant T$, $v\in E^s_{z}$ and $w\in E^u_{z}$ nonzero vectors.

Let $T_2\geqslant T_1$ be such that $C(T_1)C^2 e^{cT_1} e^{-c t}\leqslant \dfrac{1}{2}$ for any $t\geqslant T_2$.

\begin{claim}[1]
    For any $z\in \Fc(\Lambda,\zeta,X)$, \[\dfrac{\norm{v}_{\phi^t(z)}}{\norm{w}_{\phi^t(z)}} \leqslant \dfrac{1}{2}\dfrac{\norm{v}_z}{\norm{w}_z}\] for any $t\geqslant T_2$, $v\in E^s_{z}$ and $w\in E^u_{z}$ nonzero vectors.
\end{claim}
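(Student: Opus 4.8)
The plan is to follow the orbit $s\mapsto\phi^s(z)$ on $[0,t]$, cut it at its first and last visits to the thick part $\Fc^{th}$, estimate the three resulting pieces separately, and multiply. Throughout I fix nonzero $v\in E^s_z$ and $w\in E^u_z$ and set $R(s)=\norm{v}_{\phi^s(z)}/\norm{w}_{\phi^s(z)}$; since $E^s$ and $E^u$ are $\phi$-invariant, the same $v,w$ serve at every point of the orbit, so the claim is exactly $R(t)\leqslant\frac{1}{2}R(0)$, and I will bound $R(t)/R(0)$ as the product of the three factors $R(t)/R(b)$, $R(b)/R(a)$, $R(a)/R(0)$.

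First I would dispose of the degenerate case in which the whole orbit $\{\phi^s(z):0\leqslant s\leqslant t\}$ lies in a single thin set $\Fc_P$: writing $z=\phi^\tau(w_0)$ for the entry point $w_0\in\partial^+\Fc_P$, one has $\tau+t\leqslant T^+_{w_0}$, so Lemma \ref{SingleThinEstimation} gives $R(t)\leqslant Ce^{-ct}R(0)\leqslant Ce^{-cT_2}R(0)\leqslant\frac{1}{2}R(0)$ by the inequality defining $T_2$ (all of $C$, $C(T_1)$, $e^{cT_1}$ being $\geqslant 1$). Otherwise the set $S=\{s\in[0,t]:\phi^s(z)\in\Fc^{th}\}$ is nonempty and closed; put $a=\min S$ and $b=\max S$, so $0\leqslant a\leqslant b\leqslant t$ and $\phi^a(z),\phi^b(z)\in\Fc^{th}$. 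Because $\Fc^{th}$ is closed and the $\Fc_P$ are disjoint open sets, each of the connected pieces $\phi^{[0,a)}(z)$ and $\phi^{(b,t]}(z)$ lies in a single $\Fc_P$; reading off the entry point of each excursion as the base point in Lemma \ref{SingleThinEstimation} yields $R(a)\leqslant Ce^{-ca}R(0)$ and $R(t)\leqslant Ce^{-c(t-b)}R(b)$ (these hold trivially, via $1\leqslant C$, when $a=0$ or $b=t$, i.e.\ when $z$ or $\phi^t(z)$ is already in $\Fc^{th}$).

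It then remains to bound $R(b)/R(a)$ and combine. If $b-a\geqslant T_1$, the $T_1$-estimate just above, applied to $\phi^a(z)\in\Fc^{th}$, gives $R(b)\leqslant\frac{1}{2C^2}R(a)$, so $R(t)\leqslant Ce^{-c(t-b)}\cdot\frac{1}{2C^2}\cdot Ce^{-ca}\cdot R(0)\leqslant\frac{1}{2}R(0)$. If instead $b-a<T_1$ (in particular if $a=b$), the $C(T_1)$-estimate gives $R(b)\leqslant C(T_1)R(a)$; the key observation now is that a short middle piece forces the two thin excursions to be long, since $a+(t-b)=t-(b-a)>T_2-T_1$, whence $R(t)\leqslant C^2 C(T_1)e^{-c(a+(t-b))}R(0)<C^2 C(T_1)e^{cT_1}e^{-cT_2}R(0)\leqslant\frac{1}{2}R(0)$, again by the choice of $T_2$. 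Multiplying along the orbit finishes the proof. The one genuinely delicate step is exactly this last bookkeeping: one must see that the factor $C$ picked up from a thin excursion is harmless once that excursion has length exceeding $T_2-T_1$, and that this is automatic precisely when the middle piece is too short for the $T_1$-estimate — which is why $T_2$ was chosen so that $C(T_1)C^2 e^{cT_1}e^{-cT_2}\leqslant\frac{1}{2}$. Everything else reduces to the immediate facts that a flow line meets $\Fc^{th}$ in a closed subset of $[0,t]$ and leaves it in subintervals each contained in a single $\Fc_P$, which is what makes Lemma \ref{SingleThinEstimation} applicable verbatim to the two excursions.
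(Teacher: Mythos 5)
Your proof is correct and follows essentially the same route as the paper's: you cut $[0,t]$ at the first and last visits to $\Fc^{th}$, estimate the two thin excursions by Lemma \ref{SingleThinEstimation}, split into the cases where the middle piece is shorter or longer than $T_1$ (using the $C(T_1)$-bound, respectively the $\frac{1}{2C^2}$-bound), and close with the defining inequality for $T_2$, exactly as in the paper. Your treatment is merely a bit more explicit about the degenerate subcases ($a=0$, $b=t$, or the whole orbit lying in one $\Fc_P$) and about anchoring Lemma \ref{SingleThinEstimation} at the entry points of the excursions.
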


\begin{proof}[Proof of Claim (1)]
    If $\phi^s(z)\not\in \Fc^{th}$ for any $s\in [0,t]$, then there exists $P\in\Pc^\Gamma$, such that $\phi^s(z)\in\Fc_P$ for all $s\in [0,t]$, then by Lemma \ref{SingleThinEstimation}, \[\dfrac{\norm{v}_{\phi^t(z)}}{\norm{w}_{\phi^t(z)}} \leqslant C e^{-c t}\dfrac{\norm{v}_z}{\norm{w}_z} \leqslant \dfrac{1}{2}\dfrac{\norm{v}_z}{\norm{w}_z}\] for any $v\in E^s_{z}$ and $w\in E^u_{z}$ nonzero vectors.

    If there exists $s\in [0,t]$ such that $\phi^s(z)\in \Fc^{th}$, let $s'$ (respectively, $s''$) be the minimum (respectively, maximum) of such $s\in [0,t]$. Then $[0,t]$ is separated as $3$ parts, $[0,s']$, $(s',s'')$ and $(s'',t]$. The first and third parts of the flow line are fully contained in $\Fc_P$ for some $P\in\Pc^\Gamma$ respectively, hence can be estimated by Lemma \ref{SingleThinEstimation}.
    \begin{itemize}
        \item[Case 1.] When $s''-s'\leqslant T_1$, 
        \begin{align*} \dfrac{\norm{v}_{\phi^t(z)}}{\norm{w}_{\phi^t(z)}}
        & \leqslant Ce^{-c(t-s'')} \dfrac{\norm{v}_{\phi^{s''}(z)}}{\norm{w}_{\phi^{s''}(z)}}\\
        & \leqslant Ce^{-c(t-s'')} C(T_1) \dfrac{\norm{v}_{\phi^{s'}(z)}}{\norm{w}_{\phi^{s'}(z)}}\\
        & \leqslant Ce^{-c(t-s'')} C(T_1) Ce^{-cs'}\dfrac{\norm{v}_z}{\norm{w}_z} \\
        & \leqslant C(T_1)C^2 e^{cT_1} e^{-c t} \dfrac{\norm{v}_z}{\norm{w}_z} \leqslant \dfrac{1}{2} \dfrac{\norm{v}_z}{\norm{w}_z}
        \end{align*}
        for any $z\in \Fc^{th}$, $0\leqslant t\leqslant T$, $v\in E^s_{z}$ and $w\in E^u_{z}$ nonzero vectors.
        \item[Case 2.] When $s''-s'\geqslant T_1$,
        \begin{align*} 
        \dfrac{\norm{v}_{\phi^t(z)}}{\norm{w}_{\phi^t(z)}} 
        & \leqslant Ce^{-c(t-s'')} \dfrac{\norm{v}_{\phi^{s''}(z)}}{\norm{w}_{\phi^{s''}(z)}} \\
        & \leqslant Ce^{-c(t-s'')} \dfrac{1}{2C^2} \dfrac{\norm{v}_{\phi^{s'}(z)}}{\norm{w}_{\phi^{s'}(z)}}\\
        & \leqslant Ce^{-c(t-s'')} \dfrac{1}{2C^2} Ce^{-cs'} \dfrac{\norm{v}_z}{\norm{w}_z}\\
        & \leqslant \dfrac{1}{2} e^{-c (t-((s''-s')))} \dfrac{\norm{v}_z}{\norm{w}_z} \leqslant \dfrac{1}{2} \dfrac{\norm{v}_z}{\norm{w}_z}
        \end{align*}
        for any $z\in F^{th}$, $0\leqslant t\leqslant T$, $v\in E^s_{z}$ and $w\in E^u_{z}$ nonzero vectors.
    \end{itemize}
\end{proof}

\begin{claim}[2]
    There exists constant $C'>0$, such that for any $z\in \Fc(\Lambda,\zeta,X)$, \[\dfrac{\norm{v}_{\phi^t(z)}}{\norm{w}_{\phi^t(z)}} \leqslant C'\dfrac{\norm{v}_z}{\norm{w}_z}\] for any $0\leqslant t\leqslant T_2$, $v\in E^s_{z}$ and $w\in E^u_{z}$ nonzero vectors.
\end{claim}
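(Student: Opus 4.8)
The plan is to follow exactly the excursion decomposition of the orbit segment that is used in the proof of Claim (1), except that, since the flow time is now capped at $T_2$, we only need a uniform bound and never have to harvest any contraction. Fix $z\in\Fc(\Lambda,\zeta,X)$, $t\in[0,T_2]$, and nonzero $v\in E^s_z$, $w\in E^u_z$. Because the endpoints $z^+,z^-$ are unchanged by the flow, $E^s_{\phi^s(z)}=E^s_z$ and $E^u_{\phi^s(z)}=E^u_z$ for every $s$, so the same vectors $v,w$ may legitimately be compared at every point of the orbit segment $\{\phi^s(z):0\leqslant s\leqslant t\}$.

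First I would treat the case that this orbit segment is disjoint from $\Fc^{th}$. Since the sets $(\Fc_P)_{P\in\Pc^\Gamma}$ are open, pairwise disjoint, and cover $\Fc(\Lambda,\zeta,X)\setminus\Fc^{th}$, connectedness of $[0,t]$ forces the segment into a single $\Fc_P$; as it avoids $\partial^{\pm}\Fc_P\subset\Fc^{th}$ it is of the form $\{\phi^{s}(z_0):s_1\leqslant s\leqslant s_1+t\}$ for the entry point $z_0\in\partial^+\Fc_P$ with $0\leqslant s_1\leqslant s_1+t\leqslant T_{z_0}^+$ (and symmetrically, via the reverse map and reverse-invariance of the metric, on the part of $\Fc_P$ parametrized by $\partial^-\Fc_P$, which is the only situation that occurs for flow lines one of whose endpoints maps to the parabolic point of $P$). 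Then Lemma \ref{SingleThinEstimation} gives $\dfrac{\norm{v}_{\phi^t(z)}}{\norm{w}_{\phi^t(z)}}\leqslant Ce^{-ct}\dfrac{\norm{v}_z}{\norm{w}_z}\leqslant C\dfrac{\norm{v}_z}{\norm{w}_z}$.

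Otherwise the set $\{s\in[0,t]:\phi^s(z)\in\Fc^{th}\}$ is nonempty; set $s'=\min$ and $s''=\max$ of it. On $[0,s']$ the orbit lies, apart from its right endpoint $\phi^{s'}(z)\in\partial^-\Fc_{P_1}$, inside a single thin piece $\Fc_{P_1}$, and on $[s'',t]$ it enters a single thin piece from $\phi^{s''}(z)\in\partial^+\Fc_{P_2}$; applying Lemma \ref{SingleThinEstimation} to each flanking excursion yields $\dfrac{\norm{v}_{\phi^{s'}(z)}}{\norm{w}_{\phi^{s'}(z)}}\leqslant C\dfrac{\norm{v}_z}{\norm{w}_z}$ and $\dfrac{\norm{v}_{\phi^{t}(z)}}{\norm{w}_{\phi^{t}(z)}}\leqslant C\dfrac{\norm{v}_{\phi^{s''}(z)}}{\norm{w}_{\phi^{s''}(z)}}$. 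For the middle segment the key observation is that $\phi^{s'}(z)\in\Fc^{th}$ and $s''-s'\leqslant t\leqslant T_2$, so $\{\phi^{s}(z):s'\leqslant s\leqslant s''\}\subset\Fc^{th}_{T_2}$ by the very definition of $\Fc^{th}_{T_2}$; hence the a priori bound $C(T_2)$ coming from cocompactness of the $\Gamma$-action on $\Fc^{th}_{T_2}$ applies and gives $\dfrac{\norm{v}_{\phi^{s''}(z)}}{\norm{w}_{\phi^{s''}(z)}}\leqslant C(T_2)\dfrac{\norm{v}_{\phi^{s'}(z)}}{\norm{w}_{\phi^{s'}(z)}}$. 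Chaining the three inequalities proves the claim with $C'=C^2C(T_2)$, a constant that also dominates the bound $C$ from the previous case.

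I do not expect a genuine obstacle here: every analytic ingredient — the thin-part estimate of Lemma \ref{SingleThinEstimation} and the cocompactness constant $C(T_2)$ — is already in hand, and the rest is bookkeeping. The two points that deserve a line of care are, first, the observation that the middle orbit segment between the first and last visits to $\Fc^{th}$ cannot escape $\Fc^{th}_{T_2}$, which is precisely where the hypothesis $t\leqslant T_2$ enters; and second, checking that each of the two flanking excursions really does lie in a single thin piece with the appropriate endpoint on $\partial^{\pm}\Fc_{P}$, so that Lemma \ref{SingleThinEstimation} applies verbatim (the only subtlety being the orbits that spend all of their past, or all of their future, inside a cusp, where one uses the $\partial^-$-parametrization and reverse-invariance of the metric).
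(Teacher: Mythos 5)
Your proposal is correct and uses exactly the paper's ingredients (Lemma \ref{SingleThinEstimation} for excursions in a single $\Fc_P$, and the cocompactness constant $C(T_2)$), with the same top-level dichotomy according to whether the orbit segment meets $\Fc^{th}$. The paper takes a small shortcut in the second case: once the segment meets $\Fc^{th}$ at some time $s\in[0,T_2]$, every point $\phi^t(z)$ with $t\in[0,T_2]$ lies in $\Fc^{th}_{T_2}$, so the paper bounds the whole segment at once and concludes with $C'=\max\{C,C(T_2)\}$, whereas you re-run the three-piece splitting from Claim (1) and get $C'=C^2C(T_2)$; your version has the minor advantage that $C(T_2)$ is only invoked from a base point genuinely in $\Fc^{th}$, which is literally how that constant was defined, and your closing caveat about orbits lying in a cusp for all past or future time (handled via the $\partial^-$-parametrization and reverse invariance of the metric) is the same implicit extension of Lemma \ref{SingleThinEstimation} that the paper itself uses in Claims (1) and (2).
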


\begin{proof}[Proof of Claim (2)]
    For any $z\in \Fc(\Lambda,\zeta,X)$, $\{\phi^t(z):t\in [0,T_2]\}$ is contained in $\Fc^{th}_{T_2}$ or a $\Fc_P$ for some $P\in\Pc^\Gamma$. For the first case, \[\dfrac{\norm{v}_{\phi^t(z)}}{\norm{w}_{\phi^t(z)}} \leqslant C(T_2)\dfrac{\norm{v}_z}{\norm{w}_z}\] and for the second case, by Lemma \ref{SingleThinEstimation} \[\dfrac{\norm{v}_{\phi^t(z)}}{\norm{w}_{\phi^t(z)}} \leqslant Ce^{-cT_2}\dfrac{\norm{v}_z}{\norm{w}_z}\leqslant C\dfrac{\norm{v}_z}{\norm{w}_z}\] for any $0\leqslant t\leqslant T_2$, $v\in E^s_{z}$ and $w\in E^u_{z}$ nonzero vectors. Let $C' = \max\{C, C(T_2)\}$.
\end{proof}

Eventually, for any $t\in\Rb_{\geqslant 0}$, we write $t = n_t T_2 + r_t$ where $n_t\in\Nb$ and $0\leqslant r_t < T_2$, then for any $z\in \Fc(\Lambda,\zeta,X)$, \[\dfrac{\norm{v}_{\phi^t(z)}}{\norm{w}_{\phi^t(z)}} \leqslant \Big(\dfrac{1}{2}\Big)^{n_t}\dfrac{\norm{v}_{\phi^{r_t}(z)}}{\norm{w}_{\phi^{r_t}(z)}} \leqslant C(T_2) \Big(\dfrac{1}{2}\Big)^{(t- T_2)/T_2}\dfrac{\norm{v}_z}{\norm{w}_z} \] for any $v\in E^s_{z}$ and $w\in E^u_{z}$ nonzero vectors, which implies that $\Fc(\Lambda,\zeta,X)\times \Kb^d = E^s \oplus E^u$ with the metric $\norm{\cdot}$ is a dominated splitting of rank $k$. Therefore we complete the proof of Theorem \ref{TheoremEGFtoRestricted}.

\subsection{A remark on relatively Anosov representations}\label{SectionRemarkonRAR}
We show Corollary \ref{LimitMapWaiveCor} in this section.
Let $(\Gamma, \Pc)$ be a relatively hyperbolic group with an adapted generating set $S$, and let $\rho:\Gamma\to\SL(d,\Kb)$ be a representation that is $P_k$-Anosov relative to $\Pc$.

\begin{proposition}\cite[Proposition 4.2, Theorem 8.1]{ZZ1}\label{ZZ1P42andT81}
    If a representation $\rho:\Gamma\to\SL(d,\Kb)$ is $P_k$-Anosov relative to $\Pc$, then there exist constants $\alpha> 0$ and $\beta\geqslant 0$, such that \[\log \dfrac{\sigma_{k}(\rho(\gamma))}{\sigma_{k+1}(\rho(\gamma))}\geqslant \alpha \log \abs{\gamma}_S - \beta \] for any $P\in \Pc$ and $\gamma\in P$.
\end{proposition}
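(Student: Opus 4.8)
The plan is to deduce the estimate from the Cartan-projection (``relatively dominated'') reformulation of relative Anosov-ness, together with the elementary geometry of combinatorial horoballs; this is the content of \cite[Proposition 4.2 and Theorem 8.1]{ZZ1}, and I sketch the strategy. First, since $\rho$ is $P_k$-Anosov relative to $\Pc$ it is $P_k$-divergent, and its limit map $\xi\colon\partial(\Gamma,\Pc)\to\Fc_{k,d-k}$ is a continuous, transverse, strongly dynamics preserving homeomorphism onto its image (see the discussion after Definition~\ref{DefRelativelyAnosov} and Lemma~\ref{DivergenceLemma}). Fix $P\in\Pc$ with parabolic fixed point $p$; then $\xi(p)$ is $\rho(P)$-invariant, and, $p$ being a bounded parabolic point, every sequence of pairwise distinct elements of $P$ converges to $p$. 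Let $X_{GM}$ be the Groves--Manning cusp space of $(\Gamma,\Pc)$ (the model $X_f$ with $f(t)=2^{t}$) and $\hat d$ its metric. By \cite[Proposition 3.9]{ZZ1} there are constants $a\geqslant 1$, $b\geqslant 0$ with $a^{-1}\log\abs{\gamma}_S-b\leqslant \hat d(\id,\gamma\id)\leqslant a\log\abs{\gamma}_S+b$ for every $\id\neq\gamma\in P$, so it suffices to produce $C,c>0$ with $\sigma_{k+1}(\rho(\gamma))/\sigma_k(\rho(\gamma))\leqslant Ce^{-c\hat d(\id,\gamma\id)}$ for all $\gamma\in\Gamma$; taking logarithms and restricting to $\gamma\in P$ then yields the proposition.

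Next, I would prove this global Cartan-contraction inequality by a local-to-global argument along $X_{GM}$. Given $\gamma\in\Gamma$, choose a discrete geodesic from $\id$ to $\gamma$ in $X_{GM}$ and break it into maximal segments lying in the thick part (the Cayley graph of $\Gamma$) and maximal excursions into the combinatorial horoballs. On a thick segment $\rho$ behaves like the restriction of an Anosov representation of a hyperbolic group: since $\rho$ is $P_k$-Anosov in restriction to every cocompact subflow $\Fc_R(X_{GM})$ (Theorem~\ref{TheoremCompactSubflow}, applied to the restriction of $\xi$ to $\partial_{cc}X_{GM}$), the singular-value gap of $\rho$ along the segment grows uniformly linearly in its length with uniformly controlled singular directions. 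An excursion of $\hat d$-length $L$ corresponds to an element conjugate into some $P'\in\Pc$ with cusp-space displacement comparable to $L$, so matters reduce to the peripheral estimate below. Feeding the two inputs into the standard local-to-global principle for Cartan projections of products of matrices with aligned singular spaces (as in \cite{BPS}, \cite[\S 6]{CZZ}, \cite[\S 9]{ZZ1}, the alignment being supplied by transversality and strong dynamics preservation of $\xi$) produces the inequality $\sigma_{k+1}(\rho(\gamma))/\sigma_k(\rho(\gamma))\leqslant Ce^{-c\hat d(\id,\gamma\id)}$.

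The heart of the matter is the peripheral estimate $\sigma_{k+1}(\rho(\delta))/\sigma_k(\rho(\delta))\leqslant Ce^{-c\hat d(\id,\delta\id)}$ for $\delta\in P$, which already gives the proposition for peripheral elements and encodes simultaneously that $\rho|_P$ is weakly unipotent \cite[Proposition~5.1]{ZZ1} and that it nevertheless escapes towards $\xi(p)$ at a logarithmic rate. I would establish it by a dyadic induction along the levels of the combinatorial horoball over $\Cay(P,S\cap P)$: an element $\delta$ with $\abs{\delta}_S\asymp 2^{m}$ sits at horoball depth $\asymp m$, and decomposing it through its projections to the successive levels $m,m-1,\dots,0$ writes $\delta$ as a product of $\asymp m$ elements of $P$, each a single ``horizontal step'' at its scale. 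Using that $P$ acts cocompactly on the horosphere $\overline{B_P}\setminus B_P$ and that strong dynamics preservation of $\xi$ is uniform on compact sets, each such step contracts $\sigma_{k+1}/\sigma_k$ by a definite factor while keeping the relevant singular subspaces in a fixed neighbourhood of $\xi(p)$, so that the $\asymp m$ steps compose with bounded loss; summing the contractions gives the estimate. The main obstacle is uniformity: one must bound the angles between the singular subspaces across \emph{all} depth increments and all $P\in\Pc$ at once, which is exactly where the transversality of the limit map and the cocompactness of the $P$-actions on the horospheres have to be combined carefully — in a self-contained write-up this would be the technical core, though one may also simply invoke \cite[Proposition 4.2, Theorem 8.1]{ZZ1}.
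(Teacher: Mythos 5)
This statement is not proved in the paper at all: it is imported verbatim from Zhu--Zimmer, and the paper's ``proof'' is the citation \cite[Proposition 4.2, Theorem 8.1]{ZZ1}. So your closing remark (``one may also simply invoke ZZ1'') is in fact exactly what the paper does, and had you stopped there the proposal would match the paper. The issue is that the sketch you offer as a substitute has a genuine gap at the step you yourself identify as the heart of the matter.

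The dyadic induction inside the peripheral subgroup does not work as described. To compose singular value gaps multiplicatively, $\frac{\sigma_k}{\sigma_{k+1}}(\rho(\delta_1\cdots\delta_m))\gtrsim\prod_j\frac{\sigma_k}{\sigma_{k+1}}(\rho(\delta_j))$ up to bounded loss, one needs a uniform transversality (angle) condition between the attracting $k$-dimensional singular subspaces of each factor and the repelling $(d-k)$-dimensional singular subspaces of the adjacent factor. But for elements of a single peripheral subgroup $P$ these subspaces all converge to the two components of the \emph{same} flag $\xi(p)=(\xi^k(p),\xi^{d-k}(p))$, and since $\xi^k(p)\subset\xi^{d-k}(p)$ they are nested, i.e.\ maximally non-transverse; transversality of the limit map only relates \emph{distinct} boundary points, so neither it nor cocompactness of $P$ on the horosphere supplies the alignment you invoke. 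Concretely, in the basic model where $\rho|_P$ is a unipotent one-parameter subgroup of $\SL(2,\Rb)$, one has $\frac{\sigma_1}{\sigma_2}(\rho(\delta^n))\asymp n^2$, whereas bounded-loss composition of the two halves $\delta^{n/2}\cdot\delta^{n/2}$ would force a lower bound $\asymp n^4$; so the claimed ``compose with bounded loss'' is false, and the per-step loss is in fact unbounded. This is precisely why the gap grows only polynomially (equivalently, linearly in $\log\abs{\gamma}_S$) in the cusp, and why ZZ1's argument has to proceed differently (via weak unipotence of $\rho|_P$ and the strong dynamics preserving property along peripheral sequences, not via aligned products). A secondary structural point: your reduction to the global estimate $\frac{\sigma_{k+1}}{\sigma_k}(\rho(\gamma))\leqslant Ce^{-c\hat d(\id,\gamma\id)}$ over the Groves--Manning space is circular in emphasis, since the local-to-global step requires the peripheral estimate as input, which is exactly the proposition; note also that Corollary \ref{WeaklyDominatedCorollary} of this paper cannot be used instead, because it only yields weak domination with respect to some Gromov model $X_f$ adapted to the (unknown) divergence rate, not with respect to the Groves--Manning space, whereas the $\log\abs{\gamma}_S$ rate is equivalent (via \cite[Proposition 3.9]{ZZ1} and Proposition \ref{RatioGrowthLinearLemma}) to weak domination for the Groves--Manning metric specifically.
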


The proposition says that the singular value ratio of elements in parabolic subgroups is growing at least linearly with respect to the word length. Then, by Proposition \ref{RatioGrowthLinearLemma}, we have that for any Gromov model $X$ of $(\Gamma, \Pc)$, $\rho|_P$ is $P_k$-weakly dominated for each $P\in \Pc$. Therefore, when focus on relatively Anosov representations, we no longer need to apply Lemma \ref{GromovModelDivtoDom} to find proper Gromov models.

On the other hand, from Section \ref{SectionRelationAsymptoticallyEmbeddedandandRelativelyAnosov}, we know that a representation $\rho$ is $P_k$-Anosov relative to $\Pc$ with limit map $\xi:\partial(\Gamma,\Pc) \cong \Lc(\rho)\subset \Fc_{k,d-k}$ if and only if $\rho$ is extended geometrically finite with a homeomorphic boundary extension $\xi^{-1}:\Lc(\rho) \to \partial(\Gamma,\Pc)$. Then in this case, $\Fc(\Lc(\rho),\xi^{-1},X) = \Fc(X)$ for any Gromov model $X$ of $(\Gamma, \Pc)$.

We apply these facts for relatively Anosov representations and deduce from Theorem \ref{TheoremRestrictedtoEGF} and Theorem \ref{TheoremEGFtoRestricted} that

\begin{corollary}
    A representation $\rho:\Gamma\to\SL(d,\Kb)$ is $P_k$-Anosov relative to $\Pc$ if and only if it is $P_k$-Anosov in restriction to $\Fc(X)$ for a (any) Gromov model $X$ of $(\Gamma, \Pc)$.
\end{corollary}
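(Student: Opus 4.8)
The statement to prove is the final Corollary: a representation $\rho\colon\Gamma\to\SL(d,\Kb)$ is $P_k$-Anosov relative to $\Pc$ if and only if it is $P_k$-Anosov in restriction to $\Fc(X)$ for some (equivalently, any) Gromov model $X$ of $(\Gamma,\Pc)$. The plan is to deduce this directly from the two main structural theorems, Theorem \ref{TheoremRestrictedtoEGF} and Theorem \ref{TheoremEGFtoRestricted}, together with the identification (recalled in Section \ref{SectionRelationAsymptoticallyEmbeddedandandRelativelyAnosov}) between relatively Anosov representations and extended geometrically finite representations whose boundary extension is a homeomorphism. The key point that upgrades ``some Gromov model'' to ``any Gromov model'' is Proposition \ref{ZZ1P42andT81} combined with Proposition \ref{RatioGrowthLinearLemma}, which together force $\rho|_P$ to be $P_k$-weakly dominated with respect to \emph{every} Gromov model once $\rho$ is relatively Anosov.

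\emph{First direction ($\Rightarrow$, for every $X$).} Suppose $\rho$ is $P_k$-Anosov relative to $\Pc$, with limit map $\xi\colon\partial(\Gamma,\Pc)\to\Fc_{k,d-k}$. By the discussion in Section \ref{SectionRelationAsymptoticallyEmbeddedandandRelativelyAnosov}, $\rho$ is $P_k$-divergent (via Lemma \ref{DivergenceLemma}), $\xi$ is a homeomorphism onto $\Lc(\rho)$, and $\rho$ is extended geometrically finite with the homeomorphic (hence automatically refined) boundary extension $\xi^{-1}\colon\Lc(\rho)\to\partial(\Gamma,\Pc)$. For such a boundary extension one has $\Fc(\Lc(\rho),\xi^{-1},X)=\Fc(X)$ for every Gromov model $X$, since $\pi$ becomes a bijection. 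Now instead of invoking Lemma \ref{GromovModelDivtoDom} to \emph{choose} a good Gromov model, I would invoke Proposition \ref{ZZ1P42andT81}: the singular-value ratio of parabolic elements grows at least like $\alpha\log|\gamma|_S-\beta$, while Proposition \ref{RatioGrowthLinearLemma} gives $|\gamma|_X\leqslant\lambda\log^+_2|\gamma|_S+\epsilon$ for any Gromov model $X$; composing these two estimates yields constants $C,c>0$ with $\sigma_{k+1}(\rho(\gamma))/\sigma_k(\rho(\gamma))\leqslant Ce^{-c|\gamma|_X}$ for all $\gamma\in P$ and all $P\in\Pc$, i.e.\ $\rho|_P$ is $P_k$-weakly dominated with respect to this arbitrary $X$. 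Feeding this into the argument of Theorem \ref{TheoremEGFtoRestricted} (whose only use of the special Gromov model from Lemma \ref{GromovModelDivtoDom} was precisely to secure this weak domination) shows $\rho$ is $P_k$-Anosov in restriction to $\Fc(\Lc(\rho),\xi^{-1},X)=\Fc(X)$ for this $X$. Since $X$ was arbitrary, this establishes the implication for every Gromov model, in particular for some one.

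\emph{Second direction ($\Leftarrow$, from some $X$).} Conversely suppose $\rho$ is $P_k$-Anosov in restriction to $\Fc(X)$ for some Gromov model $X$. Here $\Fc(X)=\Fc(\Lambda,\zeta,X)$ with $\Lambda=\partial(\Gamma,\Pc)$ and $\zeta=\id$, which is the minimal boundary extension of $(\Gamma,\Pc)$ (each $\zeta^{-1}(p)$ is a singleton, trivially). Applying Theorem \ref{TheoremRestrictedtoEGF} to this minimal boundary extension produces a $\rho$-equivariant, $\zeta$-transverse, continuous map $\xi=(\xi^k,\xi^{d-k})\colon\partial(\Gamma,\Pc)\to\Fc_{k,d-k}$ making $\rho$ extended geometrically finite with boundary extension $(\xi(\partial(\Gamma,\Pc)),\zeta\circ\xi^{-1})$; here $\zeta$-transversality just says $\xi(p)$ and $\xi(q)$ are transverse for $p\ne q$. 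Since $\xi$ is continuous and transverse on the compact Hausdorff space $\partial(\Gamma,\Pc)$, it is injective, hence a homeomorphism onto its image; thus $\rho$ is extended geometrically finite with a homeomorphic transverse boundary extension, which by Remark \ref{RelationAsymptoticallyEmbeddedandEGF} (i.e.\ \cite[Theorem 1.7]{Weisman} and \cite[Proposition 4.4]{ZZ1}) is equivalent to $\rho$ being $P_k$-asymptotically embedded, hence $P_k$-Anosov relative to $\Pc$, with limit map $\xi$.

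\emph{Main obstacle.} The genuinely new content is the ``any Gromov model'' strengthening, and the only delicate step is verifying that nothing in the proof of Theorem \ref{TheoremEGFtoRestricted} uses the Gromov model $X$ beyond the weak-domination property $\sigma_{k+1}(\rho(\gamma))/\sigma_k(\rho(\gamma))\leqslant Ce^{-c|\gamma|_X}$ for $\gamma$ in the parabolic subgroups; inspecting that proof, the model enters only through Lemma \ref{ThinBoundaryPointsEstimation} (where weak domination is invoked) and through the quasi-isometry $p_M$ and the thick-thin decomposition, which exist for any Gromov model. Hence once Propositions \ref{ZZ1P42andT81} and \ref{RatioGrowthLinearLemma} supply weak domination for an arbitrary $X$, the rest of the argument carries over verbatim, and the remaining steps are immediate consequences of already-proved results.
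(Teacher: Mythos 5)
Your proposal is correct and follows essentially the same route as the paper: the forward direction combines Proposition \ref{ZZ1P42andT81} with Proposition \ref{RatioGrowthLinearLemma} to get weak domination of $\rho|_P$ for an arbitrary Gromov model (so Lemma \ref{GromovModelDivtoDom} is not needed), uses the identification $\Fc(\Lc(\rho),\xi^{-1},X)=\Fc(X)$ for the homeomorphic boundary extension, and feeds this into Theorem \ref{TheoremEGFtoRestricted}; the backward direction applies Theorem \ref{TheoremRestrictedtoEGF} to the minimal boundary extension $\id:\partial(\Gamma,\Pc)\to\partial(\Gamma,\Pc)$ and concludes via Remark \ref{RelationAsymptoticallyEmbeddedandEGF}, exactly as the paper does implicitly.
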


In particular, analogous to Corollary \ref{WeaklyDominatedCorollary}, we have

\begin{corollary}\label{WeaklyDominatedforAnyGromovModel}
    If a representation $\rho:\Gamma\to\SL(d,\Kb)$ is $P_k$-Anosov relative to $\Pc$, then it is $P_k$-weakly dominated with respect to any Gromov model $X$ of $(\Gamma,\Pc)$.
\end{corollary}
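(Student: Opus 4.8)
The plan is to deduce this from Corollary \ref{WeaklyDominatedCorollary} (the ``moreover'' part of Theorem \ref{MainTheorem}), applied to each Gromov model separately. First I would use the translation recorded in Section \ref{SectionRelationAsymptoticallyEmbeddedandandRelativelyAnosov}: since $\rho$ is $P_k$-Anosov relative to $\Pc$ it is $P_k$-divergent, its limit map $\xi\colon\partial(\Gamma,\Pc)\to\Lc(\rho)$ is a homeomorphism, and hence $\rho$ is extended geometrically finite with the homeomorphic (in particular minimal) transverse boundary extension $\xi^{-1}\colon\Lc(\rho)\to\partial(\Gamma,\Pc)$ in $\Fc_{k,d-k}$. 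Because $\xi^{-1}$ is injective one has $\Fc(\Lc(\rho),\xi^{-1},X)=\Fc(X)$ for every Gromov model $X$ of $(\Gamma,\Pc)$.

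Now I would fix an arbitrary Gromov model $X$. By the Corollary immediately preceding the statement, $\rho$ is $P_k$-Anosov in restriction to $\Fc(X)=\Fc(\Lc(\rho),\xi^{-1},X)$; thus the hypotheses of Theorem \ref{TheoremRestrictedtoEGF} are satisfied with the minimal boundary extension $\xi^{-1}$ and with this $X$. The estimate (\ref{eqweaklydominated}) in the proof of Theorem \ref{TheoremRestrictedtoEGF}, which is exactly what Corollary \ref{WeaklyDominatedCorollary} records, then provides constants $C,c>0$ such that $\sigma_{k+1}(\rho(\gamma))/\sigma_k(\rho(\gamma))\leqslant Ce^{-c\abs{\gamma}_X}$ for all $\gamma\in\Gamma$, i.e.\ $\rho$ is $P_k$-weakly dominated with respect to $X$. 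Since $X$ was an arbitrary Gromov model, the corollary follows.

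The step that actually carries content --- and the one I would check carefully --- is that the hypothesis of Corollary \ref{WeaklyDominatedCorollary} is available for \emph{every} Gromov model, not only for the distinguished one produced by Lemma \ref{GromovModelDivtoDom}. This is where the features of relatively Anosov (as opposed to merely divergent extended geometrically finite) representations enter: by Proposition \ref{ZZ1P42andT81} the singular value gap of a parabolic element $\gamma\in P$ satisfies $\log(\sigma_k(\rho(\gamma))/\sigma_{k+1}(\rho(\gamma)))\geqslant\alpha\log\abs{\gamma}_S-\beta$, while by Proposition \ref{RatioGrowthLinearLemma} every Gromov model $X$ satisfies $\abs{\gamma}_X\leqslant\lambda\log^+_2\abs{\gamma}_S+\epsilon$ on $P$; combining these two inequalities shows that $\rho|_P$ is $P_k$-weakly dominated with respect to any $X$, so no appeal to Lemma \ref{GromovModelDivtoDom} is needed and the construction of the invariant metric on $\Fc(\Lc(\rho),\xi^{-1},X)\times\Kb^d$ in Section \ref{Section1to2} (hence the ``for any Gromov model'' clause of the preceding Corollary) goes through for an arbitrary $X$. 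Alternatively, one can bypass the flow machinery for the parabolic part altogether: on the ``conical'' elements $\abs{\gamma}_X$ is comparable to $\abs{\gamma}_S$, so the classical dominated estimate already yields exponential decay in $\abs{\gamma}_X$, while on elements sitting deep inside the cusps the two displayed inequalities provide it directly, and splicing these estimates along a geodesic $[o,\gamma o]$ in $X$ gives the claim for every $\gamma$.
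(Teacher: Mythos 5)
Your argument is essentially the paper's own: you translate relative Anosovness into extended geometric finiteness with the homeomorphic (hence minimal) boundary extension $\xi^{-1}\colon\Lc(\rho)\to\partial(\Gamma,\Pc)$ so that $\Fc(\Lc(\rho),\xi^{-1},X)=\Fc(X)$, use Proposition \ref{ZZ1P42andT81} together with Proposition \ref{RatioGrowthLinearLemma} to see that $\rho|_P$ is $P_k$-weakly dominated with respect to \emph{every} Gromov model (so Lemma \ref{GromovModelDivtoDom} is not needed and Theorem \ref{TheoremEGFtoRestricted} applies to an arbitrary $X$), and then invoke the estimate behind Corollary \ref{WeaklyDominatedCorollary} from the proof of Theorem \ref{TheoremRestrictedtoEGF} — exactly the chain the paper uses in Section \ref{SectionRemarkonRAR}. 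This is correct; only your closing "splice along a geodesic" alternative is a rough sketch, but it is inessential to the proof.
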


\subsection{Stability}
We keep the assumptions and notations as in Section \ref{Section1to2theFirst}. We now prove Theorem \ref{MainStabilityTheorem} by showing the type preserving stability of any representation that is $P_k$-Anosov in restriction to $\Fc(\Lambda,\zeta,X)$. Recall that \[\Fc(\Lambda,\zeta,X) = \Fc^{th} \cup (\bigcup_{P\in\Pc^\Gamma} \Fc_P)~,\] is a thick-thin decomposition of $\Fc(\Lambda,\zeta,X)$.

\begin{theorem}\label{StabilityTheorem}
    Let $\rho_0:\Gamma\to\SL(d,\Kb)$ be a representation that is $P_k$-Anosov in restriction to $\Fc(\Lambda,\zeta,X)$, then there exists a neighborhood $O\subset\Hom_{\Pc}(\rho_0)$ of $\rho_0$, such that any $\rho\in O$ is $P_k$-Anosov in restriction to $\Fc(\Lambda,\zeta,X)$.
\end{theorem}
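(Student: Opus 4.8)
The plan is to re-run, for $\rho$ in a small neighbourhood of $\rho_0$ in $\Hom_\Pc(\rho_0)$, the construction of the metric and the rank-$k$ dominated splitting carried out in the proof of Theorem \ref{TheoremEGFtoRestricted}. The key structural observation is that all of the data on which that construction rests --- the $\Gamma$-flow $\Fc(\Lambda,\zeta,X)$ itself, its thick--thin decomposition $\Fc(\Lambda,\zeta,X)=\Fc^{th}\cup\bigcup_{P\in\Pc^\Gamma}\Fc_P$, the compact core $K$, the horospherical sets $\partial^{\pm}\Fc_P$ and the exit times $T_z^{\pm}$ --- depend only on $\rho_0$, $\zeta$ and the Gromov model $X$ fixed in Section \ref{Section1to2theFirst}, and \emph{not} on the perturbation $\rho$: the only thing $\rho$ changes is the flat bundle $E_\rho(\Fc(\Lambda,\zeta,X))$ and the action on its $\Kb^d$-fibres. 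So the statement to prove is that the existence of a rank-$k$ dominated splitting of $E_\rho(\Fc(\Lambda,\zeta,X))$ is an open condition on $\rho\in\Hom_\Pc(\rho_0)$, and the construction of Section \ref{Section1to2theFirst} must be reproduced with two inputs: control in the thin parts and control in the thick part.

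Control in the thin parts requires no hypothesis of closeness. For every $\rho\in\Hom_\Pc(\rho_0)$ and every $P\in\Pc$ one has $\rho|_P=g_P\,\rho_0|_P\,g_P^{-1}$ for some $g_P\in\SL(d,\Kb)$; since $X$ was chosen via Lemma \ref{GromovModelDivtoDom} so that $\rho_0|_P$ is $P_k$-divergent and $P_k$-weakly dominated with respect to $X$, the inequality $\sigma_{k+1}(\rho(\gamma))/\sigma_k(\rho(\gamma))\le\norm{g_P}^2\norm{g_P^{-1}}^2\,\sigma_{k+1}(\rho_0(\gamma))/\sigma_k(\rho_0(\gamma))$ shows that $\rho|_P$ is $P_k$-divergent and $P_k$-weakly dominated with respect to $X$ as well, with constants depending only on $\rho$. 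Lemma \ref{ThinBoundaryPointsEstimation} and Lemma \ref{SingleThinEstimation}, whose proofs use only weak domination of the parabolic restrictions together with the thick-part estimate, then go through verbatim for $\rho$.

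Control in the thick part is where the neighbourhood is produced, and it is the delicate point. Since $\rho_0$ is $P_k$-Anosov in restriction to $\Fc(\Lambda,\zeta,X)$, it is a fortiori $P_k$-Anosov in restriction to every cocompact subflow, and a dominated splitting over a compact base is an open condition with stable/unstable data depending continuously on the representation (Theorem \ref{TheoremCompactSubflow} and the standard openness of dominated splittings over a compact base). The difficulty is that this neighbourhood of $\rho_0$ shrinks as the cocompact subflow grows, so one cannot simply take a union. One resolves this through the thin--thick bookkeeping of Claims (1) and (2): a flow segment joining two points of $K$ decomposes into stretches lying in the cocompact $\Fc^{th}$ and stretches lying in the horoballs $\Fc_P$; on the horoball stretches the required contraction (or bounded distortion) is furnished \emph{uniformly in $\rho$} by the automatic weak domination of $\rho|_P$ from the previous paragraph, while the thick stretches are controlled by the perturbed dominated splitting over a single, fixed cocompact subflow. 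Composing these, one obtains for $\rho$ in a fixed neighbourhood $O'$ of $\rho_0$ the analogue of Lemma \ref{ThickPartPointsEstimation} with uniform constants, and the perturbed stable and unstable directions over the fixed cocompact subflow propagate --- by the density-and-continuity argument of Proposition \ref{LimitMapWaivePro}, supplemented at the parabolic points by the values coming from the conjugacy in the previous paragraph (or, equivalently, by a graph-transform argument over $\Fc(\Lambda,\zeta,X)$ whose convergence is guaranteed by the uniform thick-part rate together with the thin-part estimate) --- to continuous, transverse, suitably equivariant maps $\xi^k_\rho:\Lambda\to\Gr_k(\Kb^d)$ and $\xi^{d-k}_\rho:\Lambda\to\Gr_{d-k}(\Kb^d)$. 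These give the subbundles $E^s,E^u$ of $\Fc(\Lambda,\zeta,X)\times\Kb^d$, and then one rebuilds the metric exactly as in Section \ref{Section1to2theFirst}: a $\Gamma$-invariant, reverse-invariant metric on $\Fc^{th}\times\Kb^d$ making $E^s\perp E^u$ (the action on $\Fc^{th}$ being cocompact), extended into each $\Fc_P$ by the same interpolation formula with the weak-domination constants of the second paragraph; Claims (1) and (2) then show that $\Fc(\Lambda,\zeta,X)\times\Kb^d=E^s\oplus E^u$ is a rank-$k$ dominated splitting with respect to it. Setting $O=O'\cap\Hom_\Pc(\rho_0)$ finishes the proof.

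The main obstacle is this thick-part step, in two interlocking aspects. First, one must genuinely check that a fixed cocompact subflow --- and hence a fixed neighbourhood $O'$ --- suffices; this is the content of (the proofs of) Claims (1) and (2), read now with the thick stretches governed by the \emph{perturbed} splitting rather than by extended geometric finiteness, and it works precisely because a flow segment joining two points of $K$ can make cusp excursions only up to a depth controlled by the weak domination of $\rho|_P$, which by the second paragraph is automatically stable, so that the contraction accumulated on the thick stretches, composed with the contraction or bounded distortion on the cusp stretches, still beats a fixed exponential rate. Second, one must produce the global invariant subbundles $E^s_\rho,E^u_\rho$ for $\rho\in O'$ with the required continuity and transversality, which is where the propagation à la Proposition \ref{LimitMapWaivePro} (or the graph transform) enters. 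Apart from these two points, every ingredient --- the thin-part estimates and the metric assembly --- is a verbatim repetition of the arguments of Section \ref{Section1to2theFirst}.
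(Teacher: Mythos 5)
Your reduction of the thin parts to the type--preserving condition is fine: $\rho|_P=A_P(\rho)\,\rho_0|_P\,A_P(\rho)^{-1}$ does give $P_k$-weak domination of $\rho|_P$ with respect to the same Gromov model $X$, so the analogues of Lemma \ref{ThinBoundaryPointsEstimation} and Lemma \ref{SingleThinEstimation} are unproblematic. The gap is in the thick-part/globalization step, which is exactly where you admit the difficulty lies but do not actually resolve it. First, openness of dominated splittings over a \emph{fixed cocompact} subflow only gives you perturbed stable/unstable data over flow lines both of whose endpoints are (uniformly) compactly attained conical points; the flow lines of $\Fc(\Lambda,\zeta,X)$ that limit to parabolic preimages or make deep cusp excursions are not in any fixed cocompact subflow, so the ``analogue of Lemma \ref{ThickPartPointsEstimation} with uniform constants'' does not follow from what you state. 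In the paper that lemma is proved for $\rho_0$ by using its extended geometric finiteness (Proposition \ref{RefinedtoMinimal} together with Lemma \ref{DivergenceLemma}) to control $U_{d-k}(\rho_0(\gamma_n^{-1}))$ and its transversality to the stable spaces; for the perturbed $\rho$ this input is precisely what the theorem is supposed to produce, so ``re-running'' that argument is circular. Second, the proposed propagation of the limit maps from the cocompact subflow to all of $\Lambda$ does not work as described: Proposition \ref{LimitMapWaivePro} \emph{presupposes} a dominated splitting over the entire flow space and extracts the maps from it; it does not let you extend maps defined only on $\partial_{cc}X$ by density, because continuity (and transversality) at the parabolic preimages is the whole analytic content. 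Defining the values on $\zeta^{-1}(p)$ by the conjugators $A_P(\rho)$ leaves unproven the continuity of the glued map at the junction, the transversality between the $A_P(\rho)$-twisted fibres and nearby conical values, and the $\rho$-equivariant consistency of the gluing across different parabolic orbits.

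Your parenthetical ``graph-transform over $\Fc(\Lambda,\zeta,X)$'' is in fact the route the paper takes, but it requires an ingredient you never construct: a $\Gamma$-equivariant, uniformly controlled identification of the $\rho_0$-flat bundle with the $\rho$-flat bundle over the whole (non-cocompact) flow space. This is Lemma \ref{LocalDeformationIsomorphisms}: a map $g:O\times\Fc(\Lambda,\zeta,X)\to\GL(d,\Kb)$ built by a partition of unity, averaging $\rho(\gamma)\rho_0(\gamma)^{-1}$ over the cocompact thick part and inserting the conjugators $A_P(\rho)$ on the cusp regions (this is exactly where $\Hom_\Pc(\rho_0)$ is used). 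Transporting the $\rho_0$-splitting and its metric through this isomorphism, one gets an almost flow-invariant splitting in the $\rho$-bundle and then applies the contraction-mapping argument on bounded sections of $\Hom(E^u_O,E^s_O)$ (as in \cite{Shub}, \cite{CZZ}, \cite{ZZ1}) to produce the invariant perturbed bundles; no limit maps, no extended geometric finiteness of $\rho$, and no thick-part singular-value estimate for $\rho$ are needed along the way. Without such a bundle identification, ``small deformations of the $\rho_0$-splitting'' are not even well defined in the $\rho$-bundle over the non-compact quotient, since the naive fibrewise comparison of the two flat structures degenerates in the cusps. So as written your argument has a genuine gap at its central step; to close it you would essentially have to supply Lemma \ref{LocalDeformationIsomorphisms} and the contraction-mapping step, i.e.\ the paper's proof.
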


We now assume that $\rho_0:\Gamma\to\SL(d,\Kb)$ is a representation that is $P_k$-Anosov in restriction to $\Fc(\Lambda,\zeta,X)$ and $O$ is a neighborhood of $\rho_0$ in $\Hom_{\Pc}(\rho_0)$. For each $P\in\Pc$, let \[A_P:O\to \SL(d,\Kb)\] be a continuous map such that \[\rho(\gamma) = A_P(\rho)\rho(\gamma)A_P(\rho)^{-1}\] for any $\gamma\in P$ and $A(\rho_0) = \Id$.

The first step is to show that under small type preserving deformations, the flat bundle over $\Gamma\backslash\Fc(\Lambda,\zeta,X)$ is deformed isomorphically. More concretely, consider the following two $\Gamma$-actions on $O\times \Fc(\Lambda,\zeta,X)\times \Kb^d$. We denote by $\iota_0$ the $\Gamma$-action on $O\times \Fc(\Lambda,\zeta,X)\times \Kb^d$ by $\iota_0(\gamma)(\rho,z,v)=(\rho,\gamma z,\rho_0(\gamma)v)$ and we denote by $\iota$ the one by $\iota(\gamma)(\rho,z)=(\rho,z,\rho(\gamma)v)$. The following lemma indicate the existence of a $\iota_0(\Gamma)$-$\iota(\Gamma)$-equivariant isomorphism \[\hat{g}: O\times \Fc(\Lambda,\zeta,X)\times \Kb^d \to O\times \Fc(\Lambda,\zeta,X)\times \Kb^d~,\] fibering over identity, such that $\hat{g}(\rho_0, z,v) = (\rho_0, z,v)$ for any $z\in \Fc(\Lambda,\zeta,X)$ and $v\in \Kb^d$. Giving such an isomorphism $\hat{g}$ is equivalent to giving a map $g: O\times \Fc(\Lambda,\zeta,X) \to \SL(d,\Kb)$ as described in the lemma below.

\begin{lemma}\label{LocalDeformationIsomorphisms}
     Up to replacing $O$ by a smaller neighborhood of $\rho_0$, there exists a continuous map
     \[g: O\times \Fc(\Lambda,\zeta,X) \to \GL(d,\Kb), \] such that \[\rho(\gamma)g(\rho,z)=g(\rho,\gamma z)\rho_0(\gamma)\] for any $(\rho,z)\in \Fc(\Lambda,\zeta,X)$, and $g|_{\rho_0\times \Fc(\Lambda,\zeta,X) }\equiv \Id$.
\end{lemma}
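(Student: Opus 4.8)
The plan is to build $g$ explicitly on the cusps of $\Fc(\Lambda,\zeta,X)$, where the type-preserving hypothesis trivializes the deformation, to produce it on the $\Gamma$-cocompact thick part $\Fc^{th}$ by an abstract continuity argument, and then to glue the two pieces along the horospheres $\partial^{\pm}\Fc_{P'}$. For the cusps: for each $P\in\Pc$ we are given a continuous $A_P\colon O\to\SL(d,\Kb)$ with $A_P(\rho_0)=\Id$ and $A_P(\rho)^{-1}\rho(\eta)A_P(\rho)=\rho_0(\eta)$ for all $\eta\in P$. I would extend this assignment $\Gamma$-equivariantly to all of $\Pc^\Gamma$ by $A_{\gamma P\gamma^{-1}}(\rho):=\rho(\gamma)A_P(\rho)\rho_0(\gamma)^{-1}$. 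This is well defined: $\Pc$ may be taken to be a set of conjugacy-class representatives and $\Pc^\Gamma$ is almost malnormal, so $N_\Gamma(P)=P$ for each $P\in\Pc$; hence $\gamma P\gamma^{-1}=\gamma'P'\gamma'^{-1}$ with $P,P'\in\Pc$ forces $P=P'$ and $\eta:=\gamma^{-1}\gamma'\in P$, and the type-preserving identity for $\eta$ gives $\rho(\gamma)A_P(\rho)\rho_0(\gamma)^{-1}=\rho(\gamma')A_P(\rho)\rho_0(\gamma')^{-1}$. One checks $A_{P'}(\rho_0)=\Id$ for every $P'\in\Pc^\Gamma$. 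Now set $g(\rho,z):=A_{P'}(\rho)$ for $z\in\overline{\Fc_{P'}}$ (the $\Fc_{P'}$ being pairwise disjoint, this is unambiguous); since $\Gamma$ permutes the horoballs with $\gamma\Fc_{P'}=\Fc_{\gamma P'\gamma^{-1}}$, this partial $g$ already satisfies $\rho(\gamma)g(\rho,z)=g(\rho,\gamma z)\rho_0(\gamma)$ there, is continuous, and equals $\Id$ at $\rho_0$.

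Next I would extend over the thick part. On $\Fc^{th}$ the group $\Gamma$ acts properly discontinuously and cocompactly, and $g$ is already prescribed on the closed $\Gamma$-invariant subset $\partial_0:=\bigcup_{P'\in\Pc^\Gamma}(\partial^+\Fc_{P'}\cup\partial^-\Fc_{P'})$ of $\Fc^{th}$, namely $g=A_{P'}$. I want to extend $g$ over $O\times\Fc^{th}$ to a continuous map valued in $\mathrm{Mat}_d(\Kb)$ that satisfies the twisted equivariance $g(\rho,\gamma z)=\rho(\gamma)g(\rho,z)\rho_0(\gamma)^{-1}$, agrees with $A_{P'}$ on $O\times\partial_0$, and equals $\Id$ on $\{\rho_0\}\times\Fc^{th}$ — the last two requirements being compatible because $A_{P'}(\rho_0)=\Id$. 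Such an extension is produced by a standard $\Gamma$-equivariant Tietze/partition-of-unity argument: fix a compact $K$ with $\Gamma K=\Fc^{th}$, take an ordinary continuous extension of the prescribed data over a relatively compact neighbourhood of $K$, take a $\Gamma$-invariant partition of unity subordinate to the $\Gamma$-translates of that neighbourhood (which exists by proper discontinuity and cocompactness), and average the local extension against it, twisting each summand by the appropriate $\rho$ and $\rho_0$; the averaging reproduces the prescribed data wherever it was already globally and equivariantly defined, hence on $\partial_0$ and at $\rho_0$.

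Finally, gluing the cusp formula on $\bigcup_{P'}\overline{\Fc_{P'}}$ with the thick-part map on $\Fc^{th}$ — they agree on the overlap $\partial_0$ — gives, by the pasting lemma for the locally finite closed cover $\{\overline{\Fc_{P'}}\}\cup\{\Fc^{th}\}$, a continuous twisted-equivariant $g\colon O\times\Fc(\Lambda,\zeta,X)\to\mathrm{Mat}_d(\Kb)$ with $\rho(\gamma)g(\rho,z)=g(\rho,\gamma z)\rho_0(\gamma)$ and $g(\rho_0,\cdot)\equiv\Id$. It remains only to shrink $O$ so that $g$ is $\GL(d,\Kb)$-valued: on the cusps $g$ takes values in $\SL(d,\Kb)$, so it is automatically invertible there, and on $\Fc^{th}$ invertibility of $g(\rho,\cdot)$ is, by the equivariance, equivalent to invertibility on the compact $K$; since $g(\rho_0,\cdot)|_K=\Id$ and invertibility is an open condition, this holds for all $\rho$ in a neighbourhood of $\rho_0$, to which we restrict $O$. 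The main obstacle is precisely the non-cocompactness of $\Gamma\curvearrowright\Fc(\Lambda,\zeta,X)$, which rules out the naive ``nearby flat bundles over a compact base are isomorphic'' argument; the cusps must be handled by hand through the conjugators $A_{P'}$, and the two delicate points are (i) that the assignment $P'\mapsto A_{P'}$ is genuinely $\Gamma$-equivariant, for which almost malnormality (i.e. $N_\Gamma(P)=P$) is exactly what is needed, and (ii) that a \emph{single} neighbourhood $O$ suffices, which is where cocompactness of $\Fc^{th}$ re-enters via the compactness of $K$.
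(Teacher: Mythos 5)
Your construction is correct and rests on the same two pillars as the paper's proof: on the cusps, the conjugators $A_P$ extended to $\Pc^\Gamma$ by $A_{\gamma P\gamma^{-1}}(\rho)=\rho(\gamma)A_P(\rho)\rho_0(\gamma)^{-1}$ (your justification via almost malnormality, hence $N_\Gamma(P)=P$, is exactly the point the paper compresses into ``independent of the choice of $\eta$ since $g_P$ is $P$-invariant''), and on the cocompact thick part, a twisted-equivariant map obtained by averaging against a $\Gamma$-equivariant partition of unity, with invertibility recovered after shrinking $O$ by compactness of $K$. Where you genuinely diverge is the assembly: the paper never matches values along the cusp boundary at all --- it forms a single global partition-of-unity combination of the equivariantly averaged transition $\rho(\gamma)\rho_0(\gamma)^{-1}$ (weights $f(\gamma^{-1}z)$) and the cusp maps $h_P(z)A_P(\rho)$ (with $h_P$ supported near $\Fc_P$), so continuity across the transition region is automatic --- whereas you hard-glue the constant value $A_{P'}(\rho)$ on all of $\overline{\Fc_{P'}}$ to an equivariantly averaged Tietze extension on $\Fc^{th}$. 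That route works, but one step needs repair: the overlap on which your two pieces must agree is the full topological frontier $\overline{\Fc_{P'}}\cap\Fc^{th}=\overline{\Fc_{P'}}\setminus\Fc_{P'}$, which may be strictly larger than $\partial^+\Fc_{P'}\cup\partial^-\Fc_{P'}$, since the latter are defined by flow entry and exit and need not exhaust the frontier; you should prescribe $A_{P'}$ on the full frontier, which is still closed, $\Gamma$-equivariantly permuted, and carries equivariant data, so your Tietze-plus-averaging step and the computation showing the average reproduces that data go through verbatim. You should also record that the closures $\overline{\Fc_{P'}}$ are pairwise disjoint and locally finite (this is what the paper's arrangement that distinct horoballs are far apart provides), which is what makes your cusp piece continuous and the pasting lemma applicable. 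The trade-off is clear: the paper's soft blending avoids this frontier and local-finiteness bookkeeping, while your version has the mild advantage that $g(\rho,\cdot)$ is literally equal to $A_{P'}(\rho)$ on the entire cusp region rather than only deep inside it.
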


\begin{proof}
    We firstly construct $g$ on the thick part. Let $K$ be a compact subset of $\Fc^{th}$ such that $\Gamma\cdot K=\Fc^{th}$ and let $f:\Fc(\Lambda,\zeta,X)\to \Rb_{\geqslant 0}$ be a function with compact support and $f|_K\equiv 1$. Then \[g^{th}(\rho,z) =  \dfrac{1}{\sum_{\gamma\in\Gamma} f(\gamma^{-1}z)} \sum_{\gamma\in\Gamma} f(\gamma^{-1}z)\rho(\gamma)\circ \rho_0(\gamma)^{-1}~,\] is well-defined and verifies the equivariant condition $\rho(\gamma) g^{th}(\rho, z) = g^{th}(\rho, \gamma z) \rho_0(z)$. Actually, the summation in the formula is locally finite since $f$ has compact support and $\Gamma$ acts on $\Fc(\Lambda,\zeta,X)$ properly discontinuously. Up to replacing $O$ by a smaller neighborhood, $g^{th}|_{O\times K}$ takes values in $\GL(d,\Kb)$ by the compactness of $K$, and hence $g^{th}|_{O\times \Fc^{th}}$ takes values in $\GL(d,\Kb)$.
    
    We then construct $g$ on the cusp regions. Let $P\in \Pc$ and let $h_P:\Fc(\Lambda,\zeta,X)\to \Rb_{\geqslant 0}$ be a $P$-invariant function that supports on a small neighborhood of $\Fc_P$ and $h|_{\Fc_P}\equiv 1$. Let \[g_P (\rho, z) = h_P(z) A_P(\rho)~.\]
    For each $P'\in\Pc^{\Gamma}$ with $P'=\eta P\eta^{-1}$ for some $\eta\in \Gamma$ and $P\in \Pc$, let \[g_{P'}(\rho,z) =\rho(\eta)g_P(\rho,\eta^{-1} z) \rho_0(\eta)^{-1}~,\] which is actually independent of the choice of $\eta$ since $g_P$ is $P$-invariant.

    Finally, let \[g(\rho,z) =   \dfrac{1}{\sum_{\gamma\in\Gamma} f(\gamma^{-1}z) + \sum_{P\in\Pc^\Gamma} h_P(z) } \Big( g^{th}(\rho,z) + \sum_{P\in\Pc^\Gamma} g_P(\rho,z) \Big)~,\]
    which is well-defined with the desired properties. Actually, it is clear that $g|_{\rho_0\times \Fc(\Lambda,\zeta,X) }\equiv \Id$. Up to shrinking $O$ again, the restriction of $g$ on the support of $g^{th}$ takes values in $\GL(d,\Kb)$ by compactness. $g$ also takes values in $\GL(d,\Kb)$ outside the support of $g^{th}$ by the construction of $g_P$.
\end{proof}

The rest of the proof of Theorem \ref{StabilityTheorem} directly follows the argument of \cite[Theorem 8.1]{CZZ} in their study of cusped Hitchin representations, which is originally from \cite[Corollary 5.19]{Shub} for the stability of dominated splittings. One may also refer to \cite[Theorem 12.1]{ZZ1} for the stability of relatively Anosov representations. We sketch the argument here.

Since $\rho_0$ is $P_k$-Anosov in restriction to $\Fc(\Lambda,\zeta,X)$, let \[\Fc(\Lambda,\zeta,X)\times \Kb^d = E^s_{\rho_0} \oplus E^u_{\rho_0}\] be a dominated splitting of rank $k$ with respect to a $\rho_0(\Gamma)$-invariant metric $\norm{\cdot}$. Then \[O\times \Fc(\Lambda,\zeta,X)\times \Kb^d = (O\times E^s_{\rho_0}) \oplus (O\times E^u_{\rho_0})\] is a dominated splitting of rank $k$ over the $\Gamma$-flow $O\times \Fc(\Lambda,\zeta,X)$ with the $\Gamma$-action through $\iota_0$, with respect to the trivial extension of $\norm{\cdot}$ on the trivial bundle, still denoted by $\norm{\cdot}$.

Since there is a $\iota_0(\Gamma)$-$\iota(\Gamma)$-equivariant isomorphism $\hat{g}: O\times \Fc(\Lambda,\zeta,X)\times \Kb^d \to O\times \Fc(\Lambda,\zeta,X)\times \Kb^d$ due to the lemma above, we write $E_O^s=\hat{g}(O\times E^s_{\rho_0})$ and $E_O^u=\hat{g}(O\times E^u_{\rho_0})$, then \[O\times \Fc(\Lambda,\zeta,X)\times \Kb^d = E_O^s \oplus E_O^u\] is a decomposition of $O\times \Fc(\Lambda,\zeta,X)\times \Kb^d$ into $\iota(\Gamma)$-invariant subbundles, and $\hat{g}_*\norm{\cdot}$ is a $\iota(\Gamma)$-invariant metric.

This decomposition is $\phi$-invariant over $\rho_0 \times \Fc(\Lambda,\zeta,X)$ and we wish to deform it to be $\phi$-invariant over $O \times \Fc(\Lambda,\zeta,X)$. Small deformations of $E_O^u$ can be characterized by images of bounded sections of $\Hom({E}_O^u, {E}_O^s)$. One can show that the induced flow on $\Hom({E}_O^u, {E}_O^s)$ is a contracting map on the unit ball of $\Hom({E}_O^u, {E}_O^s)$ (with respect to the norm defined by the supremum of the operator norm on each fiber). Hence we can apply the contraction mapping theorem and deduce that there is a fixed section $s$ of $\Hom({E}_O^u, {E}_O^s)$ which provides a flow invariant deformation $F_O^u = s(E_O^u)$. By a similar process, we find a deformation of $E_O^s$, denoted by $F_O^s$, which is also flow invariant. Up to shrinking $O$ again, $O\times \Fc(\Lambda,\zeta,X)\times \Kb^d = F_O^u\oplus F_O^s$ is a dominated splitting with respect to the metric $\hat{g}_*\norm{\cdot}$. When we restrict this dominated splitting to each piece $\rho\times \Fc(\Lambda,\zeta,X)$, we see that $\rho$ is $P_k$-Anosov in restriction to $\Fc(\Lambda,\zeta,X)$.

\section{Relative hyperbolicity extensions}\label{SectionRHExtensions}
\subsection{Hyperbolic relative to relatively hyperbolic subgroups}

For a finitely generated group $\Gamma$, a collection $\Pc$ of subgroups of $\Gamma$ is said to be \emph{almost malnormal} if for any $P,P'\in \Pc$ and $\gamma\in\Gamma$, $\gamma P \gamma^{-1} \cap P$ is infinite only when $P=P'$ and $\gamma\in P$.

When $\Gamma$ is a hyperbolic group, a condition for a collection $\Pc$ of subgroups of $\Gamma$ providing $(\Gamma,\Pc)$ a relatively hyperbolic group is characterized by the following theorem.  Recall that we denote $\Pc^\Gamma = \{\gamma P\gamma^{-1} : \gamma\in \Gamma,P\in \Pc\}$.

\begin{theorem}[\cite{Bow12},\cite{Tran13}, \cite{Man15}]\label{ClassicalExtensionReltoHyp}
    Let $\Gamma$ be a hyperbolic group and $\Pc$ is a finite collection of finitely generated, infinite subgroups of $\Gamma$. Then $(\Gamma,\Pc)$ is relatively hyperbolic if and only if $\Pc$ is almost malnormal and each $P\in \Pc$ is quasi-convex in $\Gamma$. Moreover, when the conditions above hold, there is a continuous, $\Gamma$-equivariant quotient map $\tau:\partial\Gamma\to \partial(\Gamma,\Pc)$ which exactly identifies the limit set of $P$ in $\partial\Gamma$ to the parabolic point fixed by $P$ in $\partial(\Gamma,\Pc)$ for each $P\in\Pc^\Gamma$.
\end{theorem}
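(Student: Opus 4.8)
The plan is to treat the two implications separately, using Yaman's criterion (Theorem \ref{YamanCriterion}) as the engine for the forward direction and the convergence dynamics for the converse; the ``moreover'' clause will fall out of the forward construction. Throughout I assume $\Gamma$ is non-elementary (the remaining cases being degenerate). For the implication that almost malnormality plus quasi-convexity gives relative hyperbolicity, I would first record the facts I need about quasi-convex subgroups: for each $P\in\Pc$ the inclusion $P\hookrightarrow\Gamma$ extends to a topological embedding $\partial P\hookrightarrow\partial\Gamma$ with closed image the limit set $\Lambda_P$; it is standard that for quasi-convex $A,B\le\Gamma$ the intersection $A\cap B$ is quasi-convex with $\Lambda_{A\cap B}=\Lambda_A\cap\Lambda_B$; and $\mathrm{Stab}_\Gamma(\Lambda_P)=P$ (the inclusion $\supseteq$ is clear, and $\subseteq$ follows from the previous item together with almost malnormality, since $g\Lambda_P=\Lambda_P$ forces $gPg^{-1}\cap P$ to have limit set $\Lambda_P$, hence to be infinite, so $g\in P$). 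Since $\Pc^\Gamma$ is almost malnormal, distinct $Q,Q'\in\Pc^\Gamma$ intersect in a finite group, so $\Lambda_Q\cap\Lambda_{Q'}=\emptyset$; thus $\{\Lambda_Q:Q\in\Pc^\Gamma\}$ is a $\Gamma$-invariant family of pairwise disjoint closed sets (note $g\Lambda_Q=\Lambda_{gQg^{-1}}$), and since $\Pc$ is finite and $\Gamma$ acts as a convergence group on $\partial\Gamma$, for every $\varepsilon>0$ all but finitely many $\Lambda_Q$ have diameter $<\varepsilon$. Hence the partition of $\partial\Gamma$ whose non-degenerate blocks are exactly the $\Lambda_Q$ is upper semicontinuous.

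I would then let $M$ be the quotient of $\partial\Gamma$ by this partition and $\tau\colon\partial\Gamma\to M$ the quotient map. Upper semicontinuity makes $M$ compact and metrizable, and $M$ is perfect because $\partial\Gamma$ is and $\bigcup_Q\Lambda_Q$ has dense complement; the $\Gamma$-action descends to $M$ and $\tau$ is equivariant. To see $\Gamma\curvearrowright M$ is a convergence action, take $\gamma_n\to\infty$, pass to a subsequence with source--sink dynamics $\gamma_n\to a$ uniformly on compacta of $\partial\Gamma\setminus\{b\}$, and push forward through $\tau$, using that a compact subset of $M\setminus\{\tau(b)\}$ pulls back into $\partial\Gamma$ minus the block of $b$, hence into $\partial\Gamma\setminus\{b\}$. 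Next I would classify points of $M$: one not of the form $\tau(\Lambda_Q)$ lifts to a conical limit point lying outside every $\Lambda_Q$, hence is conical for $\Gamma\curvearrowright M$; and $p=\tau(\Lambda_Q)$ has stabilizer $\mathrm{Stab}_\Gamma(\Lambda_Q)=Q$, which is infinite, finitely generated, and acts cocompactly on $M\setminus\{p\}$ because the quasi-convex subgroup $Q$ acts cocompactly on $\partial\Gamma\setminus\Lambda_Q$, so $p$ is a bounded parabolic point, with finitely many $\Gamma$-orbits of such points since $\Pc$ is finite. Yaman's criterion then yields that $(\Gamma,\Pc)$ is relatively hyperbolic with $\partial(\Gamma,\Pc)$ $\Gamma$-equivariantly homeomorphic to $M$; transporting $\tau$ through this homeomorphism gives the asserted quotient $\partial\Gamma\to\partial(\Gamma,\Pc)$, which by construction collapses each $\Lambda_P$ to the parabolic point fixed by $P$ and is injective elsewhere.

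For the converse, assume $\Gamma$ is hyperbolic and $(\Gamma,\Pc)$ is relatively hyperbolic; by Definition \ref{RelativelyHyperbolicGroupsDefinition} each $P\in\Pc$ is finitely generated, infinite, and equals the stabilizer of a bounded parabolic point $p_P\in\partial(\Gamma,\Pc)$, the $p_P$ lying in distinct $\Gamma$-orbits. For almost malnormality, if $\gamma P\gamma^{-1}\cap P'$ were infinite it would (being infinite and finitely generated) contain an infinite-order element fixing both $\gamma p_P$ and $p_{P'}$, hence loxodromic for $\Gamma\curvearrowright\partial(\Gamma,\Pc)$ unless $\gamma p_P=p_{P'}$; but the stabilizer $P'$ of a bounded parabolic point contains no loxodromic, so $\gamma p_P=p_{P'}$, giving $P=P'$ and $\gamma\in P$. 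For quasi-convexity I would fix a Gromov model $X$ with horoball $B_P$ stabilized by $P$, use that $\Cay(\Gamma,S)$ is hyperbolic and quasi-isometric to the thick part $X^{th}$, and show that $P$ is dynamically quasi-convex in $\Gamma$ --- that for each compact $K\subset\partial\Gamma\setminus\Lambda_P$ and open $U\supseteq\Lambda_P$ only finitely many cosets $gP$ have $g\Lambda_P$ meeting $K$ but not contained in $U$ --- directly from the boundedness of $p_P$ and proper discontinuity, and then invoke the equivalence, for subgroups of hyperbolic groups, between dynamical and metric quasi-convexity.

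The main obstacle is the input about quasi-convex subgroups $Q$ with $\mathrm{Stab}_\Gamma(\Lambda_Q)=Q$: that $Q$ acts \emph{cocompactly} (not merely properly) on $\partial\Gamma\setminus\Lambda_Q$, which is what upgrades the collapsed point $\tau(\Lambda_Q)$ from parabolic to \emph{bounded} parabolic in the first direction, and the parallel dynamical-quasi-convexity estimate in the converse. I would isolate the former as a lemma proved by a ``ping-pong near $\Lambda_Q$'' in $\Cay(\Gamma,S)$: given $\xi\in\partial\Gamma\setminus\Lambda_Q$, push $\xi$ by elements of $Q$ until a geodesic representative no longer fellow-travels the quasi-convex core of $Q$, landing it in a fixed compact set. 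The converse estimate requires comparing the word metric with the metric of the cusped space near $B_P$ and bounding the excursions that a $\Cay(\Gamma)$-geodesic joining two points of a coset $gP$ can make around other horoballs. Everything else is bookkeeping with upper semicontinuous decompositions and convergence dynamics.
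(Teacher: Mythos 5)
Your overall strategy for the main implication is the same as the paper's: partition $\partial\Gamma$ into the translates $\Lambda_Q$, $Q\in\Pc^\Gamma$, and singletons, take the quotient $M$, and verify Yaman's criterion (Theorem \ref{YamanCriterion}); this is exactly how the paper proves the relative generalization, Theorem \ref{RelativeBoundaryExtension}, following Manning. The step you single out as the main obstacle (cocompactness of $Q$ on $\partial\Gamma\setminus\Lambda_Q$, making the collapsed point a \emph{bounded} parabolic) is indeed a correct lemma, and your projection-to-the-hull sketch is the same mechanism as the paper's Step 3. The genuine gap is at a different spot, namely the clause ``lifts to a conical limit point lying outside every $\Lambda_Q$, hence is conical for $\Gamma\curvearrowright M$''. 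Conicality of $x$ in $\partial\Gamma$ gives a sequence $(\gamma_n)$ and points $a\ne b$ with $\gamma_nx\to a$ and $\gamma_ny\to b$ for $y\ne x$; pushing this through $\tau$ witnesses conicality of $\tau(x)$ only if $\tau(a)\ne\tau(b)$, and nothing in your argument rules out that $a$ and $b$ lie in one and the same translate $\Lambda_Q$, in which case the pushed-forward dynamics degenerates. This is precisely the delicate point of the whole construction: in the paper's proof (Step 4 of Theorem \ref{RelativeBoundaryExtension}) the sequence $(\gamma_n)$ is built by hand, choosing times $t_n$ along a geodesic to $x$ so that $\ell(t_n)$ lies near the thick part and the forward ray meets any single hull only in a segment of controlled length, and then the bounded-overlap property of distinct hulls (intersections of their $R$-neighborhoods have uniformly bounded diameter) yields a contradiction if $\tau(a)=\tau(b)$. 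Some argument of this kind must be supplied; the bare ``hence'' does not stand, so as written the classification of conical points is missing its proof.

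Two smaller remarks. First, the null-sequence claim for $\{\Lambda_Q\}$ should not be attributed to ``$\Pc$ finite and the convergence action'' alone; the standard (and the paper's) reason is properness: only finitely many coset translates of the quasiconvex hull of $\Lambda_P$ meet a given ball, and translates whose hulls avoid a large ball have small visual diameter. Second, for the converse your malnormality argument via convergence dynamics on $\partial(\Gamma,\Pc)$ is fine, but the quasiconvexity step via dynamical quasiconvexity plus Bowditch's equivalence is heavier than necessary and is left quite vague (the excursion estimates are exactly where the work would be); the short route, parallel to what the paper quotes in the relative setting, is that relative hyperbolicity forces each $P\in\Pc$ to be undistorted (Osin), and in a hyperbolic group an undistorted finitely generated subgroup is quasiconvex by the Morse lemma.
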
\ \\

We now study a generalization of Theorem \ref{ClassicalExtensionReltoHyp} in the relatively hyperbolic situation. From now on, we assume that $(\Gamma,\Hc)$ is a relatively hyperbolic group. Let $\Pc$ be a finite collection of finitely generated, infinite subgroups of $\Gamma$, such that for any $H\in\Hc$, there is some $P\in\Pc$ with $H\subset P$. For each $P\in\Pc$, we write $\Hc_P^P = \{H\in\Hc^\Gamma : H\subset P\}$ and $\Hc_P$ denotes a collection of representatives of $P$-conjugation orbits in $\Hc_P^P$. We discuss about the conditions for $(\Gamma,\Pc)$ and $(P,\Hc_P)$ for each $P\in\Pc$ to be relatively hyperbolic, and moreover, the relations among the Bowditch boundaries $\partial(\Gamma,\Hc)$, $\partial(\Gamma,\Pc)$ and $\partial(P,\Hc_P)$ for each $P\in \Pc$ when they are relatively hyperbolic.

It is shown in \cite[Proposition 7.1]{Hru10} that the following two definitions of relative quasi-convexity are equivalent. One should notice that the definitions depend on the relatively hyperbolic structure of $(\Gamma,\Hc)$.

\begin{definition}\cite[Definition 6.2]{Hru10}
    A subgroup $P$ of $\Gamma$ is relatively quasi-convex if $P$ acts on $\Lambda(P)$ geometrically finitely, where $\Lambda(P)$ denotes the limit set of $P$ in $\partial(\Gamma,\Hc)$ with respect to the convergence group action of $\Gamma$ on $\partial(\Gamma,\Hc)$.
\end{definition}

\begin{definition}\cite[Definition 6.5]{Hru10}
     A subgroup $P$ of $\Gamma$ is relatively quasi-convex if $P$ is either finite, or for some (hence any) Gromov model $X$ of $(\Gamma,\Hc)$, $P$ admits a cusp uniform action on a geodesic space that is $P$-equivariantly quasi-isometric to $\Hull(\Lambda(P))$, the convex hull of $\Lambda(P)$ in $X$.
\end{definition}

By the following two propositions, we deduce that when $(\Gamma,\Pc)$ is relatively hyperbolic, then $\Pc$ is almost malnormal and each $P\in\Pc$ is relatively quasi-convex.

\begin{proposition}\cite[Proposition 2.36, Lemma 5.4]{Osin06}
    If $(\Gamma,\Pc)$ is relatively hyperbolic, then $\Pc$ is an almost malnormal collection of finitely many proper, undistorted subgroups.
\end{proposition}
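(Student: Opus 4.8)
The plan is to read all four assertions off a cusp uniform action of $\Gamma$ on a Gromov model $X$ (Definition \ref{RelativelyHyperbolicGroupsDefinition}) together with the convergence action of $\Gamma$ on $\partial(\Gamma,\Pc)$; this reconstructs Osin's argument geometrically. Finiteness of $\Pc$ is part of the definition, so nothing is needed there. For \emph{properness}, fix $P\in\Pc$ with horoball $B_P\in\Bc$ centered at the parabolic point $p_P$: the limit set of $P$ in $\partial(\Gamma,\Pc)$ is the single point $p_P$, while by tautness of $X$ the space $\partial(\Gamma,\Pc)$ is infinite and coincides with the limit set of $\Gamma$. Since finite-index subgroups have equal limit sets, $P$ has infinite index, so $P\neq\Gamma$.

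For \emph{almost malnormality}, suppose $P,P'\in\Pc$, $\gamma\in\Gamma$, and $Q:=\gamma P\gamma^{-1}\cap P'$ is infinite. Every infinite-order element of $P'$ is parabolic with fixed point $p_{P'}$, so $P'$, hence $Q$, contains no loxodromic; an infinite subgroup of a convergence group without loxodromic elements fixes a unique boundary point (standard convergence-group theory). Since every element of $Q$ fixes both $\gamma p_P$ (as $Q\subset\gamma P\gamma^{-1}$) and $p_{P'}$ (as $Q\subset P'$), we get $\gamma p_P=p_{P'}$, so $\gamma B_P$ and $B_{P'}$ are concentric; as disjoint members of $\Bc$, they must coincide, and taking stabilizers gives $\gamma P\gamma^{-1}=P'$. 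Since $\Pc$ consists of pairwise non-conjugate subgroups (the standard convention, implicit in Theorem \ref{YamanCriterion}), this forces $P=P'$, whence $\gamma$ fixes $p_P$ and so lies in its full stabilizer $P$.

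For \emph{undistortion}, fix $P\in\Pc$ and a basepoint $o$ on the horosphere $\overline{B_P}\setminus B_P\subset X^{th}$. Because $p_P$ is a bounded parabolic point, $P$ acts cocompactly on $\overline{B_P}\setminus B_P$ (the space-side form of boundedness, already used in the proof of Proposition \ref{RatioGrowthLinearLemma}), so by the Milnor--\v{S}varc lemma $(P,\abs{\cdot}_{S\cap P})$ is quasi-isometric, through the orbit of $o$, to the horosphere equipped with its intrinsic length metric; likewise $(\Gamma,\abs{\cdot}_S)$ is quasi-isometric, through the orbit of $o$, to $X^{th}$ with its intrinsic length metric $d_{X^{th}}$. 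So $P$ is undistorted in $\Gamma$ precisely when the inclusion of the horosphere into $X^{th}$ is a quasi-isometric embedding for these two metrics. One inequality is immediate; for the other, join $x,y$ on the horosphere by a path $\sigma$ in $X^{th}$ of length at most $d_{X^{th}}(x,y)+1$ and push $\sigma$ onto the horosphere via the closest-point projection to $B_P$. As $B_P$ is quasiconvex in the $\delta$-hyperbolic space $X$ and $\sigma$ avoids $B_P$, this projection is coarsely Lipschitz, so the projected path, which joins $x$ to $y$ along the horosphere, has length at most $C\,d_{X^{th}}(x,y)+C'$; finiteness of $\Pc$ makes the constants uniform.

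\textbf{The main obstacle} is this last step. One must be careful that ``coarsely Lipschitz'' is applied in the right metric: closest-point projection to a horoball is coarsely Lipschitz for the ambient metric $d_X$, and one needs to transfer this to the intrinsic length metrics on $X^{th}$ and on the horosphere — equivalently, to the fact that the horospherical metric is bi-Lipschitz to the metric the horosphere inherits from $X^{th}$, even though it is only logarithmically comparable to the one it inherits from $X$. This, together with the input that a bounded parabolic point yields a cocompact action on its horosphere, is where I would lean on the standard comparison between Bowditch's and Hruska's formulations of geometric finiteness (see \cite{Hru10}).
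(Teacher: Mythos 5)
First, a framing remark: the paper offers no proof of this proposition — it is quoted from Osin, whose argument runs through relative presentations and relative isoperimetric/Dehn functions, a completely different framework from your geometric one. Your reconstruction of the properness and almost-malnormality parts from the cusp uniform action is essentially sound: the limit set of $P$ is the single point $p_P$, an infinite subgroup of a convergence group with no loxodromics fixes a unique boundary point, and disjointness of the horoballs in $\Bc$ upgrades $\gamma p_P=p_{P'}$ to $\gamma B_P=B_{P'}$ and hence $\gamma P\gamma^{-1}=P'$; the caveat you flag (members of $\Pc$ pairwise non-conjugate) is indeed needed for the literal statement, and the claim that $\partial(\Gamma,\Pc)$ has more than one point deserves a line of justification beyond ``tautness'' (e.g.\ an infinite $P$ acting cocompactly on an unbounded horosphere forces $\partial X\neq\{p_P\}$ and rules out a two-point boundary), but these are minor.

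The genuine gap is in the undistortion leg. Coarse Lipschitzness of the projection to $B_P$ is an ambient-metric statement: after subdividing $\sigma$ it only produces a chain $x=q_0,q_1,\dots,q_m=y$ of horosphere points with $d_X(q_i,q_{i+1})\leqslant K$ and $m\lesssim d_{X^{th}}(x,y)$. To finish you must join consecutive $q_i$ by paths \emph{in the horosphere} of uniformly bounded intrinsic length, i.e.\ you need: there is $K'=K'(K)$ such that any two horosphere points at $d_X$-distance at most $K$ are at intrinsic horospherical distance at most $K'$. Your proposed way to close this — ``the horospherical metric is bi-Lipschitz to the metric the horosphere inherits from $X^{th}$'' — is not an independent lemma you can import: after the two Milnor--\v{S}varc identifications you set up, that comparison \emph{is} the undistortion statement being proved, so invoking it makes the projection argument circular. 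What is actually needed is only the bounded-scale claim above, and that is where the real content sits: it does not follow formally from Definition \ref{RelativelyHyperbolicGroupsDefinition}, but uses properness of the Gromov model, path-connectedness of the horosphere, and cocompactness of the $P$-action on it (from which one can bound the intrinsic distance on the compact set of pairs at ambient distance $\leqslant K$). Either prove that bounded-scale statement explicitly, or accept that this step is exactly what Osin's Lemma 5.4 (or Dru\c{t}u--Sapir, Hruska) supplies — which is what the paper does by citing it. As written, the final step of your undistortion argument assumes what it is meant to show.
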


\begin{proposition}\cite[Theorem 1.5]{Hru10}\label{UndistortedtoRelativelyQuasiConvex}
    If $P$ is a finitely generated, undistorted subgroup of $\Gamma$, then $P$ is relatively quasi-convex.
\end{proposition}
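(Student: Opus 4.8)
Since this statement is \cite[Theorem 1.5]{Hru10}, one may simply cite it; for concreteness I record the plan one would follow to prove it. The idea is to verify a metric form of relative quasi-convexity directly inside a Gromov model. Fix a finite adapted generating set $S$ of $(\Gamma,\Hc)$ and let $X$ be the Groves--Manning cusp space $X(\Gamma,\Hc,S)$ of \cite{GM08}, with its $\Gamma$-invariant family $\Bc$ of disjoint combinatorial horoballs, thick part $X^{th}$, and basepoint $o=e$; since $\Gamma$ and every $H\in\Hc$ are finitely generated, $X$ is a proper $\delta$-hyperbolic graph, and we identify $\Gamma$ with the orbit $\Gamma\cdot o\subset X^{th}$. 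By the equivalent descriptions of relative quasi-convexity in \cite[Section 6]{Hru10}, it suffices to produce a constant $\kappa\geqslant 0$ such that for all $p_1,p_2\in P$ and every geodesic $c$ of $X$ from $p_1 o$ to $p_2 o$, every point of $c$ that lies in $X^{th}$ lies in $\Nc_\kappa(P\cdot o)$; the excursions of $c$ into horoballs are then automatically controlled, since $c$ enters and leaves each horoball along its $0$-level.

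To build $\kappa$ I would run a fellow-traveling argument driven by the undistortion of $P$. Fix a finite symmetric generating set $S_P$ of $P$ and put $L_0=\max_{s\in S_P}\abs{s}_S$. Given $p_1,p_2\in P$, spell $p_1^{-1}p_2$ by an $S_P$-geodesic word and trace it out of $p_1 o$ through the orbit $P\cdot o$; this produces a path $\pi$ from $p_1 o$ to $p_2 o$ contained in $\Nc_{L_0}(P\cdot o)$, and, because $P$ is undistorted, $(P,\abs{\cdot}_{S_P})\hookrightarrow(\Gamma,\abs{\cdot}_S)$ is a quasi-isometric embedding, so $\pi$ is a uniform quasi-geodesic of $\Cay(\Gamma,S)$ --- this is the only place the hypothesis is used. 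Now replace each maximal subpath of $\pi$ that runs along a single peripheral coset $\eta H$ and has $\abs{\cdot}_S$-length at least a fixed threshold $D_0$ by a geodesic of the combinatorial horoball based on $\eta H$ between its two endpoints (which ascends to height $\approx\log_2$ of that length and descends again); call the resulting path $\widetilde\pi$. By the standard fact that lifting a word quasi-geodesic to the cusp space and substituting horoball detours for its long peripheral excursions produces a uniform quasi-geodesic of $X$ (\cite[Section 3]{GM08}), $\widetilde\pi$ is an $(L',L')$-quasi-geodesic of $X$ from $p_1 o$ to $p_2 o$, with $L'$ independent of $p_1,p_2$.

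It then remains to extract $\kappa$ by the Morse lemma in the $\delta$-hyperbolic space $X$: there is $C=C(L',\delta)$ with $c\subseteq\Nc_C(\widetilde\pi)$. Let $x\in c\cap X^{th}$ and choose $y\in\widetilde\pi$ with $d_X(x,y)\leqslant C$. Since the $X$-distance from $X^{th}$ to a point at height $t$ in a horoball is at least $t$, the point $y$ has height $O(C)$. Off the horoball detours $\widetilde\pi$ coincides with $\pi$, so if $y$ is not inside a detour then $d_X(y,P\cdot o)=O(1)$; if $y$ is inside a detour then, having height $O(C)$, it lies within $O(C)$ (going straight down its column inside the horoball) of one of the two endpoints of that detour, and those endpoints are endpoints of a subpath of $\pi$, hence within $O(1)$ of $P\cdot o$. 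In every case $d_X(x,P\cdot o)\leqslant d_X(x,y)+d_X(y,P\cdot o)=O(C)$, uniformly in $p_1,p_2$ and $x$ (the implied constants depending only on $P$, $S$, $S_P$ and the relative hyperbolicity data), so one may take $\kappa$ to be this bound. Finite generation of $P$ is used only to guarantee a finite $S_P$ (so that $\pi$ is a genuine quasi-geodesic), and no peripheral structure on $P$ is needed as input; this is consistent with the fact that, downstream, Theorem \ref{IntroRelativeBoundaryExtension} is what actually equips $P$ with one.

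The main obstacle is the middle step, namely that $\widetilde\pi$ is a uniform quasi-geodesic of the cusp space. This is precisely where the relative hyperbolicity of $(\Gamma,\Hc)$ enters in an essential way: it is a reformulation of the Bounded Coset Penetration property, recording that the cusp-space distance between two elements of $\Gamma$ is, up to horoball detours, governed by their word geometry. Everything else is Morse-lemma bookkeeping in a $\delta$-hyperbolic space together with the elementary geometry of combinatorial horoballs (heights, columns, and the horizontal reach $f(k)\approx 2^{k}$); verifying carefully that the metric criterion of the first paragraph is equivalent to the two definitions of relative quasi-convexity quoted just before the proposition --- again via \cite[Section 6]{Hru10} --- would be the remaining piece of a complete write-up.
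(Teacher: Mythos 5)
The paper gives no argument for this proposition at all: it is exactly Hruska's result and is simply quoted as \cite[Theorem 1.5]{Hru10}, so your decision to cite it is precisely the paper's approach. Your supplementary sketch is a reasonable outline of the standard cusped-space argument (close in spirit to Hruska's own proof, which verifies one of the equivalent quasiconvexity criteria from \cite[Section 6]{Hru10}); as you yourself flag, the genuinely delicate steps are the equivalence with the metric criterion in your first paragraph and the claim that the horoball-modified path $\widetilde\pi$ is a uniform quasi-geodesic of $X$, which is not an off-the-shelf statement of \cite{GM08} and would need a careful proof in a complete write-up.
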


Conversely, we will show that,

\begin{theorem}\label{RelativeBoundaryExtension}
    If $\Pc$ is an almost malnormal collection of relatively quasi-convex subgroups, then $(\Gamma,\Pc)$ and $(P,\Hc_P)$ for each $P\in\Pc$, are relatively hyperbolic. Moreover, $\partial(P,\Hc_P) \cong \Lambda(P)\subset \partial(\Gamma,\Hc)$ and there exists a continuous, $\Gamma$-equivariant quotient map $\tau:\partial(\Gamma,\Hc)\to \partial(\Gamma,\Pc)$ by identifying $\gamma\partial(P,\Hc_P)$ to the unique parabolic point in $\partial(\Gamma,\Pc)$ fixed by $\gamma P\gamma^{-1}$ for each $P\in\Pc$ and $\gamma\in \Gamma$.
\end{theorem}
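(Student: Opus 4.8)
The plan is to deduce the relative hyperbolicity of $(\Gamma,\Pc)$ from Yaman's criterion (Theorem \ref{YamanCriterion}), applied to the space $M$ obtained from $\partial(\Gamma,\Hc)$ by collapsing, for each $Q\in\Pc^\Gamma$, the limit set $\Lambda(Q)$ to a point. I would first treat the peripheral pieces. For $P\in\Pc$ relatively quasi-convex, $P$ acts geometrically finitely on $\Lambda(P)$ by the definition recalled above, so (when $P$ is non-elementary) Yaman's theorem applies to $P\curvearrowright\Lambda(P)$ and shows that $P$ is relatively hyperbolic, with boundary $\Lambda(P)$ and peripheral structure given by the $P$-conjugacy classes of the infinite subgroups $P\cap H'$, $H'\in\Hc^\Gamma$ --- the induced peripheral structure of \cite{Hru10}; the elementary case is immediate. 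That this structure equals $\Hc_P$ is the remaining point: if $P\cap H'$ is infinite with $H'=\gamma H\gamma^{-1}$ and $H\in\Hc$, then $H\subset P_1$ for some $P_1\in\Pc$, so $P\cap\gamma P_1\gamma^{-1}$ is infinite, whence $P_1=P$ and $\gamma\in P$ by almost malnormality of $\Pc$; thus $H'\subset P$ and $P\cap H'=H'$, and conversely each $H'\in\Hc^\Gamma$ with $H'\subset P$ fixes a parabolic point of $\Gamma$ lying in $\Lambda(P)$, which by \cite{Hru10} is a bounded parabolic point for $P$. This also makes $\Hc_P$ finite and identifies $\partial(P,\Hc_P)\cong\Lambda(P)$.

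Next I would set up the collapse. Working in a Gromov model $X$ of $(\Gamma,\Hc)$, the hulls $Y_Q=\Hull(\Lambda(Q))$, $Q\in\Pc^\Gamma$, are uniformly quasi-convex, and almost malnormality of $\Pc$ forces any two distinct ones to have uniformly bounded coarse intersection. A counting argument --- only finitely many $Q$ have $Y_Q$ meeting a fixed ball about the basepoint, because the $Y_Q$ are the $\Gamma$-translates of the finitely many $Y_P$, $P\in\Pc$, and distinct translates are coarsely separated --- then shows that $\{\Lambda(Q)\}_{Q\in\Pc^\Gamma}$ is a null family of pairwise disjoint closed subsets of $\partial(\Gamma,\Hc)$. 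Hence the partition of $\partial(\Gamma,\Hc)$ whose nontrivial blocks are the $\Lambda(Q)$ is upper semicontinuous, the quotient $M=\partial(\Gamma,\Hc)/{\sim}$ is a compact, metrizable, perfect space carrying a continuous $\Gamma$-action, and the quotient map $\tau:\partial(\Gamma,\Hc)\to M$ is $\Gamma$-equivariant and surjective.

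The bulk of the work is then to verify that $\Gamma\curvearrowright M$ satisfies the hypotheses of Yaman's theorem. For the convergence property I would argue, in the spirit of the quotient-of-boundary constructions of \cite{Tran13} (the hyperbolic case) combined with relative quasi-convexity of the $Q$, that collapsing the limit sets $\Lambda(Q)$ preserves the convergence dynamics; the essential geometric input is that each $Q$ acts cocompactly on $\partial(\Gamma,\Hc)\setminus\Lambda(Q)$ (seen via coarse projection onto $Y_Q$ in $X$), which simultaneously identifies $\tau(\Lambda(Q))$ as a bounded parabolic point of $M$ with stabilizer $Q$. Since every $H'\in\Hc^\Gamma$ lies in some $Q\in\Pc^\Gamma$, every parabolic point of $\Gamma$ lies in some $\Lambda(Q)$; hence the points of $M$ other than the $\tau(\Lambda(Q))$ are images of conical limit points of $\Gamma$, and such a point is conical for $\Gamma\curvearrowright M$ because its $\tau$-fibre is a singleton, so the conical dynamics descends. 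The $\Gamma$-orbits of bounded parabolic points are in bijection with $\Pc$, hence finite in number, with stabilizers the $Q\in\Pc^\Gamma$, which are infinite and finitely generated. Yaman's theorem then yields that $(\Gamma,\Pc)$ is relatively hyperbolic with $\partial(\Gamma,\Pc)$ $\Gamma$-equivariantly homeomorphic to $M$; under this identification $\tau$ is the asserted map, collapsing $\gamma\partial(P,\Hc_P)=\Lambda(\gamma P\gamma^{-1})$ to the bounded parabolic point fixed by $\gamma P\gamma^{-1}$ and injective elsewhere.

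I expect the main obstacle to be precisely this last verification --- that the convergence-group structure and the conical/bounded-parabolic dichotomy survive the collapse, making $\Gamma\curvearrowright M$ geometrically finite --- since the available quotient arguments are written for hyperbolic $\Gamma$, and in the relatively hyperbolic setting one must additionally control the interaction between the hulls $Y_Q$ and the horoballs of $X$, both for the cocompactness of $Q$ on $\partial(\Gamma,\Hc)\setminus\Lambda(Q)$ and for the null-family property.
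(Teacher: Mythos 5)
Your overall route is the same as the paper's: collapse each limit set $\Lambda(Q)$, $Q\in\Pc^\Gamma$, to a point, check that the quotient $M$ is compact, metrizable and perfect via the null-family property, identify the images of the $\Lambda(Q)$ as bounded parabolic points using cocompactness of $Q$ on $\partial(\Gamma,\Hc)\setminus\Lambda(Q)$, and invoke Yaman's criterion (Theorem \ref{YamanCriterion}). Two side remarks: the paper obtains relative hyperbolicity of $(P,\Hc_P)$ by citing Hruska's Theorem 9.1 rather than re-running Yaman on $P\curvearrowright\Lambda(P)$, and your explicit almost-malnormality argument identifying the induced peripheral structure with $\Hc_P$ fills in a point the paper leaves implicit; also, the convergence property itself descends formally through the equivariant quotient (a compact subset of $M\setminus\{\tau(b)\}$ pulls back to a compact subset of $\partial(\Gamma,\Hc)\setminus\tau^{-1}(\tau(b))$), so that part of your concern is unproblematic.

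The genuine gap is at the step you yourself flag as the main obstacle: the claim that points of $M$ outside $\{\tau(\Lambda(Q))\}$ are conical ``because the $\tau$-fibre is a singleton, so the conical dynamics descends.'' This does not suffice. If $x$ is conical for $\Gamma\curvearrowright\partial(\Gamma,\Hc)$, a witnessing sequence $(\gamma_n)$ gives $\gamma_n x\to a$ and $\gamma_n y\to b$ for $y\ne x$ with $a\ne b$; but for $\tau(x)$ to be conical in $M$ one needs $\tau(a)\ne\tau(b)$, and nothing a priori prevents $a$ and $b$ from lying in the \emph{same} $\Lambda(Q)$, in which case they are identified by the collapse and that sequence witnesses nothing. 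Injectivity of $\tau$ at $x$ controls the wrong end of the dynamics. This is exactly what Step 4 of the paper's proof supplies: along a geodesic $\ell$ with $\ell(+\infty)=x$ it chooses times $t_n\to+\infty$ with $\ell(t_n)$ in a fixed neighborhood of the thick part and such that $\ell([t_n,+\infty))$ meets any hull-neighborhood $X_Q$ in a segment of length at most $2D$, where $D$ bounds $\diam\bigl(\Nc_R(X_Q)\cap\Nc_R(X_{Q'})\bigr)$ for $Q\ne Q'$; pulling back by elements $\gamma_n$ carrying $\ell(t_n)$ near the basepoint, one shows that if $a,b$ both lay in one $\Lambda(Q)$, the limiting geodesic would force $\ell([t_n,t_n+2D])$ into a single translate of a hull, contradicting the choice of $t_n$ and the bounded coarse intersection of distinct hulls. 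Without this (or an equivalent) argument, the verification of Yaman's hypotheses — precisely the conical/bounded-parabolic dichotomy for $M$ — is incomplete.
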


Firstly, $(P,\Hc_P)$ is relatively hyperbolic for each $P\in\Pc$ is given by the following theorem.

\begin{theorem}\cite[Theorem 9.1]{Hru10}
    If $Q$ is a relatively quasi-convex subgroup of $(\Gamma,\Hc)$, then $(Q,\Hc_Q)$ is relatively hyperbolic, where $\Hc_Q$ is a set of representatives of $Q$-conjugacy classes of 
    \[\{Q\cap \gamma H \gamma^{-1} : \gamma\in\Gamma, H\in\Hc, \text{ and } Q\cap \gamma H \gamma^{-1} \text{ is infinite}.\}\]
\end{theorem}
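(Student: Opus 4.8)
The plan is to verify Definition~\ref{RelativelyHyperbolicGroupsDefinition} for $(Q,\Hc_Q)$ directly, using the geometric description of relative quasi-convexity, and then to match the peripheral structure it produces with $\Hc_Q$. By \cite[Proposition~7.1]{Hru10} we may work with \cite[Definition~6.5]{Hru10}: $Q$ admits a cusp uniform action on a taut Gromov hyperbolic geodesic space $Y$ that is $Q$-equivariantly quasi-isometric to $\Hull(\Lambda(Q))\subset X$, where $X$ is a fixed Gromov model of $(\Gamma,\Hc)$ and $\Lambda(Q)\subset\partial(\Gamma,\Hc)$ is the limit set of $Q$. First I would dispose of the elementary cases: if $Q$ is finite then $(Q,\emptyset)$ is relatively hyperbolic and $\Hc_Q=\emptyset$; if $\Lambda(Q)$ is a single point then almost malnormality of $\Hc$ forces $Q$ to lie in a single conjugate $\gamma H\gamma^{-1}$, the combinatorial horoball on $\Cay(Q,S\cap Q)$ shows $(Q,\{Q\})$ is relatively hyperbolic, and $\Hc_Q=\{Q\}$; if $\Lambda(Q)$ consists of two points then $Q$ is two-ended of loxodromic type, $(Q,\emptyset)$ is relatively hyperbolic, and $\Hc_Q=\emptyset$ since a conjugate of a peripheral subgroup contains no loxodromic element. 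So from now on assume $\Lambda(Q)$ is infinite, hence perfect.

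Granting this, the cusp uniform action on $Y$ is, by Definition~\ref{RelativelyHyperbolicGroupsDefinition} itself, a witness that $(Q,\Pc)$ is relatively hyperbolic, where $\Pc$ is a set of representatives of the $Q$-conjugacy classes of stabilizers of the horoballs of $Y$; cocompactness of the $Q$-action on $Y^{th}$ makes $\Pc$ finite, and the Milnor-\v{S}varc lemma applied to the horospheres makes each member of $\Pc$ infinite and finitely generated. Since the Gromov boundary is a quasi-isometry invariant and the convex hull of an infinite closed subset $S\subset\partial X$ has Gromov boundary $S$, we obtain a $Q$-equivariant homeomorphism $\partial(Q,\Pc)\cong\partial Y\cong\Lambda(Q)$, and under it the parabolic points of $(Q,\Pc)$ are exactly the centers of the horoballs of $Y$, i.e.\ the bounded parabolic points of the convergence action $Q\curvearrowright\Lambda(Q)$. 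Thus the whole theorem reduces to showing $\Pc=\Hc_Q$, i.e.\ that the bounded parabolic points of $Q\curvearrowright\Lambda(Q)$ are precisely the $\Gamma$-parabolic points lying in $\Lambda(Q)$ whose $\Gamma$-stabilizer meets $Q$ in an infinite subgroup, with these intersections as the corresponding stabilizers.

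The key lemma I would prove is that a point $p\in\Lambda(Q)$ is conical for $Q\curvearrowright\Lambda(Q)$ if and only if it is conical for $\Gamma\curvearrowright\partial(\Gamma,\Hc)$. One direction is immediate: a geodesic witnessing conicality for the isometric $Q$-action (transported through the quasi-isometry $Y\simeq\Hull(\Lambda(Q))\hookrightarrow X$) also witnesses it for $\Gamma\geqslant Q$. For the converse, if $p$ is $\Gamma$-conical then $\mathrm{Stab}_\Gamma(p)$ is an elementary, non-parabolic subgroup of $\Gamma$, hence finite or two-ended of loxodromic type; if $\mathrm{Stab}_Q(p)=Q\cap\mathrm{Stab}_\Gamma(p)$ is finite then $p$ is $Q$-conical by geometric finiteness of $Q\curvearrowright\Lambda(Q)$, and if it is infinite then it has finite index in $\mathrm{Stab}_\Gamma(p)$, so it contains a power of the loxodromic element fixing $p$, making $p$ a loxodromic fixed point and therefore conical. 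Combining this lemma with Bowditch's dichotomy $\partial(\Gamma,\Hc)=\partial_{con}(\Gamma,\Hc)\sqcup\partial_{par}(\Gamma,\Hc)$ and the mutual exclusivity of conical and bounded parabolic points, the bounded parabolic points of $Q\curvearrowright\Lambda(Q)$ are exactly the $p\in\Lambda(Q)\cap\partial_{par}(\Gamma,\Hc)$ with $\mathrm{Stab}_Q(p)$ infinite; writing $\mathrm{Stab}_\Gamma(p)=\gamma H\gamma^{-1}$ for suitable $H\in\Hc$ and $\gamma\in\Gamma$, the horoball stabilizer at $p$ is $\mathrm{Stab}_Q(p)=Q\cap\gamma H\gamma^{-1}$. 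Finally, an infinite parabolic subgroup of $\Gamma$ fixes a unique point of $\partial(\Gamma,\Hc)$, so $p\mapsto\mathrm{Stab}_Q(p)$ descends to a bijection between $Q$-orbits of these points and $Q$-conjugacy classes of the infinite subgroups $Q\cap\gamma H\gamma^{-1}$; hence $\Pc=\Hc_Q$ up to choice of representatives, and $(Q,\Hc_Q)$ is relatively hyperbolic (with $\partial(Q,\Hc_Q)\cong\Lambda(Q)\subset\partial(\Gamma,\Hc)$ for free). I expect this last matching step — in particular ruling out a ``spurious'' horoball of $Y$ centered at a $\Gamma$-conical point, which is exactly what the converse direction of the key lemma prevents — to be the main obstacle; finiteness of $\Hc_Q$ and finite generation and infiniteness of its members then follow routinely from the cocompact action on $Y^{th}$ and Milnor-\v{S}varc.
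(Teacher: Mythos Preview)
The paper does not prove this statement at all: it is quoted verbatim as \cite[Theorem~9.1]{Hru10} and used as a black box to feed into the proof of Theorem~\ref{RelativeBoundaryExtension}. So there is no ``paper's own proof'' to compare against.

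That said, your sketch is a reasonable reconstruction of the argument and is essentially in the spirit of Hruska's own proof (which also passes through geometric finiteness of $Q\curvearrowright\Lambda(Q)$ and Yaman's criterion). A couple of remarks. First, once you invoke \cite[Definition~6.2]{Hru10} --- that $Q$ acts geometrically finitely on $\Lambda(Q)$ --- you could skip the detour through the cusp-uniform space $Y$ entirely and apply Theorem~\ref{YamanCriterion} directly to $\Lambda(Q)$; the horoball model is only needed to read off $\partial(Q,\Hc_Q)\cong\Lambda(Q)$, which Yaman already gives you. Second, your key lemma together with the conical/parabolic dichotomy in fact forces every $\Gamma$-parabolic point of $\Lambda(Q)$ to have \emph{infinite} $Q$-stabilizer (a finite one would make the point $Q$-conical hence $\Gamma$-conical, contradiction), so the clause ``with $\mathrm{Stab}_Q(p)$ infinite'' in your description of the $Q$-parabolic points is automatically satisfied --- worth saying explicitly. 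Third, Definition~\ref{RelativelyHyperbolicGroupsDefinition} as stated in this paper requires $\Gamma$ (hence here $Q$) to be finitely generated; relative quasi-convexity alone does not guarantee this in general, so if you want to land exactly in that definition you should either add finite generation of $Q$ as a hypothesis or note that Hruska works with a broader notion of relative hyperbolicity that does not require it.
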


For the rest of Theorem \ref{RelativeBoundaryExtension}, we generalize a proof of Theorem \ref{ClassicalExtensionReltoHyp} given by Manning \cite{Man15}.

\begin{proof}[Proof of Theorem \ref{RelativeBoundaryExtension}]
    We denote \[M_{par} = \{ \gamma\Lambda(P) : \gamma\in \Gamma, P\in \Pc\} = \{ \Lambda(P) : P\in \Pc^\Gamma\} \] and \[M_{con} = \partial(\Gamma,\Hc) \setminus \bigcup_{\gamma\in\Gamma, P \in \Pc} \gamma\Lambda(P)~.\] Then elements of $M_{con}$ as singletons and elements of $M_{par}$ together give a $\Gamma$-invariant partition of $\partial(\Gamma,\Hc)$. Let $M = M_{par} \cup M_{con}$. Let $\tau: \partial(\Gamma,\Hc)\to M$ be the quotient that maps each point to the set containing it in the partition. We endow $M$ with the quotient topology and the natural $\Gamma$-action induced by the $\Gamma$-action on $\partial(\Gamma,\Hc)$. Then $\tau$ is $\Gamma$-equivariant by definition. By Theorem \ref{YamanCriterion}, it suffices to show that $M$ is a perfect, metrizable, compact space, and the $\Gamma$-action on $M$ is geometrically finite with $M_{par}$ the set of bounded parabolic points and $M_{con}$ the set of conical limit points.\\ \\
    \textit{Step 1.} We show that $M$ is a perfect metrizable, compact space.
    Let $X$ be a Gromov model of $(\Gamma,\Hc)$ with $d_X$ the metric on $X$. Then $M_{par}$ is a null sequence, that is, for any $\epsilon > 0$, there are only finitely many $P\in \Pc^\Gamma$ with the diameter of $\Lambda(P)$ larger than $\epsilon$, with respect to a visual metric on $\partial(\Gamma,\Hc)$. In fact, any compact subset of $X$ only intersects finitely many subsets $\Hull(\Lambda(P))$, $P\in \Pc^\Gamma$ and there are only finitely many $\Gamma$-orbits of $\Hull(\Lambda(P))$ for $P\in\Pc^\Gamma$. Therefore $M_{par}$ is a null sequence by the properties of the visual metric. Then by \cite[Proposition 2.1 and Proposition 2.2]{Man15}, which are originally from \cite{Dav}, we have that $M$ is a metrizable, compact space.

    $M$ is perfect and $\Gamma$ acts on $M$ as a convergence group follows directly from the fact that $\partial(\Gamma,\Hc)$ is perfect, $\Gamma$ acts on $\partial(\Gamma,\Hc)$ as a convergence group and $\tau$ is a $\Gamma$-equivariant quotient map.\\ \\
    \textit{Step 2.} We find a Gromov model of $(P,\Hc_P)$ as a subspace of $X$ for each $P\in \Pc$ and discuss their structures.
    There exists a constant $C_1\geqslant 0$, such that $\Nc_{C_1}(\Hull(\Lambda(P))$ is proper and geodesic. Then $\Nc_{C_1}(\Hull(\Lambda(P))$ is a Gromov model for $(P,\Hc_P)$ for each $P\in\Pc$ (and hence for each $P\in\Pc^\Gamma$) by the relative quasi-convexity of $P$. We denote $X_P = \Nc_{C_1}(\Hull(\Lambda(P))$. Let $\{B_H : H\in \Hc^\Gamma\}$ be a collection of horoballs satisfying the conditions in Definition \ref{RelativelyHyperbolicGroupsDefinition} for the relatively hyperbolic group $(\Gamma,\Hc)$. Since each $H\in \Hc^\Gamma$ is contained in some $P\in\Pc^\Gamma$, horoballs are quasi-convex, and $H$ acts on the horosphere of $B_H$ cocompactly, we may assume $B_H\subset X_P$ when $H\subset P$, for any $H\in\Hc^\Gamma$ and $P\in\Pc^\Gamma$. In this case, $X_P$ together with the collection of horoballs $\{B_H : H\in\Hc^\Gamma, H\subset P\}$ satisfies the conditions in \ref{RelativelyHyperbolicGroupsDefinition} for the relatively hyperbolic group $(P,\Hc_P)$ and hence gives a Gromov model of $(P,\Hc_P)$. The construction works also for constants larger than $C_1$, which means $\Nc_R(X_P)$ is also a Gromov model of $(P,\Hc_P)$ for any $R\geqslant 0$. In particular, $P$ acts on $\Nc_R(X_P)\setminus X_P$ cocompactly, since the horoballs are contained in $X_P$, and $\Nc_R(X_P)\setminus X_P$ is contained in the thick part. Now we fix the thick-thin decomposition $X = X^{th} \cup \bigcup_{H\in\Hc^\Gamma} B_H$. \\ \\
    \textit{Step 3.} We show that $M_{par}$ consists of all bounded parabolic points by checking that for each $P\in\Pc^\Gamma$, $P$ acts on $M\setminus \{\Lambda(P)\}$ cocompactly. Since $\tau$ is a $\Gamma$-equivariant quotient, it suffices to show that $P$ acts on $\partial(\Gamma,\Hc)\setminus \Lambda(P)$ cocompactly. Since $\Nc_R(X_P)$ is relatively quasi-convex, there exists a constant $C_2\geqslant 0$, such that for any $y\in\partial(\Gamma,\Hc)\setminus \Lambda(P)$, there exists a point $\mathrm{proj}(y)\in \Nc_R(X_P)\setminus X_P$, the projection of $y$ in $\Nc_R(X_P)$ such that for any sequence $(y_n )_{n\in \Nb}\subset X$ with $y_n\to y$ as $n\to +\infty$, $d_X(y_n,\mathrm{proj}(y))\leqslant d(y_n, \Nc_R(X_P))+C_2$ (See for example \cite[Section 5]{Bow12}, \cite[Section 2.3]{Man15} for the definition of the projection map). The set of points in $\partial(\Gamma,\Hc)\setminus \Lambda(P)$, with projections in $\Nc_R(X_P)$ remaining in a compact set, is compact. Since the $P$-action on $\Nc_R(X_P)\setminus X_P$ is cocompact, then the $P$-action on $\partial(\Gamma,\Hc)\setminus \Lambda(P)$ is cocompact.

    Notice that for any $R\geqslant 0$, there exists constant $D\geqslant 0$, such that the diameter of $\Nc_R(X_P) \cap \Nc_R(X_{P'})$ is at most $D$ for any $P,P'\in\Pc^\Gamma$ (See for example \cite[Lemma 4.7]{DS05}, \cite[Lemma 2.6]{Man15}). We now fix a constant $R$ such that any geodesic with endpoints in $X_P\cup \Lambda(P)$ is contained in $\Nc_R(X_P)$ and we assume $D\geqslant R$.\\ \\
    \textit{Step 4.} Now we show that $M_{con}$ consists of all conical limit points. Let $x\in M_{con}$ and let $\ell $ be a geodesic in $X$ with $\ell (+\infty) = x$ and $\ell (-\infty) = y \ne x$. Then we can pick a sequence $(t_n )_{n\in \Nb}\subset\Rb$, such that 
    \begin{itemize}
        \item[1.] $t_n$ is increasing and $t_n\to +\infty$ as $n\to +\infty$;
        \item[2.] $\ell (t_n)\in \Nc_{2D}(X^{th})$ for each $n$;
        \item[3.] For each $P\in\Pc^\Gamma$, if $\ell (t_n)\in X_P$, then the geodesic ray $\ell ([t_n,+\infty))$ intersects $X_P$ for length at most $2D$.
    \end{itemize}
    More specifically, we firstly find a sequence $(t_n )_{n\in \Nb}$ satisfies (1) and $\ell (t_n')\in X^{th}$. For each $n$, if $t_n'$ satisfies condition (3), then we take $t_n = t_n'$. If there exists $P\in\Pc^\Gamma$ such that $t_n'\in X_P$ with $\ell ([t_n',+\infty))$ intersecting with $X_P$ of length larger than $2D$, then we take $t_n = \sup\{ s - D : \ell (s)\in X_P\}$. Then $t_n$ satisfies condition (2) and (3). Otherwise, there exists another $P' \in \Pc^\Gamma$ such that $\ell (t_n)\in X_{P'}$ and $\ell ([t_n,t_n+2D])\subset \Nc_R(X_{P'})$, which contradicts that $\Nc_R(X_P)\cap \Nc_R(X_{P'})$ has diameter at most $D$ for any $P' \ne P \in \Pc^\Gamma$. Since $t_n'\leqslant t_n$, there exists a subsequence of $(t_n )_{n\in \Nb}$ with the desired conditions.

    Let $(\gamma_n)_{n\in \Nb}\subset\Gamma$ be a sequence such that $d(\gamma_n^{-1} O, \ell (t_n))\leqslant C_3$, where $O$ is a fixed base point in $X^{th}$ and $C_3$ is a constant such that $\Nc_{2D}(X^{th}) \subset \Gamma\Nc_{C_3}(\{O\})$. Then one can easily check that there exist points $a\ne b\in \partial(\Gamma,\Hc)$, such that$\gamma_n x \to a$ and $\gamma_n y' \to b$ as $n\to +\infty$, for any $y'\in \partial(\Gamma,\Hc)\setminus\{x\}$. Now it suffices to show that $\tau(a)\ne \tau(b)$. Suppose otherwise, $\tau(a) = \tau(b)$, then there exists $P\in \Pc^\Gamma$, such that $a,b\in \Lambda(P)$. The sequence of geodesics $(\gamma_n \ell )_{n\in \Nb}$ converges to a geodesic $\ell'$ with $\ell'(+\infty)=a$ and $\ell'(-\infty)=b$, and there exists $s\in\Rb$ such that $d(O,\ell'(s))\leqslant C_3$. Then $\ell'(\Rb)\subset \Hull(\Lambda(P))$. This implies that for $n$ large enough, $\gamma_n\ell([t_n,t_n+3D))\subset X_P$. Therefore we have $\ell (t_n)\in \gamma_n^{-1}X_P = X_{\gamma_n^{-1}P\gamma_n}$ and $\ell ([t_n,t_n+2D))\subset \gamma_n^{-1}X_P = X_{\gamma_n^{-1}P\gamma_n}$, which contradicts to the assumptions on $t_n$.
\end{proof}

One may also compare the following theorem with the results above.

\begin{theorem}\cite[Corollary 1.14]{DS05}
    If $(\Gamma,\Pc)$ and $(P,\Hc_P)$ for each $P\in\Pc$ are relatively hyperbolic, then $(\Gamma,\bigcup_{P\in\Pc}\Hc_P)$ is relatively hyperbolic.
\end{theorem}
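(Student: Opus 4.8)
The plan is to produce a Gromov model of $(\Gamma,\bigcup_{P\in\Pc}\Hc_P)$ explicitly, by grafting the given models together. Fix an adapted generating set $S$ of $(\Gamma,\Pc)$ large enough that $S\cap P$ is an adapted generating set of $(P,\Hc_P)$ for every $P\in\Pc$; such an $S$ exists because $\Pc$ is finite and each $(P,\Hc_P)$ is finitely generated. Choosing $f$ with $f(s+t)\geqslant 2^tf(s)$, let $\tilde X = X_f(\Gamma,\bigcup_{P\in\Pc}\Hc_P,S)$ be the generalized Groves--Manning cusp space of Section~\ref{SectionGMCuspSpace}; equivalently, $\tilde X$ is obtained from the cusp space $X_f(\Gamma,\Pc,S)$ by deleting the interior of every $P$-horoball $\Hc_f(\gamma\Cay(P,S\cap P))$ and grafting in, along its $0$-level $\gamma\Cay(P,S\cap P)$, a copy of $X_f(P,\Hc_P,S\cap P)$ — this grafting inserts exactly the combinatorial horoballs over the cosets of the members of $\Hc_P$, so the two descriptions coincide. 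In either form, $\Gamma$ acts on $\tilde X$ properly discontinuously, with thick part $\Cay(\Gamma,S)$ (acted on cocompactly) and thin part the $\Gamma$-invariant disjoint family of combinatorial horoballs over the cosets of the members of $\bigcup_P\Hc_P$, the stabilizers of which are precisely the $\Gamma$-conjugates of those members. Thus, once we know $\tilde X$ is Gromov hyperbolic, it is a Gromov model and the $\Gamma$-action is a cusp uniform action in the sense of Definition~\ref{RelativelyHyperbolicGroupsDefinition}, giving that $(\Gamma,\bigcup_{P\in\Pc}\Hc_P)$ is relatively hyperbolic.

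Proving hyperbolicity of $\tilde X$ is the main obstacle. I would argue it via the relative Dehn function: by the Groves--Manning criterion recalled in Section~\ref{SectionGMCuspSpace}, $\tilde X$ is hyperbolic provided $(\Gamma,\bigcup_P\Hc_P)$ admits a linear relative isoperimetric function, and one obtains such a function by composing van Kampen fillings in two stages — fill a null-homotopic word $w$ first by a relative van Kampen diagram over $(\Gamma,\Pc)$ with $O(\abs{w})$ peripheral $P$-cells, each of boundary length $\leqslant\abs{w}$, and then fill every such $P$-cell by a relative diagram over the corresponding $(P,\Hc_P)$, which has linearly many $H$-cells with $H\in\Hc_P$. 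The delicate point is to improve the naive bound to a genuinely linear one; this uses that in $X_f(\Gamma,\Pc,S)$ a geodesic meets each $P$-coset in bounded diameter — i.e. the bounded coset penetration property of $(\Gamma,\Pc)$, together with almost malnormality of $\Pc$ — to control how much of $w$ each $P$-cell can absorb. This is exactly the content of the transitivity theorems of Osin and of Drutu--Sapir; an equivalent route is to import the asymptotic-cone characterization of relative hyperbolicity and note that an asymptotic cone of $\Gamma$ is tree-graded over the ultralimits of the cosets of the $P\in\Pc$, each of which is an asymptotic cone of the corresponding $P$ and hence tree-graded over ultralimits of cosets of the $H\in\Hc_P$, and tree-gradedness is transitive in this sense. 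Yet another option, closer to the grafting picture, is to invoke a combination theorem such as \cite{Dah03} for the hyperbolic spaces $X_f(\Gamma,\Pc,S)$ and $X_f(P,\Hc_P,S\cap P)$ glued along horospheres carrying cocompact $\Gamma$- (resp. $P$-) actions.

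Granting hyperbolicity, it remains only to read off the Bowditch boundary from the construction: the conical points of $(\Gamma,\Pc)$ persist, while each parabolic point $p$ fixed by some $P\in\Pc^\Gamma$ is replaced by a copy of $\partial(P,\Hc_P)\cong\Lambda(P)$, whose $\Hc_P$-conical (resp. $\Hc_P$-parabolic) points become conical (resp. parabolic) in $(\Gamma,\bigcup_P\Hc_P)$. In particular the blow-down map $\partial(\Gamma,\bigcup_P\Hc_P)\to\partial(\Gamma,\Pc)$, which collapses each such copy to the corresponding parabolic point, is precisely the quotient $\tau$ of Theorem~\ref{IntroRelativeBoundaryExtension} with $\Hc$ taken to be $\bigcup_P\Hc_P$, so that the present statement is exactly a converse to the constructions of Section~\ref{SectionRHExtensions}, and one may equally well have concluded by verifying the hypotheses of Yaman's Theorem~\ref{YamanCriterion} on this blown-up boundary.
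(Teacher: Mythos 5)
First, note that the paper does not prove this statement at all: it is quoted verbatim from \cite[Corollary 1.14]{DS05} and offered only as a point of comparison with Theorem \ref{RelativeBoundaryExtension}, so there is no internal argument for you to match. Judged as a standalone proof, your set-up is sound: the grafted space you describe does coincide with $X_f(\Gamma,\bigcup_{P\in\Pc}\Hc_P,S)$ (every coset of an $H\in\Hc_P$ lies in a unique coset of $P$), the thick/thin decomposition and the identification of the horoball stabilizers with the conjugates of the $H$'s are correct, and once hyperbolicity of $\tilde X$ is known, Definition \ref{RelativelyHyperbolicGroupsDefinition} is indeed verified. Your asymptotic-cone route is, moreover, the actual strategy of \cite{DS05}.

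The genuine gap is that the one step carrying all the content --- hyperbolicity of $\tilde X$, equivalently a linear relative isoperimetric inequality for $(\Gamma,\bigcup_P\Hc_P)$, equivalently transitivity of the tree-graded structure --- is never established. Your first route ends with ``this is exactly the content of the transitivity theorems of Osin and of Dru\c{t}u--Sapir,'' which is the statement being proved, so that branch is circular; the two-stage van Kampen filling you sketch gives a priori only a superlinear bound, and the claimed improvement via bounded coset penetration and almost malnormality is precisely what \cite{Osin06} has to work to prove. Your second route asserts without argument that ``tree-gradedness is transitive in this sense''; verifying properties $(T_1)$ and $(T_2)$ for the refined collection of pieces is the substance of the Dru\c{t}u--Sapir proof and cannot be waved through. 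The third route invokes an unspecified combination theorem (the gluing here is along horospheres, not along the quasiconvex or parabolic subgroups to which \cite{Dah03} applies, so applicability would itself need proof). A smaller point: the Groves--Manning theorem as recalled in Section \ref{SectionGMCuspSpace} presupposes that the pair is already relatively hyperbolic, so to use the cusped space as a \emph{criterion} you must run the homological isoperimetric argument of \cite{GM08} on $\tilde X$ directly --- again the step you skip. The final paragraph about reading off the Bowditch boundary and invoking Theorem \ref{YamanCriterion} is fine as a description of the outcome, but it presupposes the hyperbolicity (or the geometric finiteness of the action on the blown-up boundary) that has not been proved.
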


\subsection{Proof of Theorem \ref{RelativeAnosovExtension} and a direct example}

we assume that $(\Gamma,\Pc)$ is a relatively hyperbolic group and for each $P\in \Pc$, $(P,\Hc_P)$ is a relatively hyperbolic group. $\Hc = \cup_{P\in \Pc} \Hc_P$. Let $\tau: \partial(\Gamma,\Hc)\to \partial(\Gamma,\Pc)$ be the quotient map given by Theorem \ref{RelativeBoundaryExtension}. We restate and prove Theorem \ref{RelativeAnosovExtension}.

\begin{theorem}\label{ExtendingRepresentationsTheorem}
    Let $\rho:\Gamma\to \SL(d,\Kb)$ be a $P_k$-divergent representation. Consider the following conditions,
    \begin{itemize}
        \item[(1)] $\rho$ is extended geometrically finite relative to $\Hc$;
        \item[(1')] $\rho$ is extended geometrically finite relative to $\Pc$ and for each $P\in \Pc$, $\rho|_P$ is extended geometrically finite relative to $\Hc_P$;
        \item[(2)] $\rho$ is Anosov relative to $\Hc$;
        \item[(2')] $\rho$ is extended geometrically finite relative to $\Pc$ and for each $P\in \Pc$, $\rho|_P$ is Anosov relative to $\Hc_P$.
    \end{itemize}
    Then (1) is equivalent to (1'), and (2) is equivalent to (2').
\end{theorem}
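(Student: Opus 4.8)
The plan is to reduce everything, via Proposition \ref{RefinedtoMinimal} and Remark \ref{OpenSettobeWholeTransverse}, to representations equipped with their canonical refined boundary extensions on the limit set $\Lc(\cdot)$, for which the open sets $C_p$ are taken to be the full sets of flags transverse to the fibre $\zeta^{-1}(p)$, and then to transfer extended geometric finiteness back and forth by gluing boundary extensions (note that $\rho|_P$ is automatically $P_k$-divergent). A recurring tool is the following lemma: \emph{if $\sigma$ is $P_k$-divergent and extended geometrically finite relative to a peripheral structure, with refined boundary extension $\zeta_0\colon\Lc(\sigma)\to\partial$, then every pairwise distinct sequence $(g_m)$ with $(U_k(\sigma(g_m)),U_{d-k}(\sigma(g_m)))\to y\in\Lc(\sigma)$ satisfies $g_m\to\zeta_0(y)$ in $\partial$.} Indeed, after passing to a subsequence with $g_m\to p'$, $g_m^{-1}\to q'$ and $U_{d-k}(\sigma(g_m)^{-1})$ convergent, picking a flag $x=(V,W)$ with $V$ transverse both to the $(d-k)$-plane part of $\zeta_0^{-1}(q')$ and to $\lim U_{d-k}(\sigma(g_m)^{-1})$, and combining the ``extends the convergence dynamics'' property with Lemma \ref{DivergenceLemma}, one sees that the $k$-plane $U_k(y)$ lies in the $k$-plane part of $\zeta_0^{-1}(p')$; since $U_k(y)$ is contained in the $(d-k)$-plane part of $\zeta_0^{-1}(\zeta_0(y))$, transversality of $\zeta_0$ forces $p'=\zeta_0(y)$.

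For (1)$\Rightarrow$(1'): By Theorem \ref{RelativeBoundaryExtension} we identify $\partial(P,\Hc_P)$ with the peripheral limit set $\Lambda(P)\subset\partial(\Gamma,\Hc)$, and $\partial(\Gamma,\Hc)$ decomposes as the conical set $M_{con}$ (mapped homeomorphically onto $\partial_{con}(\Gamma,\Pc)$ by $\tau$) together with the sets $\Lambda(P)$ (each collapsed by $\tau$ to the parabolic point $p_P$ it determines). Writing $\zeta_\Hc\colon\Lc(\rho)\to\partial(\Gamma,\Hc)$ for the canonical extension, I claim $\rho|_P$ is extended geometrically finite relative to $\Hc_P$ with extension $\zeta_\Hc|_{\Lc(\rho|_P)}\colon\Lc(\rho|_P)\to\Lambda(P)$: this is a transverse boundary extension (comparing the descriptions in Proposition \ref{RefinedtoMinimal} for $\rho$ and $\rho|_P$, and using that a parabolic point of $(P,\Hc_P)$ has the same $\Gamma$- and $P$-stabiliser, one gets $\zeta_\Hc(\Lc(\rho|_P))=\Lambda(P)$ and $\zeta_\Pc^{-1}(p_P)=\Lc(\rho|_P)$), condition (1) of Proposition \ref{CriterionEGF} is vacuous, and condition (2) at a parabolic point of $(P,\Hc_P)$ — also a parabolic point of $(\Gamma,\Hc)$ — is exactly the corresponding condition for $\rho$ relative to $\Hc$. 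That $\rho$ is extended geometrically finite relative to $\Pc$: take $\zeta_\Pc:=\tau\circ\zeta_\Hc$, which is continuous, equivariant, surjective, and transverse because distinct points of $\partial(\Gamma,\Pc)$ have disjoint, hence $\zeta_\Hc$-transverse, preimages; condition (1) of Proposition \ref{CriterionEGF} is vacuous, and for condition (2) at a parabolic point $p_P$, a pairwise distinct sequence $(\gamma_n)\subset P$ has, after a subsequence, $\gamma_n\to y$ and $\gamma_n^{-1}\to y'$ in $\Lambda(P)=\partial(P,\Hc_P)$, and applying the ``extends the convergence dynamics'' property of $\rho|_P$ at $y'$ with $C_{p_P}$ the set of flags transverse to $\Lc(\rho|_P)=\zeta_\Pc^{-1}(p_P)$ (which is contained in each of the sets $C_{y'}$) forces $\rho(\gamma_n)L$ into arbitrarily small neighbourhoods of $\zeta_{\Hc_P}^{-1}(y)\subset\Lc(\rho|_P)$, hence into any prescribed neighbourhood of $\zeta_\Pc^{-1}(p_P)$.

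For (1')$\Rightarrow$(1): Using the identification $\zeta_\Pc^{-1}(p_P)=\Lc(\rho|_P)$ established as above, define $\zeta_\Hc\colon\Lc(\rho)\to\partial(\Gamma,\Hc)$ to equal $\tau^{-1}\circ\zeta_\Pc$ on the fibres over conical points of $(\Gamma,\Pc)$ and to equal $\zeta_{\Hc_P}$ on $\Lc(\rho|_P)$ for each $P$; equivariance and surjectivity are immediate, and transversality holds since fibres over distinct points of $\partial(\Gamma,\Hc)$ either lie in a single $\Lambda(P)$ (transverse by $\zeta_{\Hc_P}$) or in distinct $\tau$-fibres (transverse by $\zeta_\Pc$). \textbf{The main obstacle is proving $\zeta_\Hc$ continuous.} My plan: given $y_m\to y$ and $\zeta_\Hc(y_m)\to z$, choose for each $m$ a pairwise distinct sequence realising $y_m$ as a flag-limit, taken inside $\mathrm{Stab}\bigl(\zeta_\Hc(y_m)\bigr)$ when the latter is parabolic so that by the lemma it converges to $\zeta_\Hc(y_m)$ in $\partial(\Gamma,\Hc)$; diagonalising yields a distinct sequence $(g_m)$ with flags converging to $y$ and $g_m\to z$. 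The lemma for $\rho$ relative to $\Pc$ gives $\tau(z)=\zeta_\Pc(y)$, settling the conical case; in the parabolic case $z\in\Lambda(P)$, and one must show $z=\zeta_{\Hc_P}(y)$, which is where one exploits the null-sequence property of $\{\Lambda(P)\}_{P\in\Pc^\Gamma}$ (Step~1 of the proof of Theorem \ref{RelativeBoundaryExtension}) together with the structure of bounded parabolic points to see that the flags $(U_k(\rho(g_m)),\dots)$ are governed by the ``$P$-direction'' of $g_m$, reducing the claim to the lemma for $\rho|_P$. With continuity in hand, Proposition \ref{CriterionEGF} applies: condition (1) is vacuous since $\Lc(\rho)$ is closed and contains all flag-limits of distinct sequences, and condition (2) at a parabolic point $p_H$ of $(\Gamma,\Hc)$ with stabiliser $H\subset P$ coincides with the corresponding condition for $\rho|_P$ relative to $\Hc_P$ because $\zeta_\Hc^{-1}(p_H)=\zeta_{\Hc_P}^{-1}(p_H)$, with common $C_{p_H}$ the flags transverse to this fibre.

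Finally, (2)$\Leftrightarrow$(2') follows from (1)$\Leftrightarrow$(1') together with the equivalence, recalled in Section \ref{SectionRelationAsymptoticallyEmbeddedandandRelativelyAnosov}, between being $P_k$-Anosov relative to a structure, being $P_k$-asymptotically embedded, and being extended geometrically finite with a homeomorphic transverse boundary extension. Granting (2), $\rho$ is $P_k$-divergent and extended geometrically finite relative to $\Hc$ with $\zeta_\Hc$ a homeomorphism; by (1)$\Rightarrow$(1') it is extended geometrically finite relative to $\Pc$, and $\zeta_\Hc|_{\Lc(\rho|_P)}$ is a homeomorphism onto $\Lambda(P)=\partial(P,\Hc_P)$, so each $\rho|_P$ is $P_k$-Anosov relative to $\Hc_P$. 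Granting (2'), by (1')$\Rightarrow$(1) the glued map $\zeta_\Hc$ makes $\rho$ extended geometrically finite relative to $\Hc$; moreover $\zeta_\Hc$ is a bijection — singletons over conical points, and over a parabolic point $p_H$ the fibre $\zeta_{\Hc_P}^{-1}(p_H)$ is a single point since $\zeta_{\Hc_P}$ is then a homeomorphism — hence a homeomorphism of compacta, so $\rho$ is $P_k$-asymptotically embedded, i.e.\ $P_k$-Anosov, relative to $\Hc$.
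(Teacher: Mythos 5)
Most of your architecture coincides with the paper's: you pass to the canonical refined extensions on limit sets (Proposition \ref{RefinedtoMinimal}, Remark \ref{OpenSettobeWholeTransverse}), obtain (1)$\Rightarrow$(1$'$) by composing with $\tau$ and restricting to $\Lc(\rho|_P)$, obtain (1$'$)$\Rightarrow$(1) by gluing $\tau^{-1}\circ\zeta_{\Pc}$ over conical points with the maps $\zeta_{\Hc_P}$ over the sets $\Lambda(P)$, and reduce (2)$\Leftrightarrow$(2$'$) to the homeomorphism statements via the equivalence with asymptotically embedded representations. Your ``key lemma'' is correct (it is essentially the last step of the proof of Proposition \ref{RefinedtoMinimal}), the identification $\zeta_{\Pc}^{-1}(p_P)=\Lc(\rho|_P)$ is right, and the (1)$\Rightarrow$(1$'$) and (2)$\Leftrightarrow$(2$'$) parts are sound apart from wording (condition (1) of Proposition \ref{CriterionEGF} is not ``vacuous'' but automatically satisfied because $\Lambda$ is the full limit set).

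The genuine gap is exactly where you flag ``the main obstacle'': continuity of the glued map $\zeta_{\Hc}$ at a flag $y$ with $\zeta_{\Pc}(y)=p_P$ parabolic. Your reduction to realizing sequences does not close this case: in the only nontrivial situation, $\zeta_{\Pc}(y_m)\ne p_P$ for all large $m$ (if $y_m\in\Lc(\rho|_P)$ the claim already follows from continuity of $\zeta_{\Hc_P}$), so the diagonal sequence $(g_m)$ does \emph{not} lie in $P$, and the key lemma for $\rho|_P$ simply does not apply to it; the phrase ``the flags are governed by the $P$-direction of $g_m$'' is an assertion, not an argument, and the tool you invoke (the null-sequence property of $\{\Lambda(P)\}$, used in the paper only for metrizability of $\partial(\Gamma,\Pc)$) is not the relevant one. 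What is actually needed here — and what the paper spends the bulk of its proof on — is: since $p_P$ is bounded parabolic, $P$ acts cocompactly on $\partial(\Gamma,\Pc)\setminus\{p_P\}$, so one finds $\gamma_m\in P$ with $\gamma_m^{-1}\zeta_{\Pc}(y_m)$ in a fixed compact $K_0$; the convergence dynamics of $P$ on $\partial(\Gamma,\Hc)$ (with limit set $\Lambda(P)$) then forces $\gamma_m\to z$; and Lemma \ref{DivergenceLemma} together with transversality of $\zeta_{\Pc}$, applied to the compact set $\zeta_{\Pc}^{-1}(K_0)$ transverse to $\Lc(\rho|_P)$, gives $y_m=\rho(\gamma_m)\bigl(\rho(\gamma_m^{-1})y_m\bigr)\to$ the flag limit of $(\gamma_m)$, which by the key lemma for $\rho|_P$ lies in $\zeta_{\Hc_P}^{-1}(z)$; hence $z=\zeta_{\Hc_P}(y)$. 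Your forward limits $\zeta_{\Pc}(y_m)\ne p_P$ make this argument available, but without carrying it out (or an equivalent one) the proof of (1$'$)$\Rightarrow$(1), and hence of (2$'$)$\Rightarrow$(2), is incomplete. (Your detour through realizing sequences also quietly uses metrizability of $\Gamma\cup\partial(\Gamma,\Hc)$ and the compatibility of convergence in $\partial(P,\Hc_P)$ with convergence in $\partial(\Gamma,\Hc)$ under the identification of Theorem \ref{RelativeBoundaryExtension}; these are true but should be said, and the paper's direct argument with the points $x_n$ avoids them altogether.)
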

\begin{proof}
    Since $\rho$ is $P_k$-divergent, let $\Lc\subset \Fc_{k,d-k}$ be the limit set of $\rho$, and for each $P\in\Pc^\Gamma$, let $\Lc_P\subset \Lc$ be the limit set of $\rho|_P$.
    \\ \\
    \textit{(1)$\Rightarrow$(1') and (2)$\Rightarrow$(2').} We assume that (1) holds. Since $\rho$ is $P_k$-divergent, it is extended geometrically finite with a boundary extension $\zeta:\Lc\to \partial(\Gamma,\Hc)$ that has the properties described in Proposition \ref{RefinedtoMinimal}. Then $\tau\circ\zeta:\Lc\to \partial(\Gamma,\Pc)$ is a transverse boundary extension of $(\Gamma,\Pc)$. Let $(\gamma_n)_{n\in\Nb}\subset \Gamma$ be a sequence, then for any subsequence, there exists a further subsequence, such that $\gamma_n\to p \in \partial(\Gamma,\Hc)$ and $\gamma_n^{-1}\to q \in \partial(\Gamma,\Hc)$ as $n\to +\infty$ with respect to the convergence group action of $\Gamma$ on $\partial(\Gamma,\Hc)$. Then $\gamma_n\to \tau(p)$ and $\gamma_n^{-1}\to \tau(q) $ as $n\to +\infty$ with respect to the convergence group action of $\Gamma$ on $\partial(\Gamma,\Pc)$. For any open set $U\subset \Fc_{k,d-k}$ containing $(\tau\circ\zeta)^{-1}(\tau(p)) \supset \zeta^{-1}(p)$ and any compact set $L\subset \Fc_{k,d-k}$ transverse to $(\tau\circ\zeta)^{-1}(\tau(q))$ and hence transverse to $\zeta^{-1}(q)$, $\rho(\gamma_n) L$ contains in $U$ when $n$ is large enough. Therefore, by Remark \ref{OpenSettobeWholeTransverse}, $\rho$ is extended geometrically finite with boundary extension $\tau\circ\zeta:\Lc\to \partial(\Gamma,\Pc)$.
    
    For each $P\in\Pc$, since $\partial(P,\Hc_P)$ is identified as a subset of $\partial(\Gamma,\Hc)$, it is direct to deduce that $\rho|_P$ is extended geometrically finite with boundary extension $\zeta|_{\Lc_P}:\Lc_P\to \partial(P,\Hc_P)$, which completes the proof of (1)$\Rightarrow$(1'). In particular, when $\tau\circ\zeta:\Lc\to \partial(\Gamma,\Pc)$ is a homeomorphism, than for each $P\in\Pc$, $\zeta|_{\Lc_P}:\Lc_P\to \partial(P,\Hc_P)$ is also a homeomorphism, which completes the proof of (2)$\Rightarrow$(2').
    \\ \\
    \textit{(1')$\Rightarrow$(1) and (2')$\Rightarrow$(2).} If (1') holds, let $\widetilde{\zeta}:\Lc \to \partial(\Gamma,\Pc)$, $\zeta_P:\Lc_P \to \partial(P,\Hc_P)$ be their boundary extensions for extended geometrical finiteness with the properties described in Proposition \ref{RefinedtoMinimal}. 
    We define a $\Gamma$-equivariant surjective map $\zeta:\Lc \to \partial(\Gamma,\Hc)$ by the following way.
    \begin{itemize}
        \item If $p\in \partial(P,\Hc_P)\subset \partial(\Gamma,\Hc)$ for some $P\in\Pc$, then $\zeta^{-1}(p) = \zeta_P^{-1}(p)\subset \Lc_P \subset \Lc$;
        \item If $p\in \partial(\Gamma,\Hc)$ and there exists $\gamma\in\Gamma$ and $P\in\Pc$ such that $\gamma^{-1}p\in \partial(P,\Hc_P)$, then $\zeta^{-1}(p) = \rho(\gamma)\zeta^{-1}(\gamma^{-1} p)$;
        \item If $p \in \partial(\Gamma,\Hc)$ such that $\tau(p)\in \partial_{con}(\Gamma,\Pc)$, then $\zeta^{-1}(p) = \widetilde{\zeta}^{-1}(\tau(p))$.
    \end{itemize}
    By Proposition \ref{RefinedtoMinimal}, the conditions above define $\zeta$ everywhere on $\Lc$. Actually, $\zeta$ is the unique $\rho$-equivariant map such that $\widetilde{\zeta} = \tau\circ\zeta$ and for each $P\in\Pc$, $\zeta|_P = \zeta_P$.
    
    The map $\zeta:\Lc \to \partial(\Gamma,\Hc)$ is transverse. Actually, let $p\ne q\in \partial(\Gamma,\Hc)$ be two points. If $\tau(p)\ne \tau(q)$, then $\zeta^{-1}(p)\subset \widetilde{\zeta}^{-1}(\tau(p))$ and $\zeta^{-1}(q)\subset \widetilde{\zeta}^{-1}(\tau(q))$ are transverse as $\widetilde{\zeta}$ is transverse. If $\tau(p)=\tau(q)$, there exists $\gamma\in\Gamma$ and $P\in\Pc$ such that $\gamma^{-1} p \ne \gamma^{-1} q\in \partial(P,\Hc_P)$, then $\zeta^{-1}(p) = \rho(\gamma)\zeta^{-1}_P(\gamma^{-1} p)$ and $\zeta^{-1}(q) = \rho(\gamma)\zeta^{-1}_P(\gamma^{-1} q)$ are transverse as $\zeta_P$ is transverse.

    Then we show that $\zeta:\Lc \to \partial(\Gamma,\Hc)$ is continuous. We argue by contradiction. Let $(x_n)_{n\in \Nb}$ be a sequence of flags in $\Lc$ with $x_n\to x\in \Lc$ as $n\to +\infty$. We denote $p_n = \zeta(x_n) \in \partial(\Gamma,\Hc)$ and $q = \zeta(x)\in \partial(\Gamma,\Hc)$. We assume that $p_n\to p\in \partial(\Gamma,\Hc)$ as $n\to +\infty$ and $p\ne q$. Since $\widetilde{\zeta}$ is continuous, $\tau(p_i) = \tau(\zeta(x_n))) = \widetilde{\zeta}(x_n) \to \widetilde{\zeta}(x) = \tau(q)$,  as $n\to +\infty$, and since $\tau$ is continuous, $\tau(p_i)\to \tau(p)$ as $n\to +\infty$. Then we have $\tau(p) = \tau(q)$, which means $p,q$ are contained in the limit set of the same parabolic subgroup. Up to a transformation by the $\Gamma$-action, we may assume that $p,q\in\partial(P,\Hc_P) \subset \partial(\Gamma,\Hc)$ for some $P\in\Pc$. If $p_n$ is contained in $\partial(P,\Hc_P)$ for $n$ large enough, the by the continuity of $\zeta_P$, we have $p=q$, which contradicts our assumption. Hence we assume that, up to a subsequence, $p_n\not\in \partial(P,\Hc_P)$, i.e., $\tau(p_n)\ne \tau(p)$ for all $n\in\Nb$. Since $\tau(p)$ is a bounded parabolic point of $\partial(\Gamma,\Pc)$, $P$ acts cocompactly on $\partial(\Gamma,\Pc)\setminus \{\tau(p)\}$. Then there exists a compact set $K_0\subset \partial(\Gamma,\Pc)\setminus \{\tau(p)\}$ and a sequence $(\gamma_n)_{n\in\Nb}\subset P$, such that $\gamma_n^{-1}\tau(p_n)\in K_0$ for all $n\in\Nb$.

    Since $\tau$ is continuous and $\partial(\Gamma,\Hc)$ is compact, then $\tau^{-1}(K_0)\subset \partial(\Gamma,\Hc)\setminus \partial(P,\Hc_P)$ is compact. Now we have that $\gamma_n p_n \in \tau^{-1}(K_0)$ and $p_n\to p$ as $n\to +\infty$. Since $P$ acts on $\partial(\Gamma,\Hc)$ as a convergence group and the limit set of $P$ is precisely $\partial(P,\Hc_P)$, up to a subsequence, there exists points $a,b \in \partial(P,\Hc_P)$, such that for any compact set $K\subset \partial(\Gamma,\Hc)\setminus \{a\}$, $\gamma_n c\to a$ as $n\to +\infty$ uniformly for any $c\in K$. Apply it for the case $K = \tau^{-1}(K_0)$, then we must have $\gamma_n\to a = p$.

    On the other hand, $\widetilde{\zeta}^{-1}(K_0)\subset \Lc\setminus \{\widetilde{\zeta}^{-1}(\tau(p))\} = \Lc\setminus \Lc_P$ is compact and $\rho(\gamma_n^{-1})x_n\in \widetilde{\zeta}^{-1}(K_0)$. Since $\widetilde{\zeta}$ is transverse, then $\widetilde{\zeta}^{-1}(K_0)$ is transverse to $\Lc_P$.
    We denote \[y = \{ \lim\limits_{n\to +\infty} (U_k(\rho(\gamma_n)),U_{d-k}(\rho(\gamma_n)))\text{ and }y'= \{ \lim\limits_{n\to +\infty} (U_k(\rho(\gamma_n^{-1})),U_{d-k}(\rho(\gamma_n^{-1})))~,\] which are contained in $\Lc_P$. By Lemma \ref{DivergenceLemma}, we have $\rho(\gamma_n) y'' \to y $ as $n\to +\infty$ uniformly for any $y''\in \widetilde{\zeta}^{-1}(K_0)$ as $\widetilde{\zeta}^{-1}(K_0)$ is transverse to $y'$. Then $\rho(\gamma_n)\rho(\gamma_n^{-1})x_n = x_n \to y = x$ as $n\to +\infty$. Since $\zeta_P$ extends the convergence dynamics as described in the definition of extended geometrically finite representation, we have $q = \zeta(x) = p$, which derives a contradiction. Therefore, $\zeta$ is continuous.

    Now $\zeta: \Lc \to \partial(\Gamma,\Hc)$ is a transverse boundary extension. It is easy to see that $\zeta$ extends the convergence dynamics by Remark \ref{OpenSettobeWholeTransverse}. Therefore, $\rho$ is extended geometrically finite with boundary extension $\zeta: \Lc \to \partial(\Gamma,\Hc)$, which completes the proof of (1')$\Rightarrow$(1).
    When $\zeta_P$ is a homeomorphism for each $P\in\Pc$, by our construction, $\zeta$ is also a homeomrohpism, which implies (2')$\Rightarrow$(2).
\end{proof}

Now we provides a direct way to construct representations that are divergent, extended geometrically finite and not relatively Anosov.

\begin{example}\label{DirectExampleofDirectSum}
    Let $\Sigma = \Sigma_{g,b}$ be an oriented surface of genus $g$ with $b$ punctures and empty boundary. We mark the punctures by the set of indices $I=\{1,2,...,b\}$. Let $\Gamma$ be the fundamental group of $\Sigma$, which is a free group when $b>0$, and let $c_i\in\Gamma$ be a representative of the homotopy class of a small simple closed curve around the $i$-th puncture for each $i\in I$.
    
    For any subset $I_0\subset I$, a hyperbolic structure on $\Sigma$ with the $i$-th puncture a cusp for each $i\in I_0$ and with the $i'$-th punture a funnel for each $i'\in I\setminus I_0$ provides a geometrically finite representation $\rho_{I_0}:\Gamma\to \SL(2,\Rb)$. Let $\Pc_{I_0} = \{ \langle c_i \rangle : i\in I_0 \}$. Then by \cite[Proposition 1.7]{ZZ2}, $(\Gamma,\Pc_{I_0})$ is relatively hyperbolic and $\rho_{I_0}$ is $P_1$-Anosov relative to $\Pc_{I_0}$.
    
    For $j=1,2$, we consider the follow data. Let $I_j$ be a subset of $I$ and let $\rho_j = \rho_{I_j}$ be a representation as described above. Since $\rho_j$ is $P_1$-Anosov relative to $\Pc_{I_j}$, there is a unique continuous limit map $\xi_j : \partial(\Gamma,\Pc_{I_0})\to \Lc(\rho_j)$ that is transverse and homeomorphic, where $\Lc(\rho_j)$ is the limit set of $\rho_j$ in $\Pb(\Rb^2)$. Let $\tau_j : \partial(\Gamma, \Pc_{I_j})\to \partial(\Gamma,\Pc)$ be the quotient map given by Theorem \ref{RelativeBoundaryExtension}.

    We consider the set \[\Lambda = \{x_1\oplus x_2\in \Rb^2 \oplus \Rb^2 : x_1\in \Lc(\rho_1), x_2\in \Lc(\rho_2)\text{ and }\tau_1(\xi_1^{-1}(x_1))= \tau_2(\xi_2^{-1}(x_2))\}\]
    and the boundary extension in $\Pb(\Rb^2)$ given by
    \begin{eqnarray*}
    \zeta: \Lambda &\to & \partial(\Gamma,\Pc) \\
    x_1\oplus x_2 &\mapsto& \tau_1(\xi_1^{-1}(x_1)) ~.
    \end{eqnarray*}
    We claim that $\rho = \rho_1\oplus \rho_2:\Gamma \to \SL(4,\Rb)$ is $P_2$-divergent and extended geometrically finite with boundary extension $\zeta:\Lambda\to \partial(\Gamma,\Pc)$. Actually, let $X$ be a Gromov model of $(\Gamma,\Pc)$, then $\rho_j$ is $P_1$-Anosov in restriction to $\Fc(\Lc_j,\tau_j\circ\xi_j^{-1},X)$ for $j=1,2$.
    We consider the surjective, flow equivariant, continuous map between $\Gamma$-flows
    \begin{eqnarray*}
    \iota_j: \Fc(\Lambda, \zeta, X) &\to & \Fc(\Lc_j,\tau_j\circ\xi_j^{-1},X) \\
    (x_1\oplus x_2,y_1\oplus y_2,t) &\mapsto& (x_j,y_j,t)~.
    \end{eqnarray*}
    By pulling-back the dominated splitting and the metric through $\iota_j$, we deduce that $\rho_j$ is $P_1$-Anosov in restriction to $\Fc(\Lambda, \zeta, X)$ by Theorem \ref{ExtendingRepresentationsTheorem} (2)$\Rightarrow$(2') and Theorem \ref{TheoremEGFtoRestricted}. The direct sum of these two dominated splitting for $\rho_1$ and $\rho_2$ is a dominated splitting of rank $2$ by Remark \ref{UnitVolumeMetric}. Therefore, $\rho$ is $P_2$-divergent and extended geometrically finite with boundary extension $\zeta:\Lambda\to \partial(\Gamma,\Pc)$ by Theorem \ref{TheoremRestrictedtoEGF}.
    
    For each $i\in I$, $\rho(c_i) = \diag(\rho_1(c_i),\rho_2(c_i))$ while $\rho_j(c_i)$ is parabolic (hyperbolic, respectively) if $i\in I_j$ ($i\in I\setminus I_j$, respectively). Then $\rho$ is $P_2$-Anosov relative to $\Pc$ only when $I_1 = I_2 = I$.

    The construction also works for the direct sum of more of such representations $\rho_j$.
\end{example}

\subsection{An example from simple Anosov representations}\label{SectionExampleOfTW}
The notion of simple Anosov representation of closed surface groups, introduced by \cite{TW23}, provides a domain of discontinuity of the mapping class group action on the character variety that is strictly larger than the domain of Anosov representations. The strictly larger part was proved by constructing a simple Anosov representation, which is not Anosov. We recall the construction of this example, and show that this representation is moreover divergent and extended geometrically finite, but not relatively Anosov.

Let $\Sigma$ be a closed hyperbolic surface. Let $p: \widetilde{\Sigma}\to \Sigma$ be a Galois covering of degree $d$. Let $c$ be a simple closed geodesic in $\widetilde{\Sigma}$ such that $p(c)$ has self-intersection. We denote by $\Gamma'$ the fundamental group of $\widetilde{\Sigma}$ and $\Gamma$ the fundamental group of $\Sigma$, then $\Gamma'$ is automatically a normal subgroup of $\Gamma$ of index $d$. Let $\gamma\in\Gamma'$ be the element that represents the homotopy class of $c$.

By applying the Maskit combination theorem (see \cite[Proposition 4.3]{TW23}), there exists a geometrically finite representation $\rho_1: \Gamma' \to \SL(2,\Cb)$ where the parabolic subgroups are exactly the $\Gamma'$-conjugates of $\langle \gamma \rangle$. Then $(\Gamma',\{\langle \gamma \rangle\})$ is a relatively hyperbolic group and $\rho_1$ is $P_1$-Anosov relative to $\{\langle \gamma \rangle\}$ by \cite[Proposition 1.7]{ZZ2}.

Let $\{\id = \alpha_1, \alpha_2,...,\alpha_d\}$ be a collection of representatives of the left cosets of $\Gamma'$ in $\Gamma$. Let $\{c = c_1, c_2,...,c_d\}$ be the collection of lifts of $p(c)$ in $\widetilde{\Sigma}$ such that $c_i$ has homotopy class $ \alpha_i\gamma \alpha_i^{-1}$, for each $i=1,2,...,d$.

Since $\langle \alpha_i\gamma \alpha_i^{-1} \rangle \cong \Zb$ is undistorted, $\Pc = \{\langle \alpha_i\gamma \alpha_i^{-1} \rangle : i=1,2,...,d\}$ is almost malnormal. By Proposition \ref{UndistortedtoRelativelyQuasiConvex} and Theorem \ref{RelativeBoundaryExtension}, $(\Gamma', \Pc)$ and $(\Gamma,\Pc)$ are relatively hyperbolic.

Since $\Gamma'$ is a normal subgroup of $\Gamma$, the conjugation by $\alpha_i$ is an automorphism on $\Gamma'$. In particular, it induces a homeomorphism $\partial(\Gamma',\{\langle \gamma \rangle\}) \to \partial(\Gamma',\{\langle \alpha_i\gamma \alpha_i^{-1} \rangle\}), q\mapsto \alpha_i q$. The notation has no ambiguity by the following diagram,
\[\begin{matrix}
    \partial\Gamma\cong\partial\Gamma' &\stackrel{\alpha_i}{\longrightarrow} & \partial\Gamma\cong\partial\Gamma' \\
    \downarrow \widetilde{\tau}_1 & & \downarrow \widetilde{\tau}_i\\
    \partial(\Gamma',\{\langle \gamma \rangle\}) &\stackrel{\alpha_i}{\longrightarrow} & \partial(\Gamma',\{\langle \alpha_i\gamma \alpha_i^{-1} \rangle\})\\
    \downarrow \tau_1 & & \downarrow \tau_i \\
    \partial(\Gamma,\Pc)\cong\partial(\Gamma',\Pc) &\stackrel{\alpha_i}{\longrightarrow} & \partial(\Gamma,\Pc)\cong\partial(\Gamma',\Pc)
\end{matrix}\]
where the four vertical arrows $\widetilde{\tau}_i,\tau_i$, denote the projections given by Theorem \ref{RelativeBoundaryExtension} and the top and the bottom arrows denote the natural $\Gamma$ action on $\partial\Gamma$ and $\partial(\Gamma,\Pc)$ respectively. We also denote $\tau : \partial \Gamma \to \partial(\Gamma,\Pc)$ the projection given by Theorem \ref{RelativeBoundaryExtension}.

Since $\rho_1:\Gamma'\to \SL(2,\Cb)$ is $P_1$-Anosov relative to $\{\langle \gamma \rangle\}$, let $\Lc_0$ denote the limit set of $\rho_1$ in $\Pb(\Cb^2)$, then the limit map $\xi_1: \partial (\Gamma',\{\langle \gamma \rangle\}) \to \Lc_0$ is a $\Gamma$-equivariant homeomorphism. By Theorem \ref{ExtendingRepresentationsTheorem}, $\rho_1$ is extended geometrically finite relative to $\Pc$, with the boundary extension $\zeta_1 =\tau_1\circ \xi_1^{-1} : \Lc_0 \to \partial(\Gamma',\Pc)$.

Following from the diagram above, $\rho_i (\cdot) =\rho_1(\alpha_i^{-1}\cdot \alpha_i)$ is $P_1$-Anosov relative to $\{\langle \alpha_i\gamma \alpha_i^{-1} \rangle\}$ for each $i=1,2,...,d$, with the limit map given by 
\begin{eqnarray*}
\xi_i: \partial(\Gamma',\{\langle \alpha_i\gamma \alpha_i^{-1} \rangle\}) & \to & \Lc_0\\
q & \mapsto & \xi_1(\alpha_i^{-1}q)~.
\end{eqnarray*}
Therefore, by Theorem \ref{ExtendingRepresentationsTheorem} again, $\rho_1$ is extended geometrically finite relative to $\Pc$, with the boundary extension $\zeta_i = \tau_i \circ \xi_i^{-1}: \Lc_0 \to \partial(\Gamma',\Pc)$.

Let $X$ be a Gromov model of $(\Gamma,\Pc)$ and hence automatically a Gromov model of $(\Gamma',\Pc)$.
There is a natural flow equivariant homeomorphism
\begin{eqnarray*}
\iota_i:\Fc(\Lc_0,\zeta_i,X) &\to & \Fc(\Lc_0,\zeta_1,X) \\
(z^+,z^-,t) &\mapsto& (\xi_1(\alpha_i^{-1}\xi_i^{-1}(z^+)),\xi_1(\alpha_i^{-1}\xi_i^{-1}(z^-)),t)~.
\end{eqnarray*}
One remark here is that if we only regard $\Fc(\Lc_0,\zeta_i,X)$ as the topological space $(\Lambda_,\xi_i)^{(2)}$, we will see that $(\Lambda_,\xi_i)^{(2)} = (\Lambda_,\xi_1)^{(2)}$ for each $i=1,2,...,d$ and $\iota_i$ is the identity map.

By Theorem \ref{TheoremEGFtoRestricted}, $\rho_1$ is $P_1$-Anosov in restriction to $\Fc(\Lc_0,\zeta_1,X)$, with the dominated splitting 
\[\Fc(\Lc_0,\zeta_1,X)\times \Cb^2 = E^s_1 \oplus E^u_1\]
given by 
$((E^s_1)_z,(E^u_1)_z) = (z^+,z^-)\subset \Lc_0^2 \subset \Pb(\Cb^2)^2$ for any $z=(z^+,z^-,t)\in \Fc(\Lc_0,\zeta_1,X)$, with respect to a metric $\norm{\cdot}$ of unit volume. Then for each $i=1,2,...,d$, $\rho_i$ is $P_1$-Anosov in restriction to $\Fc(\Lc_0,\zeta_i,X)$, with the dominated splitting
\[\Fc(\Lc_0,\zeta_i,X)\times \Cb^2 = E^s_i \oplus E^u_i\]
given by the pull-back of the dominated splitting over $\Fc(\Lc_0,\zeta_1,X)$ through $\iota_i$, with respect to the pull-back metric $\iota_i^*\norm{\cdot}$.

Let $\rho = \mathrm{Ind}^{\Gamma}_{\Gamma'}(\rho_1): \Gamma \to \SL(2d,\Cb)$ be the induced representation. More precisely, we may regard $\Cb^{2d} = \oplus_{i=1}^d \alpha_i \Cb^2 $ as a direct sum of $d$ copies of $\Cb^2$, with each of them denoted by $\alpha_i \Cb^2$. Recall that the induced representation is defined by the following rule. For each $\eta\in\Gamma$ and each $\alpha_i v\in \alpha_i\Cb^2$, $\rho(\eta)\alpha_i v = \alpha_j (\rho_1(\eta') v)$, where $\eta'\in\Gamma'$ is such that $\eta \alpha_i = \alpha_j \eta'$. In particular, when we restrict $\rho$ on $\Gamma'$, we have $\rho|_{\Gamma'} = \oplus_{i=1}^{d} \rho_i$.

Consider the surjective, flow equivariant, continuous map,
\begin{eqnarray*}
\widetilde{\iota}_i:\Fc(\partial\Gamma, \tau ,X) &\to & \Fc(\Lc_0,\zeta_i,X) \\
(z^+,z^-,t) &\mapsto& (\xi_1(\alpha_i^{-1}\widetilde{\tau}_i(z^+)),\alpha_i^{-1}\widetilde{\tau}_i(z^-)),t)~.
\end{eqnarray*}
Then the dominated splitting of rank $d$, $\Fc(\Lc_0,\zeta_i,X)\times \Cb^2 = E^s_i \oplus E^u_i$ with respect to the metric $\iota_i^*\norm{\cdot}$ pulls-back to a dominated splitting of $\Fc(\partial\Gamma, \tau ,X)\times \Cb^2 = \widetilde{\iota}_i^* E^s_i \oplus \widetilde{\iota}_i^* E^u_i$ with respect to the metric $\widetilde{\iota}_i^* \iota_i^*\norm{\cdot}$. 

By Remark \ref{UnitVolumeMetric}, their direct sum for all $i=1,2,...,d$ gives a dominated splitting \[\Fc(\Lc_0,\zeta_i,X)\times \Cb^{2d} =  (\oplus_{i=1}^d \widetilde{\iota}_i^* E^s_i) \oplus (\oplus_{i=1}^d \widetilde{\iota}_i^* E^u_i)\] with respect to the metric $\oplus_{i=1}^d \widetilde{\iota}_i^* \iota_i^* \norm{\cdot}$, for the representation $\rho|_{\Gamma'}$. It is easy to see that this is moreover a dominated splitting for $\rho$ and $\oplus_{i=1}^d \widetilde{\iota}_i^* \iota_i^* \norm{\cdot}$ is actually $\rho(\Gamma)$-invariant by the definition of induced representations. Let \[\Lambda = \{x = \oplus_{i=1}^d x_i : x_i\in\Lc_0 \text{ for each }i=1,2,...,d~,\] \[ \text{ and } \zeta_1(x_1) = \zeta_2(x_2) = ... = \zeta_d(x_d) \} \subset \Cb^{2d} \]
and let
\begin{eqnarray*}
\xi:\partial\Gamma &\to & \Lambda \\
q &\mapsto& \oplus_{i=1}^d \xi_i(\widetilde{\tau}_i(q))~.
\end{eqnarray*}
is a $\rho(\Gamma)$-invariant closed subset of $\Gr_d(\Cb^{2d})$. $\xi$ is a $\Gamma$-equivariant homeomorphism since for $q\ne q'\in\partial \Gamma$, if $\xi_1(\widetilde{\tau}_1(q))=\xi_1(\widetilde{\tau}_1(q'))$, then $\widetilde{\tau}_1(q)=\widetilde{\tau}_1(q')$ as the same parabolic point in $ \partial(\Gamma',\{\langle \gamma \rangle\})$, and hence $\xi_2(\widetilde{\tau}_2(q)) \ne \xi_2(\widetilde{\tau}_2(q'))$.

Finally, we conclude that $\rho$ is $P_k$-divergent and extended geometrically finite with the boundary extension $\tau\circ\xi^{-1} :\Lambda \to \partial(\Gamma,\Pc)$ by Theorem \ref{TheoremRestrictedtoEGF}. The limit map $\xi:\partial\Gamma \to \Lambda$ is a homeomorphism, which is not transverse, while the boundary extension $\tau\circ\xi^{-1} :\Lambda \to \partial(\Gamma,\Pc)$ is transverse. Actually, for the parabolic point $p_j\in \partial(\Gamma,\Pc)$ fixed by $\langle \alpha_j\gamma \alpha_j^{-1} \rangle $ and two distinct flags $\oplus_{i=1}^d x_i, \oplus_{i=1}^d y_i \in (\tau\circ\xi^{-1})^{-1}(p_j) \subset \Lambda$, $\oplus_{1\leqslant i \leqslant d, i\ne j}^d x_i$ and $\oplus_{1\leqslant i\leqslant d, i\ne j}^d y_i$ are transverse but $x_j = y_j$.

One should also notice that $\rho$ is not $P_d$-Anosov relative to $\Pc$, since $\rho(\alpha_i\gamma\alpha_i^{-1})$ is a blockwisely diagonal matrix with $1$ of them parabolic and the rest $d-1$ of them hyperbolic.
\bibliographystyle{alpha}
\bibliography{reference.bib}

@book{BH,
  title={Metric spaces of non-positive curvature},
  author={Bridson, Martin R and Haefliger, Andr{\'e}},
  volume={319},
  year={2013},
  publisher={Springer Science \& Business Media}
}

@article{Bow12,
  title={Relatively hyperbolic groups},
  author={Bowditch, Brian H},
  journal={International Journal of Algebra and Computation},
  volume={22},
  number={03},
  pages={1250016},
  year={2012},
  publisher={World Scientific}
}

@article{BPS,
  title={Anosov representations and dominated splittings},
  author={Bochi, Jairo and Potrie, Rafael and Sambarino, Andr{\'e}s},
  journal={Journal of the European Mathematical Society},
  volume={21},
  number={11},
  pages={3343--3414},
  year={2019}
}

@misc{Can,
  title={Anosov representations: Informal lecture notes},
  author={Canary, Richard},
  year={2021}
}

@article{Cham,
  title={Petite simplification dans les groupes hyperboliques},
  author={Champetier, Christophe},
  journal={Annales de la Facult{\'e} des sciences de Toulouse: Math{\'e}matiques},
  volume={3},
  number={2},
  pages={161--221},
  year={1994}
}

@article{CZZ,
  title={Cusped {H}itchin representations and {A}nosov representations of geometrically finite {F}uchsian groups},
  author={Canary, Richard and Zhang, Tengren and Zimmer, Andrew},
  journal={Advances in Mathematics},
  volume={404},
  pages={108439},
  year={2022},
  publisher={Elsevier}
}

@article{Dah03,
  title={Combination of convergence groups},
  author={Dahmani, Fran{\c{c}}ois},
  journal={Geometry \& Topology},
  volume={7},
  number={2},
  pages={933--963},
  year={2003},
  publisher={Mathematical Sciences Publishers}
}

@book{Dav,
  title={Decompositions of manifolds},
  author={Daverman, Robert J},
  year={1986},
  publisher={Academic Press}
}

@article{DS05,
  title={Tree-graded spaces and asymptotic cones of groups},
  author={Dru{\c{t}}u, Cornelia and Sapir, Mark},
  journal={Topology},
  volume={44},
  number={5},
  pages={959--1058},
  year={2005},
  publisher={Elsevier}
}

@article{GGKW,
  title={Anosov representations and proper actions},
  author={Gu{\'e}ritaud, Fran{\c{c}}ois and Guichard, Olivier and Kassel, Fanny and Wienhard, Anna},
  journal={Geometry \& Topology},
  volume={21},
  number={1},
  pages={485--584},
  year={2017},
  publisher={Mathematical Sciences Publishers}
}

@article{GM08,
  title={Dehn filling in relatively hyperbolic groups},
  author={Groves, Daniel and Manning, Jason Fox},
  journal={Israel Journal of Mathematics},
  volume={168},
  number={1},
  pages={317--429},
  year={2008},
  publisher={Springer}
}

@incollection{Gromov,
  title={Hyperbolic groups},
  author={Gromov, Mikhael},
  booktitle={Essays in group theory},
  pages={75--263},
  year={1987},
  publisher={Springer}
}

@article{GW,
  title={Anosov representations: domains of discontinuity and applications},
  author={Guichard, Olivier and Wienhard, Anna},
  journal={Inventiones mathematicae},
  volume={190},
  number={2},
  pages={357--438},
  year={2012},
  publisher={Springer}
}

@article{Hru10,
  title={Relative hyperbolicity and relative quasiconvexity for countable groups},
  author={Hruska, G Christopher},
  journal={Algebraic \& Geometric Topology},
  volume={10},
  number={3},
  pages={1807--1856},
  year={2010},
  publisher={Mathematical Sciences Publishers}
}

@article{KL,
  title={Relativizing characterizations of {A}nosov subgroups, I},
  author={Kapovich, Michael and Leeb, Bernhard},
  journal={arXiv preprint arXiv:1807.00160},
  year={2018}
}

@article{KLP16,
    author = {Kapovich, Michael and Leeb, Bernhard and Porti, Joan},
    year = {2016},
    month = {12},
    pages = {},
    title = {Some recent results on {A}nosov representations},
    volume = {21},
    journal = {Transformation Groups},
}

@article{KLP17,
  title={Anosov subgroups: dynamical and geometric characterizations},
  author={Kapovich, Michael and Leeb, Bernhard and Porti, Joan},
  journal={European Journal of Mathematics},
  volume={3},
  number={4},
  pages={808--898},
  year={2017},
  publisher={Springer}
}

@article{KLP18,
  title={A {M}orse Lemma for quasigeodesics in symmetric spaces and {E}uclidean buildings},
  author={Kapovich, Michael and Leeb, Bernhard and Porti, Joan},
  journal={Geometry \& Topology},
  volume={22},
  number={7},
  pages={3827--3923},
  year={2018},
  publisher={Mathematical Sciences Publishers}
}

@article{Lab,
    author = {Labourie, Francois},
    year = {2006},
    pages = {51--114},
    title = {Anosov Flows, Surface Groups and Curves in Projective Space},
    volume = {165},
    journal = {Inventiones mathematicae},
}

@article{Man15,
  title={The {B}owditch boundary of $\partial({G}, \mathcal{{H}})$ when ${G}$ is hyperbolic},
  author={Manning, Jason Fox},
  year={2015},
  journal={arXiv preprint arXiv:1504.03630}
}

@article{Matheus,
  title={Flot g{\'e}od{\'e}sique et groupes hyperboliques d'apr{\`e}s {M}. {G}romov (M{\'e}moire de {DEA})},
  author={Math{\'e}us, Fr{\'e}d{\'e}ric},
  journal={S{\'e}minaire de th{\'e}orie spectrale et g{\'e}om{\'e}trie},
  volume={9},
  pages={67--87},
  year={1990}
}

@article{Minsky99,
  title={The classification of punctured-torus groups},
  author={Minsky, Yair N},
  journal={Annals of Mathematics},
  pages={559--626},
  year={1999},
  publisher={JSTOR}
}

@article{Mineyev,
  title={Flows and joins of metric spaces},
  author={Mineyev, Igor},
  journal={Geometry \& Topology},
  volume={9},
  number={1},
  pages={403--482},
  year={2005},
  publisher={Mathematical Sciences Publishers}
}

@article{Mitra98,
  title={Cannon-{T}hurston maps for trees of hyperbolic metric spaces},
  author={Mitra, Mahan},
  journal={Journal of Differential Geometry},
  volume={48},
  number={1},
  pages={135--164},
  year={1998},
  publisher={Lehigh University}
}

@book{Osin06,
  title={Relatively Hyperbolic Groups: Intrinsic Geometry, Algebraic Properties, and Algorithmic Problems: Intrinsic Geometry, Algebraic Properties, and Algorithmic Problems},
  author={Osin, Denis V},
  volume={843},
  year={2006},
  publisher={American Mathematical Soc.}
}

@book{Shub,
  title={Global stability of dynamical systems},
  author={Shub, Michael},
  year={2013},
  publisher={Springer Science \& Business Media}
}

@article{Tran13,
  title={Relations between various boundaries of relatively hyperbolic groups},
  author={Tran, Hung Cong},
  journal={International Journal of Algebra and Computation},
  volume={23},
  number={07},
  pages={1551--1572},
  year={2013},
  publisher={World Scientific}
}

@article{TW23,
  title={Simple {A}nosov representations of closed surface groups},
  author={Tholozan, Nicolas and Wang, Tianqi},
  journal={arXiv preprint arXiv:2307.02934},
  year={2023}
}

@article{W23,
  title={Anosov representations over closed subflows},
  author={Wang, Tianqi},
  journal={Transactions of the American Mathematical Society},
  year={2023}
}

@article{Weisman,
  title={An extended definition of {A}nosov representation for relatively hyperbolic groups},
  author={Weisman, Theodore},
  journal={arXiv preprint arXiv:2205.07183},
  year={2022}
}

@article{Yam,
    author = {Yaman, A.},
    year = {2004},
    month = {01},
    pages = {41-89},
    title = {A topological characterisation of relatively hyperbolic groups},
    volume = {2004},
    journal = {Journal Fur Die Reine Und Angewandte Mathematik - J REINE ANGEW MATH},
    doi = {10.1515/crll.2004.007}
}

@article{Zhu,
  title={Relatively dominated representations},
  author={Zhu, Feng},
  journal={Annales de l'Institut Fourier},
  volume={71},
  number={5},
  pages={2169--2235},
  year={2021}
}

@article{ZZ1,
  title={Relatively {A}nosov representations via flows {I}: theory},
  author={Zhu, Feng and Zimmer, Andrew},
  journal={arXiv preprint arXiv:2207.14737},
  year={2022}
}

@article{ZZ2,
  title={Relatively {A}nosov representations via flows {II}: examples},
  author={Zhu, Feng and Zimmer, Andrew},
  journal={arXiv preprint arXiv:2207.14738},
  year={2022}
}

\end{document}